\theoremstyle{plain}
\newtheorem{thm}{\indent\bf Theorem}[section]
\newtheorem{lem}[thm]{\indent\bf Lemma}
\newtheorem{prop}[thm]{\indent\bf Proposition}
\newtheorem{cor}[thm]{\indent\bf Corollary}
\theoremstyle{definition}
\newtheorem{rem}{\indent\it Remark}[section]
\newtheorem{exa}{\indent\it Example}[section]
\numberwithin{equation}{section}
\numberwithin{figure}{section}
\def \re {\mathrm{Re\,}}
\def \im {\mathrm{Im\,}}
\begin{document}
\title[Fourth Painlev\'e transcendents]{ 
Kapaev's global asymptotics of the fourth
Painlev\'e transcendents. Elliptic asymptotics}
\author[Shun Shimomura]{Shun Shimomura} 
\address{Department of Mathematics, 
Keio University, 
3-14-1, Hiyoshi, Kohoku-ku,
Yokohama 223-8522 
Japan \quad
{\tt shimomur@math.keio.ac.jp}}
\date{}
\maketitle
\begin{abstract}
For the fourth Painlev\'e transcendents we derive elliptic asymptotic
representations, which were announced by late Professor Kapaev 
without proofs. Then we newly obtain related results including the 
correction function.
\vskip0.2cm
\par
2010 {\it Mathematics Subject Classification.} 
{34M55, 34M56, 34M40, 34M60, 33E05.}
\par
{\it Key words and phrases.} 
{elliptic asymptotic representation; fourth Painlev\'e transcendents; 
WKB analysis; isomonodromy deformation; monodromy data.} 
\end{abstract}
\hfill\thanks{To the memory of Professor Kapaev}
 \allowdisplaybreaks
\section{Introduction}\label{sc1}
The fourth Painlev\'e equation 
\begin{equation*}
\tag*{P$_\mathrm{IV}$}
y''=  \frac {(y')^2}{2y} + \frac 32 y^3 +4xy^2 +(-4\alpha+\beta+2x^2)y
-\frac{\beta^2}{2y}
\end{equation*}
$(y'=dy/dx)$ with $\alpha, \beta\in \mathbb{C}$ governs the isomonodromy
deformation of the linear system
\begin{align}\label{1.1}
& \frac{d\Psi}{d\xi} =\Biggl( \Bigl(\frac {\xi^3}{2} +\xi(x+ \mathrm{u}
\mathrm{v}) +\frac{\alpha}{\xi} \Bigr)\sigma_3
+i \begin{pmatrix} 0 & \xi^2\mathrm{u}+ 2x\mathrm{u} +\mathrm{u}' \\
  \xi^2\mathrm{v}+ 2x\mathrm{v} -\mathrm{v}'  &  0   \end{pmatrix}\Biggr) \Psi,
\\
\notag
& \beta=\mathrm{u}'\mathrm{v}-\mathrm{u}\mathrm{v}' +2x\mathrm{u}\mathrm{v}
-(\mathrm{u}\mathrm{v})^2, \quad y=\mathrm{u}\mathrm{v}
\end{align}
by Kitaev \cite{Kitaev} (for another isomonodromy system see \cite{JM}).
Kapaev \cite{Kapaev-3} announced global asymptotic results on solutions of
P$_{\mathrm{IV}}$ including elliptic asymptotic representations along 
generic directions in the complex plane, giving notice of publishing the proofs
in \cite{FIKN} written by him in collaboration with Fokas, Its and Novokshenov. 
The monograph \cite{FIKN}, however, contains none of these
proofs, and Kapaev passed away without publishing the asymptotics on 
P$_{\mathrm{IV}}$ except \cite{I-Kapaev} for solutions on the real line
and \cite{Kapaev-4}. 
In Kapaev's announcement, \cite[Theorems 2 and 3]{Kapaev-3} are on
elliptic asymptotics described in terms of an elliptic integral as the 
inverse function;  
\cite[Theorems 4 and 5]{Kapaev-3} on trigonometric
asymptotics, and \cite[Theorems 6, 7, 8 and 9]{Kapaev-3} on truncated solutions.
Asymptotics of P$_{\mathrm{IV}}$ along Stokes rays are also 
studied by Kitaev \cite{Kitaev}, \cite{Kitaev-D}, \cite{Kitaev-0}. 
By isomonodromy technique, 
elliptic asymptotics of general solutions are known for P$_{\mathbf{I}}$ 
(\cite{Ka-Ki}, \cite{Kitaev-2}, \cite{Kitaev-3}), P$_{\mathbf{II}}$ 
(\cite{Novokshenov-1}, \cite{Novokshenov-2},
\cite{Kapaev-2}, \cite{Kapaev-1}, \cite{Kitaev-3}),
P$_{\mathbf{III}}(D_6)$ (\cite{Novokshenov-3}, \cite{Novokshenov-4}, 
\cite{SSS}), P$_{\mathbf{III}}(D_7)$ (\cite{SS}) 
and P$_{\mathbf{V}}$ (\cite{S}), and in each asymptotic formula 
the phase shift is expressed in terms of the monodromy data.  
For the complete P$_{\mathrm{IV}}$ 
the proofs of elliptic asymptotics have not been published, 
though P$_{\mathrm{IV}}$ with $\alpha=0$ was treated by 
Vereshchagin \cite{Vere} for $0<\arg x<\pi/4$. 
The present author believes that it is significant to present the 
proofs of elliptic representations for P$_{\mathrm{IV}}$ in
Kapaev's announcement \cite{Kapaev-3} for the following reasons: 
(i) in the study of a general solution of P$_{\mathrm{IV}}$,  
elliptic asymptotics are crucial information; 
(ii) as a practical matter, the process of deriving the elliptic 
representation needs some devices peculiar to P$_{\mathrm{IV}}$; 
and (iii) related important materials including the correction function,
which appear in the proofs, are not referred to in the announcement.
\par
In this paper we derive the elliptic asymptotics for 
the complete P$_{\mathrm{IV}}$ by the isomonodromy deformation method along 
the lines of the arguments in \cite{Kapaev-1}, \cite{Kitaev-2}, \cite{Kitaev-3}
with discussions on the Boutroux equations and on the justification of
the asymptotics as a solution of P$_{\mathrm{IV}}$. Then we newly obtain
related results including the correction function 
$B_{\phi}(t)$ given by \eqref{5.1}, which contains information 
on the asymptotics and is essential in the justification procedure as 
in \cite[Section 3]{Kitaev-3}.  
\par
The main results are stated in Section \ref{sc2}. Theorems \ref{thm2.1} and 
\ref{thm2.2} present elliptic representations of a general solution
of P$_{\mathrm{IV}}$, which correspond to the announced 
\cite[Theorem 2]{Kapaev-3}.
These results are also described by an alternative
elliptic expression as of Corollary \ref{cor2.3}, which has an advantage in 
treating in general sectors (cf.~Theorem \ref{thm2.4}). 
For elliptic expressions of Theorems \ref{thm2.1} and \ref{thm2.2} in
directions neighbouring the positive real axis,
degeneration to trigonometric asymptotics may be considered under certain 
suppositions, and is shown to be consistent with the result of 
\cite[Theorem 4]{Kapaev-3}. This fact supports the validity of signs 
in Theorems \ref{thm2.1} and \ref{thm2.2} contradicting those of
\cite[Theorem 2]{Kapaev-3} (cf. Remark \ref{rem2.4}).
Section \ref{sc3} summarises necessary facts on the isomonodromy linear system 
\eqref{3.1} and on its monodromy data consisting of Stokes coefficients.  
Section \ref{sc4} explains turning points, Stokes graphs and WKB solutions, 
which are necessary in the WKB analysis.  
In Section \ref{sc5} we solve a direct monodromy problem for system \eqref{3.1} 
by the WKB analysis to obtain the key relations consisting of monodromy data
and certain integrals (cf.~Propositions \ref{prop5.1} and \ref{prop5.2}).
Asymptotics of these key relations are examined in Section \ref{sc6} by the
use of the $\vartheta$-function. In Section \ref{sc7} from the formulas thus
obtained asymptotic forms of the main theorems are derived by solving an
inverse monodromy problem for the prescribed monodromy data. In this process
we make technical devices to  
find necessary special properties of the elliptic function related to
our case (Propositions \ref{prop7.4} and \ref{prop7.5}). 
The justification as a solution of
P$_{\mathrm{IV}}$ is performed along the lines of Kitaev \cite{Kitaev-3} with
\cite{Kitaev-1}. The final section is devoted to the proofs of necessary 
facts on the Boutroux equations summarised in Proposition \ref{prop8.15}, 
which determine $A_{\phi}$ parametrising
the related elliptic function. Furthermore we clarify local structure 
of Stokes curves near coalescing turning points, which  
is used in drawing Stokes graphs in Section \ref{sc4}. 
\par
Throughout this paper we use the following symbols:
\par 
(1) The coefficient $A(\varphi_0)$ defined by \cite[(20)]{Kapaev-3} is 
denoted by $e^{3i\phi}A_{\phi}$ $(\phi=\varphi_0)$;
\par
(2) $\sigma_1,$ $\sigma_2$, $\sigma_3$ denote the Pauli matrices
$$
\sigma_1=\begin{pmatrix} 0 & 1 \\ 1 & 0 \end{pmatrix}, \quad
\sigma_2=\begin{pmatrix} 0 & -i \\ i & 0 \end{pmatrix}, \quad
\sigma_3=\begin{pmatrix} 1 & 0  \\ 0 & -1 \end{pmatrix}; 
$$
\par
(3) for complex-valued functions $f$ and $g$, we write $f \ll g$ or $g\gg f$
if $f=O(|g|)$, and write $f \asymp g$ if $g \ll f \ll g$.
\section{Results}\label{sc2}
To state the results we explain the monodromy data \cite[Section 2]{Kapaev-3}, 
\cite[Section 2]{I-Kapaev}, and the Boutroux 
equations \cite[Section 3]{Kapaev-3}.
For $k\in \mathbb{Z}$ system \eqref{1.1} admits the matrix solutions
\begin{equation}\label{2.1}
\Psi_k^{\infty}(\xi)=(I+O(\xi^{-1}))\exp((\tfrac 18\xi^4 +\tfrac 12 x\xi^2 +(\alpha
-\beta)\ln \xi)\sigma_3 )
\end{equation}
as $\xi \to \infty$ through the sector $|\arg\xi +\frac {\pi}8 -\frac {\pi}4 k|
<\frac{\pi}4,$ and
$$
\Psi^0(\xi)=T_0(\xi) \xi^{\alpha\sigma_3}(I+J_0\ln\xi)e^{\mathrm{int}(x)
\sigma_3},
\quad J_0=
 \begin{pmatrix} 0 & j_+ \\ j_-  & 0 \end{pmatrix}, 
\quad \mathrm{int}(x)=\int^x\mathrm{u}
\mathrm{v}\, dx 
$$
as $\xi \to 0$, where $T_0(\xi)$ is invertible around $\xi=0$, and
$j_+=0$ if $\alpha-\frac 12 \not=0,1,2,\ldots,$ $j_-=0$ if $\alpha-\frac 12
\not= -1,-2,-3,\ldots.$ The Stokes matrices  
$$
S_{2l-1}=\begin{pmatrix} 1 & s_{2l-1} \\ 0 & 1 \end{pmatrix}, \quad
S_{2l}=\begin{pmatrix} 1 & 0 \\ s_{2l}  & 1 \end{pmatrix} \quad (l\in \mathbb{Z}
) 
$$
are defined by $\Psi_{k+1}^{\infty}(\xi)=\Psi_{k}^{\infty}(\xi)S_k$, and 
satisfy $S_{k+4}=
e^{-i\pi(\alpha-\beta)\sigma_3}\sigma_3 S_k\sigma_3 e^{i\pi(\alpha-\beta)
\sigma_3},$ $s_{k+4}=-s_k e^{(-1)^k 2\pi i (\alpha-\beta)}.$ 
For the matrices
$M=e^{-\mathrm{int}(x)\sigma_3}(I+i\pi J_0)e^{\mathrm{int}(x)\sigma_3}
e^{i\pi\alpha \sigma_3}$ and $E$ such that $\sigma_3\Psi^0(e^{i\pi}\xi)
\sigma_3 =\Psi^0(\xi)M$ and $\Psi_1^{\infty}(\xi)=\Psi^0(\xi)E$,  
the semi-cyclic relation 
$$
S_1S_2S_3S_4=E^{-1}\sigma_3M^{-1}Ee^{i\pi(\alpha-\beta)\sigma_3}\sigma_3
$$
holds, and the traces of both sides lead to the surface of the monodromy data 
\begin{equation*}
\tag*{$\mathcal{M}_0(\alpha,\beta):$}
((1+s_{1}s_{2}) (1+s_{3}s_{4})+ s_{1}s_{4}) e^{-i\pi (\alpha-\beta)}
 -(1+s_{2}s_{3}) e^{i\pi(\alpha-\beta)} =-2i \sin \pi\alpha.
\end{equation*}
For each $m\in \mathbb{Z}$  
the semi-cyclic relation for $S_{j+m}$ $(1\le j\le 4)$ in 
Proposition \ref{prop3.1} yields 
\begin{align*}
\tag*{$\mathcal{M}_m(\alpha,\beta):$}
((1+s_{1+m}s_{2+m}) &(1+s_{3+m}s_{4+m})+ s_{1+m}s_{4+m}) e^{-i\pi (-1)^m(\alpha
-\beta)}
\\
& -(1+s_{2+m}s_{3+m}) e^{i\pi(-1)^m(\alpha-\beta)} =-2i (-1)^m\sin \pi\alpha.
\end{align*}
Around a nonsingular point on $\mathcal{M}_m(\alpha,\beta)$ suitable three of 
$s_{j+m}$ $(1\le j\le 4)$ are independent. 
For any $c\in\mathbb{C}\setminus\{0\}$ the
gauge transformation $\Psi=c^{\sigma_3}\tilde{\Psi}$ induces the action
$$
[c]: \,\,\, (S_1,S_2,S_3, S_4) \mapsto (c^{-\sigma_3}S_1 c^{\sigma_3},
c^{-\sigma_3}S_2 c^{\sigma_3},c^{-\sigma_3}S_3 c^{\sigma_3},
c^{-\sigma_3}S_4 c^{\sigma_3})
$$
on $\mathcal{M}_0(\alpha,\beta)$ consistent with the isomonodromy structure
of \eqref{1.1}, and  
each solution of P$_{\mathrm{IV}}$ corresponds to an orbit, or equivalence class 
yielded by dividing $\mathcal{M}_0(\alpha,\beta)$ by $[c]$.
Thus an orbit passing through a point 
$(s_1,s_2,s_3,s_4)\in \mathcal{M}_0 (\alpha,\beta)$ parametrises 
a solution of P$_{\mathrm{IV}}$. Let us call it a {\it solution 
labelled by} $(s_1,s_2,s_3,s_4)$. 
(In \cite[Section 2]{Kapaev-3} the gauge
symmetry on $(\mathrm{u},\mathrm{v})$ is considered.)
\par
For $0<A<\frac{8}{27}$ let $z=0,$ $x_1,$ $x_3,$ $x_5$ with $x_5 <x_3<x_1 <0$
be the zeros of the polynomial $P_A(z)=z^4+4z^3+4z^2+4Az$ such that 
$x_1 \to 0,$ $x_3, x_5 \to -2$ as $A\to 0$ and that
$x_1,x_3 \to -\frac 23,$ $x_5 \to -\frac 83$ as $A\to \frac 8{27}$. 
Let $\Pi_{\pm}$ be two copies of 
$P^1(\mathbb{C})\setminus([x_5,x_3]\cup [x_1,0])$. 
The elliptic curve $w^2=P_A(z)$ is the two sheeted Riemann surface
$\Pi_{A,0}=\Pi_+ \cup \Pi_-$ 
glued along the cuts $[x_5,x_3]$, $[x_1,0]$.
As long as $(\phi,A) \in \mathbb{R} \times \mathcal{D}_0$ with
$\mathcal{D}_0=\mathbb{C}\setminus \{c\le 0\} \cup\{c\ge \tfrac{8}{27}\}$, 
the polynomial 
$z^4+4e^{i\phi}z^3 +4e^{2i\phi}z^2 +4e^{3i\phi} Az$ admits the roots 
$0,$ $z_j$ $(j=1,3,5)$ such that 
$\re e^{-i\phi}z_5 < \re e^{-i\phi}z_3 <\re e^{-i\phi} z_1 $ 
and that $z_j=x_j$ if $\phi=0,$ $0<A<\frac{8}{27}$
(cf. Corollary \ref{cor8.2}), and then the elliptic curve
$$
w^2=w(A,z)^2=z^4+4e^{i\phi}z^3 +4e^{2i\phi}z^2 +4e^{3i\phi}Az 
=z(z-z_1)(z-z_3)(z-z_5)
$$
may be considered to be the two sheeted Riemann surface $\Pi_{A,\phi}=\Pi_+ \cup
\Pi_-$ 
that is a continuous modification of $\Pi_{A,0}$ with $\Pi_{\pm}$ glued 
along cuts $[z_1,0],$ $[z_5,z_3]$. 
Here the branch of $w=\sqrt{z(z-z_1)(z-z_3)(z-z_5)}=z^2\sqrt{1-z_1z^{-1}}
\sqrt{1-z_3z^{-1}}\sqrt{1-z_5z^{-1}}$ is such that $\sqrt{1-z_jz^{-1}}\to 1$
as $z\to \infty$ $(j=1,3,5)$ on the upper sheet $\Pi_+$.

As will be shown in Corollary \ref{cor8.16} with Remark \ref{rem8.3}, 
for each $\phi\in \mathbb{R}$, there exists
$A_{\phi} \in\mathbb{C}$ such that, for any cycle $\mathbf{c}$ on 
$\Pi_{A_{\phi},\phi}$
$$
\re \int_{\mathbf{c}} \frac{w(A_{\phi},z)}{z}dz =0
$$
and that $A_{\phi}$ has the following properties: 
\par
(1) for each $\phi\in \mathbb{R}$, $A_{\phi}$ is uniquely determined;
\par
(2) ${A}_{\phi+\pi/2}={A}_{\phi},$ ${A}_{-\phi}=\overline{{A}_{\phi}}$;
\par
(3) $A_0=\frac8{27},$ $A_{\pm \pi/4}= 0$ and 
$0\le \re A_{\phi} \le \tfrac 8{27};$
\par
(4) $A_{\phi}$ is continuous in $\phi \in \mathbb{R}$ and is smooth in
$\phi \in \mathbb{R}\setminus \{\pi k/4 \,|\, k\in \mathbb{Z} \}.$ 
\par\noindent
The elliptic curve $\Pi_{A_{\phi},\phi}$ degenerates 
if and only if $\phi=\pi k/4$ with $k\in \mathbb{Z}.$ 
\subsection{Solutions for $0<|\phi|<\pi/4$ in \cite[Theorem 2]{Kapaev-3}}
\label{ssc2.1}
For $0<|\phi|<\pi/4$ and $A\in \mathcal{D}_0$, let
the primitive cycles $\mathbf{a}$ and $\mathbf{b}$ on $\Pi_{A,\phi}$
be as described on the upper sheet $\Pi_+$ in Figure \ref{cycles1}. 
(The cycles $\mathbf{a}$ and $\mathbf{b}$ are consistent with those defined in
\cite[Section 3]{Kapaev-3}.)
Then the Boutroux equations 
\begin{equation}\label{2.2}
\re \int_{\mathbf{a}} \frac{w(A, z)}{z} dz=
\re \int_{\mathbf{b}} \frac{w(A, z)}{z} dz=0
\end{equation}
admit a unique solution $A=A_{\phi}$, which means \cite[Theorem 1]{Kapaev-3}.
For $0<|\phi|<\pi/4$ the periods of $\Pi_{A_{\phi},\phi}$ along $\mathbf{a}$ and
$\mathbf{b}$ are given by
$$
\Omega_{\mathbf{a}}=\Omega_{\mathbf{a}}^{\phi} =\int_{\mathbf{a}}\frac{dz}
{w(A_{\phi},z)}, 
\quad
\Omega_{\mathbf{b}}=\Omega_{\mathbf{b}}^{\phi} =\int_{\mathbf{b}}\frac{dz}
{w(A_{\phi},z)}, 
$$
which satisfy $\im \Omega_{\mathbf{b}}/\Omega_{\mathbf{a}}>0.$
{\small
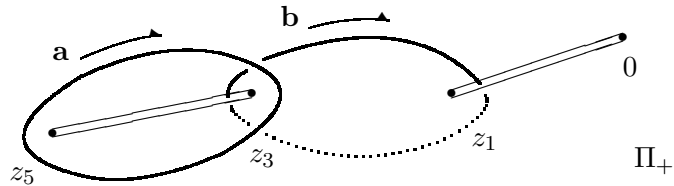
\begin{figure}[htb]
\begin{center}
\unitlength=0.75mm
\begin{picture}(70,40)(-40,-19)
 \put(-57.5,-15.2){\makebox{$z_5$}}
 \put(-15.5,-12.2){\makebox{$z_3$}}
 \put(23.5,-9.2){\makebox{$z_1$}}
 \put(50,3.2){\makebox{$0$}}

 \put(-50,6){\makebox{$\mathbf{a}$}}
 \put(-10,12){\makebox{$\mathbf{b}$}}

 \put(52,-13){\makebox{$\Pi_+$}}

\thinlines

 \qbezier(-45,6.2) (-37,9.8) (-31,10.3)
 \qbezier(-5,11.6) (3,13.8) (9,12.9)


 \put(-31,10.3){\vector(4,-1){0}}
 \put(9,12.9){\vector(3,-2){0}}
 \put(-50,-6.3){\line(5,1){35}}
 \put(-50,-7.7){\line(5,1){35}}
 \put(20,0.7){\line(3,1){30}}
 \put(20,-0.7){\line(3,1){30}}
\thicklines
 \put(-50,-7){\circle*{1.5}}
 \put(-15,0){\circle*{1.5}}
 \put(20,0){\circle*{1.5}}
 \put(50,10){\circle*{1.5}}
  \qbezier(-45,3) (-65,-9.5) (-45,-14.5)
  \qbezier(-20,-10.5) (0,1.5) (-20,7)
  \qbezier(-45,3) (-31, 9.5) (-20,7)
  \qbezier(-45,-14.5) (-34.5, -17) (-20,-10.5)

 \qbezier(-13, 6) (12.5,15.5) (25,1.8)
 \qbezier(-16, 4) (-19.5,2) (-19.2,-0.5)

 \qbezier[10](26.0,-0.3) (27.5,-3) (20,-7.5)
 \qbezier[25](-12.5,-7.3) (4,-14.8) (20,-7.5)
 \qbezier[7](-15.6,-5.4) (-19,-3) (-18.6,-2.4)

\end{picture}
\end{center}
\caption{Cycles $\mathbf{a},$ $\mathbf{b}$ on $\Pi_{A,\phi}=\Pi_+\cup\Pi_-$}
\label{cycles1}
\end{figure}
}
\par
Let $\mathrm{P}(u;A)$ denote the elliptic function defined by
$$
\mathrm{P}_{u}^2 =\mathrm{P}^4 +4e^{i\phi}\mathrm{P}^3 +4e^{2i\phi}
\mathrm{P}^2 +4 e^{3i\phi}A\mathrm{P}, \quad \mathrm{P}(0;A)=0, \quad
\text{i.e.} \,\,\,  \int^{\mathrm{P}(u;A)}_0 \frac{dz}{w(A,z)}=u.
$$
Note that $\mathrm{P}(u;A_{\phi})$ does not degenerate as long as
$\phi\not=k\pi/4,$ $k\in\mathbb{Z}.$ Then \cite[Theorem 2]{Kapaev-3} with
$n=0,$ $m=0,1$ may be described as follows.
\par
Let $y=y(\mathbf{s},x)$ denote the solution of P$_{\mathrm{IV}}$ labelled by
the monodromy data $\mathbf{s}=(s_1,s_2,s_3,s_4)\in \mathcal{M}_0(\alpha,\beta).$
\begin{thm}\label{thm2.1}
Suppose that $-\pi/4 <\phi<0$ and that $(1+s_1s_2)(1+s_2s_3)-1\not=0,$
$1+s_1s_2\not=0.$ Then 
\begin{align*}
& y(\mathbf{s},x)=e^{-i\phi}x \mathrm{P}(e^{i\phi}t+ \chi_{0-} +O(t^{-\delta});
A_{\phi}), \quad 2t=(e^{-i\phi}x)^2,
\\
& \chi_{0-}\equiv\frac{\Omega_{\mathbf{a}}}{2\pi i} \ln((1+s_1s_2)(1+s_2s_3)-1)
\\
&\phantom{----------}
-\frac{\Omega_{\mathbf{b}}}{2\pi i} \ln(1+s_1s_2)+ \frac{\Omega_{\mathbf{b}}}3
(\alpha-\beta) \quad \mod \Omega_{\mathbf{a}}\mathbb{Z} +\Omega_{\mathbf{b}}
\mathbb{Z}
\end{align*}
as $2t =(e^{-i\phi}x)^2 \to \infty$ through the cheese-like strip
$$
S_0(\phi,t_{\infty},\kappa_0,\delta_0)=\{x=e^{i\phi}\sqrt{2t}\,|\, 
\re t>t_{\infty}, \,\,\, |\im t|<\kappa_0 \} \setminus \bigcup_{\sigma\in
\mathcal{P}_{0-}} \{|x-\sigma|<\delta_0 \}
$$
with
$\mathcal{P}_{0-}=\{e^{i\phi}\sqrt{2t_{0-}} \,|\, e^{i\phi}t_{0-} +\chi_{0-} 
=\pm \tfrac 13
\Omega_{\mathbf{b}}+\Omega_{\mathbf{a}}\mathbb{Z} +\Omega_{\mathbf{b}}\mathbb
{Z} \}.$
Here $\delta$ is some positive number, $\kappa_0$ a given positive number,
$\delta_0$ a given small positive number, and $t_{\infty}=t_{\infty}(\kappa_0,
\delta_0)$ a sufficiently large positive number depending on $(\kappa_0,
\delta_0).$
\end{thm}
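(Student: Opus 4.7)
The plan is to implement the isomonodromy deformation scheme as in Kapaev \cite{Kapaev-1} and Kitaev \cite{Kitaev-2}, \cite{Kitaev-3}, adapted to the four-turning-point structure of system \eqref{1.1}. First I would introduce the Boutroux coordinate $2t=(e^{-i\phi}x)^2$ and rescale the spectral variable so that the leading symbol of \eqref{1.1} becomes governed by the polynomial $z(z-z_1)(z-z_3)(z-z_5)=w(A,z)^2$, with $A$ a free parameter. The Boutroux equations \eqref{2.2} (Proposition \ref{prop8.15}) then force the distinguished choice $A=A_\phi$, which for $-\pi/4<\phi<0$ is unique by property (1); this choice is exactly what makes the WKB phases $\int w(A_\phi,z)z^{-1}dz$ stay bounded as $\re t\to\infty$, permitting uniform WKB matching across cycles.

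Next I would carry out the direct monodromy step. With $A=A_\phi$ fixed I would draw the Stokes graph of the leading-order scalar equation (Section \ref{sc4}), construct normalised WKB solutions in each canonical sector $|\arg\xi+\pi/8-k\pi/4|<\pi/4$, and match them across turning points via Airy parametrices. Summing the resulting contributions along the cycles $\mathbf{a}$ and $\mathbf{b}$ yields key relations of the form
\begin{align*}
(1+s_1 s_2)(1+s_2 s_3)-1 &= (\text{ratio of }\vartheta\text{-values involving }\Omega_{\mathbf{a}}\text{ and a phase }\chi)(1+o(1)),\\
1+s_1 s_2 &= (\text{ratio of }\vartheta\text{-values involving }\Omega_{\mathbf{b}}\text{ and }\chi)(1+o(1)),
\end{align*}
as claimed in Propositions \ref{prop5.1} and \ref{prop5.2}. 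The formal monodromy factor $\xi^{\alpha-\beta}$ in \eqref{2.1} produces, via the residue of $w(A_\phi,z)/z$ at $z=0$, the term $\frac{\Omega_{\mathbf{b}}}{3}(\alpha-\beta)$ appearing in $\chi_{0-}$; I would track this exponent carefully through each WKB connection formula, which is where the non-degeneracy hypotheses $1+s_1s_2\neq 0$ and $(1+s_1s_2)(1+s_2s_3)-1\neq 0$ first become essential, since they prevent the denominators of the matching matrices from vanishing.

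The inverse monodromy step then inverts the two key relations for the unknown phase $\chi$ modulo $\Omega_{\mathbf{a}}\Z+\Omega_{\mathbf{b}}\Z$ using the $\vartheta$-function asymptotics of Section \ref{sc6}, giving exactly the expression claimed for $\chi_{0-}$. Reading $y=\mathrm{u}\mathrm{v}$ off the WKB form of $\Psi$ near a distinguished turning point then produces the formal elliptic expression $y\sim e^{-i\phi}x\,\mathrm{P}(e^{i\phi}t+\chi_{0-};A_\phi)$. The special properties of $\mathrm{P}(u;A_\phi)$ collected in Propositions \ref{prop7.4} and \ref{prop7.5} are needed here to identify the right translate of the elliptic function that actually appears.

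The justification that this formal expression is the solution labelled by $(s_1,\ldots,s_4)$ would follow Kitaev \cite{Kitaev-3} and \cite{Kitaev-1}: one constructs an exact isomonodromic $\Psi$ with the prescribed Stokes data via a small-norm Riemann--Hilbert problem whose jump lies off a parametrix built from the formal WKB ansatz, and the correction function $B_\phi(t)$ of Section \ref{sc5} enters as the crucial ingredient that makes this RH problem uniformly solvable on $S_0(\phi,t_\infty,\kappa_0,\delta_0)$, except near the discrete pole set $\mathcal{P}_{0-}$ where the elliptic function itself blows up. The main obstacle I anticipate is uniformity in $t$ of the WKB matching and of the theta-function inversion over the cheese-like strip: as $x$ varies, the four turning points $0,z_1,z_3,z_5$ move, the Stokes graph undergoes local reorganisations, and one must ensure that the $O(t^{-\delta})$ error survives across all these structural changes while avoiding only the explicitly excised disks of radius $\delta_0$ around $\mathcal{P}_{0-}$. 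This is where the rigidity conferred by the Boutroux condition $\re\int_{\mathbf{c}} w(A_\phi,z)z^{-1}dz=0$ on every cycle $\mathbf{c}$ is decisive, and verifying that Kitaev's justification scheme transfers to the P$_{\mathrm{IV}}$ setting with its particular coalescence pattern at $\phi=-\pi/4$ and $\phi=0$ is the technically heaviest portion of the argument.
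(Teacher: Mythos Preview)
Your overall architecture matches the paper's: Boutroux rescaling, WKB/Airy matching along the Stokes graph to obtain the key relations of Propositions \ref{prop5.1}--\ref{prop5.2}, $\vartheta$-function asymptotics, inversion for the phase, and Kitaev's justification. Two technical points, however, are misidentified in your sketch, and they are exactly the steps the paper flags as requiring devices peculiar to P$_{\mathrm{IV}}$.

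First, the term $\tfrac{\Omega_{\mathbf{b}}}{3}(\alpha-\beta)$ in $\chi_{0-}$ does not arise from a residue of $w(A_\phi,z)/z$ at $z=0$. It comes from the constants $C_{\alpha,\beta}(\mathbf{a},\mathbf{b})=\tfrac12(\alpha-\beta)\int_{\mathbf{a},\mathbf{b}} z\,dz/w(A_\phi,z)$ that appear in Proposition \ref{prop6.1}, and the specific coefficient $1/3$ is obtained in Proposition \ref{prop7.2} by integrating the $\vartheta$-representation \eqref{7.3} of $\mathrm{P}(u;A_\phi)$ over a period. That computation depends on knowing that the poles of $\mathrm{P}(u;A_\phi)$ sit at $\pm\tfrac13\Omega_{\mathbf{b}}$ (Proposition \ref{prop7.5}), which in turn rests on the unexpected symmetry of Proposition \ref{prop7.4}: if $\eta$ solves $(p')^2/p - p(p+2e^{i\phi})^2=4e^{3i\phi}A$, then so do $\chi_\pm=\mp\tfrac12\eta'/\eta-\tfrac12\eta-e^{i\phi}$. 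This is not merely ``identifying the right translate''; it is how the $1/3$ is produced.

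Second, the step you describe as ``reading $y=\mathrm{uv}$ off the WKB form of $\Psi$ near a distinguished turning point'' is not how $\psi$ is actually recovered. The direct monodromy computation yields only the \emph{difference} $\int_0^{z_+^+}-\int_0^{z_-^+}$ of Abel integrals (equation \eqref{7.1}), where $z_\pm$ are the zeros of $b_1\mp ib_2$; one also needs the \emph{sum} in order to solve for $z_\pm$ separately and then use $\psi=-z_+-z_--2e^{i\phi}$. The paper obtains the sum by passing to the Weierstrass model, checking via the algebraic relations among $z_\pm,\psi$ that $\wp(u+v)=\tfrac13 e^{2i\phi}+O(t^{-1})$ by the addition theorem, and hence that the sum equals $\Omega_0\in\{\pm\tfrac13\Omega_{\mathbf{b}}\}$. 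Only after \eqref{7.2} is in hand does the $\vartheta$-expansion collapse the combination $-\mathrm{P}(\cdot+\tfrac12(\Omega_{\mathbf{b}}+\Omega_0))-\mathrm{P}(\cdot+\tfrac12(\Omega_{\mathbf{b}}-\Omega_0))-2e^{i\phi}$ to a single $\mathrm{P}$. Without this addition-theorem device you cannot pass from the monodromy relations to the asymptotic for $\psi$, so this is the genuine missing idea in your plan.
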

\begin{thm}\label{thm2.2}
Suppose that $0<\phi<\pi/4 $ and that $(1+s_1s_2)(1+s_2s_3)-1\not=0,$
$1+s_2s_3\not=0.$ Then 
\begin{align*}
& y(\mathbf{s},x)=e^{-i\phi}x \mathrm{P}(e^{i\phi}t+ \chi_{0+} +O(t^{-\delta});
A_{\phi}), \quad 2t=(e^{-i\phi}x)^2,
\\
& \chi_{0+}\equiv\frac{\Omega_{\mathbf{a}}}{2\pi i} \ln ((1+s_1s_2)(1+s_2s_3)-1)
\\
&\phantom{----------}
+\frac{\Omega_{\mathbf{b}}}{2\pi i} \ln(1+s_2s_3)+ \frac{\Omega_{\mathbf{b}}}3
(\alpha-\beta) \quad \mod \Omega_{\mathbf{a}}\mathbb{Z} +\Omega_{\mathbf{b}}
\mathbb{Z}
\end{align*}
as $2t =(e^{-i\phi}x)^2 \to \infty$ through the cheese-like strip
$$
S_1(\phi,t_{\infty},\kappa_0,\delta_0)=\{x=e^{i\phi}\sqrt{2t}\,|\, 
\re t>t_{\infty}, \,\,\, |\im t|<\kappa_0 \} \setminus \bigcup_{\sigma\in
\mathcal{P}_{0+}} \{|x-\sigma|<\delta_0 \}
$$
with
$\mathcal{P}_{0+}=\{e^{i\phi}\sqrt{2t_{0+}} \,|\, e^{i\phi}t_{0+} +\chi_{0+} 
=\pm \tfrac 13
\Omega_{\mathbf{b}}+\Omega_{\mathbf{a}}\mathbb{Z} +\Omega_{\mathbf{b}}\mathbb
{Z} \}.$
\end{thm}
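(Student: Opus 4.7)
The plan is to follow the isomonodromy deformation method along the lines of Kapaev \cite{Kapaev-1} and Kitaev \cite{Kitaev-3}, running in parallel to the proof of Theorem \ref{thm2.1} but adapted to the sector $0<\phi<\pi/4$. First I would introduce the rescaling tied to $2t=(e^{-i\phi}x)^2$ that brings the isomonodromy system \eqref{1.1} into the semiclassical form \eqref{3.1} studied in Section \ref{sc3}, so that the large-parameter WKB analysis of Section \ref{sc4} becomes applicable. The Boutroux equations \eqref{2.2}, whose unique solvability by $A=A_{\phi}$ is supplied by Proposition \ref{prop8.15} and Corollary \ref{cor8.16}, guarantee that the real parts of all line integrals $\int_{\mathbf{c}} w(A_{\phi},z)z^{-1}\,dz$ over cycles remain bounded in $t$, which is the prerequisite for a globally valid WKB matching across the entire cheese-like strip $S_1(\phi,t_\infty,\kappa_0,\delta_0)$.

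Next I would solve the direct monodromy problem. The Stokes graph for $0<\phi<\pi/4$ has the four turning points $0,z_1,z_3,z_5$ arranged as in Figure \ref{cycles1}, but with a topology of canonical domains distinct from the $\phi<0$ case; in particular the labelling of the infinity sectors $k=1,\dots,4$ that feed into each canonical domain is shifted, which is precisely what exchanges the roles of $(1+s_1s_2)$ and $(1+s_2s_3)$. Following Propositions \ref{prop5.1} and \ref{prop5.2}, the WKB connection formulas between adjacent canonical domains give matrix relations expressing products $S_k S_{k+1}\cdots$ of Stokes matrices in terms of exponentials of cycle integrals over $\mathbf{a}$ and $\mathbf{b}$, together with the correction function $B_{\phi}(t)$ from \eqref{5.1}.

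Using the $\vartheta$-function representation of $\mathrm{P}(u;A_{\phi})$ from Section \ref{sc6}, I would expand the cycle integrals in their quasi-periodic leading form in $t$ and compare with the semi-cyclic relation $\mathcal{M}_0(\alpha,\beta)$. Two of the four scalar equations coming from matching trace and off-diagonal entries become trivial modulo the surface relation; the remaining two pin down the shift uniquely, yielding
\begin{align*}
\chi_{0+}\equiv \frac{\Omega_{\mathbf{a}}}{2\pi i}\ln\bigl((1+s_1s_2)(1+s_2s_3)-1\bigr) +\frac{\Omega_{\mathbf{b}}}{2\pi i}\ln(1+s_2s_3) +\frac{\Omega_{\mathbf{b}}}{3}(\alpha-\beta)
\end{align*}
modulo $\Omega_{\mathbf{a}}\mathbb{Z}+\Omega_{\mathbf{b}}\mathbb{Z}$. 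The sign of the $\Omega_{\mathbf{b}}$-term (opposite to that in Theorem \ref{thm2.1}) traces back to the orientation of the specific Stokes curve which becomes dominant when $\phi$ crosses from negative to positive values. The hypotheses $(1+s_1s_2)(1+s_2s_3)-1\neq 0$ and $1+s_2s_3\neq 0$ are exactly what is required for both logarithms to be well-defined; the discs around $\mathcal{P}_{0+}$ are excised because $\mathrm{P}(u;A_{\phi})$ has its poles at shifts of $\pm\tfrac13\Omega_{\mathbf{b}}$ from the lattice $\Omega_{\mathbf{a}}\mathbb{Z}+\Omega_{\mathbf{b}}\mathbb{Z}$ (Propositions \ref{prop7.4} and \ref{prop7.5}), and uniform asymptotics must avoid these singularities of the approximating elliptic function.

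The last step is to pass from the formal elliptic ansatz to a genuine solution of P$_{\mathrm{IV}}$ with the prescribed monodromy $\mathbf{s}$ by solving the inverse monodromy problem of Section \ref{sc7}; here one shows that the map sending $\chi_{0+}$ to the monodromy data is locally invertible away from its ramification points, and justifies the $O(t^{-\delta})$ remainder using the correction function $B_{\phi}(t)$ exactly as in \cite[Section 3]{Kitaev-3} together with \cite{Kitaev-1}. The main obstacle I anticipate is the uniform control of the error along the whole strip $S_1(\phi,t_\infty,\kappa_0,\delta_0)$: one must simultaneously track the motion of the turning points $z_j$ as $\im t$ varies, handle the topological transitions of the Stokes graph that occur when $\im t$ approaches the extremal values $\pm\kappa_0$ (requiring the local structure near coalescing turning points established in Section \ref{sc8}), and verify that the discs around $\mathcal{P}_{0+}$ are large enough in $x$-scale to absorb the discrepancy between the lattice of formal poles and the true poles of the approximating elliptic function.
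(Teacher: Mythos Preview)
Your outline follows the paper's strategy at the architectural level---WKB analysis on the rescaled system \eqref{3.1}, the direct monodromy problem (Proposition~\ref{prop5.2} for $0<\phi<\pi/4$), $\vartheta$-function asymptotics, and Kitaev's justification scheme---so the overall approach is correct. Two points deserve correction.

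First, a misconception: the Stokes graph of Section~\ref{sc4} is determined by $\phi$ alone (through $A_\phi$ and the limit turning points $\lambda_j(\infty)=\sqrt{z_j}$ in the $\lambda$-plane, not the $z$-plane), and under supposition \eqref{5.1} the actual turning points satisfy $\lambda_j(t)-\lambda_j(\infty)=O(t^{-1})$. There are no ``topological transitions of the Stokes graph'' as $\im t$ varies within the strip; the coalescence analysis of Section~\ref{ssc8.4} concerns the limits $\phi\to 0$ or $\phi\to\pm\pi/4$, not variation in $t$. The excised discs around $\mathcal{P}_{0+}$ are needed because $|\psi|+|\psi|^{-1}$ blows up there, violating the hypotheses of Proposition~\ref{prop4.2}, not because the Stokes geometry changes.

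Second, and more substantively, you gloss over the step that actually extracts $\psi(t)$ from the key relations. Proposition~\ref{prop6.2} expresses the monodromy in terms of integrals of $W(z)$ built from $z_\pm=\mp\tfrac12 e^{-i\phi}\psi_t\psi^{-1}-\tfrac12\psi-e^{i\phi}+O(t^{-1})$, which depend on the pair $(\psi,\psi_t)$, not on $\psi$ alone. The paper's device (Section~\ref{ssc7.1}) is to pass to the $\wp$-parametrisation, compute $\wp(u)+\wp(v)$ and $\wp'(u)-\wp'(v)$ explicitly from the algebraic relations among $z_\pm$ and $\psi$, and apply the addition theorem to obtain $\wp(u+v)=\tfrac13 e^{2i\phi}+O(t^{-1})$, so that $u+v$ equals a fixed constant $\Omega_0\in\{\pm\tfrac13\Omega_{\mathbf{b}}\}$. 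Only then can one solve \eqref{7.1} for $u$ and $v$ separately and recover $\psi=-z_+-z_--2e^{i\phi}$ via the $\vartheta$-expansion \eqref{7.3}. Proposition~\ref{prop7.4} is exactly the algebraic identity underpinning this manoeuvre; its role is in the \emph{derivation} of $\psi(t)$, not merely in locating the poles of $\mathrm{P}(u;A_\phi)$ as you suggest. Without this step your ``expand the cycle integrals and compare with $\mathcal{M}_0(\alpha,\beta)$'' does not close: you have two transcendental relations for two unknown functions $z_+(t),z_-(t)$ but no mechanism to collapse them into a single elliptic expression for $\psi$.
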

\begin{rem}\label{rem2.1}
If $A_{\phi}\not=0, \tfrac 8{27}$, then $\mathrm{P}(\tau;A_{\phi})=
 e^{3i\phi}A_{\phi}(\wp(\tau;g_2,g_3)-\frac 13 e^{2i\phi})^{-1}$ with 
$g_2=-4e^{4i\phi}(A_{\phi}-\frac 13)$,
$g_3= e^{6i\phi}(-A_{\phi}^2+ \frac 43 A_{\phi} -\frac 8{27}),$
where $\wp(\tau;g_2,g_3)$ is the Weierstrass pe-function satisfying
$\wp_{\tau}^2=4\wp^3-g_2\wp -g_3.$ Furthermore, $\wp_{\tau}=-e^{3i\phi}A_{\phi}
\mathrm{P}^{-2} w(A_{\phi},\mathrm{P}).$
\end{rem}
\begin{rem}\label{rem2.3}
The solutions $y(\mathbf{s},x)$ in Theorems \ref{thm2.1} and \ref{thm2.2},
and the correction function $B_{\phi}(t)$ in Proposition \ref{prop7.3} are
parametrised by $(\mathfrak{s}_{12},\mathfrak{s}_{23})=(s_1s_2, s_2s_3)$.
The variables $\mathfrak{s}_{12},$ $\mathfrak{s}_{23}$, and
$\mathfrak{s}_{34}=s_3s_4,$ $\mathfrak{s}_{41}=s_4s_1$ are invariant under the
action $[c]$ on $\mathcal{M}_0(\alpha,\beta)$, and hence the family of 
the orbits generated by $[c]$ on 
$\mathcal{M}_0(\alpha,\beta)$ may be identified with the two-dimensional 
surface:
\begin{align*}
\tag*{$\mathcal{M}^*_0(\alpha,\beta):$}    
&  \bigl( (1+\mathfrak{s}_{12})(1+\mathfrak{s}_{34})+      
\mathfrak{s}_{41} \bigr)e^{i\pi(\beta-\alpha)} 
-(1+ \mathfrak{s}_{23})e^{i\pi(\alpha-\beta)} +2i \sin \pi\alpha  =0,
\\
& \mathfrak{s}_{12}\mathfrak{s}_{34}-\mathfrak{s}_{23}\mathfrak{s}_{41}=0,
\end{align*}
whose points parametrise solutions of P$_{\mathrm{IV}}$ given in Theorems
\ref{thm2.1}, \ref{thm2.2}, and \cite[Theorems 2, 4, 5 and 6]{Kapaev-3} 
with $n=0$. Singular points on this surface is given by Proposition 
\ref{prop3.2}. 
It is easy to see that a point $(\mathfrak{s}_{12},\mathfrak{s}_{23},
\mathfrak{s}_{34}) \in \mathcal{M}^*_0(\alpha,\beta)$ satisfying the condition
of Theorem \ref{thm2.1} or \ref{thm2.2} is nonsingular.
\end{rem}
\begin{rem}\label{rem2.4}
An inconsistency appears between the signs of $\ln((1+s_1s_2)(1+s_2s_3)-1)$
in \cite[Theorem 2]{Kapaev-3} and 
in our Theorems \ref{thm2.1} and \ref{thm2.2}.
The agreements of the trigonometric asymptotics with
\cite[Theorem 4]{Kapaev-3} and \cite[Theorem 4.1]{I-Kapaev} discussed 
in Section \ref{ssc2.3} support the correctness of the sign $+$ 
in our theorems.
\end{rem}
\subsection{Alternative expression of solutions}\label{ssc2.2}
Let the elliptic curve $\Pi_{A_{\phi}}^*=\Pi^*_+ \cup\Pi_-^*$: 
$v^2=v(A_{\phi},\zeta)^2=e^{-4i\phi} w(A_{\phi},e^{i\phi}\zeta)^2$ be as defined
in Section \ref{sc8}, and the cycles $\mathbf{a}^*$ and $\mathbf{b}^*$ 
on $\Pi^*_{A_{\phi}}$ as drawn in Figure \ref{cycles2}.
These elliptic curve and cycles, which depend on $A_{\phi}$ only, are also
the images of $\Pi_{A_{\phi},\phi} =\Pi_+\cup \Pi_-$ and
$\mathbf{a}$ and $\mathbf{b}$ under the mapping $z=e^{i\phi}\zeta$. 
Then the Boutroux equation \eqref{2.2} is written in the form
\begin{equation}\label{2.3}
\re\, e^{2i\phi}\int_{\mathbf{a}_*} \frac{v(A_{\phi},\zeta)}{\zeta}d\zeta=
\re\, e^{2i\phi}\int_{\mathbf{b}_*} \frac{v(A_{\phi},\zeta)}{\zeta}d\zeta=0,
\end{equation}
in which 
$$
v(A_{\phi},\zeta):=e^{-2i\phi}w(A_{\phi},e^{i\phi}\zeta)
=\sqrt{\zeta^4+4\zeta^3 +4\zeta^2 +4A_{\phi}\zeta},
$$
and for each $\phi \in \mathbb{R},$ $A_{\phi}$ is a unique solution of
\eqref{2.3} as in Proposition \ref{prop8.15}.
Let $\mathfrak{P}(u)=\mathfrak{P}(u,A_{\phi})$ be the elliptic function defined
by
$$
\int^{\mathfrak{P}(u,A_{\phi})}_0 \frac{d\zeta}{v(A_{\phi},\zeta)} =u. 
$$
Then it is easy to see that $\mathrm{P}(e^{-i\phi}u;A_{\phi})
=e^{i\phi}\mathfrak{P}(u;A_{\phi})$, and that
$$
\Omega_{\mathbf{a}_*}^*=\int_{\mathbf{a}_*} \frac{d\zeta}{v(A_{\phi},\zeta)}
=e^{i\phi}\Omega_{\mathbf{a}}, \quad
\Omega_{\mathbf{b}_*}^*=\int_{\mathbf{b}_*} \frac{d\zeta}{v(A_{\phi},\zeta)}
=e^{i\phi}\Omega_{\mathbf{b}}
$$
are the periods of $\Pi_{A_{\phi}}^*$.
Then the solutions given above are also written as follows.
\begin{cor}\label{cor2.3}
Suppose that $0<|\phi|<\pi/4$, and that $(1+s_1s_2)(1+s_2s_3)-1\not=0$,
$\eta_{\phi}(\mathbf{s}) \not=0,$ where $\eta_{\phi}(\mathbf{s})
=1+s_1s_2$ if $-\pi/4<\phi<0$, and $= (1+s_2s_3)^{-1}$ if $0<\phi<\pi/4.$
Then
\begin{align*}
  &y(\mathbf{s}, x)=x \mathfrak{P}(\tfrac 12 x^2  +\chi_0^* +O(x^{-\delta});
A_{\phi}),
\\
 &\chi_0^* \equiv \frac{\Omega_{\mathbf{a}_*}^*}{2\pi i}\ln ((1+s_1s_2) (1+s_2
s_3)-1)
\\
&\phantom{----------}
 -\frac{\Omega_{\mathbf{b}_*}^*}{2\pi i}\ln \eta_{\phi}(\mathbf{s}) 
+\frac{\Omega_{\mathbf{b}_*}^*}{3}(\alpha-\beta)  \mod \Omega_{\mathbf{a}_*}
^* \mathbb{Z} + \Omega_{\mathbf{b}_*}^*\mathbb{Z}
\end{align*}
as $x\to \infty$ through the cheese-like strip
$$
S_0^*(\phi,t_{\infty},\kappa_0,\delta_0)=\{x=e^{i\phi}\sqrt{2t}\,|\, 
\re t>t_{\infty}, \,\,\, |\im t|<\kappa_0 \} \setminus \bigcup_{\sigma\in
\mathcal{P}_0^*} \{|x-\sigma|<\delta_0 \}
$$
with
$\mathcal{P}_0^*=\{e^{i\phi}\sqrt{2t_0^*} \,|\, e^{2i\phi}t^*_0 +\chi^*_0 
=\pm \tfrac 13
\Omega^*_{\mathbf{b}_*}+\Omega^*_{\mathbf{a}_*}\mathbb{Z} 
+\Omega^*_{\mathbf{b}_*}\mathbb{Z} \}.$
\end{cor}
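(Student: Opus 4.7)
The plan is to deduce Corollary \ref{cor2.3} directly from Theorems \ref{thm2.1} and \ref{thm2.2} by applying the rescaling $z = e^{i\phi}\zeta$ that carries $\Pi_{A_\phi,\phi}$ to the $\phi$-independent curve $\Pi_{A_\phi}^*$. The three key ingredients, all recorded in the paragraph immediately preceding the corollary, are
\begin{equation*}
\mathrm{P}(e^{-i\phi}u;A_\phi) = e^{i\phi}\mathfrak{P}(u;A_\phi),\qquad
\Omega_{\mathbf{a}_*}^* = e^{i\phi}\Omega_{\mathbf{a}},\qquad
\Omega_{\mathbf{b}_*}^* = e^{i\phi}\Omega_{\mathbf{b}},
\end{equation*}
which together say that $e^{i\phi}$ acts as a uniform scaling on the defining integral, on a fundamental domain, and on the whole period lattice.

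First I would take the conclusion of Theorem \ref{thm2.1} (when $-\pi/4<\phi<0$) or Theorem \ref{thm2.2} (when $0<\phi<\pi/4$) and use $2t = (e^{-i\phi}x)^2$, i.e.\ $e^{i\phi}t = \tfrac{1}{2}e^{-i\phi}x^2$, to rewrite the argument of $\mathrm{P}$ as
\begin{equation*}
e^{i\phi}t + \chi_{0\pm} + O(t^{-\delta}) = e^{-i\phi}\bigl(\tfrac{1}{2}x^2 + e^{i\phi}\chi_{0\pm} + O(x^{-2\delta})\bigr).
\end{equation*}
The first identity above then converts $\mathrm{P}(\,\cdot\,;A_\phi)$ into $e^{i\phi}\mathfrak{P}(\,\cdot\,;A_\phi)$, and the $e^{i\phi}$ cancels against the prefactor $e^{-i\phi}x$ in the statement of the theorems, yielding $y(\mathbf{s},x) = x\mathfrak{P}(\tfrac{1}{2}x^2 + e^{i\phi}\chi_{0\pm} + O(x^{-2\delta});A_\phi)$.

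It then remains to verify that $e^{i\phi}\chi_{0\pm}$ coincides with the expression $\chi_0^*$ modulo $\Omega_{\mathbf{a}_*}^*\Z + \Omega_{\mathbf{b}_*}^*\Z = e^{i\phi}(\Omega_{\mathbf{a}}\Z + \Omega_{\mathbf{b}}\Z)$, and that the exceptional set $\mathcal{P}_0^*$ matches $\mathcal{P}_{0\pm}$. Multiplying the explicit formula for $\chi_{0-}$ by $e^{i\phi}$ and applying the period identities handles the case $\phi<0$ immediately, since there $\eta_\phi(\mathbf{s}) = 1+s_1s_2$. For $\phi>0$ the relation $\ln(1+s_2s_3) = -\ln\eta_\phi(\mathbf{s})$ accounts for the apparent sign flip in front of $\Omega_{\mathbf{b}_*}^*/(2\pi i)$. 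Multiplying the congruence defining $\mathcal{P}_{0\pm}$ by $e^{i\phi}$ throughout reproduces exactly the congruence defining $\mathcal{P}_0^*$, while the $x$-description of the cheese-like strip is literally unchanged.

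The only genuine obstacle is the bookkeeping: tracking the single factor $e^{i\phi}$ through the elliptic function, its period lattice, the two logarithms, and the $\pm\tfrac{1}{3}\Omega_{\mathbf{b}_*}^*$ shift appearing in $\mathcal{P}_0^*$, with the extra care that for $\phi>0$ the convention $\eta_\phi(\mathbf{s}) = (1+s_2s_3)^{-1}$ flips one sign in $\chi_0^*$ relative to $\chi_{0+}$. Once this is organised, Corollary \ref{cor2.3} follows from Theorems \ref{thm2.1} and \ref{thm2.2} without any further analytic input.
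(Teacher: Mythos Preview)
Your proposal is correct and follows exactly the route the paper intends: the paper records the three identities $\mathrm{P}(e^{-i\phi}u;A_\phi)=e^{i\phi}\mathfrak{P}(u;A_\phi)$ and $\Omega_{\mathbf{a}_*,\mathbf{b}_*}^*=e^{i\phi}\Omega_{\mathbf{a},\mathbf{b}}$ immediately before the corollary and then simply states the corollary as the resulting rewriting of Theorems \ref{thm2.1} and \ref{thm2.2}. Your explicit bookkeeping --- factoring $e^{-i\phi}$ from the argument, matching $e^{i\phi}\chi_{0\pm}$ with $\chi_0^*$ (including the sign flip via $\eta_\phi(\mathbf{s})=(1+s_2s_3)^{-1}$ for $\phi>0$), and multiplying the pole congruence by $e^{i\phi}$ --- is precisely what this rewriting amounts to.
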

The dependence on $\phi$ of $v(A_{\phi},\zeta)$ is on $A_{\phi}$ only
such that $A_{\phi+\pi/2}=A_{\phi}$, and hence
so is that of $\Omega_{\mathbf{a}_*}^*$, 
$\Omega_{\mathbf{b}_*}^*$ and $\mathfrak{P}(u,A_{\phi})$. 
Solutions in general directions are given as follows.
\begin{thm}\label{thm2.4}
For any $n \in \mathbf{Z}$, let $\mathbf{s}_n=(s_{1+n},s_{2+n},s_{3+n},s_{4+n})$.
Suppose that $0<|\phi-\pi n/2|<\pi/4$, and that $(1+s_{1+n}s_{2+n})(1+s_{2+n}
s_{3+n})-1 \not=0,$ $\eta_{\phi}(\mathbf{s}_{n}) \not=0,$ where 
$\eta_{\phi}(\mathbf{s}_{n})
=1+s_{1+n}s_{2+n}$ if $-\pi/4<\phi-\pi n/2<0$, and $=(1+s_{2+n}s_{3+n})^{-1}$ 
if $0<\phi-\pi n/2<\pi/4.$ Then $\mathrm{P}_{\mathrm{IV}}$ admits a solution
represented as follows$:$
\begin{align*}
 &y(\mathbf{s}_n, x)=x \mathfrak{P}(\tfrac 12 x^2  +\chi_0^n +O(x^{-\delta});
A_{\phi}),
\\
 &\chi_0^n \equiv \frac{\Omega_{\mathbf{a}_*}^*}{2\pi i}
\ln ((1+s_{1+n}s_{2+n}) (1+s_{2+n}s_{3+n})-1)
\\
& \phantom{----------}
 -\frac{\Omega_{\mathbf{b}_*}^*}{2\pi i}\ln \eta_{\phi}(\mathbf{s}_{n}) 
+\frac{\Omega_{\mathbf{b}_*}^*}{3}(\alpha-\beta)  \mod \Omega_{\mathbf{a}_*}
^* \mathbb{Z} + \Omega_{\mathbf{b}_*}^*\mathbb{Z}
\end{align*}
as $x\to \infty$ through the cheese-like strip
$$
S_0^n(\phi,t_{\infty},\kappa_0,\delta_0)=\{x=e^{i\phi}\sqrt{2t}\,|\, 
\re t>t_{\infty}, \,\,\, |\im t|<\kappa_0 \} \setminus \bigcup_{\sigma\in
\mathcal{P}_0^n} \{|x-\sigma|<\delta_0 \}
$$
with
$\mathcal{P}_0^n=\{e^{i\phi}\sqrt{2t_0^n} \,|\, e^{2i\phi}t^n_0 +\chi^n_0 
=\pm \tfrac 13
\Omega^*_{\mathbf{b}_*}+\Omega^*_{\mathbf{a}_*}\mathbb{Z} 
+\Omega^*_{\mathbf{b}_*}\mathbb{Z} \}.$
\end{thm}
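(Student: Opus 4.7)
The plan is to reduce Theorem \ref{thm2.4} to Corollary \ref{cor2.3} by exploiting the $\pi/2$-periodicity $A_{\phi+\pi/2}=A_{\phi}$ from property~(2), together with the cyclic shift structure of the Stokes data, $s_{k+4}=-s_ke^{(-1)^k 2\pi i(\alpha-\beta)}$. Writing $\phi=\phi'+\pi n/2$ with $0<|\phi'|<\pi/4$, the periodicity implies that $v(A_{\phi},\zeta)$, the cycles $\mathbf{a}_*,\mathbf{b}_*$, the periods $\Omega^*_{\mathbf{a}_*},\Omega^*_{\mathbf{b}_*}$, and the elliptic function $\mathfrak{P}(u,A_{\phi})$ all coincide with their counterparts at $\phi'$. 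Consequently, only the Stokes labels need to be reassigned when the direction $\arg x$ is rotated by $\pi n/2$; the underlying elliptic geometry is unchanged.

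The main technical step is to rerun the WKB and direct monodromy analyses of Sections~\ref{sc4}--\ref{sc6} in the sector $|\phi-\pi n/2|<\pi/4$, where the Stokes matrices $S_{1+n},\ldots,S_{4+n}$ and the semi-cyclic relation $\mathcal{M}_n(\alpha,\beta)$ (both already introduced) play the role previously taken by $S_1,\ldots,S_4$ and $\mathcal{M}_0(\alpha,\beta)$. The turning points of system~\eqref{1.1}, the Stokes graph in the $\xi$-plane, and the canonical solutions $\Psi_k^{\infty}$ transform equivariantly under a rotation of $\arg x$ by $\pi/2$, merely permuting the sector labels in $\xi$ by the appropriate amount. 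As a result, the key relations of Propositions~\ref{prop5.1} and~\ref{prop5.2} are reproduced verbatim with each $s_j$ replaced by $s_{j+n}$, and the correction function $B_{\phi}(t)$ of Proposition~\ref{prop7.3} is reproduced accordingly in terms of the shifted invariants $(\mathfrak{s}_{12},\mathfrak{s}_{23})$ constructed from $\mathbf{s}_n$.

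Feeding these shifted key relations into the $\vartheta$-function analysis of Section~\ref{sc6} and into the inverse monodromy procedure of Section~\ref{sc7}, together with the justification along the lines of Kitaev~\cite{Kitaev-3,Kitaev-1}, produces a solution of P$_{\mathrm{IV}}$ with the stated elliptic asymptotic, the phase shift $\chi_0^n$ being obtained from $\chi_0^*$ of Corollary~\ref{cor2.3} by the substitution $\mathbf{s}\mapsto\mathbf{s}_n$. The main obstacle will be in the second step: one must verify that, under the rotation $\phi\mapsto\phi+\pi/2$, the choice of branch cuts $[z_1,0]$ and $[z_5,z_3]$ on $\Pi_{A_{\phi},\phi}$, the orientation of the cycles $\mathbf{a}$ and $\mathbf{b}$, the labelling of turning points, and the branches of the logarithms defining $\chi_0^n$ modulo $\Omega^*_{\mathbf{a}_*}\mathbb{Z}+\Omega^*_{\mathbf{b}_*}\mathbb{Z}$ are all transported consistently with the cyclic shift of the Stokes indices, so that the two cases $-\pi/4<\phi-\pi n/2<0$ and $0<\phi-\pi n/2<\pi/4$ (reflected in the definition of $\eta_{\phi}(\mathbf{s}_n)$) both inherit the correct sign conventions from Theorems~\ref{thm2.1} and~\ref{thm2.2}. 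Once this equivariance is established, the formula in Corollary~\ref{cor2.3} applied to the rotated configuration immediately yields the asymptotic representation of Theorem~\ref{thm2.4}.
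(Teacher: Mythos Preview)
Your overall strategy---reduce to Corollary~\ref{cor2.3} via the $\pi/2$-periodicity of $A_\phi$ and a shift of the Stokes indices---matches the paper's, but your execution is less direct than what the paper actually does. You propose to \emph{rerun} the WKB calculation of Sections~\ref{sc4}--\ref{sc6} in the rotated sector and then verify that all the branch, cycle, and orientation conventions line up; the paper avoids this entirely. Instead it writes down an explicit change of variables $\phi=\tfrac{\pi n}{2}+\tilde\phi$, $x=e^{i\pi n/2}\tilde x$, $\xi=e^{i\pi n/4}\tilde\xi$, $(\mathrm u,\mathrm v)\mapsto e^{i\pi n/4}(\tilde{\mathrm u}_{(n)},\tilde{\mathrm v}_{(n)})$ (with a swap of $\tilde{\mathrm u},\tilde{\mathrm v}$ for odd $n$), $(\alpha,\beta)=e^{i\pi n}(\tilde\alpha,\tilde\beta)$, which carries the isomonodromy system~\eqref{1.1} to a $\sigma_2^n$-conjugate copy of itself in the new variables. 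From the transformation of the canonical solutions one reads off directly
\[
\tilde S_k=\sigma_2^n\,e^{(\alpha-\beta)(i\pi n/4)\sigma_3}\,S_{k+n}\,e^{-(\alpha-\beta)(i\pi n/4)\sigma_3}\,\sigma_2^n,
\]
and hence $\tilde s_k\tilde s_{k+1}=s_{k+n}s_{k+1+n}$. Corollary~\ref{cor2.3} then applies verbatim to the tilded system with $0<|\tilde\phi|<\pi/4$, and translating back gives Theorem~\ref{thm2.4}. The ``main obstacle'' you flag---the consistency of cuts, cycles, signs and branches under rotation---never arises, because the explicit substitution transports the entire analysis at once rather than asking you to re-establish it piece by piece.
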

\begin{rem}\label{rem2.5}
By the single-valuedness on $\mathbb{C}$, 
$y(\mathbf{s}_n,x)$ coincides with $y(\mathbf{s}_{n+4},x)$. 
Indeed $s_ks_{k+1}=s_{k+4}s_{k+5}$ for every $k\in\mathbb{Z}.$
\end{rem}
\subsection{Observation related to trigonometric asymptotics}\label{ssc2.3}
Let us observe trigonometric asymptotics as degeneration of our elliptic 
solutions. For elliptic expressions of solutions in Theorems \ref{thm2.1},
\ref{thm2.2} or Corollary \ref{cor2.3} we may calculate at least formal leading 
terms of the analytic continuations to strips lying along the positive 
real axis, which is expected to behave trigonometrically as in 
\cite[Theorem 4]{Kapaev-3}. 
Such problems were discussed in \cite[Section 4]{Kitaev-3} for the first 
and the second Painlev\'e transcendents. 
\par
Write the strip $S_0(\phi,t_{\infty},\kappa_0,\delta_0)$ in Theorem \ref{thm2.1}
in the form
\begin{align*}
 S_0(\phi,t_{\infty},\kappa_0,\delta_0) &=e^{i\phi}\sqrt 
{2 T(t_{\infty},\kappa_0))} \setminus P_{\phi} 
 =\sqrt{2 e^{2i\phi} T(t_{\infty},\kappa_0))} \setminus P_{\phi}, 
\\
T(t_{\infty},\kappa_0) &=\{t\,|\, \re t>t_{\infty},\,\, |\im t|<\kappa_0\},
\end{align*}
where
$ \sqrt{2T}=\{x=\sqrt{2t} \,|\, t\in T \}$ and $ P_{\phi}=
 \bigcup_{\sigma\in\mathcal{P}_{0-}}\{|x-\sigma|<\delta_0\}$.
Let us suppose that there exists a strip $\mathcal{S}_0$ such that 
$$
\mathcal{S}_0=\{t \,|\, \re t>t_{\infty}^0, \,\, -K_0 <\im t <-K_1 \}  
 \subset \bigcup_{-\phi_0<\phi<0} e^{2i\phi} T(t_{\infty},\kappa_0),
$$
where $K_0$, $K_1$ are positive constants and $\phi_0$ a small positive
constant. Then $\sqrt{2\mathcal{S}_0}$ is a strip lying along the line 
$\sqrt{2t^0_{\infty}}<x<+\infty$, and has the properties: for each $\phi$,
\par
(1) every $t\in \mathcal{S}_0\cap e^{2i\phi}T(t_{\infty},\kappa_0)$
fulfils $t \phi \asymp 1$ with implied constants independent of $\phi$ and $t$;
\par
(2) $\sqrt{2 (\mathcal{S}_0\cap e^{2i\phi}T(t_{\infty},\kappa_0))}
 \setminus P_{\phi}\subset S_0(\phi,t_{\infty},\kappa_0,\delta_0).$
\par\noindent
By the property (2) the expression of $y(\mathbf{s},x)$ in Theorem \ref{thm2.1}
is valid in each region $\mathcal{S}_0 \cap e^{2i\phi}T(t_{\infty},\kappa_0)$.
By Remark \ref{rem2.1} this is written in the form
\begin{equation}\label{2.4}
(2t)^{-1/2}y(\mathbf{s},x) =\frac{e^{3i\phi}A_{\phi}}{\wp(\tau;g_2,g_3)
-\tfrac 13 e^{2i\phi} }, \quad
\tau= e^{i\phi}t+\chi_{0-}+O(t^{-\delta}).
\end{equation}
Note that, by Corollary \ref{cor8.20} and the property (1), 
$$
\Omega_{\mathbf{a}}= e^{-i\phi}\Omega^*_{\mathbf{a}_*} =\tfrac 12 \sqrt{3}i
\ln t \,(1+o(1)), \quad
\Omega_{\mathbf{b}}= e^{-i\phi}\Omega^*_{\mathbf{b}_*} =- \sqrt{3}\pi (1+o(1)),
$$
and 
$\chi_{0-}=\tfrac 14 \sqrt{3}\pi^{-1} \ln((1+s_1s_2)(1+s_2s_3)-1) \ln t 
-i\tfrac 12 \sqrt{3} (\ln(1+s_1s_2) -\tfrac 23 \pi i (\alpha-\beta))$.
The periods of $\wp(\tau)$ are given by 
$(2\omega,2\omega')=(\sqrt{3}\pi(1 +o(1)), \tfrac 12 \sqrt{3}i\ln t \,(1+o(1))$
such that $\im (\omega'/\omega) =(2\pi)^{-1} \ln t\,(1+o(1)).$ 
In \eqref{2.4}, we have $\phi=O(t^{-1})$ 
and $A_{\phi}=\tfrac 8{27}+O(t^{-1})$ (cf.~Proposition \ref{prop8.19})
as $t \to \infty$, and then $\wp(u)$ degenerates to 
\begin{align}\label{2.5}
&\wp(u)= -\tfrac 13\hat{\omega}^{-2}+ \hat{\omega}^{-2} \sin^{-2}(\hat{\omega} u)
+O(h\cos^2(2\hat{\omega}u))
\quad \text{or}
\\
\label{2.6}
&\wp(u)=-\tfrac 13 \hat{\omega}^2 -8\hat{\omega}^2 h \cos (2\hat{\omega}(u-
\omega'))+O(h^2\cos^2(2\hat{\omega}(u-\omega')))
\end{align}
with $\hat{\omega}=\pi/(2\omega)$ and $h=e^{i\pi {\omega'/\omega}}$ 
\cite{HC}, \cite{WW}. In
$\mathcal{S}_0\cap (\bigcup_{-\phi_0<\phi<0} e^{2i\phi}T(t_{\infty},\kappa_0))$, 
from \eqref{2.5} we have the trigonometric expression
\begin{align*}
\frac{1}{(2t)^{-1/2}y(\mathbf{s},x)+\tfrac 23} &=
\frac 12 + \frac 12 (e^{i 2\tau/\sqrt{3}} +e^{-i 2\tau/\sqrt{3}}) 
(1+O(t^{-\varepsilon})),
\\
{2\tau}/{\sqrt{3}} =  {2t}/{\sqrt{3}} &+({2\pi})^{-1}
\ln((1+s_1s_2)(1+s_2s_3) -1)\ln t -\tfrac{2}3 \pi (\alpha-\beta) +O(1),
\end{align*}
which agrees with \cite[Theorem 4, (30)]{Kapaev-3} up to constants.
From \eqref{2.6} we have
\begin{align*}
y(\mathbf{s},x) +\tfrac 23 &= 
 2 t^{-1/2} (e^{i 2\hat{\tau}/\sqrt{3}} +e^{-i 2\hat{\tau}/\sqrt{3}})
(1+O(t^{-\varepsilon})),
\\
{2\hat{\tau}}/{\sqrt{3}} &=  {2t}/{\sqrt{3}} +({2\pi})^{-1}
\ln(1-(1+s_1s_2)(1+s_2s_3)) \ln t -\tfrac{2}3 \pi (\alpha-\beta) +O(1),
\end{align*}
which agrees with \cite[Theorem 4, (28)]{Kapaev-3} and also with
\cite[Theorem 1.1, (1.11)]{I-Kapaev} up to constants.
In the strip $0<K_0<\im t<K_1$ a similar argument is possible for the solution
in Theorem \ref{thm2.2}. The argument above, though not justified, suggests
information about degeneration to the trigonometric asymptotics.
\section{Basic facts}\label{sc3}
\subsection{Monodromy data}
The monodromy data and the monodromy manifold $\mathcal{M}^*_0(\alpha,\beta)$ 
described in Section \ref{sc2} have the following properties.
\begin{prop}\label{prop3.1}
For each $m \in \mathbb{Z}$, 
\begin{align*}
&S_{1+m}S_{2+m}S_{3+m}S_{4+m}e^{-i\pi(\alpha-\beta)
\sigma_3}\sigma_3
\\
&\phantom{---} =
\begin{cases}
(ES_1\cdots S_m)^{-1} \sigma_3 M^{-1} (ES_1\cdots S_m) \quad &
\text{if $m\ge 1;$}
\\[0.4cm]
(ES_0^{-1}\cdots S_{m+1}^{-1})^{-1}\sigma_3 M^{-1}(ES_0^{-1}\cdots S_{m+1}^{-1})
\quad & \text{if $m\le -1.$} 
\end{cases}
\end{align*}
\end{prop}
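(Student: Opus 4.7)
The plan is to compute $\Psi_{5+m}^\infty(\xi)$ in two ways and equate them, generalising the derivation of the base case $m=0$ sketched before the proposition. For each $m \in \mathbb{Z}$, set
$$
F_m = \begin{cases} E S_1 S_2 \cdots S_m & (m \ge 0),\\ E S_0^{-1} S_{-1}^{-1} \cdots S_{m+1}^{-1} & (m \le -1),\end{cases}
$$
with the empty-product convention $F_0 = E$. Iterating $\Psi_{k+1}^\infty = \Psi_k^\infty S_k$ from $\Psi_1^\infty = \Psi^0 E$ yields, in both cases, $\Psi_{1+m}^\infty(\xi) = \Psi^0(\xi)\, F_m$, whence the Stokes definition gives the first expression
$$
\Psi_{5+m}^\infty(\xi) = \Psi_{1+m}^\infty(\xi)\, S_{1+m} S_{2+m} S_{3+m} S_{4+m} = \Psi^0(\xi)\, F_m\, S_{1+m} S_{2+m} S_{3+m} S_{4+m}.
$$

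The second expression comes from the rotation symmetry. I claim
\begin{equation}\label{rot}
\Psi_{5+m}^\infty(\xi) = \sigma_3\, \Psi_{1+m}^\infty(e^{-i\pi}\xi)\, \sigma_3\, e^{i\pi(\alpha-\beta)\sigma_3}.
\end{equation}
The coefficient matrix $A(\xi)$ of system \eqref{1.1} satisfies the parity $\sigma_3 A(e^{-i\pi}\xi)\sigma_3 = -A(\xi)$, verified by inspection: the reflection $\xi \mapsto -\xi$ sends each diagonal term of $A$ to its negative and fixes every off-diagonal one, after which conjugation by $\sigma_3$ reverses the sign of the off-diagonal part, producing a net sign change. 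Consequently $\sigma_3 \Psi(e^{-i\pi}\xi)\sigma_3$ solves \eqref{1.1} whenever $\Psi$ does, so both sides of \eqref{rot} are solutions. A routine asymptotic comparison on the Stokes sector of index $5+m$, using $\ln(e^{-i\pi}\xi) = \ln\xi - i\pi$ to extract a factor $e^{-i\pi(\alpha-\beta)\sigma_3}$ that cancels the trailing $e^{i\pi(\alpha-\beta)\sigma_3}$, shows that the two sides share the leading asymptotic $(I+O(\xi^{-1}))\exp\bigl((\tfrac18\xi^4 + \tfrac12 x\xi^2 + (\alpha-\beta)\ln\xi)\sigma_3\bigr)$, and hence coincide by uniqueness of the canonical solution on that sector.

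To conclude, invert $\sigma_3\Psi^0(e^{i\pi}\xi)\sigma_3 = \Psi^0(\xi) M$ to obtain $\Psi^0(e^{-i\pi}\xi) = \sigma_3\Psi^0(\xi)\sigma_3 M^{-1}$, and substitute $\Psi_{1+m}^\infty = \Psi^0 F_m$ into \eqref{rot}; its right-hand side becomes $\Psi^0(\xi)\,\sigma_3 M^{-1} F_m\, \sigma_3\, e^{i\pi(\alpha-\beta)\sigma_3}$. Equating with the Stokes expression above, cancelling $\Psi^0(\xi)$ on the left, then left-multiplying by $F_m^{-1}$ and right-multiplying by $e^{-i\pi(\alpha-\beta)\sigma_3}\sigma_3$ (using that $\sigma_3$ commutes with $e^{\pm i\pi(\alpha-\beta)\sigma_3}$ and $\sigma_3^2 = I$) delivers
$$
S_{1+m}S_{2+m}S_{3+m}S_{4+m}\, e^{-i\pi(\alpha-\beta)\sigma_3}\sigma_3 = F_m^{-1}\, \sigma_3 M^{-1}\, F_m,
$$
which is precisely the identity of Proposition \ref{prop3.1} in both sign ranges of $m$ (and also recovers the base case $m=0$). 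The main obstacle in this plan is the careful verification of \eqref{rot}: tracking the branch of $\ln\xi$ under $\xi \mapsto e^{-i\pi}\xi$, and matching the Stokes sectors of indices $1+m$ and $5+m$; once this is done, the remainder is routine matrix algebra.
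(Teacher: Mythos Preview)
Your argument is correct. The paper, however, takes a different and shorter route: it treats the $m=0$ semi-cyclic relation $S_1S_2S_3S_4=E^{-1}\sigma_3M^{-1}E\,e^{i\pi(\alpha-\beta)\sigma_3}\sigma_3$ as given, and then shifts $m\mapsto m\pm1$ by pure matrix algebra using the periodicity $S_{k+4}=e^{-i\pi(\alpha-\beta)\sigma_3}\sigma_3 S_k\sigma_3 e^{i\pi(\alpha-\beta)\sigma_3}$. For instance, left-multiplying the $m=0$ relation by $S_1^{-1}$ and right-multiplying by $S_5$ (written via the periodicity) yields the $m=1$ case after a two-line simplification; the $m=-1$ step is analogous, and the general statement follows by induction. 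So the paper never revisits the canonical solutions $\Psi_k^\infty$, $\Psi^0$ or the rotation symmetry of system \eqref{1.1}; it only manipulates identities among constant matrices.

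Your approach is more self-contained and conceptual: by re-deriving the rotation identity $\Psi_{5+m}^\infty(\xi)=\sigma_3\Psi_{1+m}^\infty(e^{-i\pi}\xi)\sigma_3 e^{i\pi(\alpha-\beta)\sigma_3}$ you obtain all $m$ at once and, as a by-product, recover the base case $m=0$ rather than assuming it. The cost is the analytic verification (parity of $A(\xi)$, sector matching, branch of $\ln\xi$), which you have handled correctly. The paper's inductive proof, by contrast, is a few lines of algebra but relies on the $m=0$ relation and the $S_{k+4}$ periodicity as black boxes.
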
 
\begin{proof}
The semi-cyclic condition $S_1S_2S_3S_4=E^{-1}\sigma_3M^{-1}E e^{i\pi(\alpha-\beta)\sigma_3}\sigma_3$
with $S_{k+4}=e^{-i\pi(\alpha-\beta)\sigma_3}\sigma_3 S_k \sigma_3 e^{i\pi(\alpha
-\beta)\sigma_3}$ yields
\begin{align*}
S_2S_3S_4S_{1+4} =& S_1^{-1}E^{-1}\sigma_3 M^{-1} E e^{i\pi(\alpha-\beta)\sigma
_3 }\sigma_3e^{-i\pi(\alpha-\beta)\sigma_3}\sigma_3 S_1 \sigma_3 e^{i\pi(\alpha
-\beta)\sigma_3}
\\
 =& (ES_1)^{-1}\sigma_3 M^{-1} (E  S_1)e^{i\pi(\alpha-\beta)\sigma_3} \sigma_3, 
\\
S_0S_1S_2S_{3} =& S_0E^{-1}\sigma_3 M^{-1} E e^{i\pi(\alpha-\beta)\sigma
_3 }\sigma_3 S_4^{-1}
\\
 =& (ES_0^{-1})^{-1}\sigma_3 M^{-1} E S_0^{-1}S_0e^{i\pi(\alpha-\beta)\sigma_3}
\sigma_3 S_4^{-1}
\\
 =& (ES_0^{-1})^{-1}\sigma_3 M^{-1} (E S_0^{-1})e^{i\pi(\alpha-\beta)\sigma_3}
\sigma_3. 
\end{align*}
Repeating this procedure, we obtain the proposition.
\end{proof}
\begin{prop}\label{prop3.2}
The surface $\mathcal{M}^*_0(\alpha,\beta)$ has a singular point $\mathbf{s}
_{\mathrm{sing}}$ if and only if
$\alpha -\tfrac 12 \in \mathbb{Z}$, and then $\mathbf{s}_{\mathrm{sing}}=
(\mathfrak{s}_{12}, \mathfrak{s}_{23}, \mathfrak{s}_{34}, \mathfrak{s}_{41})
=(e^{-i\pi\beta}-1, e^{i\pi\beta}-1,e^{-i\pi\beta}-1, e^{-i\beta\pi}
-e^{-2i\beta\pi}).$
\end{prop}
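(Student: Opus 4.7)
The plan is to apply the Jacobian criterion to the complete intersection
$\mathcal{M}^*_0(\alpha,\beta)=\{F_1=F_2=0\}\subset\C^4$, where $F_1$ denotes
the linear-inhomogeneous defining polynomial and
$F_2:=\mathfrak{s}_{12}\mathfrak{s}_{34}-\mathfrak{s}_{23}\mathfrak{s}_{41}$.
The first observation is that
$\partial F_1/\partial\mathfrak{s}_{41}=e^{i\pi(\beta-\alpha)}$ is a nonzero
constant, so $\nabla F_1$ never vanishes; hence a point $\mathbf{s}$ on the
surface is singular if and only if $\nabla F_2=\lambda\,\nabla F_1$ for some
$\lambda\in\C$.

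Next I match the four components of this proportionality after introducing
the auxiliary scalar $\mu:=\lambda\, e^{i\pi(\beta-\alpha)}$. The matching
condenses into the symmetric expressions
$\mathfrak{s}_{12}=\mathfrak{s}_{34}=\mu/(1-\mu)$,
$\mathfrak{s}_{23}=-\mu$, $\mathfrak{s}_{41}=\mu\, e^{2i\pi(\alpha-\beta)}$,
with the shifts $1+\mathfrak{s}_{12}=1+\mathfrak{s}_{34}=(1-\mu)^{-1}$ and
$1+\mathfrak{s}_{23}=1-\mu$. Substituting into $F_2=0$ produces
$\mu^2/(1-\mu)^2=-\mu^2 e^{2i\pi(\alpha-\beta)}$, whose nontrivial branch is
$(1-\mu)^2=-e^{2i\pi(\beta-\alpha)}$, equivalently
$1-\mu=\mp i\, e^{i\pi(\beta-\alpha)}$.

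The decisive step is to substitute these values into $F_1=0$. One computes
$(1+\mathfrak{s}_{12})(1+\mathfrak{s}_{34})+\mathfrak{s}_{41}=
(\mu-1)\,e^{2i\pi(\alpha-\beta)}$, so the first bracket of $F_1$ multiplied by
$e^{i\pi(\beta-\alpha)}$ equals $(\mu-1)\,e^{i\pi(\alpha-\beta)}=\pm i$, while
$(1+\mathfrak{s}_{23})\,e^{i\pi(\alpha-\beta)}=(1-\mu)\,e^{i\pi(\alpha-\beta)}=\mp i$.
Therefore $F_1=\pm 2i+2i\sin\pi\alpha$, which vanishes precisely when
$\sin\pi\alpha=\mp 1$, i.e.\ $\cos\pi\alpha=0$, i.e.\ $\alpha-\tfrac12\in\Z$.
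Conversely, substituting $e^{i\pi\alpha}=\mp i$ back into the expressions for
$\mathfrak{s}_{12},\mathfrak{s}_{23},\mathfrak{s}_{34},\mathfrak{s}_{41}$
simplifies them (independently of the sign choice) to exactly
$\mathbf{s}_{\mathrm{sing}}=(e^{-i\pi\beta}-1,\,e^{i\pi\beta}-1,\,
e^{-i\pi\beta}-1,\,e^{-i\pi\beta}-e^{-2i\pi\beta})$, and a direct check confirms
it lies on $\mathcal{M}^*_0(\alpha,\beta)$.

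The main technical obstacle is the algebraic collapse in the third paragraph:
the equation $F_1=0$ \emph{a priori} carries two independent exponential
factors $e^{\pm i\pi(\beta-\alpha)}$ together with the inhomogeneity
$2i\sin\pi\alpha$, so the reduction to the single trigonometric constraint
$\cos\pi\alpha=0$ is not evident in advance. The trick is to keep $\mu$ as
the unifying parameter and invoke the $F_2$-constraint
$(1-\mu)^2=-e^{2i\pi(\beta-\alpha)}$ to cancel the exponentials symmetrically.
The trivial branch $\mu=0$ of the factorisation of $F_2=0$ gives
$\mathbf{s}=\mathbf{0}$ with $\nabla F_2=0$ and has to be disposed of
separately; it is consistent with the stated formula (which specialises to
the origin precisely when $\beta\in 2\Z$) but contributes no genuinely new
singular point outside this degenerate subcase.
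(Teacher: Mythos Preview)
Your proof is correct and follows essentially the same approach as the paper: both apply the Jacobian criterion to the two defining equations of $\mathcal{M}^*_0(\alpha,\beta)$, the paper after the shift $(x,y,z,u)=(1+\mathfrak{s}_{12},1+\mathfrak{s}_{23},1+\mathfrak{s}_{34},1+\mathfrak{s}_{41})$ and you directly in the $\mathfrak{s}_{ij}$-coordinates via the auxiliary parameter $\mu$. Your residual caveat about the degenerate $\mu=0$ branch corresponds exactly to the paper's unexamined case $y-1=0$.
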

\begin{proof}
Write the surface in the form
$$
f=e^{i\pi(\beta-\alpha)}(xz+u-1)- e^{i\pi(\alpha-\beta)}y
+2i\sin \pi\alpha=0,\quad (x-1)(z-1)=(y-1)(u-1)
$$
with $(x,y,z,u)=(1+\mathfrak{s}_{12},1+\mathfrak{s}_{23},1+\mathfrak{s}_{34},
1+\mathfrak{s}_{41}),$
and examine when $f_x,$ $f_y,$ $f_z$, $f_u$ and $f$ have a common zero.
If, say $y-1\not=0$, by using $u-1=(x-1)(z-1)/(y-1)$, we have $\alpha-\tfrac 12
\in \mathbb{Z}$ and $x=z=1/y=e^{-i\pi\beta}.$
\end{proof}
\begin{rem}\label{rem3.1}
Equation P$_{\mathrm{IV}}$ admits a one-parameter family of classical solutions 
(respectively, a rational solution) if and only if 
$\{\alpha-\tfrac 12,\, \tfrac 12\beta,\, \alpha-\tfrac 12\beta-\tfrac 12 \}
\cap \mathbb{Z} \not=\emptyset$ 
(respectively, $\{(\alpha-\tfrac 12, \tfrac 12 \beta ),
(\alpha\pm \tfrac 16,2\alpha-\tfrac 12 \beta) \}\cap \mathbb{Z}^2\not=
\emptyset$) \cite{Gromak-1}, \cite{Gromak-2}, \cite[Chap.~6]{GLS},
\cite{Murata}, \cite{UW}.  
\end{rem}
\subsection{Isomonodromy linear system}
Let us transform system \eqref{1.1} into a form suitable to our calculation.
By 
$$
\Psi=\mathrm{u}^{(1/2)\sigma_3} x^{-(1/4)\sigma_3}\Phi, \quad
\mathrm{u}\mathrm{v}=y, \quad  
 \frac{\mathrm{u}_x}{\mathrm{u}}= x \mathrm{z},
$$
system \eqref{1.1} is changed into
\begin{align*}
\frac{d\Phi}{d\xi}=& \Biggl(\Bigl(\frac {\xi^3}2  +(x+y)\xi + \frac{\alpha}{\xi}
\Bigr)\sigma_3 + ix^{1/2} \begin{pmatrix}  0 & \xi^2 +x\mathrm{z} +2x \\
         x^{-1}(y\xi^2 -xy\mathrm{z} +y^2 +\beta ) & 0 \end{pmatrix}
\Biggr)\Phi,
\\
 y' =& 2x y\mathrm{z} + 2xy -y^2- \beta.
\end{align*}
The change of variables
$$
\tau=x^2, \quad y=x\eta, \quad \xi=x^{1/2}\tilde{\xi}
$$
takes this system to
\begin{align*}
\frac{d\Phi}{d\tilde{\xi}}=&  \tau \Biggl( \Bigl(\frac{\tilde{\xi}^3}2+(1+\eta)
\tilde{\xi} +\frac{\alpha \tau^{-1}}{\tilde{\xi}}\Bigr)\sigma_3
+i\begin{pmatrix} 0 & \tilde{\xi}^2 +\mathrm{z} +2 \\
\eta \tilde{\xi}^2 -\eta\mathrm{z} + \eta^2 +\beta\tau^{-1} & 0
\end{pmatrix}
\Biggr)\Phi,
\\
\tau \frac{d\eta}{d\tau} =&\tau\eta(\mathrm{z}+1)-\frac 12 ({\tau\eta^2} 
+\eta+\beta).
\end{align*} 
The further substitution
$$
\tau=2e^{2i\phi} t, \quad \eta=e^{-i\phi}\psi, \quad  e^{i\phi}\mathrm{z}
=\mathfrak{z}, \quad \lambda=e^{i\phi/2}\tilde{\xi}
$$
leads to
\begin{align}\label{3.1}
&\frac{d\Phi}{d\lambda}  = t \mathcal{B}(t,\lambda)\Phi, \qquad
\mathcal{B}(t,\lambda)=b_3\sigma_3 +b_1\sigma_1 +b_2\sigma_2, 
\\
\label{3.2}
 &2(\mathfrak{z}+  e^{i\phi})=e^{-i\phi}\frac{\psi_t}{\psi} +\psi 
+\tfrac 12 (e^{-i\phi} + \beta\psi^{-1})t^{-1}
\end{align}
with
\begin{align*}
&b_1= ie^{-i\phi/2}\left((\psi+e^{i\phi})\lambda^2-(\psi-e^{i\phi})\mathfrak{z}
+\psi^2 +2 e^{2i\phi} +\tfrac 12 \beta t^{-1}\right),
\\
&b_2= e^{-i\phi/2}\left((\psi-e^{i\phi})\lambda^2-(\psi+e^{i\phi})\mathfrak{z}
+\psi^2 -2 e^{2i\phi} +\tfrac 12 \beta t^{-1}\right),
\\
&b_3= \lambda^3+2(e^{i\phi}+\psi)\lambda
+\alpha t^{-1}\lambda^{-1} 
\end{align*}
In \eqref{3.2} as a resulting equation, 
$\psi_t$ denotes the derivative $(d/dt)\psi.$
Let us now change the meaning of $\psi_t$ in such a way that 
$\psi_t$ is an arbitrary function not 
necessarily the derivative, and in what follows suppose that
system \eqref{3.1} is equipped with $\mathfrak{z}$ containing 
such $\psi_t.$ 
Then the isomonodromy property of \eqref{1.1} is converted to that
of \eqref{3.1}.
\begin{prop}\label{prop3.3}
The monodromy data of \eqref{3.1} is invariant under a small change of $t$
if and only if $\psi_t=(d/dt)\psi$ holds in \eqref{3.2} and $y(x)= e^{-i\phi}x\psi$ 
with $2e^{2i\phi}t=x^2$ solves $\mathrm{P}_{\mathrm{IV}}$.
\end{prop}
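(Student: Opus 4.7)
The plan is to transport the isomonodromy property of \eqref{1.1} through the explicit chain of substitutions that produced \eqref{3.1}--\eqref{3.2}. Each step in that chain is either a scalar rescaling of the spectral variable ($\xi \to \tilde\xi \to \lambda$) or a diagonal gauge transformation $\Phi = G(t)^{-1}\Psi$ depending only on $t$ (with $\alpha,\beta$ fixed), namely $G(t) = \mathrm{u}^{(1/2)\sigma_3}x^{-(1/4)\sigma_3}$. Setting $\Phi_k^\infty := G^{-1}\Psi_k^\infty$ yields $\Phi_{k+1}^\infty = \Phi_k^\infty S_k$, so the Stokes multipliers of \eqref{3.1} at $\lambda = \infty$ coincide with those of \eqref{1.1} at $\xi = \infty$, and an analogous matching holds at the origin. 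Since $2e^{2i\phi}t = x^2$ is a local diffeomorphism, $t$-invariance of the Stokes data of \eqref{3.1} is equivalent to $x$-invariance of the Stokes data of \eqref{1.1}.

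For the ``if'' direction, assume $\psi_t = (d/dt)\psi$ and that $y(x) = e^{-i\phi}x\psi$ solves $\mathrm{P}_{\mathrm{IV}}$. Choosing $(\mathrm{u},\mathrm{v})$ with $\mathrm{u}\mathrm{v}=y$ and $\mathrm{u}_x/\mathrm{u} = xz$ recovers Kitaev's Hamiltonian Lax pair \cite{Kitaev}, for which \eqref{1.1} is isomonodromic in $x$; retracing the chain of substitutions gives isomonodromy of \eqref{3.1} in $t$, and in this situation \eqref{3.2} is tautologically the $\mathrm{u}$-equation of the Hamiltonian system rewritten in the new variables. Conversely, suppose the monodromy data of \eqref{3.1} are $t$-independent. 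By the gauge-equivalence above, the monodromy data of \eqref{1.1} are $x$-independent, and Kitaev's theorem forces $\mathrm{u},\mathrm{v}$ to satisfy their Hamiltonian system, hence $y = \mathrm{u}\mathrm{v}$ to solve $\mathrm{P}_{\mathrm{IV}}$. Reversing the algebraic steps that produced \eqref{3.2} from $\mathrm{u}_x/\mathrm{u} = xz$ then identifies the (a~priori free) variable $\psi_t$ with the actual derivative $(d/dt)\psi$.

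The main delicate point is the gauge ambiguity $(\mathrm{u},\mathrm{v})\mapsto(c\mathrm{u},c^{-1}\mathrm{v})$ in Kitaev's Lax pair, which preserves $y=\mathrm{u}\mathrm{v}$ but shifts $\mathrm{u}_x/\mathrm{u}$ by $c_x/c$, hence modifies $\mathfrak{z}$ and $\psi_t$. In the ``only if'' direction this freedom must be eliminated by fixing the normalization of the canonical solutions $\Phi_k^\infty$ at $\lambda=\infty$, which rigidifies $\psi_t$ and closes the argument; the bookkeeping of this normalization---essentially matching the action $[c]$ from Section~\ref{sc2} on Stokes data to the corresponding rescaling of $(\mathrm{u},\mathrm{v})$---is the only non-routine piece of the proof.
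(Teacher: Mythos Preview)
Your overall strategy---transporting the isomonodromy of Kitaev's Lax pair \eqref{1.1} through the explicit chain of gauge transformations and rescalings that produced \eqref{3.1}---is exactly what the paper intends. Note that the paper gives no proof of Proposition~\ref{prop3.3} at all; it simply records the sentence ``Then the isomonodromy property of \eqref{1.1} is converted to that of \eqref{3.1}'' and states the proposition. So your write-up is already more detailed than the paper's.

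There is, however, one technical imprecision. You assert that ``setting $\Phi_k^\infty := G^{-1}\Psi_k^\infty$ yields $\Phi_{k+1}^\infty = \Phi_k^\infty S_k$, so the Stokes multipliers of \eqref{3.1} coincide with those of \eqref{1.1}.'' The solutions $G^{-1}\Psi_k^\infty$ do solve \eqref{3.1}, but they are \emph{not} the canonical solutions \eqref{3.3}: the spectral rescaling $\xi = e^{-i\phi/2}x^{1/2}\lambda$ produces an extra factor $(e^{-i\phi/2}x^{1/2})^{(\alpha-\beta)\sigma_3}$ on the right, and the paper's own Proposition~\ref{prop3.4} records the resulting relation
\[
S_k \;=\; D(t)\, S_k^*\, D(t)^{-1}, \qquad D(t)=\mathrm{u}^{(1/2)\sigma_3}x^{-(1/4)\sigma_3}(e^{-i\phi/2}x^{1/2})^{-(\alpha-\beta)\sigma_3},
\]
which is a $t$-\emph{dependent} diagonal conjugation. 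Hence constancy of $S_k$ does not transfer to constancy of $S_k^*$ by the route you describe (only the invariants $s_k s_{k+1}=s_k^* s_{k+1}^*$ of Corollary~\ref{cor3.5} survive).

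The clean fix is to argue at the level of Lax-pair compatibility rather than individual Stokes matrices: compatibility of the $(\xi,x)$-pair for \eqref{1.1} is gauge- and rescaling-invariant, so it is equivalent to compatibility of the induced $(\lambda,t)$-pair for \eqref{3.1}; and for the latter, the canonical solutions \eqref{3.3}---whose normalisation at $\lambda=\infty$ is rigid in $t$---are forced to solve the $t$-equation as well, whence $S_k^*$ is constant. This bypasses $D(t)$ entirely. Your final paragraph gestures at normalisation issues, but it is aimed at the separate $(\mathrm{u},\mathrm{v})\mapsto(c\mathrm{u},c^{-1}\mathrm{v})$ ambiguity rather than at this point; once you route the argument through compatibility, both subtleties dissolve simultaneously.
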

For $k\in \mathbb{Z}$ system \eqref{3.1} admits canonical solutions
\begin{equation}\label{3.3}
\Phi_k^{\infty}(\lambda)=(I+O(\lambda^{-1})) \exp((\tfrac 14 t\lambda^4
+e^{i\phi}t\lambda^2 +(\alpha-\beta)\ln \lambda)\sigma_3)
\end{equation}
as $\lambda \to \infty$ through the sector $|\arg(t^{1/4}\lambda)+\tfrac{\pi}
8 -\tfrac{\pi}4k|<\tfrac{\pi}4$. The Stokes matrices are defined by
$\Phi_{k+1}^{\infty}(\lambda)=\Phi_k^{\infty}(\lambda)S_k^*$.
Recalling the Stokes matrices $S_k$ with respect to $\Psi^{\infty}_k(\xi)$
solving \eqref{1.1}, we have the following relation.
\begin{prop}\label{prop3.4}
For every $k\in \mathbb{Z}$, 
$$
S_k=\mathrm{u}^{(1/2)\sigma_3}x^{-(1/4)\sigma_3}
(e^{-i\phi/2}x^{1/2})^{-(\alpha-\beta)\sigma_3} S_k^*
(e^{-i\phi/2}x^{1/2})^{(\alpha-\beta)\sigma_3}\mathrm{u}^{-(1/2)\sigma_3}
x^{(1/4)\sigma_3}.
$$
\end{prop}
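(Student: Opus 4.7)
The plan is to track how the substitutions $\Psi=\mathrm{u}^{(1/2)\sigma_3}x^{-(1/4)\sigma_3}\Phi$, $\xi=x^{1/2}\tilde{\xi}$, $\lambda=e^{i\phi/2}\tilde{\xi}$ (so that $\xi=e^{-i\phi/2}x^{1/2}\lambda$) together with $\tau=x^2=2e^{2i\phi}t$ change the canonical solution \eqref{2.1} at $\xi=\infty$ into the canonical solution \eqref{3.3} at $\lambda=\infty$, and then read off the Stokes matrices.

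First I would substitute $\xi=e^{-i\phi/2}x^{1/2}\lambda$ in the exponent of \eqref{2.1}. A direct computation gives
\[
\tfrac18\xi^4=\tfrac18 e^{-2i\phi}x^2\lambda^4=\tfrac14 t\lambda^4,\qquad
\tfrac12 x\xi^2=\tfrac12 e^{-i\phi}x^2\lambda^2= e^{i\phi}t\lambda^2,
\]
while $(\alpha-\beta)\ln\xi=(\alpha-\beta)\ln\lambda+(\alpha-\beta)\ln(e^{-i\phi/2}x^{1/2})$. Hence, because all prefactors are diagonal and commute with $\sigma_3$,
\[
\Psi_k^{\infty}(\xi)=(I+O(\lambda^{-1}))\exp\!\bigl((\tfrac14 t\lambda^4+e^{i\phi}t\lambda^2+(\alpha-\beta)\ln\lambda)\sigma_3\bigr)\,(e^{-i\phi/2}x^{1/2})^{(\alpha-\beta)\sigma_3}.
\]
Next, the gauge $\Phi=x^{(1/4)\sigma_3}\mathrm{u}^{-(1/2)\sigma_3}\Psi$ transforms $\Psi_k^{\infty}(\xi)$ into a solution of \eqref{3.1}. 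The canonical solution $\Phi_k^{\infty}(\lambda)$ in \eqref{3.3} is the unique solution with the prescribed asymptotic normalisation in the corresponding sector, so there exists a constant (in $\lambda$) matrix $C$ with
\[
\Phi_k^{\infty}(\lambda)=x^{(1/4)\sigma_3}\mathrm{u}^{-(1/2)\sigma_3}\Psi_k^{\infty}(\xi)\,C.
\]
Matching the $(I+O(\lambda^{-1}))\exp(\cdots)$ normalisations on both sides and using that the diagonal prefactors commute with $\exp(\cdots \sigma_3)$ forces
\[
C=(e^{-i\phi/2}x^{1/2})^{-(\alpha-\beta)\sigma_3}\mathrm{u}^{(1/2)\sigma_3}x^{-(1/4)\sigma_3},
\]
which is independent of $k$ (only the sectors of validity change with $k$, not the normalisation at infinity).

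Finally, from $\Phi_{k+1}^{\infty}=\Phi_k^{\infty}S_k^*$ and $\Psi_{k+1}^{\infty}=\Psi_k^{\infty}S_k$ the relation $\Phi_k^{\infty}=x^{(1/4)\sigma_3}\mathrm{u}^{-(1/2)\sigma_3}\Psi_k^{\infty}C$ immediately yields $S_k^*=C^{-1}S_kC$, i.e.\ $S_k=CS_k^*C^{-1}$. Since $C$ is diagonal, all its factors commute, and we may reorder them to write $S_k$ in the form stated in the proposition.

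The only real obstacle is bookkeeping: one must verify that $C$ is genuinely independent of $k$ (same constant matrix in every Stokes sector) and that the diagonal factors may be freely interchanged, but both points follow from the fact that the conjugating matrix is diagonal and that the sectorial asymptotic data in \eqref{2.1} and \eqref{3.3} are fixed by the same leading exponential up to the computed diagonal shift $(e^{-i\phi/2}x^{1/2})^{(\alpha-\beta)\sigma_3}$.
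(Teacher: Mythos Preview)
Your proposal is correct and follows essentially the same approach as the paper: the paper's proof consists of the single line stating the relation $\Psi^{\infty}_k(\xi)=\mathrm{u}^{(1/2)\sigma_3}x^{-(1/4)\sigma_3}\Phi^{\infty}_k(\lambda)(e^{-i\phi/2}x^{1/2})^{(\alpha-\beta)\sigma_3}\mathrm{u}^{-(1/2)\sigma_3}x^{(1/4)\sigma_3}$, which is exactly what you derive (with more detail) by tracking the exponent under $\xi=e^{-i\phi/2}x^{1/2}\lambda$ and the diagonal gauge. Your added remarks on the $k$-independence of $C$ and the commutation of diagonal factors simply make explicit what the paper leaves implicit.
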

\begin{proof}
The relation $\Psi^{\infty}_k(\xi)=\mathrm{u}^{(1/2)\sigma_3}x^{-(1/4)\sigma_3}
\Phi^{\infty}_k(\lambda)(e^{-i\phi/2}x^{1/2})^{(\alpha-\beta)\sigma_3}
\mathrm{u}^{-(1/2)\sigma_3}x^{(1/4)\sigma_3}$ yields the conclusion.
\end{proof}
The Stokes coefficients $s^*_k$ for $\Phi^{\infty}_k(\lambda)$ are given by
$$
S_{2l-1}^*=\begin{pmatrix} 1 & s^*_{2l-1} \\  0 & 1 \end{pmatrix}, \quad
S_{2l}^*=\begin{pmatrix} 1 & 0 \\ s^*_{2l}  & 1 \end{pmatrix} \quad
(l\in \mathbb{Z}). 
$$
\begin{cor}\label{cor3.5}
For any $k, j\in \mathbb{Z}$, $s_k^*s_{k+2j+1}^*=s_k s_{k+2j+1}$.
\end{cor}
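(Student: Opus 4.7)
The plan is to exploit the fact that the conjugation relating $S_k$ to $S_k^*$ in Proposition \ref{prop3.4} is by a diagonal matrix. First I would introduce the scalar
$$
d = \mathrm{u}^{1/2} x^{-1/4} (e^{-i\phi/2} x^{1/2})^{-(\alpha-\beta)},
$$
so that Proposition \ref{prop3.4} reads $S_k = d^{\sigma_3} S_k^* d^{-\sigma_3}$. Because $d^{\sigma_3}$ is diagonal, conjugation preserves the upper/lower triangular shape of each $S_k^*$ and only rescales its single off-diagonal entry.

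Next I would carry out that rescaling on both types of Stokes matrix. For odd index $k = 2l-1$, so that $S_k^*$ is upper triangular with $s^*_{2l-1}$ in the $(1,2)$-entry, a direct computation gives
$$
d^{\sigma_3}\begin{pmatrix} 1 & s^*_{2l-1} \\ 0 & 1 \end{pmatrix} d^{-\sigma_3}
= \begin{pmatrix} 1 & d^{2} s^*_{2l-1} \\ 0 & 1 \end{pmatrix},
$$
so $s_{2l-1} = d^{2} s^*_{2l-1}$. For even index $k = 2l$, the analogous computation with the lower-triangular $S^*_{2l}$ produces $s_{2l} = d^{-2} s^*_{2l}$. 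Thus the scaling factor carried by $s_k^*$ is $d^{+2}$ when $k$ is odd and $d^{-2}$ when $k$ is even.

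Finally, I would note that since $2j+1$ is odd, the integers $k$ and $k+2j+1$ have opposite parities. Consequently, in the product $s_k s_{k+2j+1}$ one factor brings a $d^{+2}$ and the other a $d^{-2}$, and these cancel to yield
$$
s_k s_{k+2j+1} = (d^{\pm 2} s^*_k)(d^{\mp 2} s^*_{k+2j+1}) = s^*_k s^*_{k+2j+1},
$$
which is the claim of Corollary \ref{cor3.5}. There is no real obstacle here; the only point that deserves care is checking the exponent on $d$ in each of the two cases, which is immediate from the triangular form of the Stokes matrices. In particular, the $\phi$- and $x$-dependent factors hidden in $d$ never enter the final gauge-invariant combinations $s_k s_{k+2j+1}$, which is consistent with the fact that these are precisely the quantities used to coordinatise $\mathcal{M}^*_0(\alpha,\beta)$ in Remark \ref{rem2.3}.
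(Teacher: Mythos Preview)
Your proof is correct and is exactly the argument the paper has in mind: the corollary is stated without proof immediately after Proposition \ref{prop3.4}, and your diagonal-conjugation computation is the intended one-line justification. The key observation, that $k$ and $k+2j+1$ have opposite parity so the $d^{\pm 2}$ factors cancel, is precisely the point.
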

\section{Turning points, Stokes graph, WKB analysis}\label{sc4}
For system \eqref{3.1} we will treat the direct monodromy problem by 
WKB analysis. 
Let us start with the characteristic roots $\pm
\mu(t,\lambda)$ of $\mathcal{B}(t,\lambda)$ constituting the essential
part of the WKB solution (cf.~Proposition \ref{prop4.2}). 
By $\mu(t,\lambda)^2=b_1^2+b_2^2 +b_3^2$, we have
\begin{equation}\label{4.1}
\mu(t,\lambda)^2=\lambda^6+4e^{i\phi}\lambda^4+(4e^{2i\phi} +2(\alpha-\beta)
t^{-1})\lambda^2+4 e^{3i\phi} a_{\phi}+\alpha^2t^{-2}\lambda^{-2}
\end{equation}
with 
\begin{align}\label{4.2}
4 e^{3i\phi} a_{\phi}= & 4 e^{3i\phi}a_{\phi}(t)= 
e^{-2i\phi}{(\psi_t)^2}{\psi^{-1}} -(\psi+2e^{i\phi})^2\psi
\\   
\notag
& +(e^{-2i\phi}\psi_t +(4\alpha-\beta)\psi +2(2\alpha-\beta)e^{i\phi})t^{-1}
+\tfrac 14 (e^{-2i\phi}\psi-\beta^2 \psi^{-1})t^{-2} .
\end{align}
To draw Stokes graphs it is necessary to know the location of the turning
points. 
Now we note the following facts on the solution $A_{\phi}$ of the Boutroux
equation \eqref{2.2} for $|\phi|<\pi/4$ (Proposition \ref{prop8.15}):
\par
(i) $A_{0}=\frac{8}{27},$ and then $w(A_0,z)=z(z +\frac 23)^2(z+\frac 8{3});$
\par
(ii) $A_{\pm \pi/4}=0,$ and then $w(A_{\pm \pi/4},z)=z^2(z+2e^{\pm i\pi/4})^2;$
\par
(iii) for $0<|\phi|<\pi/4,$ $0<\re A_{\phi} <\frac 8{27}$ and 
$w(A_{\phi},z)$ does not degenerate.
\par\noindent
Then by Corollary \ref{cor8.2}, for $|\phi|<\pi/4$, the zeros of $w(A_{\phi},z)$
may be numbered in such a way that $\re e^{-i\phi}z_5 \le \re e^{-i\phi}z_3 \le 
\re e^{-i\phi}z_1 $ and that $(z_1,z_3,z_5)\to (-\frac 23, -\frac 23,
-\frac 83)$ as $\phi \to 0$, and $\to (0, -2e^{\pm i\pi/4}, -2e^{\pm i\pi/4})$
as $\phi \to \pm \pi/4$ (cf. Section \ref{sc2}). 
\par
Our WKB analysis is carried out under the supposition $a_{\phi}(t)= A_{\phi}
+O(t^{-1})$ as $t\to\infty$, that is, \eqref{5.1}. The characteristic root 
satisfies $\lambda\mu(t,\lambda) \to w(A_{\phi},\lambda^2)$ as $t\to \infty$.  
Let $\lambda_j$ $(1\le j \le 6)$ be turning points of $\mu(t,\lambda)$ such
that  
\begin{align*}
&\lambda_1 \to z_1^{1/2}, \,\,\, \lambda_3\to z_3^{1/2}, \,\,\, \lambda_5
\to z_5^{1/2}\,\,\, \text{as}\,\, t \to \infty,  
\\
&\lambda_2=-\lambda_1, \,\,\, \lambda_4=-\lambda_3, \,\,\, \lambda_6
=-\lambda_5,
\\
& |\arg \lambda_j -\tfrac {\pi}2 |< \tfrac {\pi}4 \,\,\, (j=1,3,5) \,\,\,
\text{for sufficiently large $t$}.
\end{align*}
\par
The algebraic function $\mu(t,\lambda)$ is written in the form
\begin{align*}
\mu(t,\lambda)&= \sqrt{\smash{ 
\lambda^6}+ 4e^{i\phi}\lambda^4 +(4e^{2i\phi}+2(\alpha-\beta)t^{-1})\lambda^2
+4e^{3i\phi}\smash{ a_{\phi}}+ \alpha^2t^{-2}\lambda^{-2}}
\\
&= \lambda^{-1}\sqrt{(\lambda^2-\lambda_1^2)(\lambda^2-\lambda_3^2)
(\lambda^2-\lambda_5^2)(\lambda^2-\lambda_0^2)}
\\
&=\lambda^{-1}\sqrt{(\lambda-\lambda_1)(\lambda-\lambda_2)
(\lambda-\lambda_3)(\lambda-\lambda_4)(\lambda-\lambda_5)(\lambda-\lambda_6)
(\lambda-\lambda_{0})(\lambda+\lambda_{0})},
\\
& \lambda_0=O(t^{-1}),
\end{align*}
which is considered on the two sheeted
Riemann surface $\mathcal{R}_{t}=\mathcal{R}^+_{t} \cup \mathcal{R}^-_{t}$ glued along the
cuts $[\lambda_5,\lambda_3]$, $[\lambda_1,\lambda_0],$ $[-\lambda_0,\lambda_2]$,
$[\lambda_4,\lambda_6]$
with $\mathcal{R}^{\pm}_{t}=\mathbb{C}\setminus ( 
[\lambda_5,\lambda_3]\cup[\lambda_1,\lambda_0]\cup[-\lambda_0,\lambda_2]
\cup[\lambda_4,\lambda_6])$.
The branches of $\mu(\infty,\lambda)$ with $a_{\phi}(\infty)=A_{\phi}$ (by
\eqref{5.1}) is chosen in such a way that
$\mu(\infty,\lambda)=\lambda^3(1+O(\lambda^{-2})$ as $\lambda \to\infty$ on
the upper sheet $\mathcal{R}^+_{t}.$
For $0<|\phi|<\pi/4$, $w(A_{\phi},z)$ does not degenerate and 
neither does $\mathcal{R}_t$.
Then each turning point satisfies $\lambda_j(t)-\lambda_j(\infty)=O(t^{-1})$
as $t\to \infty.$
\par
In what follows we treat a Stokes graph with $t=\infty$, and the limit
turning point $\lambda_j(\infty)$ is simply denoted by $\lambda_j$.
By $z=\lambda^2$ the algebraic function $w(A_{\phi},z)$ on 
$\Pi_{A_{\phi},\phi}$ is mapped to 
$\lambda \mu(\infty,\lambda)$ on $\mathcal{R}_{\infty}$, in which
\begin{align*}
\mu(\infty,\lambda)=&\sqrt{\lambda^6+4e^{i\phi}\lambda^4 +4e^{2i\phi}
\lambda^2 +4 e^{3i\phi} A_{\phi}}
=\sqrt{(\lambda^2-\lambda_1^2)(\lambda^2-\lambda_3^2)(\lambda^2-\lambda_5^2)}
\\
=&\sqrt{(\lambda^2-z_1)(\lambda^2-z_3)(\lambda^2-z_5)}.
\end{align*}
The Stokes graph on $\mathcal{R}_{\infty}$ consists of vertices 
and Stokes curves, where the Stokes curve is defined by 
$\re \int^{\lambda}_{\lambda_j} \mu(\infty,\lambda)d\lambda=0,$ and the
vertices are turning points and singular points.
In our case the turning points and the Stokes curves have the
following properties:
\par
(i) if $\phi=0$, then $\lambda_1=\lambda_3= \tfrac 13\sqrt{6}i,$ 
$\lambda_5
= \frac 23\sqrt{6} i,$ that is, $\lambda_{1,3}=\tfrac 13\sqrt{6}i$ is a 
double turning point, and
if $\phi=\pm \pi/4$, then $\lambda_1=0,$ $\lambda_3=\lambda_5= 
\sqrt{2}i e^{\pm i \pi/8}$, that is, $\lambda_{3,5}=\sqrt{2}i e^{\pm i \pi/8}$ 
is a double turning point;
\par
(ii) if $\phi$ is close to $0$, then the double turning point $\lambda_{1,3}$
resolves into 
\begin{align*}
&\lambda_{2\pm 1} =\tfrac 13 \sqrt{6}i \pm \phi_* e^{i3\pi/4} +O(\phi_*^2) 
\quad \text{for $\phi >0$},
\\
&\lambda_{2\pm 1} =\tfrac 13 \sqrt{6}i \pm \phi_* e^{i\pi/4}+O(\phi_*^2)
\quad \text{for $\phi <0$},
\end{align*}
where
$\phi_*=\phi_*(\phi)$ is such that $\phi_* \ge 0$ and $\phi_* =o(1)$ as
$\phi \to 0$ (cf.~Propositions \ref{prop8.17} and \ref{prop8.18}); 
\par 
(iii) by the Boutroux equations (2.2) with $z=\lambda^2$,
$$
\re \int_{\lambda_1}^{\lambda_3} \mu(\infty,\lambda) d\lambda=0, \quad
\re \int_{\lambda_3}^{\lambda_5} \mu(\infty,\lambda) d\lambda=0,
$$
implying the existence of Stokes curves joining $\lambda_1$ to $\lambda_3$, 
and $\lambda_3$ to $\lambda_5$;
\par
(iv) the Stokes curves tending to $\infty$ are asymptotic to the rays
$\arg \lambda=\frac{\pi}8+ \frac{\pi}4k$ $(1\le k\le 8)$.
\par
Taking these facts into account, we may draw the limit Stokes graphs 
for $|\phi|<\pi/4$ as in Figure \ref{stokes}, in which the cuts 
$[\lambda_5,\lambda_3]$,
$[\lambda_1,\lambda_2]$, $[\lambda_4,\lambda_6]$ are omitted. 
{\small
\begin{figure}[htb]
\begin{center}
\unitlength=0.65mm
\begin{picture}(80,90)(-40,-45)

 \put(0,32){\makebox{$\lambda_5$}}
 \put(-6,-36){\makebox{$\lambda_6$}}
 \put(5,12){\makebox{$\lambda_{3}$}}
 \put(-10,13){\makebox{$\lambda_{1}$}}
 \put(-9,-15){\makebox{$\lambda_{4}$}}
 \put(4,-17){\makebox{$\lambda_{2}$}}
 \put(3,0){\makebox{$0$}}

 \put(0,0){\circle{2.3}}

 \put(5,28){\circle*{2.3}}
 \put(-5,-28){\circle*{2.3}}

 \put(5,19.5){\circle*{2.3}}
 \put(-5,-19.5){\circle*{2.3}}

 \put(-3,10){\circle*{2.3}}
 \put(3,-10){\circle*{2.3}}

\thicklines
 \qbezier (5,19.5) (16,14) (33,19)
 \qbezier (-5,-19.5) (-16,-14) (-33,-19)

 \qbezier (5,19.5) (6.2,25) (5,28)
 \qbezier (5,19.5) (-2,15) (-3,10)

 \qbezier (-5,-19.5) (-6.2,-25) (-5,-28)
 \qbezier (-5,-19.5) (2,-15) (3,-10)

 \qbezier (-3,10) (-12,7) (-33,19)
 \qbezier (-3,10) (-1,7) (0,0)

 \qbezier (3,-10) (12,-7) (33,-19)
 \qbezier (3,-10) (1,-7) (0,0)

 \qbezier (5,28) (8,27) (14, 40)
 \qbezier (5,28) (-5,28) (-13, 40)

 \qbezier (-5,-28) (-8,-27) (-14, -40)
 \qbezier (-5,-28) (5,-28) (13, -40)

\put(-10,-53){\makebox{$-\pi/4<\phi<0$}}
\end{picture}
\qquad\qquad
\begin{picture}(80,80)(-40,-45)

 \put(-6,32){\makebox{$\lambda_5$}}
 \put(3,-36){\makebox{$\lambda_6$}}
 \put(-9,12){\makebox{$\lambda_{3}$}}
 \put(5,13){\makebox{$\lambda_{1}$}}
 \put(3,-15){\makebox{$\lambda_{4}$}}
 \put(-9,-16){\makebox{$\lambda_{2}$}}
 \put(-6,0){\makebox{$0$}}

 \put(0,0){\circle{2.3}}

 \put(-5,28){\circle*{2.3}}
 \put(5,-28){\circle*{2.3}}

 \put(-5,19.5){\circle*{2.3}}
 \put(5,-19.5){\circle*{2.3}}

 \put(3,10){\circle*{2.3}}
 \put(-3,-10){\circle*{2.3}}

\thicklines
 \qbezier (-5,19.5) (-16,14) (-33,19)
 \qbezier (5,-19.5) (16,-14) (33,-19)

 \qbezier (-5,19.5) (-6.2,25) (-5,28)
 \qbezier (-5,19.5) (2,15) (3,10)

 \qbezier (5,-19.5) (6.2,-25) (5,-28)
 \qbezier (5,-19.5) (-2,-15) (-3,-10)

 \qbezier (3,10) (12,7) (33,19)
 \qbezier (3,10) (1,7) (0,0)

 \qbezier (-3,-10) (-12,-7) (-33,-19)
 \qbezier (-3,-10) (-1,-7) (0,0)

 \qbezier (-5,28) (-8,27) (-14, 40)
 \qbezier (-5,28) (5,28) (13, 40)

 \qbezier (5,-28) (8,-27) (14, -40)
 \qbezier (5,-28) (-5,-28) (-13, -40)

\put(-10,-53){\makebox{$0<\phi<\pi/4$}}
\end{picture}
\vskip1.6cm
\unitlength=0.58mm
\begin{picture}(80,80)(-40,-45)

 \put(14,26){\makebox{$\lambda_{3,5}$}}
 \put(4,0){\makebox{$\lambda_{1,2}$}}
 \put(-6,-24){\makebox{$\lambda_{4,6}$}}
 \put(-3,3){\makebox{$0$}}

\put(9.94,24){\circle*{2.3}}
\put(-9.94,-24){\circle*{2.3}}

\put(0,0){\circle*{2.3}}

\thicklines
 \qbezier (9.94,24) (17,20.5) (33,21)
 \qbezier (-9,40) (-2,28) (9.94,24)
 \qbezier (-9.94,-24) (-17,-20.5) (-33,-21)
 \qbezier (9,-40) (2,-28) (-9.94,-24)
 \qbezier (33,-13.67) (0,0) (-33,13.67)

 \qbezier (16.57,40) (0,0) (-16.57, -40)

\put(-10,-53){\makebox{$\phi=-\pi/4$}}
\end{picture}
\quad
\begin{picture}(80,80)(-40,-45)

 \put(-2,34){\makebox{$\lambda_5$}}
 \put(-2,-38){\makebox{$\lambda_6$}}
 \put(2,18){\makebox{$\lambda_{1,3}$}}
 \put(-12,-12){\makebox{$\lambda_{2,4}$}}
 \put(2,0){\makebox{$0$}}

 \put(0,0){\circle{2.3}}

\put(0,30){\circle*{2.3}}
\put(0,-30){\circle*{2.3}}

\put(0,15){\circle*{2.3}}
\put(0,-15){\circle*{2.3}}

\thicklines
 \qbezier (0,15) (16,15) (32,21)
 \qbezier (0,15) (-16,15) (-32,21)
 \qbezier (0,-15) (16,-15) (32,-21)
 \qbezier (0,-15) (-16,-15) (-32,-21)

 \qbezier (0,30) (0,0) (0, -30)
 \qbezier (0,30) (7,30) (13, 40)
 \qbezier (0,30) (-7,30) (-13, 40)
 \qbezier (0,-30) (7,-30) (13, -40)
 \qbezier (0,-30) (-7,-30) (-13, -40)

\put(-10,-53){\makebox{$\phi=0$}}
\end{picture}
\quad
\begin{picture}(80,80)(-40,-45)

 \put(-24,25){\makebox{$\lambda_{3,5}$}}
 \put(-14,1){\makebox{$\lambda_{1,2}$}}
 \put(-5,-23){\makebox{$\lambda_{4,6}$}}
 \put(1,3){\makebox{$0$}}

\put(-9.94,24){\circle*{2.3}}
\put(9.94,-24){\circle*{2.3}}

\put(0,0){\circle*{2.3}}

\thicklines
 \qbezier (-9.94,24) (-17,20.5) (-33,21)
 \qbezier (9,40) (2,28) (-9.94,24)
 \qbezier (9.94,-24) (17,-20.5) (33,-21)
 \qbezier (-9,-40) (-2,-28) (9.94,-24)
 \qbezier (-33,-13.67) (0,0) (33,13.67)

 \qbezier (-16.57,40) (0,0) (16.57, -40)

\put(-10,-53){\makebox{$\phi=\pi/4$}}

\end{picture}
\vskip0.7cm
\end{center}
\caption{Limit Stokes graphs on $\mathcal{R}_{\infty}^+$}
\label{stokes}
\end{figure}
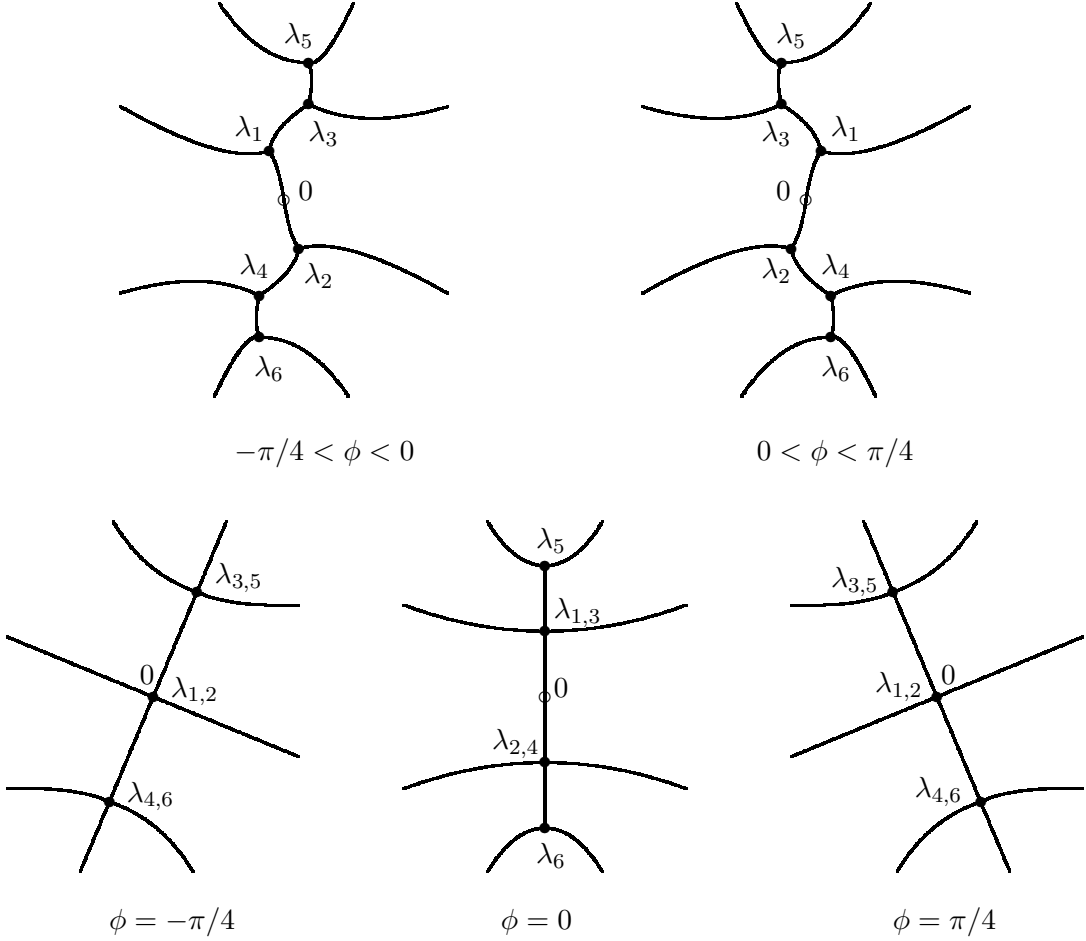
}
An unbounded domain $D \subset \mathcal{R}_{\infty}$ is said to be canonical
if, for every $\lambda \in D$, there exist contours ${C}_{\pm}(\lambda) \subset
D$ terminating in $\lambda$ such that
$$
\re \int_{\lambda^-_0}^{\lambda}\mu(\lambda)d\lambda \to -\infty,  \quad
\re \int_{\lambda^+_0}^{\lambda}\mu(\lambda)d\lambda \to +\infty
$$
as $\lambda_0^- \to \infty$ along $C_-(\lambda)$ and as $\lambda_0^+ \to \infty$
along $C_+(\lambda)$, respectively (see \cite{F}, \cite{FIKN}). The interior 
of a canonical domain contains exactly one Stokes curve, and the boundary 
consists of Stokes curves. System \eqref{3.1} admits the following WKB solution 
(\cite{F}, \cite[Theorem 7.2]{FIKN}, \cite[Proposition 3.8]{S}). 
\begin{prop}\label{prop4.2}
In a canonical domain system \eqref{3.1} with $\mathcal{B}(t,\lambda)
=b_1\sigma_1+b_2\sigma_2 +b_3\sigma_3$ admits an asymptotic solution 
expressed as
$$
\Psi_{\mathrm{WKB}}(\lambda)=T(I+O(t^{-\delta})) \exp\Bigl(\int^{\lambda}
_{\lambda_*} \Lambda(\tau) d\tau \Bigr), \quad
T=\begin{pmatrix}  1 & \frac{b_3-\mu}{b_1+ib_2} \\
               \frac{\mu-b_3}{b_1-ib_2}  &  1
\end{pmatrix},
$$
as long as $|b_1 \pm ib_2|^{-1} \ll 1,$ $|\psi|+|\mathfrak{z}| \ll 1,$
$|\lambda-\lambda_j| \gg t^{-(2/3)(1 -\delta)}$ $(\lambda_j:$ a simple 
turning point$)$ with given implied constants. 
Here $\delta$ is an arbitrary number such that $0<\delta<1$, 
$\lambda_* $ is a fixed base point, and
\begin{align*}
 &\Lambda(\lambda)=t \mu(t,\lambda)\sigma_3 -\mathrm{diag} T^{-1}T_{\lambda},
\\
 &\mathrm{diag}T^{-1}T_{\lambda} 
=\frac 14 \Bigl(1-\frac{b_3}{\mu}\Bigr) \frac{\partial}{\partial\lambda}
\ln \frac{b_1+ib_2}{b_1-ib_2} \sigma_3 +\frac 12\frac{\partial}{\partial\lambda}
\ln\frac{\mu}{\mu+b_3} I.
\end{align*}
\end{prop}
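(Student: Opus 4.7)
The plan is to derive the WKB solution by the standard diagonalising-transformation followed by an iteration scheme, adapted to the canonical domain geometry of system \eqref{3.1}. First I would diagonalise $\mathcal{B}(t,\lambda)=b_1\sigma_1+b_2\sigma_2+b_3\sigma_3$ by the matrix $T$ given in the statement: a direct computation using $\mu^2=b_1^2+b_2^2+b_3^2$ yields $T^{-1}\mathcal{B}T=\mu\sigma_3$, and the columns of $T$ are the eigenvectors associated with $\pm\mu$. The condition $|b_1\pm ib_2|^{-1}\ll 1$, together with the assumption $|\psi|+|\mathfrak{z}|\ll 1$ and $|\lambda-\lambda_j|\gg t^{-(2/3)(1-\delta)}$, prevents $T$ from degenerating and keeps $\det T$ bounded away from zero.

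Substituting $\Phi=T\tilde\Phi$ converts \eqref{3.1} into
\[
\tilde\Phi_\lambda = \bigl(t\mu\sigma_3 - T^{-1}T_\lambda\bigr)\tilde\Phi.
\]
Splitting $T^{-1}T_\lambda$ into diagonal and off-diagonal parts, the diagonal piece may be evaluated directly from the explicit form of $T$: computing $T^{-1}=(\det T)^{-1}\bigl(\begin{smallmatrix}1&-(b_3-\mu)/(b_1+ib_2)\\-(\mu-b_3)/(b_1-ib_2)&1\end{smallmatrix}\bigr)$, differentiating $T$ in $\lambda$, and using $\mu\mu_\lambda=b_1 b_{1\lambda}+b_2 b_{2\lambda}+b_3 b_{3\lambda}$ together with $\det T = 2\mu/(\mu+b_3)$ (up to sign), one reduces the diagonal entries to the expression $\tfrac14(1-b_3/\mu)\partial_\lambda\ln((b_1+ib_2)/(b_1-ib_2))\sigma_3+\tfrac12\partial_\lambda\ln(\mu/(\mu+b_3))\,I$ claimed in the proposition. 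Then setting $\tilde\Phi = \exp(\int^\lambda_{\lambda_*}\Lambda(\tau)d\tau)F$ absorbs the diagonal part, leaving
\[
F_\lambda = -\exp\!\Bigl(-\textstyle\int^\lambda_{\lambda_*}\Lambda\,d\tau\Bigr)\,\mathrm{offdiag}(T^{-1}T_\lambda)\,\exp\!\Bigl(\textstyle\int^\lambda_{\lambda_*}\Lambda\,d\tau\Bigr)F,
\]
an equation of the form $F_\lambda = t^{-1}K(\lambda)F$ in which $K$ is off-diagonal and bounded on the canonical domain away from turning points.

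Next I would convert this to a Volterra integral equation $F=I+\int K_t F\,d\tau$ with the contours of integration chosen according to the canonical-domain condition: for the $(1,2)$-entry integrate along $C_+(\lambda)$ on which $\mathrm{Re}\int\mu\,d\tau$ decreases towards $\lambda$, and for the $(2,1)$-entry along $C_-(\lambda)$ on which it increases. The exponential factors $\exp(\pm 2t\int\mu\,d\tau)$ in the conjugated kernel are then uniformly bounded along these contours, and standard Neumann-series iteration gives $F=I+O(t^{-\delta})$ uniformly on the canonical domain, with the $t^{-\delta}$ factor produced by the explicit $t^{-1}$ gain together with integration along a contour of length $\ll t^{(2/3)(1-\delta)}$ dictated by the turning-point exclusion.

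The main obstacle is obtaining uniform control of the off-diagonal kernel and the exponential weights near the turning points: both $\mu^{-1}$ and $(b_1\pm ib_2)^{-1}$ become large there, and the estimate on the Volterra integral must absorb the factor produced by $|\lambda-\lambda_j|^{-1}$ against the Stokes-curve geometry. The exponent $\tfrac23(1-\delta)$ in the exclusion condition $|\lambda-\lambda_j|\gg t^{-(2/3)(1-\delta)}$ is dictated precisely by the matching with local Airy-type behaviour around simple turning points, where $\mu \asymp (\lambda-\lambda_j)^{1/2}$ and $\int\mu\,d\tau \asymp (\lambda-\lambda_j)^{3/2}$. Once this estimate is in place, the construction of canonical contours (guaranteed by the definition of canonical domain recalled just before the proposition) carries the proof through; since this WKB construction is classical and is established in \cite{F}, \cite[Theorem 7.2]{FIKN}, and \cite[Proposition 3.8]{S}, the argument above reduces to verifying that the specific $\mathcal{B}(t,\lambda)$ of \eqref{3.1} fits the hypotheses of those references under the stated bounds on $\psi$, $\mathfrak{z}$ and $|b_1\pm ib_2|^{-1}$.
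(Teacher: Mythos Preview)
Your proposal is correct and in fact goes beyond what the paper does: the paper gives no proof of this proposition at all, simply citing \cite{F}, \cite[Theorem 7.2]{FIKN}, and \cite[Proposition 3.8]{S} immediately before the statement. Your sketch---diagonalise by $T$, absorb the diagonal part of $T^{-1}T_\lambda$ into the exponent, and control the remaining off-diagonal piece by a Volterra iteration along the canonical contours $C_\pm(\lambda)$---is exactly the standard construction in those references, and your closing remark that the task reduces to checking the hypotheses for the specific $\mathcal{B}(t,\lambda)$ of \eqref{3.1} is precisely the paper's own stance.
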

\begin{rem}\label{rem4.1}
In the WKB solution we write $\Lambda(\lambda)$ in the component-wise form
$\Lambda(\lambda)=\Lambda_3(\lambda)+\Lambda_I(\lambda)$, $\Lambda_3(\lambda)
 \in \mathbb{C}\sigma_3,$ $\Lambda_I(\lambda) \in \mathbb{C}I$ with
$$
\Lambda_3(\lambda)=t\mu(t,\lambda)\sigma_3-\mathrm{diag}T^{-1}T_{\lambda}|
_{\sigma_3}\sigma_3, 
\quad \Lambda_I(\lambda)= -\mathrm{diag}T^{-1}T_{\lambda}|_I I.
$$
\end{rem}
The WKB solution fails in expressing asymptotics in a neighbourhood of a
turning point. Around a simple turning point equation \eqref{3.1} is reduced
to the system
\begin{equation}\label{4.3}
\frac{dW}{d\zeta}=\begin{pmatrix} 0 & 1 \\ \zeta & 0  \end{pmatrix} W
\end{equation}
having solutions ${}^{T}(\mathrm{Ai}(\zeta),\mathrm{Ai}_{\zeta}(\zeta)),$
${}^{T}(\mathrm{Bi}(\zeta),\mathrm{Bi}_{\zeta}(\zeta))$ \cite[Theorem 7.3]
{FIKN}, \cite[Proposition 3.9]{S}, where $\mathrm{Ai}
(\zeta)$ is the Airy function and $\mathrm{Bi}(\zeta)=e^{-\pi i/6}
\mathrm{Ai}(e^{-2\pi i/3}\zeta)$ \cite{AS}, \cite{HTF}. 
\begin{prop}\label{prop4.3}
Let $\lambda_j$ be a simple turning point, and write $c_k=b_k(\lambda_j),$
$c'_k=(b_k)_{\lambda}(\lambda_j)$ $(k=1,2,3)$ and $\kappa_c=c_1c_1'+c_2c_2'
+c_3c_3'.$ Suppose that $c_k,$ $c_k'$ are bounded and $ c_1\pm ic_2\not=0.$
Then \eqref{3.1} admits a matrix solution of the form
$$
\Phi_j(\lambda)=T_j(I+O(t^{-\delta'})) \begin{pmatrix} 1 & 0 \\ 0 & \hat{t}^{-1}
\end{pmatrix} W(\zeta), 
\quad T_j=\begin{pmatrix} 1  & -\frac{c_3}{c_1+ic_2} \\
        -\frac{c_3}{c_1-ic_2} & 1   \end{pmatrix},
$$
in which $\hat{t}=2(2\kappa_c)^{-1/3}(c_1-i c_2)(t/4)^{1/3}$ and
$\lambda-\lambda_j=(2\kappa_c)^{-1/3} (t/4)^{-2/3}(\zeta+\zeta_0)$ with
$|\zeta_0| \ll t^{-1/3},$ as long as $|\zeta| \ll t^{(2/3-\delta')/3},$ that is,
$|\lambda-\lambda_j| \ll t^{-2/3 +(2/3-\delta')/3}.$ Here $\delta'$ is a given
number such that $0<\delta'<2/3,$ and $W(\zeta)$ solves \eqref{4.3}, which
admits canonical solutions $W_{\nu}(\zeta)$ $(\nu\in \mathbb{Z})$ such that
$$
W_{\nu}(\zeta)=\zeta^{-(1/4)\sigma_3}(\sigma_3+\sigma_1)(I+O(\zeta^{-3/2}))
\exp(\tfrac 23 \zeta^{3/2} \sigma_3)
$$
as $\zeta \to \infty$ through the sector $|\arg \zeta-(2\nu-1)\frac{\pi}3|
<\frac {2\pi}3,$ and that $W_{\nu+1}(\zeta)=W_{\nu}(\zeta)G_{\nu}$ with
$$
G_1=\begin{pmatrix} 1 & -i \\  0 & 1 \end{pmatrix}, \quad
G_2=\begin{pmatrix} 1 & 0 \\  -i & 1 \end{pmatrix}, \quad
G_{\nu+1}=\sigma_1 G_{\nu} \sigma_1.
$$
\end{prop}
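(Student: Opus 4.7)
The plan is to carry out a standard turning-point reduction: gauge $\mathcal{B}(t,\lambda)$ into a Jordan-type form at $\lambda_j$, rescale both dependent and independent variables so that the leading part matches the Airy system \eqref{4.3}, and then handle the remainder perturbatively. Since $\lambda_j$ is a simple turning point, $\mu(t,\lambda_j)^2=c_1^2+c_2^2+c_3^2=0$ and $\mathcal{B}(t,\lambda_j)=c_1\sigma_1+c_2\sigma_2+c_3\sigma_3$ is a nonzero nilpotent matrix (this uses the hypothesis $c_1\pm ic_2\neq 0$). A direct computation exploiting $c_1^2+c_2^2=-c_3^2$ gives $\det T_j=2$ and
$$T_j^{-1}\mathcal{B}(t,\lambda_j)T_j=\begin{pmatrix} 0 & -2c_3^2/(c_1+ic_2) \\ 0 & 0 \end{pmatrix},$$
so $T_j$ is precisely the Jordan gauge at $\lambda_j$.

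Next I Taylor-expand $\mathcal{B}(t,\lambda)$ at $\lambda_j$ and compute $T_j^{-1}\mathcal{B}(t,\lambda)T_j$ to first order in $\lambda-\lambda_j$. The $(2,1)$ entry vanishes at $\lambda_j$, and its $\lambda$-derivative can be shown by straightforward calculation to be proportional to $\kappa_c=c_1c_1'+c_2c_2'+c_3c_3'$; this is the same $\kappa_c$ that appears in the expansion $\mu(t,\lambda)^2=2\kappa_c(\lambda-\lambda_j)+O((\lambda-\lambda_j)^2)$ of the characteristic root. With the change of variable $\lambda-\lambda_j=(2\kappa_c)^{-1/3}(t/4)^{-2/3}(\zeta+\zeta_0)$ and the diagonal rescaling $\Phi=T_j\,\mathrm{diag}(1,\hat t^{-1})W$ using $\hat t=2(2\kappa_c)^{-1/3}(c_1-ic_2)(t/4)^{1/3}$, the system \eqref{3.1} becomes
$$\frac{dW}{d\zeta}=\left(\begin{pmatrix} 0 & 1 \\ \zeta & 0\end{pmatrix}+R(t,\zeta)\right)W,$$
where the shift $\zeta_0$, with $|\zeta_0|\ll t^{-1/3}$, is chosen to absorb the $O(t^{-1})$ corrections in \eqref{4.1}--\eqref{4.2} that displace the true turning point from $\lambda_j(\infty)$.

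Once the perturbed Airy form is in place, I construct the asymptotic solution by comparing with \eqref{4.3}: letting $W_0(\zeta)$ be any canonical Airy matrix solution, writing $W=(I+V(t,\zeta))W_0$, and solving the resulting integral equation for $V$ by successive approximations. The properties of $W_\nu(\zeta)$, including the WKB-type asymptotics and the Stokes multipliers $G_\nu$, follow from the classical theory of the Airy equation and the relation $\mathrm{Bi}(\zeta)=e^{-i\pi/6}\mathrm{Ai}(e^{-2i\pi/3}\zeta)$. The main technical obstacle is the uniform estimate $\|R(t,\zeta)\|\ll t^{-\delta'}$ on the regime $|\zeta|\ll t^{(2/3-\delta')/3}$: one must carefully track how the quadratic and higher Taylor terms of $\mathcal{B}$, together with the $t^{-1}$ and $t^{-2}$ corrections to the turning points coming from \eqref{4.2}, transform under the rescaling, noting that a factor $(\lambda-\lambda_j)^k$ becomes $O(t^{-2k/3})$ while each factor $t$ is absorbed by the overall $t\cdot \mathcal{B}$ on the right-hand side. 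This is the same technical step as in \cite[Theorem 7.3]{FIKN} and \cite[Proposition 3.9]{S}, and once it is carried out the claimed error $O(t^{-\delta'})$ in the matrix $I+O(t^{-\delta'})$ follows at once.
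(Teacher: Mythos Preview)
The paper does not supply its own proof of this proposition: immediately before the statement it cites \cite[Theorem~7.3]{FIKN} and \cite[Proposition~3.9]{S} and then records Proposition~\ref{prop4.3} without argument. Your sketch is precisely the standard turning-point reduction that underlies those references (Jordan gauge via $T_j$, rescaling to the Airy model, perturbative control of the remainder), and the explicit verifications you give---$\det T_j=2$ from $c_1^2+c_2^2=-c_3^2$, the strictly upper-triangular form of $T_j^{-1}\mathcal{B}(t,\lambda_j)T_j$, and the appearance of $\kappa_c$ as $\tfrac12(\mu^2)'(\lambda_j)$---are all correct. So your approach is exactly the one the paper defers to, only spelled out in more detail than the paper itself provides.
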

\begin{rem}\label{rem4.2}
The matrix solutions
$\Psi_{\mathrm{WKB}}(\lambda)$ and $\Phi_j(\lambda)$ in Propositions 
\ref{prop4.2} and \ref{prop4.3} are simultaneously valid in the 
annulus $\mathcal{A}_{j}:$ $t^{-2/3 +\varepsilon} \ll |\lambda
-\lambda_j| \ll t^{-2/3+2\varepsilon}$ with, say, $\varepsilon
=\frac 23 \delta=\frac 16(\frac 23 -\delta')=\frac 1{10}$, in which the
matching is possible.
\end{rem}
\section{Direct monodromy problem}\label{sc5}
By WKB analysis we calculate Stokes matrices of linear system \eqref{3.1}
as a solution of the direct monodromy problem.
Recall $a_{\phi}(t)$ given by \eqref{4.2} and a unique solution $A_{\phi}$
of the Boutroux equations \eqref{2.2} for $0<|\phi|<\pi/4$. In our 
calculation suppose that $\psi$ and $\psi_t$ are arbitrary functions and that
\begin{equation}\label{5.1}
a_{\phi}(t)=A_{\phi} +\frac{B_{\phi}(t)}t, \quad B_{\phi}(t) \ll 1
\end{equation}
as $t \to \infty$ in the strip 
$$
 S_{\phi}(t'_{\infty},\kappa_1,\delta_1)=\{ t\,|\, \re t>t'_{\infty},\,\,\,
|\im t|<\kappa_1,\,\,\, |\psi(t)|+|\psi(t)|^{-1}+|\psi_t(t)|<\delta_1^{-1} \},
$$
where $\kappa_1$ is a given number, $\delta_1$ a given small number and
$t_{\infty}'$ a sufficiently large number. 
{\small
\begin{figure}[htb]
\begin{center}
\unitlength=0.90mm
\begin{picture}(80,60)(-40,-15)
 \put(20,11){\makebox{$\mathbf{c}^3_{\infty_1}$}}
 \put(10,23){\makebox{$\mathbf{c}^5_{3}$}}
 \put(-11,27){\makebox{$\mathbf{c}_5^{\infty_3}$}}


 \put(1,35){\makebox{$\lambda_5$}}
 \put(4,14){\makebox{$\lambda_{3}$}}
 \put(-8,12){\makebox{$\lambda_{1}$}}

 \put(3,0){\makebox{$0$}}
 \put(0,0){\circle{1.5}}

 \put(5,29){\circle*{1.5}}

 \put(5,19.5){\circle*{1.5}}

 \put(-3,10){\circle*{1.5}}
 \put(7,22){\vector(-1,0){0}}
 \put(8.6,29.5){\vector(-1,3){0}}
 \put(2.2,31.1){\vector(-1,-2){0}}
\thinlines
 \qbezier (7.1,31.7) (4.5,34) (2.2, 31.1)
 \qbezier (9,19) (9,22) (7, 22)
 \qbezier (7,26) (9.4,27) (8.6, 29.5)

 \qbezier (5,29) (9,29) (14, 40)
 \qbezier (-3,10) (-1,7) (0,0)
 \qbezier (-3.3,10) (-4.8,8) (-3.9,3)
 \qbezier (-3.2,7.5) (-3.7,7) (-3.1,3)

 \qbezier (-3,10) (-12,7) (-33,19)
 \qbezier (5,19.5) (-2,15) (-3,10)

 \qbezier (4.0,19.9) (1.8,25) (4.0,28.4)
 \qbezier (4.15,21.8) (3.2,25) (4.15,26.7)

\thicklines
 \qbezier (5,19.5) (16,14) (33,19)
 \qbezier (5,19.3) (16,13.8) (33,18.8)

 \qbezier (5,19.5) (6.3,25) (5,29)
 \qbezier (5.3,19.5) (6.6,25) (5.3,29)

 \qbezier (5,29) (-5,30) (-13, 40)
 \qbezier (5,29.2) (-5,30.2) (-13, 40.2)

 \put(-10,-13){\makebox{$\mathfrak{c}^{5\pi/8}_{\pi/8}=\mathbf{c}_{\infty_1}^3
\cup \mathbf{c}_3^5 \cup \mathbf{c}_5^{\infty_3}$}}
\end{picture}
\qquad
\begin{picture}(80,60)(-40,-15)
 \put(20,11){\makebox{$\mathbf{c}_{\infty_1}^3$}}
 \put(-4,18.5){\makebox{$\mathbf{c}_{3}^1$}}
 \put(-25,6){\makebox{$\mathbf{c}^{\infty_4}_1$}}

 \put(1,32){\makebox{$\lambda_5$}}
 \put(8,20){\makebox{$\lambda_{3}$}}
 \put(-0.5,8){\makebox{$\lambda_{1}$}}

 \put(3,0){\makebox{$0$}}
 \put(0,0){\circle{1.5}}

 \put(5,29){\circle*{1.5}}

 \put(5,19.5){\circle*{1.5}}

 \put(-3,10){\circle*{1.5}}
\thinlines
 \qbezier (5,29) (9,29) (14, 40)
 \qbezier (5,29) (-5,30) (-13, 40)
 \qbezier (5.5,19.5) (6.7,25) (5.5,28)
 \qbezier (-3,10) (-1,7) (0,0)
 \qbezier (-3.3,10) (-4.8,8) (-3.9,3)
 \qbezier (-3.2,7.5) (-3.7,7) (-3.1,3)

 \qbezier (4.0,19.9) (1.8,25) (4.0,28.4)
 \qbezier (4.15,21.8) (3.2,25) (4.15,26.7)

 \qbezier (7.9,17) (5.7,14.5) (2.9,16.5)

 \qbezier (-2.5,13.5) (-5.7,14.5) (-6.9,11.0)
\put(2.9,16.5){\vector(-2,3){0}}
\put(-6.9,11.0){\vector(0,-1){0}}

\thicklines
 \qbezier (5,19.5) (16,14) (33,19)
 \qbezier (5,19.7) (16,14.2) (33,19.2)

 \qbezier (5,19.5) (-2,15) (-3,10)
 \qbezier (5.1,19.5) (-1.9,15.0) (-2.9,10.0)
 \qbezier (5.0,19.6) (-1.9,15.1) (-2.9,10.1)

 \qbezier (-3,10) (-12,7) (-33,19)
 \qbezier (-3,9.8) (-12,6.8) (-33,18.8)


 \put(-10,-13){\makebox{$\mathfrak{c}^{7\pi/8}_{\pi/8}=\mathbf{c}_{\infty_1}^3
\cup \mathbf{c}_3^1 \cup \mathbf{c}_1^{\infty_4}$}}
\end{picture}
\end{center}
\caption{Fragments of the Stokes graph for $-\pi/4<\phi<0$}
\label{fragment1}
\end{figure}
}
\par
Let $-\pi/4<\phi <0.$ Let us calculate the analytic continuations of the
matrix solution $\Phi_1^{\infty}(\lambda)$ given by \eqref{3.3} in the sector
$|\arg(t\lambda)-\tfrac{\pi}8|<\tfrac{\pi}4$ along the fragments of the 
Stokes graph
$$
\mathfrak{c}_{\pi/8}^{5\pi/8}
=\mathbf{c}_{\infty_1}^3 \cup \mathbf{c}_{3}^{5} \cup \mathbf{c}_{5}^{\infty_3}
\,\,\, \text{and} \,\,\,
\mathfrak{c}_{\pi/8}^{7\pi/8}=\mathbf{c}_{\infty_1}^3
 \cup \mathbf{c}_{3}^{1} \cup \mathbf{c}_{1}^{\infty_4}
$$ 
with
$\mathbf{c}_{\infty_1}^3=(e^{i\pi/8}\infty, \lambda_3)^{\sim},$
$\mathbf{c}_{3}^5=(\lambda_3, \lambda_5)^{\sim},$
$\mathbf{c}_5^{\infty_3}=(\lambda_5, e^{5\pi/8})^{\sim},$
$\mathbf{c}_3^{1}=(\lambda_3, \lambda_1)^{\sim},$
$\mathbf{c}_1^{\infty_4}=(\lambda_1, e^{7\pi/8})^{\sim},$ where $(p,q)^{\sim}$
denotes a curve joining $p$ to $q$, and $\mathbf{c}_3^5$ lies on the
right shore of the cut $[\lambda_3,\lambda_5]$ (cf.~Figure \ref{fragment1}). 
\par
The Stokes matrix $S^*_1S^*_2= 
\Phi_1^{\infty}(\lambda)^{-1}\Phi_3^{\infty}(\lambda)$ follows from 
the analytic continuation of $\Phi_1^{\infty}(\lambda)$ along $\mathfrak{c}
^{5\pi/8}_{\pi/8}$, which is calculated by the matching procedure as below. 
In the steps {\bf (2)}, {\bf (3)}, {\bf (6)}, {\bf (7)}, analytic continuations
are considered in the annulus $\mathcal{A}_3$ and $\mathcal{A}_5$, which may 
be given by $\delta= \tfrac{3}{20},$ $\delta'=\tfrac{1}{15}$ as in
Remark \ref{rem4.2}. Thus, in what follows we set $\delta=\tfrac{3}{20}.$ 
\par
{\bf (1)} For a WKB solution $\Psi^{(\infty)}_3(\lambda)$ along 
$\mathbf{c}_{\infty_1}^{3}$ with a base point
$\tilde{\lambda}_3$ such that $\tilde{\lambda}_3 -\lambda_3\asymp t^{-1}$, 
set $\Phi_1^{\infty}(\lambda)=\Psi^{(\infty)}_3(\lambda)\Gamma_{\infty,3}$.
Then
\begin{align*}
\Gamma_{\infty,3} =& \Psi^{(\infty)}_3(\lambda)^{-1}\Phi_1^{\infty}(\lambda)
\\
=&C_3(\tilde{\lambda_3})c_I(\tilde{\lambda_3})(I+O(t^{-\delta}))
\\
&\times \exp \biggl( -\lim_{\substack{\lambda \to \infty \\[0.05cm]
\lambda \in \mathbf{c}_{\infty_1}^3}} \biggl(\int^{\lambda}_{\lambda_3}
\Lambda_3(\tau)d\tau -(\tfrac 14 t\lambda^4+ e^{i\phi}t\lambda^2 +(\alpha-
\beta)\ln \lambda)\sigma_3 \biggr)\biggr),
\end{align*}
where $C_3(\tilde{\lambda}_3)=\exp(\int^{\tilde{\lambda}_3}_{\lambda_3}
\Lambda_3(\tau)d\tau ),$ $c_I(\tilde{\lambda}_3)=\exp(-\int^{\infty}_{\tilde
{\lambda}_3}\Lambda_I(\tau) d\tau).$
\par
{\bf (2)} For a canonical solution $\Phi_3^+(\lambda)$ along 
$\mathbf{c}^3_{\infty_1} \cap\mathcal{A}_{3}$ 
(cf. Remark \ref{rem4.2}), set $\Psi^{(\infty)}_3(\lambda) =\Phi_3^+(\lambda)
\Gamma^+_3 $. Then
$$
\Gamma^+_3=\Phi_3^+(\lambda)^{-1}\Psi^{(\infty)}_3(\lambda)
=(\tilde{\zeta}_3)^{1/4}(I+O(t^{-\delta}))C_3(\tilde{\lambda}_3)^{-1}
\begin{pmatrix}  1 & 0  \\  0 & -\frac {c_1-ic_2}{c_3} \end{pmatrix},
$$
where $c_k=b_k(\lambda_3)$ and $\tilde{\zeta}_3 \asymp \tilde{\lambda}_3-
\lambda_3$ is suitably chosen.
\par
{\bf (3)} For a canonical solution $\Phi_3^-(\lambda)$ along $\mathbf{c}^5_{3} 
\cap\mathcal{A}_{3}$, set $\Phi_3^+(\lambda)=\Phi_3^-(\lambda)\Gamma
_{(3)}$. Then
$$
\Gamma_{(3)}=\Phi_3^{-}(\lambda)^{-1}\Phi_3^+(\lambda)=G_1^{-1}=
\begin{pmatrix} 1 & i \\ 0 & 1 \end{pmatrix}.
$$
\par
{\bf (4)} For a WKB solution $\Psi_3^{(5)}(\lambda)$ along $\mathbf{c}_3^5$
with a base point $\lambda_3'$ such that $\lambda_3'-\lambda_3 \asymp t^{-1}$,
set $\Phi_3^-(\lambda)=\Psi_3^{(5)}(\lambda) \Gamma_3^-$. Then
$$
\Gamma_3^-=\Psi_3^{(5)}(\lambda)^{-1}\Phi_3^-(\lambda)=(\zeta_3')^{-1/4}
(I+O(t^{-\delta}))C'_3(\lambda_3')
 \begin{pmatrix}  1 & 0 \\  0 & -\frac{c_3}{c_1-i c_2} \end{pmatrix},
$$
where $C_3'(\lambda_3')=\exp(\int^{\lambda_3'}_{\lambda_3}\Lambda_3(\tau)d\tau)$
and $\zeta_3' \asymp \lambda_3'-\lambda_3$ is suitably chosen.
\par
{\bf (5)} For a WKB solution $\Psi_5^{(3)}(\lambda)$ along $\mathbf{c}_3^5$
with a base point $\lambda_5'$ such that $\lambda_5'-\lambda_5 \asymp t^{-1}$,
set $\Psi_3^{(5)}(\lambda)=\Psi_5^{(3)}(\lambda)\Gamma_{3,5}$. Then
\begin{align*}
\Gamma_{3,5}=&\Psi_5^{(3)}(\lambda)^{-1}\Psi_3^{(5)}(\lambda)
\\
=&C_3'(\lambda'_3)^{-1}C_3'(\lambda'_5)c_I(\lambda_3',\lambda_5')
(I+O(t^{-\delta}))\exp\biggl(\int_{\lambda_3}^{\lambda_5}\Lambda_3(\tau)
d\tau\biggr),
\end{align*}
where $C_3(\lambda_5')=\exp(\int^{\lambda_5'}_{\lambda_5}\Lambda_3(\tau)d\tau)$
and $c_I(\lambda'_3,\lambda'_5)=\exp(\int^{\lambda'_5}_{\lambda'_3}
\Lambda_I(\tau)d\tau).$
\par
{\bf (6)} For a canonical solution $\Phi_5^+(\lambda)$ along $\mathbf{c}_3^5
\cap \mathcal{A}_5$, set $\Psi_5^{(3)}(\lambda)=\Phi_5^+(\lambda)\Gamma_5^+.$
Then
$$
\Gamma_5^+=\Phi_5^+(\lambda)^{-1}\Psi_5^{(3)}(\lambda)=(\zeta_5')^{1/4}
(I+O(t^{-\delta}))C_3(\lambda'_5)^{-1}
 \begin{pmatrix}  1 & 0 \\ 0 & -\frac{d_1-id_2}{d_3}  \end{pmatrix},
$$
where $d_k=b_k(\lambda_5)$ and $\zeta'_5 \asymp \lambda'_5-\lambda_5$ 
is suitably chosen.
\par
{\bf (7)} For a canonical solution $\Phi_5^-(\lambda)$ along $\mathbf{c}_5
^{\infty_3}\cap \mathcal{A}_5$, set $\Phi_5^+(\lambda)=\Phi_5^-(\lambda)
\Gamma_{(5)}$. Then
$$
\Gamma_{(5)}=\Phi_5^-(\lambda)^{-1}\Phi_5^+(\lambda)=(G_1G_2)^{-1}
=\begin{pmatrix}  1 & i \\ i & 0 \end{pmatrix}.
$$
\par
{\bf (8)} For a WKB solution $\Psi_5^{(\infty)}(\lambda)$ along $\mathbf{c}_5
^{\infty_3}$ with a base point $\tilde{\lambda}_5$ such that $\tilde{\lambda}_5
-\lambda_5 \asymp t^{-1},$ set $\Phi_5^-(\lambda)=\Psi_5^{(\infty)}(\lambda)
\Gamma_5^-$. Then
$$
\Gamma_5^-=\Psi_5^{(\infty)}(\lambda)^{-1}\Phi_5^-(\lambda)
=(\tilde{\zeta}_5)^{-1/4}(I+O(t^{-\delta})) C_3(\tilde{\lambda}_5)
\begin{pmatrix}  1 & 0 \\ 0 & -\frac{d_3}{d_1-i d_2} \end{pmatrix},
$$
where $C_3(\tilde{\lambda}_5)=\exp(\int^{\tilde{\lambda}_5}_{\lambda_5}
\Lambda_3(\tau)d\tau )$ and $\tilde{\zeta}_5 \asymp \tilde{\lambda}_5-\lambda_5$
is suitably chosen.
\par
{\bf (9)} For $\Phi_3^{\infty}(\lambda)$ solving \eqref{3.1} in the sector
$|\arg\lambda-\frac{5\pi}8|<\frac{\pi}4$, set $\Psi_5^{(\infty)}(\lambda)
=\Phi_3^{\infty}(\lambda)\Gamma_{5,\infty}.$ Then 
\begin{align*} 
\Gamma_{5,\infty}=& \Phi_3^{\infty}(\lambda)^{-1}\Psi_5^{(\infty)}(\lambda)
\\
=& C_3(\tilde{\lambda}_5)^{-1}c_I(\tilde{\lambda}_5)^{-1}(I+O(t^{-\delta}))
\\
& \times \exp\biggl(\lim_{\substack{\lambda\to\infty \\[0.05cm] \lambda \in
\mathbf{c}_5^{\infty_3}}} \biggl(\int_{\lambda_5}^{\lambda} \Lambda_3(\tau)
d\tau -(\tfrac 14t\lambda^4+e^{i\phi}t\lambda^2+(\alpha-\beta)\ln\lambda)\sigma
_3\biggr)\biggr),
\end{align*}
where $c_I(\tilde{\lambda}_5)=\exp(-\int^{\infty}_{\tilde{\lambda}_5}\Lambda_I
(\tau)d\tau).$
\par
Product of the matrices above along $\mathfrak{c}^{5\pi/8}_{\pi/8}$ 
yields the Stokes matrix
\begin{align*}
(S_1^*S_2^*)^{-1} =& \begin{pmatrix}  1 & -s_1^* \\ -s_2^* & 1+s_1^*s_2^*
\end{pmatrix}
\\
 =& \Phi_3^{\infty}(\lambda)^{-1}\Phi_1^{\infty}(\lambda)
\\
=&\Gamma_{5,\infty} \Gamma_5^- \Gamma_{(5)} \Gamma_5^+ \Gamma_{3,5}
\Gamma_3^- \Gamma_{(3)}\Gamma_3^+ \Gamma_{\infty,3}
\\
=&\epsilon_1 e^{J_{5}^{\infty 3}\sigma_3}
\begin{pmatrix} 1 & 0 \\ 0 & -d_0^{-1} \end{pmatrix}
\begin{pmatrix} 1 & i \\ i & 0 \end{pmatrix} 
\begin{pmatrix} 1 & 0 \\ 0 & -d_0 \end{pmatrix}
\\
& \times e^{J_{3,5}\sigma_3}
\begin{pmatrix} 1 & 0 \\ 0 & -c_0^{-1} \end{pmatrix}
\begin{pmatrix} 1 & i \\ 0 & 1 \end{pmatrix} 
\begin{pmatrix} 1 & 0 \\ 0 & -c_0 \end{pmatrix} e^{-J_3^{\infty 1}\sigma_3}
\\
=& \epsilon_1 \begin{pmatrix}
e^{J_{3,5}+J_5^{\infty 3} -J_3^{\infty 1}} & -i(c_0e^{J_{3,5}}+d_0e^{-J_{3,5}})
e^{J_5^{\infty 3}+J_3^{\infty 1}}  \\
-i d_0^{-1} e^{J_{3,5}-J_5^{\infty 3}-J_3^{\infty 1}}  & 
- c_0d_0^{-1} e^{J_{3,5}-J_5^{\infty 3}+J_3^{\infty 1}}   
\end{pmatrix}
\end{align*}
(up to the multiplier $1+O(t^{-\delta})$ to each entry),
in which $\epsilon_1^2=1,$ 
\begin{align}\notag
& c_0=(c_1-ic_2)/c_3, \quad d_0=(d_1-id_2)/d_3, \quad c_k=b_k(\lambda_3),\,\,\,
d_k=b_k(\lambda_5), 
\\
\label{5.2}
& J_{3,5} \sigma_3 =\int^{\lambda_5}_{\lambda_3} \Lambda_3(\tau)d\tau, \quad
\text{$\mathbf{c}_3^5=(\lambda_3,\lambda_5)^{\sim}$: on the right shore of
the cut $[\lambda_3,\lambda_5]$},
\\
\notag
& J_5^{\infty 3}\sigma_3
=\lim_{\substack{\lambda\to\infty \\[0.05cm] \lambda \in
\mathbf{c}_5^{\infty_3}}} \biggl(\int_{\lambda_5}^{\lambda} \Lambda_3(\tau)
d\tau -(\tfrac 14t\lambda^4+e^{i\phi}t\lambda^2+(\alpha-\beta)\ln\lambda)\sigma
_3\biggr),
\\
\notag
& J_3^{\infty 1}\sigma_3
=\lim_{\substack{\lambda\to\infty \\[0.05cm] \lambda \in
\mathbf{c}^3_{\infty_1}}} \biggl(\int_{\lambda_3}^{\lambda} \Lambda_3(\tau)
d\tau -(\tfrac 14t\lambda^4+e^{i\phi}t\lambda^2+(\alpha-\beta)\ln\lambda)\sigma
_3\biggr).
\end{align}            
Then the diagonal entries of $(S_1^*S_2^*)^{-1}$ give 
$1=\epsilon_1
e^{J_{3,5}+J_5^{\infty 3}-J_3^{\infty 1}}(1+O(t^{-\delta})),$ 
$1+s_1^*s_2^*=-\epsilon_1 c_0 d_0^{-1}
e^{J_{3,5}-J_5^{\infty 3}+J_3^{\infty 1}}(1+O(t^{-\delta})),$ 
which imply
\begin{equation}\label{5.3}
 1+s_1^* s_2^*= -c_0d_0^{-1} e^{2J_{3,5}}(1+O(t^{-\delta})). 
\end{equation} 
Similarly the analytic continuation of $\Phi_1^{\infty}(\lambda)$ along
$\mathfrak{c}_{\pi/8}^{7\pi/8}$ yields
\begin{align*}
(S^*_1S^*_2S^*_3)^{-1}=&\begin{pmatrix}
 1+s_2^*s_3^*  & -s_1^*-s_3^*-s_1^*s_2^*s_3^* \\ -s_2^*  & 1+s_1^*s_2^*
\end{pmatrix}  
\\
=& \Phi_4^{\infty}(\lambda)^{-1} \Phi_1^{\infty}(\lambda)
\\
=&\epsilon_2 e^{J_1^{\infty 4}\sigma_3} 
\begin{pmatrix} 1 & 0 \\ 0 & -e_0^{-1} \end{pmatrix}
\begin{pmatrix} 1 & i \\ 0 & 1 \end{pmatrix}
\begin{pmatrix} 1 & 0 \\ 0 & -e_0 \end{pmatrix}
\\
&\times e^{J_{3,1} \sigma_3}
\begin{pmatrix} 1 & 0 \\ 0 & -c_0^{-1} \end{pmatrix}
\begin{pmatrix} 1 & 0 \\ -i & 1 \end{pmatrix}
\begin{pmatrix} 1 & 0 \\ 0 & -c_0 \end{pmatrix} e^{-J_3^{\infty 1}\sigma_3}
\\
=& \epsilon_2 \begin{pmatrix} (e^{J_{3,1}}+c_0^{-1}e_0 e^{-J_{3,1}}) 
e^{J_1^{\infty 4}-J_3^{\infty 1}} &
 -i e_0 e^{-J_{3,1}+J_1^{\infty 4}+J_3^{\infty 1}} \\ 
i c_0^{-1} e^{-J_{3,1}-J_1^{\infty 4}-J_3^{\infty 1}} & 
 e^{-J_{3,1}-J_1^{\infty 4}+J_3^{\infty 1}}  \end{pmatrix}, 
\end{align*}
in which $\epsilon_2^2=1,$
\begin{align}\notag
 &e_0=(e_1-ie_2)/e_3, \quad e_k=b_k(\lambda_1),  
\\
\label{5.4}
 &J_{3,1} \sigma_3= \int^{\lambda_1}_{\lambda_3} \Lambda_3(\tau)d\tau,
\\
\notag
& J_1^{\infty 4}\sigma_3
=\lim_{\substack{\lambda\to\infty \\[0.05cm] \lambda \in
\mathbf{c}_1^{\infty_4}}} \biggl(\int_{\lambda_1}^{\lambda} \Lambda_3(\tau)
d\tau -(\tfrac 14t\lambda^4+e^{i\phi}t\lambda^2+(\alpha-\beta)\ln\lambda)\sigma
_3\biggr).
\end{align}
Observing the off-diagonal entries, we have 
$(1+s_1^*s_2^*)(1+s_2^*s_3^*)-1=c_0^{-1}e_0 e^{-2J_{3,1}}(1+O(t^{-\delta}))$.
Combining this with \eqref{5.3} and using Corollary \ref{cor3.5}, we have the
following.
\begin{prop}\label{prop5.1}
Suppose that $-\pi/4<\phi <0.$ Then
\begin{align*}
&1+s_1s_2= -c_0d_0^{-1}e^{2J_{3,5}}(1+O(t^{-\delta})), 
\\
&(1+s_1s_2)(1+s_2s_3)-1=c_0^{-1}e_0 e^{-2J_{3,1}}(1+O(t^{-\delta})),
\end{align*}
where $J_{3,5}$ and $J_{3,1}$ are integrals given by \eqref{5.2} and 
\eqref{5.4}.
\end{prop}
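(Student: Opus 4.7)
The plan is to follow the matching procedure already outlined before the proposition: compute the analytic continuation of the canonical solution $\Phi_1^\infty(\lambda)$ along the two Stokes-graph fragments $\mathfrak{c}_{\pi/8}^{5\pi/8}$ and $\mathfrak{c}_{\pi/8}^{7\pi/8}$, extract the Stokes data from the resulting matrix entries, and finally convert the starred Stokes multipliers into the unstarred ones via Corollary \ref{cor3.5}.

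First I would split each fragment into successive pieces on which a WKB solution (Proposition \ref{prop4.2}) is valid away from the turning points, and Airy-type solutions (Proposition \ref{prop4.3}) are valid inside the annuli $\mathcal{A}_3,$ $\mathcal{A}_5,$ $\mathcal{A}_1$. With $\delta = 3/20$ and $\delta' = 1/15$ as in Remark \ref{rem4.2}, the WKB and Airy descriptions overlap, so the transition matrices $\Gamma_{\infty,3},$ $\Gamma_3^{\pm},$ $\Gamma_{(3)},$ $\Gamma_{3,5},$ $\Gamma_5^{\pm},$ $\Gamma_{(5)},$ $\Gamma_{5,\infty}$ (for the first fragment) and their analogues at $\lambda_1$ (for the second) can be read off from the explicit normalizations in Propositions \ref{prop4.2} and \ref{prop4.3}. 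The Airy connection matrices $G_1$ and $G_1G_2$ account for crossing one and two Stokes curves at a simple turning point, respectively.

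Next I would multiply these factors in the correct order to get $(S_1^*S_2^*)^{-1} = \Phi_3^\infty(\lambda)^{-1}\Phi_1^\infty(\lambda)$ and $(S_1^*S_2^*S_3^*)^{-1} = \Phi_4^\infty(\lambda)^{-1}\Phi_1^\infty(\lambda)$. The scalar $c_I$-factors cancel across overlapping pieces, and the diagonalizing similarity absorbs the base-point normalization constants $C_3(\tilde\lambda_j),$ $(\tilde\zeta_j)^{\pm 1/4}$ into $1+O(t^{-\delta})$ errors. Reading the $(1,1)$ entry along $\mathfrak{c}_{\pi/8}^{5\pi/8}$ pins down the sign $\epsilon_1 e^{J_{3,5}+J_5^{\infty 3}-J_3^{\infty 1}} = 1+O(t^{-\delta}),$ and then the $(2,2)$ entry yields $1+s_1^*s_2^* = -c_0 d_0^{-1} e^{2J_{3,5}}(1+O(t^{-\delta}))$. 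Along $\mathfrak{c}_{\pi/8}^{7\pi/8}$, after fixing $\epsilon_2$ by a diagonal entry, the $(1,2)$ or $(2,1)$ entry identifies $s_1^*+s_3^*+s_1^*s_2^*s_3^*$ and $s_2^*$; combining with the structure $(1+s_1^*s_2^*)(1+s_2^*s_3^*) - 1 = s_2^*(s_1^*+s_3^*+s_1^*s_2^*s_3^*) - s_1^*s_2^*$ (which one checks by expansion), one arrives at
\[
(1+s_1^*s_2^*)(1+s_2^*s_3^*) - 1 = c_0^{-1} e_0\, e^{-2J_{3,1}}(1+O(t^{-\delta})).
\]

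Finally, I would pass from the starred to the unstarred quantities by invoking Corollary \ref{cor3.5}, which gives $s_1^* s_2^* = s_1 s_2$ and $s_2^* s_3^* = s_2 s_3$ (take $k=1,\ j=0$ and $k=2,\ j=0$). This immediately yields the two asymptotic formulas stated in Proposition \ref{prop5.1}. The main obstacle is the bookkeeping in step two: tracking the scalar factors $c_I,$ $C_3(\cdot),$ $\zeta^{\pm 1/4}$ through nine successive matchings and verifying that all non-exponential prefactors other than $c_0,$ $d_0,$ $e_0$ cancel up to $1+O(t^{-\delta})$. Getting the direction of each cut and the orientation of $\mathbf{c}_3^5$ on the right shore correct (so that $J_{3,5}$ has the sign recorded in \eqref{5.2}) is the other delicate point, and will be handled by consistently choosing $\Lambda_3$ on the upper sheet $\mathcal{R}_\infty^+$.
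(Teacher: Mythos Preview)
Your plan is exactly the paper's: the nine-step matching along $\mathfrak{c}_{\pi/8}^{5\pi/8}$ and its analogue along $\mathfrak{c}_{\pi/8}^{7\pi/8}$, reading off diagonal entries to fix $\epsilon_1,\epsilon_2$ and then extracting the products, followed by Corollary~\ref{cor3.5}. One small correction: the identity you wrote is off by a term --- in fact $(1+s_1s_2)(1+s_2s_3)-1 = s_2(s_1+s_3+s_1s_2s_3)$ exactly, so the second formula comes directly from the \emph{product} of the two off-diagonal entries of $(S_1^*S_2^*S_3^*)^{-1}$, with no extra $-s_1s_2$ and no need to bring in \eqref{5.3}.
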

\par
In the case where $0<\phi <\pi/4$, we calculate the analytic continuations of
$$
\Phi_2^{\infty}(\lambda) \,\,\,\text{along}\,\, 
\hat{\mathfrak{c}}_{3\pi/8}^{7\pi/8}=\mathbf{c}_{\infty_2}^{5}\cup 
{\mathbf{c}}_5^{3-} \cup \mathbf{c}_3^{\infty_4}, \quad
\Phi_1^{\infty}(\lambda) \,\,\,\text{along}\,\, 
\hat{\mathfrak{c}}_{\pi/8}^{7\pi/8}=\mathbf{c}_{\infty_1}^{1}\cup \mathbf{c}
_1^3 \cup \mathbf{c}_3^{\infty_4}
$$
with $\mathbf{c}_{\infty_1}^1=(e^{\pi i/8}\infty,\lambda_1)^{\sim}$,
$\mathbf{c}_{1}^3=(\lambda_1,\lambda_3)^{\sim}$,
$\mathbf{c}_{3}^{\infty_4}=(\lambda_3,e^{7\pi i/8}\infty)^{\sim}$,
$\mathbf{c}^{5}_{\infty_2}=(e^{3\pi i/8}\infty,\lambda_5)^{\sim}$,
${\mathbf{c}}_{5}^{3-}=(\lambda_5,\lambda_3)^{\sim}$,
where ${\mathbf{c}}_5^{3-}$ lies on the left shore of the cut
$[\lambda_3,\lambda_5]$ (cf.~Figure \ref{fragment2}). 
{\small
\begin{figure}[htb]
\begin{center}
\unitlength=0.90mm
\begin{picture}(80,60)(-40,-15)
 \put(-25,11){\makebox{$\mathbf{c}_3^{\infty_4}$}}
 \put(-14.5,22.5){\makebox{$\mathbf{c}_5^{3-}$}}
 \put(8,30){\makebox{$\mathbf{c}^5_{\infty_2}$}}

 \put(-6,35){\makebox{$\lambda_5$}}
 \put(-6,14){\makebox{$\lambda_{3}$}}
 \put(4,11){\makebox{$\lambda_{1}$}}

 \put(-4,0){\makebox{$0$}}
 \put(0,0){\circle{1.5}}

 \put(-5,29){\circle*{1.5}}

 \put(-5,19.5){\circle*{1.5}}

 \put(3,10){\circle*{1.5}}
 \put(-9,19){\vector(1,-3){0}}
 \put(-7,26){\vector(4,-1){0}}
 \put(-7.1,31.7){\vector(-2,-3){0}}
\thinlines
 \qbezier (-7.1,31.7) (-4.5,34) (-2.2, 31.1)
 \qbezier (-9,19) (-9,22) (-7, 22)
 \qbezier (-7,26) (-9.4,27) (-8.6, 29.5)

 \qbezier (-5,29) (-9,29) (-14, 40)
 \qbezier (3,10) (1,7) (0,0)
 \qbezier (3.3,10) (4.8,8) (3.9,3)
 \qbezier (3.2,7.5) (3.7,7) (3.1,3)

 \qbezier (3,10) (12,7) (33,19)
 \qbezier (-5,19.5) (2,15) (3,10)

 \qbezier (-4.0,19.9) (-1.8,25) (-4.0,28.4)
 \qbezier (-4.15,21.8) (-3.2,25) (-4.15,26.7)

\thicklines
 \qbezier (-5,19.5) (-16,14) (-33,19)
 \qbezier (-5,19.3) (-16,13.8) (-33,18.8)

 \qbezier (-5,19.5) (-6.3,25) (-5,29)
 \qbezier (-5.3,19.5) (-6.6,25) (-5.3,29)

 \qbezier (-5,29) (5,30) (13, 40)
 \qbezier (-5,29.2) (5,30.2) (13, 40.2)

\put(-10,-13){\makebox{$\hat{\mathfrak{c}}^{7\pi/8}_{3\pi/8}
=\mathbf{c}_{\infty_2}^5\cup \mathbf{c}_5^{3-}\cup \mathbf{c}_3^{\infty_4}$}}
\end{picture}
\qquad
\begin{picture}(80,60)(-40,-15)
 \put(-22,11){\makebox{$\mathbf{c}^{\infty_4}_3$}}
 \put(1,17.5){\makebox{$\mathbf{c}^{3}_1$}}
 \put(25,10){\makebox{$\mathbf{c}_{\infty_1}^1$}}

 \put(-6,32){\makebox{$\lambda_5$}}
 \put(-11.5,20.5){\makebox{$\lambda_{3}$}}
 \put(-2.5,9){\makebox{$\lambda_{1}$}}

 \put(-4,0){\makebox{$0$}}
 \put(0,0){\circle{1.5}}

 \put(-5,29){\circle*{1.5}}

 \put(-5,19.5){\circle*{1.5}}

 \put(3,10){\circle*{1.5}}
\thinlines
 \qbezier (-5,29) (-9,29) (-14, 40)
 \qbezier (-5,29) (5,30) (13, 40)
 \qbezier (-5.5,19.5) (-6.7,25) (-5.5,28)
 \qbezier (3,10) (1,7) (0,0)
 \qbezier (3.3,10) (4.8,8) (3.9,3)
 \qbezier (3.2,7.5) (3.7,7) (3.1,3)

 \qbezier (-4.0,19.9) (-1.8,25) (-4.0,28.4)
 \qbezier (-4.15,21.8) (-3.2,25) (-4.15,26.7)

 \qbezier (-7.9,17) (-5.7,14.5) (-2.9,16.5)

 \qbezier (2.5,13.5) (5.7,14.5) (6.9,11.0)
\put(-7.9,17){\vector(-1,2){0}}
\put(2.5,13.5){\vector(-2,-1){0}}

\thicklines
 \qbezier (-5,19.5) (-16,14) (-33,19)
 \qbezier (-5,19.7) (-16,14.2) (-33,19.2)

 \qbezier (-5,19.5) (2,15) (3,10)
 \qbezier (-5.1,19.5) (1.9,15.0) (2.9,10.0)
 \qbezier (-5.0,19.6) (1.9,15.1) (2.9,10.1)

 \qbezier (3,10) (12,7) (33,19)
 \qbezier (3,9.8) (12,6.8) (33,18.8)

 \put(-10,-13){\makebox{$\hat{\mathfrak{c}}^{7\pi/8}_{\pi/8}
=\mathbf{c}_{\infty_1}^1 \cup \mathbf{c}_1^3 \cup \mathbf{c}_3^{\infty_4}$}}
\end{picture}
\end{center}
\caption{Fragments of the Stokes graph for $0<\phi<\pi/4$}
\label{fragment2}
\end{figure}
}

This procedure results in
\begin{align*}
(S_2^*S_3^*)^{-1}=&\begin{pmatrix} 1+s_2^*s_3^* & -s_3^* \\ -s_2^* & 1
\end{pmatrix}
\\
=& \Phi_4^{\infty}(\lambda)^{-1}\Phi_2^{\infty}(\lambda)
\\
=&\tilde{\epsilon}_1 e^{J_3^{\infty 4}\sigma_3} 
\begin{pmatrix} 1 & 0 \\ 0 & -c_0^{-1} \end{pmatrix}
\begin{pmatrix} 1 & i \\ 0 & {1} \end{pmatrix}
\begin{pmatrix} 1 & 0 \\ 0 & -c_0 \end{pmatrix}
\\
&\times e^{J_{5,3}^- \sigma_3}
\begin{pmatrix} 1 & 0 \\ 0 & -d_0^{-1} \end{pmatrix}
\begin{pmatrix} 0 & i \\ i & {1} \end{pmatrix}
\begin{pmatrix} 1 & 0 \\ 0 & -d_0 \end{pmatrix}
e^{-J_5^{\infty 2} \sigma_3}
\\
=&\tilde{\epsilon}_1 
\begin{pmatrix} -c_0d_0^{-1} e^{-J^-_{5,3} +J_3^{\infty 4}-J_5^{\infty 2}} 
& -i(d_0 e^{J^-_{5,3}}+c_0e^{-J^-_{5,3}}) e^{J_3^{\infty 4}+J_5^{\infty 2}}  \\
-id_0^{-1} e^{-J^-_{5,3} -J_3^{\infty 4}-J_5^{\infty 2}}
&  e^{-J^-_{5,3} -J_3^{\infty 4}+J_5^{\infty 2}}
\end{pmatrix}
\end{align*}
with $\tilde{\epsilon}_1^2=1,$ $J_3^{\infty 4}\sigma_3=J_1^{\infty 4}\sigma_3|
_{\mathbf{c}_1^{\infty_4} \mapsto \mathbf{c}_3^{\infty_4}, \,\, 
\lambda_1 \mapsto \lambda_3 }$,
$J_5^{\infty 2}\sigma_3=J_3^{\infty 1}\sigma_3|
_{\mathbf{c}^3_{\infty_1} \mapsto \mathbf{c}^5_{\infty_2}, \,\, 
\lambda_3 \mapsto \lambda_5 }$ and
\begin{equation}
\label{5.5}
 J^-_{5,3} \sigma_3 =\int^{\lambda_3,-}_{\lambda_5} \Lambda_3(\tau)d\tau, \quad
\text{$\mathbf{c}_5^{3-}=(\lambda_5,\lambda_3)^{\sim}$: on the left shore of
the cut $[\lambda_3,\lambda_5]$},
\end{equation}
and
\begin{align*}
(S_1^*S_2^*S_3^*)^{-1}
=& \Phi_4^{\infty}(\lambda)^{-1}\Phi_1^{\infty}(\lambda)
\\
=&\tilde{\epsilon}_2 e^{J_3^{\infty 4}\sigma_3} 
\begin{pmatrix} 1 & 0 \\ 0 & -c_0^{-1} \end{pmatrix}
\begin{pmatrix} 1 & 0 \\ -i & {1} \end{pmatrix}
\begin{pmatrix} 1 & 0 \\ 0 & -c_0 \end{pmatrix}
\\
&\times e^{J_{1,3}\sigma_3}
\begin{pmatrix} 1 & 0 \\ 0 & -e_0^{-1} \end{pmatrix}
\begin{pmatrix} 1 & i \\ 0 & {1} \end{pmatrix}
\begin{pmatrix} 1 & 0 \\ 0 & -e_0 \end{pmatrix}
e^{-J_1^{\infty 1} \sigma_3}
\\
=&\tilde{\epsilon}_2 
\begin{pmatrix} 
 e^{J_{1,3} +J_3^{\infty 4}-J_1^{\infty 1}}
& -ie_0 e^{J_{1,3} +J_3^{\infty 4}+J_1^{\infty 1}}
\\
ic_0^{-1} e^{J_{1,3} -J_3^{\infty 4}-J_1^{\infty 1}} 
& (c_0^{-1}e_0e^{J_{1,3}}+e^{-J_{1,3}}) e^{-J_3^{\infty 4}+J_1^{\infty 1}}  
\end{pmatrix}
\end{align*}
with $\tilde{\epsilon}_2^2=1,$
$J_1^{\infty 1}\sigma_3=J_3^{\infty 1}\sigma_3|
_{\mathbf{c}^3_{\infty_1} \mapsto \mathbf{c}^1_{\infty_1}, \,\, 
\lambda_3 \mapsto \lambda_1 }$ and $J_{1,3} \sigma_3=-J_{3,1} \sigma_3.$
Thus we have the following.
\begin{prop}\label{prop5.2}
Suppose that $0<\phi<\pi/4.$ Then
\begin{align*}
&1+s_2s_3= -c_0d_0^{-1} e^{-2J^-_{5,3}}(1+O(t^{-\delta})), 
\\
&(1+s_1s_2)(1+s_2s_3)-1=c_0^{-1}e_0 e^{2J_{1,3}}(1+O(t^{-\delta})),
\end{align*}
where $J^-_{5,3}$ is given by \eqref{5.5} and $J_{1,3}=-J_{3,1}$.
\end{prop}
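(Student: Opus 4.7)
The plan is to mirror the derivation of Proposition~\ref{prop5.1}, now in the sector $0<\phi<\pi/4$, by reading off the two asserted identities directly from the closed-form expressions for $(S_2^*S_3^*)^{-1}$ and $(S_1^*S_2^*S_3^*)^{-1}$ that have just been obtained by the nine-step matching procedure (WKB solution / Airy model / canonical sectorial solution) along the Stokes-graph fragments $\hat{\mathfrak{c}}^{7\pi/8}_{3\pi/8}$ and $\hat{\mathfrak{c}}^{7\pi/8}_{\pi/8}$ of Figure~\ref{fragment2}. The only geometric novelty relative to Proposition~\ref{prop5.1} is that the segment $\mathbf{c}_5^{3-}$ now runs along the \emph{left} shore of the cut $[\lambda_3,\lambda_5]$, and this is precisely the origin of the integral $J_{5,3}^-$ in \eqref{5.5} having the opposite orientation to $J_{3,5}$.

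For the first identity I would compare the $(1,1)$- and $(2,2)$-entries of the explicit matrix expansion of $(S_2^*S_3^*)^{-1}$ with those of $\begin{pmatrix} 1+s_2^*s_3^* & -s_3^* \\ -s_2^* & 1\end{pmatrix}$. Matching them gives, up to a factor $1+O(t^{-\delta})$,
\begin{align*}
1+s_2^*s_3^* &= -\tilde{\epsilon}_1\, c_0 d_0^{-1}\, e^{-J_{5,3}^- + J_3^{\infty 4} - J_5^{\infty 2}}, \\
1 &= \tilde{\epsilon}_1\, e^{-J_{5,3}^- - J_3^{\infty 4} + J_5^{\infty 2}}.
\end{align*}
Taking the quotient kills $\tilde{\epsilon}_1$ and the factor $e^{J_3^{\infty 4}-J_5^{\infty 2}}$, leaving $1+s_2^*s_3^* = -c_0 d_0^{-1} e^{-2J_{5,3}^-}(1+O(t^{-\delta}))$.

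For the second identity, the analogous comparison of the diagonal entries of $(S_1^*S_2^*S_3^*)^{-1}$ with those of $\begin{pmatrix} 1+s_2^*s_3^* & -s_1^*-s_3^*-s_1^*s_2^*s_3^* \\ -s_2^* & 1+s_1^*s_2^*\end{pmatrix}$ yields
\begin{align*}
1+s_2^*s_3^* &= \tilde{\epsilon}_2\, e^{J_{1,3}+J_3^{\infty 4}-J_1^{\infty 1}}, \\
1+s_1^*s_2^* &= \tilde{\epsilon}_2\bigl(c_0^{-1} e_0\, e^{J_{1,3}} + e^{-J_{1,3}}\bigr) e^{-J_3^{\infty 4}+J_1^{\infty 1}}.
\end{align*}
Multiplying the two lines eliminates $\tilde{\epsilon}_2^{\,2}=1$ together with the $J_3^{\infty 4}$ and $J_1^{\infty 1}$ exponentials, and the product simplifies to $(1+s_1^*s_2^*)(1+s_2^*s_3^*) = 1 + c_0^{-1} e_0\, e^{2J_{1,3}}$. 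Hence $(1+s_1^*s_2^*)(1+s_2^*s_3^*)-1 = c_0^{-1} e_0\, e^{2J_{1,3}}(1+O(t^{-\delta}))$. A final application of Corollary~\ref{cor3.5}, giving $s_k^*s_{k+1}^* = s_k s_{k+1}$, converts these two relations to the form stated in the proposition.

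The main obstacle is not the closing algebra, which is essentially immediate once the two matrix products are in hand, but the local bookkeeping in the nine matching steps for the new fragments. In particular one must verify that the orientation conventions at $\lambda_1,\lambda_3,\lambda_5$ together with the choice of Airy canonical sectors produce exactly the Stokes factors $\Gamma_{(3)},\Gamma_{(5)}$ appearing in the matrix products above, and that the global signs $\tilde{\epsilon}_j^{\,2}=1$ are sufficient to absorb the remaining fourth-root ambiguities coming from the $(\tilde{\zeta}_j)^{\pm 1/4}$ factors in Proposition~\ref{prop4.3}. These are purely local checks that follow from the relations $W_{\nu+1}=W_\nu G_\nu$ and from the Stokes geometry of Figure~\ref{fragment2}, in strict parallel with the treatment for $-\pi/4<\phi<0$ leading to Proposition~\ref{prop5.1}.
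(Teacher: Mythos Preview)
Your proposal is correct and follows exactly the paper's approach: read off the diagonal entries of the two explicit matrix products, combine them to eliminate the sign $\tilde{\epsilon}_j$ and the external-path integrals $J_3^{\infty 4},J_5^{\infty 2},J_1^{\infty 1}$, then invoke Corollary~\ref{cor3.5}. One small slip: for the first identity you should \emph{multiply} the two diagonal relations rather than take their quotient --- the quotient would leave $e^{2(J_3^{\infty 4}-J_5^{\infty 2})}$ instead of $e^{-2J_{5,3}^-}$, whereas the product (using $\tilde{\epsilon}_1^{\,2}=1$) gives exactly the stated formula, just as you correctly do by multiplying for the second identity.
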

\section{Asymptotics of monodromy data}\label{sc6}
Recall that $\lambda \mu(\infty,\lambda)=w(A_{\phi},\lambda^2)$ is
considered on the Riemann surface $\mathcal{R}_{\infty}$ and that 
$w(A_{\phi},z)$ defines the elliptic curve $\Pi_{A_{\phi},\phi}$ with the
primitive cycles $\mathbf{a}$ and $\mathbf{b}$ described as in Figure 
\ref{cycles1}. Note that, by $z=\lambda^2$, $\mathcal{R}_{\infty}$ is mapped to 
$\Pi_{A_{\phi},\phi}$. Let $\hat{\mathbf{a}},$ $\hat{\mathbf{b}} \subset
\mathcal{R}_{\infty}$ denote the inverse images of $\mathbf{a},$ $\mathbf{b}$,
respectively, such that $\hat{\mathbf{a}}\subset \mathcal{R}^+_{\infty}$ 
surrounds the cut $[\lambda_5,\lambda_3]$ and that $\hat{\mathbf{b}}$ links
with $\hat{\mathbf{a}}$.   
For $\lambda_j(t) \in \mathcal{R}_t$ and $\lambda_j=\lambda_j(\infty)\in
\mathcal{R}_{\infty}$, we have $\lambda_j(t)=\lambda_j(\infty)+O(t^{-1})$ 
if $0<|\phi|<\pi/4,$ and hence
the cycles $\hat{\mathbf{a}}$ and $\hat{\mathbf{b}}$ may be regarded to be
those on $\mathcal{R}_t$ as well for sufficiently large $t$.
Furthermore $\mathbf{a},$ $\mathbf{b} \subset \Pi_{A_{\phi},\phi}$ may be
regarded to be the primitive cycles on $\Pi_{a_{\phi}(t),\phi}$ by \eqref{5.1}.
\subsection{Integrals}\label{ssc6.1}
We would like to calculate the asymptotics of $\int_{\hat{\mathbf{a}},\,\,
\hat{\mathbf{b}}}\Lambda_3(\lambda)\lambda$. By \eqref{4.1}  
\begin{align*}
\mu(t,\lambda) &=\lambda^{-1}\left(\lambda^8+4e^{i\phi}\lambda^6 +4e^{2i\phi}
\lambda^4+4e^{3i\phi}a_{\phi}\lambda^2 
+2(\alpha-\beta)t^{-1}\lambda^4+\alpha^2t^{-2}\right )^{1/2}
\\
&=\lambda^{-1} {\mathbf{w}(\lambda)}\Bigl(1+2(\alpha-\beta)t^{-1}\frac{\lambda^4}
{\mathbf{w}(\lambda)^{2}} +O(t^{-2}\mathbf{w}(\lambda)^{-2}) \Bigr)^{1/2}
\\
&=\frac{\mathbf{w}(\lambda)}{\lambda}+(\alpha-\beta)t^{-1} \frac{\lambda^3}
{\mathbf{w}(\lambda)} +O(t^{-2})
\end{align*}
along $\hat{\mathbf{a}}$ and $\hat{\mathbf{b}}$, where 
$$
\mathbf{w}(\lambda)=w(a_{\phi},\lambda^2), \quad 
 w(a_{\phi},z)=\sqrt{z^4+4e^{i\phi}z^3 +4e^{2i\phi}z^2 +4 e^{3i\phi}a_{\phi}z}.
$$
Substitution $\lambda^2=z$ yields
\begin{align}\label{6.1}
\int_{\hat{\mathbf{a}},\,\,\hat{\mathbf{b}}}  \mu(t,\lambda)d\lambda
=&\frac 12 \int_{\mathbf{a},\,\, \mathbf{b}}\frac{w(a_{\phi},z)}z dz  
+ \frac 12(\alpha-\beta)t^{-1}\int_{\mathbf{a},\,\, \mathbf{b}}
\frac z{w(a_{\phi},z)} dz +O(t^{-2})  
\\
\notag
=&
e^{4i\phi}a_{\phi}\int_{{\mathbf{a}},\,\,{\mathbf{b}}}\frac{dz}{z w(a_{\phi},z)}
+\frac 12(\alpha-\beta)t^{-1}\int_{\mathbf{a},\,\,\mathbf{b}}\frac z{w(a_{\phi},
z)} dz 
\\
\notag
&+\frac{3e^{3i\phi}a_{\phi}}2 \omega_{\mathbf{a},\,\,\mathbf{b}} +O(t^{-2}),
\quad\,\, \omega_{\mathbf{a},\,\, \mathbf{b}} =\int_{\mathbf{a},\,\,\mathbf{b}} 
\frac{dz}{w(a_{\phi},z)},
\end{align}
in which the second equality is obtained by using
$$
\int w(a_{\phi},z)\frac{dz}z=\frac{w(a_{\phi},z)}2+e^{i\phi}\frac{w(a_{\phi},z)}
z +3 e^{3i\phi}a_{\phi} \int \frac{dz}{w(a_{\phi},z)}
+ 2e^{4i\phi}a_{\phi}\int\frac{dz}{z w(a_{\phi},z)}.
$$
\par
Let us calculate
$$
\mathrm{diag}\,T^{-1}T_{\lambda}|_{\sigma_3}d\lambda 
=\frac 14 \Bigl(1-\frac{b_3}{\mu}\Bigl)\frac{d}{d\lambda} \ln\frac{b_1+ib_2}
{b_1-ib_2} d\lambda.
$$
Recalling \eqref{3.1} and \eqref{3.2}, and setting $\lambda^2=z$, we have
\begin{align}\label{6.2}
&b_1-ib_2=2ie^{i\phi/2}(z-z_+), \quad b_1+ib_2=2i e^{-i\phi/2}(z-z_-),
\\
\notag
&z_+:=-\frac{e^{-i\phi}}2 \frac{\psi_t}{\psi} -\frac{\psi}2 -e^{i\phi}+O(t^{-1}),
\quad
z_-:=\frac{e^{-i\phi}}2 \frac{\psi_t}{\psi} -\frac{\psi}2 -e^{i\phi}+O(t^{-1}),
\end{align}
with $(b_1-ib_2)(z_+)=0,$ $(b_1+ib_2)(z_-)=0.$
Furthermore
$$
\frac{b_3}{\mu(t,\lambda)}=z(z+ 2(e^{i\phi}+\psi))\Bigl(\frac 1{w(a_{\phi},z)}
+O(t^{-1})\Bigr),
$$
which satisfies $(b_3/\mu)(z_{\pm})=1$ on the upper sheet $\Pi_+$ of
$\Pi_{a_{\phi},\phi}$. Then it follows that
\begin{align}\label{6.3}
\mathrm{diag}\,T^{-1}T_{\lambda}|_{\sigma_3}d\lambda 
 &=\frac 14\Bigl(\frac 1{z-z_-}-\frac 1{z-z_+}\Bigr)dz
\\
\notag
&+\frac 14\Bigl(z_+ -z_- + \frac{w(a_{\phi},z_+)}{z-z_+} 
- \frac{w(a_{\phi},z_-)}{z-z_-} \Bigr)\frac{dz}{w(a_{\phi},z)} +O(t^{-1})dz,
\end{align} 
and that
\begin{equation}\label{6.4}
\begin{split}
&\frac 12 \int^{z_3}_{z_5}\Bigl(\frac 1{z-z_+}-\frac 1{z-z_-} \Bigr)dz
=\frac 12\ln\frac{(\lambda_3^2-z_+)(\lambda_5^2-z_-)} 
{(\lambda_3^2-z_-)(\lambda_5^2-z_+)}=\ln(c_0d_0^{-1}),
\\ 
&\frac 12 \int^{z_1}_{z_3}\Bigl(\frac 1{z-z_+}-\frac 1{z-z_-} \Bigr)dz
=\frac 12\ln\frac{(\lambda_1^2-z_+)(\lambda_3^2-z_-)} 
{(\lambda_1^2-z_-)(\lambda_3^2-z_+)}=\ln(c_0^{-1}e_0).
\end{split}
\end{equation}
\par
Suppose that $-\pi/4<\phi<0$, and recall Proposition \ref{prop5.1}.
Note that
$$
J_{3,5}=\int_{\lambda_3}^{\lambda_5} \Bigl(t\mu(t,\lambda)-\mathrm{diag}T^{-1}
T_{\lambda}|_{\sigma_3}\Bigr)d\lambda, 
$$
along $\mathbf{c}_3^5=(\lambda_3,\lambda_5)^{\sim}$ in \eqref{5.2}, and that
$\mathbf{c}_3^5$ 
is $ \frac 12 (- \hat{\mathbf{a}})$, which is the image of 
$\frac 12 (-\mathbf{a})$ of Figure \ref{cycles1} under the map $\lambda^2=z.$
The integral $J_{3,1}$ of \eqref{5.4} is along $\mathbf{c}_3^1$, which is
the image of $\tfrac 12 \mathbf{b}$.
Then by \eqref{6.1}, \eqref{6.3} and \eqref{6.4}, we have the following
proposition, in which
\begin{align}\label{6.5}
&W(z)=\Bigl(z_+ -z_- + \frac{w(a_{\phi},z_+)}{z-z_+} 
- \frac{w(a_{\phi},z_-)}{z-z_-} \Bigr)\frac{1}{w(a_{\phi},z)}, 
\\
\label{6.6}
&\omega_{\mathbf{a},\,\,\mathbf{b}}=
 \int_{\mathbf{a},\,\,\mathbf{b}}\frac{dz}{w(a_{\phi},z)},
\qquad
C_{\alpha,\beta}(\mathbf{a},\,\, \mathbf{b})
= \frac 12 (\alpha-\beta)\int_{\mathbf{a},\,\, \mathbf{b}}
\frac{z dz}{w(a_{\phi},z)}.
\end{align}
\begin{prop}\label{prop6.1}
Suppose that $-\pi/4<\phi<0$. Then
\begin{align*}
\ln(1+s_1s_2)=&-\frac t2\int_{\mathbf{a}}\frac{w(a_{\phi},z)}z dz
+\frac 14 \int_{\mathbf{a}}W(z)dz-C_{\alpha,\beta}({\mathbf{a}})
 +\pi i +O(t^{-\delta}),
\\
\ln((1+s_1s_2)(1&  +s_2s_3)-1)
\\
=&-\frac t2\int_{\mathbf{b}}\frac{w(a_{\phi},z)}z dz
+\frac 14\int_{\mathbf{b}}W(z)dz -C_{\alpha,\beta}({\mathbf{b}})+O(t^{-\delta}).
\end{align*}
\end{prop}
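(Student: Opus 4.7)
The plan is to take logarithms of both asymptotic identities in Proposition~\ref{prop5.1}, write $J_{3,5}=t\int_{\mathbf{c}_3^5}\mu\,d\lambda-\int_{\mathbf{c}_3^5}\mathrm{diag}\,T^{-1}T_\lambda|_{\sigma_3}\,d\lambda$ and similarly for $J_{3,1}$, and then pull back every piece from the $\lambda$-variable to the $z$-variable via $z=\lambda^2$ so that the half-cycle integrals over $\mathbf{c}_3^5$ and $\mathbf{c}_3^1$ become period integrals over the elliptic cycles $\mathbf{a}$ and $\mathbf{b}$. After absorbing $\log(1+O(t^{-\delta}))$ into the error, Proposition~\ref{prop5.1} gives $\ln(1+s_1s_2)=\ln(-c_0 d_0^{-1})+2J_{3,5}+O(t^{-\delta})$ and $\ln((1+s_1s_2)(1+s_2s_3)-1)=\ln(c_0^{-1}e_0)-2J_{3,1}+O(t^{-\delta})$, where $\ln(-c_0d_0^{-1})=\pi i+\ln(c_0 d_0^{-1})$ is what eventually supplies the constant $\pi i$ in the first identity of Proposition~\ref{prop6.1}.

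For the $t\mu$-part, the identifications $\mathbf{c}_3^5=\tfrac12(-\hat{\mathbf{a}})$ and $\mathbf{c}_3^1=\tfrac12\hat{\mathbf{b}}$ together with the expansion \eqref{6.1} yield
\[
2t\int_{\mathbf{c}_3^5}\mu\,d\lambda=-\tfrac{t}{2}\int_{\mathbf{a}}\frac{w(a_\phi,z)}{z}dz-C_{\alpha,\beta}(\mathbf{a})+O(t^{-1}),
\]
and the analogous identity for $\mathbf{b}$ via $-2t\int_{\mathbf{c}_3^1}\mu\,d\lambda$, producing the two leading terms of Proposition~\ref{prop6.1}. For the diag-part, \eqref{6.3} splits $\mathrm{diag}\,T^{-1}T_\lambda|_{\sigma_3}\,d\lambda$ into a rational piece $\tfrac14\bigl(\tfrac1{z-z_-}-\tfrac1{z-z_+}\bigr)dz$ and a piece $\tfrac14 W(z)dz$ (modulo $O(t^{-1})$). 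Integrating the rational piece along the half-cycle and invoking \eqref{6.4} produces precisely $-\ln(c_0 d_0^{-1})$ on the $\mathbf{a}$-side and $-\ln(c_0^{-1}e_0)$ on the $\mathbf{b}$-side, which cancel the corresponding logarithmic prefactors from Proposition~\ref{prop5.1} and leave the isolated $\pi i$ only in the first identity. The remaining $W$-integrals along the half-cycles are then promoted to the full-cycle integrals $\tfrac14\int_{\mathbf{a},\mathbf{b}}W(z)\,dz$ by sheet-interchange symmetry: since $W(z)/w(a_\phi,z)$ changes sign between the two sheets of $\Pi_{a_\phi,\phi}$, traversing the full cycle $\mathbf{a}$ (respectively $\mathbf{b}$) doubles the contribution of the half-cycle integral with the appropriate orientation sign.

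Assembling these contributions produces both formulas of Proposition~\ref{prop6.1}, the $O(t^{-\delta})$ remainder absorbing the logarithm error from Proposition~\ref{prop5.1} together with the $O(t^{-1})$ corrections from \eqref{6.1} and \eqref{6.3}. The main obstacle is the careful bookkeeping of orientations: the signs hidden in $\mathbf{c}_3^5=\tfrac12(-\hat{\mathbf{a}})$ versus $\mathbf{c}_3^1=\tfrac12\hat{\mathbf{b}}$, the branch choices implicit in \eqref{6.4}, and the relative orientation of $\int_{z_j}^{z_k}W\,dz$ against $\int_{\mathbf{a},\mathbf{b}}W(z)\,dz$ must all be compatible so that the logarithmic prefactor cancels exactly against the rational part of the diag-integral and so that the $W$-piece emerges with the positive coefficient $+\tfrac14$. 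Once these sign conventions are pinned down, the rest is a direct substitution.
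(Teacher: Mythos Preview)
Your proposal is correct and follows essentially the same route as the paper: the paper's argument is precisely ``by \eqref{6.1}, \eqref{6.3} and \eqref{6.4}'' combined with the identifications $\mathbf{c}_3^5=\tfrac12(-\hat{\mathbf{a}})$ and $\mathbf{c}_3^1=\tfrac12\hat{\mathbf{b}}$, and you have spelled out these steps in the intended way, including the cancellation of $\ln(c_0d_0^{-1})$ and $\ln(c_0^{-1}e_0)$ against the rational part of the diag-integral via \eqref{6.4}. One minor slip of phrasing: it is $W(z)$ itself, not $W(z)/w(a_\phi,z)$, that changes sign under sheet interchange (since $W$ already carries the factor $1/w(a_\phi,z)$), but your conclusion from this symmetry is the right one.
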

From Proposition \ref{prop5.2} we have the following.
\begin{prop}\label{prop6.2}
Suppose that $0<\phi<\pi/4.$ Then
\begin{align*}
\ln(1+s_2s_3)=& \frac t2\int_{\mathbf{a}}\frac{w(a_{\phi},z)}z dz
-\frac 14 \int_{\mathbf{a}}W(z)dz +C_{ \alpha,\beta}(\mathbf{a})
+\pi i +O(t^{-\delta}),
\\
\ln((1+s_1s_2)(&1  +s_2s_3)-1)
\\
=&-\frac t2\int_{\mathbf{b}}\frac{w(a_{\phi},z)}z dz
+\frac 14 \int_{\mathbf{b}}W(z)dz -C_{\alpha,\beta}(\mathbf{b})+O(t^{-\delta}).
\end{align*}
\end{prop}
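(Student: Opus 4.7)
The plan is to follow the derivation of Proposition \ref{prop6.1} step by step, substituting the relations of Proposition \ref{prop5.2} and adjusting for the different contour choices shown in Figure \ref{fragment2}. Taking logarithms in Proposition \ref{prop5.2} gives
\begin{align*}
\ln(1+s_2s_3) &= \ln(c_0 d_0^{-1}) + i\pi - 2J^-_{5,3} + O(t^{-\delta}),\\
\ln((1+s_1s_2)(1+s_2s_3)-1) &= \ln(c_0^{-1} e_0) + 2J_{1,3} + O(t^{-\delta}),
\end{align*}
so the task reduces to computing the asymptotics of $2J^-_{5,3}$ and $2J_{1,3}$.

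For $2J^-_{5,3}$ I would decompose $\Lambda_3(\lambda)\,d\lambda$ as in the computations leading to Proposition \ref{prop6.1}: the branched pieces $t\mu(t,\lambda)\,d\lambda$ and $\tfrac14 W(z)\,dz$ (from \eqref{6.3} and \eqref{6.5}), and the single-valued piece $\tfrac14\bigl(\tfrac{1}{z-z_-}-\tfrac{1}{z-z_+}\bigr)dz$. The path $\mathbf{c}_5^{3-}$ lies on the opposite shore of the cut $[\lambda_5,\lambda_3]$ from $\mathbf{c}_3^5$ and is oriented in the reverse direction; these two sign changes cancel for the branched integrands, whose lifts under $\lambda^2=z$ still represent one half of the cycle $\hat{\mathbf{a}}$ and therefore contribute, via \eqref{6.1} and the cycle integral of $\tfrac14 W(z)$, exactly as in Proposition \ref{prop6.1}. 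The single-valued piece, however, is flipped by the shore/direction swap, so its boundary evaluation via \eqref{6.4} contributes $+\ln(c_0d_0^{-1})$ rather than $-\ln(c_0d_0^{-1})$; in compact form $J^-_{5,3} = J_{3,5} + \ln(c_0d_0^{-1}) + O(t^{-\delta})$, and substituting into the log-identity above yields the first formula of the proposition, with the advertised $+$ sign in front of $\tfrac{t}{2}\int_{\mathbf{a}}\tfrac{w(a_\phi,z)}{z}dz$. For the second identity I would use $J_{1,3}=-J_{3,1}$: under $\lambda^2=z$ the path $\mathbf{c}_1^3$ is the reversal of $\mathbf{c}_3^1$ from the case $-\pi/4<\phi<0$, so the computation of $2J_{1,3}$ is verbatim that of $-2J_{3,1}$ in the derivation of Proposition \ref{prop6.1} and delivers the second formula unchanged, consistent with the fact that $(1+s_1s_2)(1+s_2s_3)-1$ does not depend on the Stokes sector used to reach it.

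The main technical obstacle is the sign bookkeeping in the first identity: one must verify that the combined shore swap plus orientation reversal produces exactly one net sign flip, located in the single-valued part of \eqref{6.3}. This is the flip that compensates the sign difference between $e^{+2J_{3,5}}$ in Proposition \ref{prop5.1} and $e^{-2J^-_{5,3}}$ in Proposition \ref{prop5.2}, and that, after combining the two $\ln(c_0d_0^{-1})$ contributions (one from the prefactor, one from the boundary evaluation of the single-valued term), converts the leading $-\tfrac{t}{2}\int_{\mathbf{a}}\tfrac{w}{z}dz$ of Proposition \ref{prop6.1} into $+\tfrac{t}{2}\int_{\mathbf{a}}\tfrac{w}{z}dz$ as required.
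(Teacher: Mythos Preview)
Your proposal is correct and follows the same route the paper intends: the paper simply says ``From Proposition \ref{prop5.2} we have the following,'' leaving the reader to redo the computation of Proposition \ref{prop6.1} with the contours of Figure \ref{fragment2}. Your sign analysis---that the shore swap and orientation reversal in $\mathbf{c}_5^{3-}$ versus $\mathbf{c}_3^5$ cancel on the branched pieces $t\mu\,d\lambda$ and $\tfrac14 W(z)\,dz$ but produce a single flip on the single-valued logarithmic piece, yielding $J^-_{5,3}=J_{3,5}+\ln(c_0d_0^{-1})+O(t^{-\delta})$---is exactly the mechanism at work, and your treatment of the second formula via $J_{1,3}=-J_{3,1}$ is likewise what the paper has in mind.
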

\begin{rem}\label{rem6.1}
In the propositions above,
$$
\frac12 \int_{\mathbf{a},\,\,\mathbf{b}}\frac{w(a_{\phi},z)}{z}dz
=e^{4i\phi}a_{\phi}\int_{\mathbf{a},\,\,\mathbf{b}}\frac{dz}{zw(a_{\phi},z)}
+\frac 32e^{3i\phi}a_{\phi}\omega_{\mathbf{a},\,\,\mathbf{b}}.
$$
\end{rem}
\subsection{Theta-function}\label{ssc6.2}
Further calculation needs the theta-function
$$
\vartheta(z,\tau)=\sum_{n=-\infty}^{\infty} e^{\pi i\tau n^2+2\pi izn},
\quad \tau=\frac{\omega_{\mathbf{b}}}{\omega_{\mathbf{a}}}, \quad
\im \tau>0, \quad \nu=\frac{1+\tau}2.
$$
For $z,$ $\tilde{z} \in \Pi_{a_{\phi},\phi}=\Pi_+ \cup \Pi_-$, set
$$
F(\tilde{z},z)=\frac{1}{\omega_{\mathbf{a}}} \int^z_{\tilde{z}} \frac{dz}
{w(a_{\phi},z)}.
$$
For $z_0 \in \Pi_{a_{\phi},\phi}=\Pi_+ \cup \Pi_-$ write $z_0^+=(z_0,
w(a_{\phi},z_0^+) ),$ $z_0^-=(z_0, -w(a_{\phi},z_0^+)$.
\begin{prop}\label{prop6.3}
For any $z_0\in \Pi_{a_{\phi},\phi},$ $w(z)=w(a_{\phi},z)$ fulfils
\begin{align*}
\frac{dz}{(z-z_0)w(z)}&=\frac 1{w(z_0^+)} d\ln \frac
{\vartheta(F(z_0^+,z)+\nu,\tau)}{\vartheta(F(z_0^-,z)+\nu,\tau)} -g_0(z_0)
\frac{dz}{w(z)},
\\
g_0(z_0)&=\frac{w'(z^+_0)}{2w(z^+_0)}-\frac 1{\omega_{\mathbf{a}} w(z_0^+)}
\Bigl( \pi i+\frac{\vartheta'}{\vartheta}(F(z^-_0,z^+_0)+\nu,\tau) \Bigr).
\end{align*}
\end{prop}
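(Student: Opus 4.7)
The plan is to view both sides of the asserted identity as meromorphic differentials on the elliptic curve $\Pi_{a_{\phi},\phi}$, match their principal parts, and then determine the residual holomorphic summand by a local expansion at $z_0^+$.

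First, $\frac{dz}{(z-z_0)w(z)}$ is a differential of the third kind on $\Pi_{a_{\phi},\phi}$ with simple poles only at $z_0^+$ and $z_0^-$, with residues $+1/w(z_0^+)$ and $-1/w(z_0^+)$ respectively. For the theta quotient, note that $F(z_0^+,z)-F(z_0^-,z)=F(z_0^+,z_0^-)$ is independent of $z$; combined with the quasi-periodicity $\vartheta(u+1,\tau)=\vartheta(u,\tau)$, $\vartheta(u+\tau,\tau)=e^{-\pi i\tau-2\pi i u}\vartheta(u,\tau)$, this implies that $\vartheta(F(z_0^+,z)+\nu,\tau)/\vartheta(F(z_0^-,z)+\nu,\tau)$ is altered by at most a constant factor under $\mathbf{a}$- or $\mathbf{b}$-cycle transport, so its logarithmic derivative is a single-valued meromorphic differential on $\Pi_{a_{\phi},\phi}$. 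Since $\vartheta(\nu,\tau)=0$ with $\vartheta'(\nu,\tau)\neq 0$ and $F(z_0^{\pm},z_0^{\pm})=0$, the numerator has a simple zero at $z_0^+$ and the denominator a simple zero at $z_0^-$; combined with $du/dz|_{z_0^{\pm}}=1/(\omega_{\mathbf{a}}w(z_0^{\pm}))$, the residues after the $1/w(z_0^+)$ prefactor match those of $\frac{dz}{(z-z_0)w(z)}$ exactly. The difference is therefore a holomorphic differential on $\Pi_{a_{\phi},\phi}$, hence equals $-g_0(z_0)\,dz/w(z)$ for some scalar $g_0(z_0)$.

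To identify $g_0(z_0)$, expand both sides of
$$\frac{dz}{(z-z_0)w(z)}+g_0(z_0)\frac{dz}{w(z)}=\frac{1}{w(z_0^+)}d\ln\frac{\vartheta(F(z_0^+,z)+\nu,\tau)}{\vartheta(F(z_0^-,z)+\nu,\tau)}$$
through order $(z-z_0)^0$ near $z_0^+$. The left side contributes $\frac{1}{w(z_0^+)(z-z_0)}-\frac{w'(z_0^+)}{w(z_0^+)^2}+\frac{g_0(z_0)}{w(z_0^+)}+O(z-z_0)$. For the right side, set $u_+=F(z_0^+,z)=\frac{z-z_0}{\omega_{\mathbf{a}}w(z_0^+)}\bigl(1-\frac{w'(z_0^+)}{2w(z_0^+)}(z-z_0)+O((z-z_0)^2)\bigr)$ and insert into
$$\frac{\vartheta'(u+\nu,\tau)}{\vartheta(u+\nu,\tau)}=\frac{1}{u}+\frac{\vartheta''(\nu,\tau)}{2\vartheta'(\nu,\tau)}+O(u)\qquad(u\to 0),$$
while $\frac{d}{dz}\ln\vartheta(F(z_0^-,z)+\nu,\tau)$ is regular at $z_0^+$ and contributes $\frac{1}{\omega_{\mathbf{a}}w(z_0^+)}\cdot\frac{\vartheta'}{\vartheta}(F(z_0^-,z_0^+)+\nu,\tau)$ to the constant term. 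Equating the constant terms and solving for $g_0(z_0)$ yields the claimed formula, provided $\vartheta''(\nu,\tau)=-2\pi i\,\vartheta'(\nu,\tau)$; this half-period identity is verified by applying the involution $n\mapsto -n-1$ to the series $\vartheta(\nu,\tau)=\sum_n (-1)^n e^{\pi i\tau n(n+1)}$ and its first two $z$-derivatives at $z=\nu=(1+\tau)/2$, which recovers precisely the $-\pi i$ appearing in the statement. The main obstacle is sheet and sign bookkeeping: the branches of $w$ on $\Pi_{\pm}$, the orientation of $F$, and the local coordinate $z-z_0$ as read from either sheet must be coordinated so that the pair of residues at $z_0^{\pm}$, the constant $-\pi i$ extracted from $\vartheta''/2\vartheta'$ at $\nu$, and the regular value $\vartheta'/\vartheta(F(z_0^-,z_0^+)+\nu,\tau)$ all combine into the compact expression for $g_0(z_0)$ stated in the proposition.
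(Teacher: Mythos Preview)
Your proof is correct and follows essentially the same route as the paper: both match the principal parts of the two sides at $z_0^{\pm}$ to reduce the discrepancy to a holomorphic differential $-g_0(z_0)\,dz/w(z)$, then pin down $g_0(z_0)$ by the local expansion at $z_0^+$, with the identification hinging on $\vartheta''(\nu)/(2\vartheta'(\nu))=-\pi i$. The only cosmetic difference is that the paper passes first to the uniformising variable $u$ via $z=\varphi(u)$ and writes $(z-z_0)^{-1}w(z)^{-1}dz=(\varphi(u)-\varphi(u_0))^{-1}\,du$ before matching poles, whereas you work directly on $\Pi_{a_\phi,\phi}$; your explicit series verification of the half-period identity is a small addition the paper omits.
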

\begin{proof}
Let $z=\varphi(u)$ be an elliptic function such that $\varphi_u^2=w(\varphi)^2$
and $z_0=\varphi(u_0)$. Then $(z-z_0)^{-1}w(z)^{-1}dz=(\varphi(u)-\varphi(u_0))
^{-1}du.$ Write $\varphi(u_0^{\pm})=z_0^{\pm},$ $\varphi_u(u^{\pm}_0)
=\pm w(z_0^+).$ Around each $u=u_0^{\pm}$, 
$$
(\varphi(u)-\varphi(u_0^{\pm}))^{-1}=\pm \frac{(u-u_0^{\pm})^{-1}}{w(z_0^{+})}
-\frac {w'(z_0^+)}{2w(z_0^+)}
+O(u-u_0^{\pm}), 
$$
and hence we have
\begin{align*}
(\varphi(u)-\varphi(u_0^{\pm}))^{-1}=& \frac 1{\omega_{\mathbf{a}}w(z_0)}
\Bigl(\frac{\vartheta'}{\vartheta}(\omega_{\mathbf{a}}^{-1}(u-u_0^+) +\nu,\tau)
-\frac{\vartheta'}{\vartheta}(\omega_{\mathbf{a}}^{-1}(u-u_0^-) +\nu,\tau)
\Bigr)+C_0
\\
=& \frac 1{\omega_{\mathbf{a}}w(z_0)}
\Bigl(\frac{\vartheta'}{\vartheta}(F(z_0^+,z) +\nu,\tau)
-\frac{\vartheta'}{\vartheta}(F(z_0^-,z)) +\nu,\tau)\Bigr)+C_0
\end{align*}
for some constant $C_0$. Passage to the limit $z\to z^+_0$, i.e. $u\to u_0^+$
leads to
$$
C_0=\frac 1{\omega_{\mathbf{a}}w(z_0^+)} \Bigl(
\frac{\vartheta'}{\vartheta}(F(z_0^-,z_0^+) +\nu,\tau) -\frac{\vartheta''(\nu)}
{2\vartheta'(\nu)}\Bigr)-\frac{w'(z^+_0)}{2w(z^+_0)},
$$
which implies the required formula.
\end{proof}
\begin{cor}\label{cor6.4}
For any $z_0 \in \Pi_{a_{\phi},\phi}$, and for $w(z)=w(a_{\phi},z)$,
\begin{align}\label{6.7}
& \int_{\mathbf{a}} \frac{dz}{(z-z_0)w(z)}=-g_0(z_0) \omega_{\mathbf{a}},
\\
\label{6.8}
& \Bigl(\int_{\mathbf{b}} -\tau \int_{\mathbf{a}}\Bigr)\frac{dz}{(z-z_0)w(z)}
=\frac{2\pi i}{w(z_0^+)}F(z_0^-, z_0^+),
\\
\label{6.9}
& \Bigl(\int_{\mathbf{b}} -\tau \int_{\mathbf{a}}\Bigr)\frac{dz}{zw(z)}
=\frac{2\pi i}{a_{\phi}\omega_{\mathbf{a}}}e^{-3i\phi}.
\end{align}
\end{cor}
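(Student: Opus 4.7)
The plan is to derive all three identities by integrating the theta-function representation of Proposition~\ref{prop6.3} along the primitive cycles and then specialising to $z_0=0$ for the last one. Throughout, I use the quasi-periodicities $\vartheta(u+1,\tau)=\vartheta(u,\tau)$ and $\vartheta(u+\tau,\tau)=e^{-i\pi\tau-2\pi i u}\vartheta(u,\tau)$, together with $F(z_0^+,z)-F(z_0^-,z)=-F(z_0^-,z_0^+)$ and the fact that, as $z$ traverses $\mathbf{a}$ (resp. $\mathbf{b}$), $F(z_0^\pm,z)$ increases by $1$ (resp. by $\tau$).

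For \eqref{6.7}, I integrate Proposition~\ref{prop6.3} along $\mathbf{a}$. By the $1$-periodicity of $\vartheta$, the theta-ratio returns to its initial value, so the logarithmic-derivative term contributes $0$; the remaining $-g_0(z_0)\,dz/w(z)$ integrates to $-g_0(z_0)\omega_{\mathbf{a}}$. For \eqref{6.8}, I do the same along $\mathbf{b}$. The $\mathbf{b}$-period contribution from the log-derivative term is computed using quasi-periodicity: after $z$ returns to its starting point, $\ln\vartheta(F(z_0^+,z)+\nu,\tau)$ has acquired the increment $-i\pi\tau-2\pi i(F(z_0^+,z_{\mathrm{start}})+\nu)$, and similarly for $F(z_0^-,z)$. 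Subtracting and using $F(z_0^+,z)-F(z_0^-,z)=-F(z_0^-,z_0^+)$, the $-i\pi\tau$ and $-2\pi i\nu$ pieces cancel and we are left with $2\pi i F(z_0^-,z_0^+)$. Hence
\begin{equation*}
\int_{\mathbf{b}}\frac{dz}{(z-z_0)w(z)}=\frac{2\pi i}{w(z_0^+)}F(z_0^-,z_0^+)-g_0(z_0)\omega_{\mathbf{b}}.
\end{equation*}
Subtracting $\tau$ times \eqref{6.7} and using $\omega_{\mathbf{b}}=\tau\omega_{\mathbf{a}}$ makes the $g_0(z_0)$ terms cancel, giving \eqref{6.8}.

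For \eqref{6.9}, I specialise \eqref{6.8} by letting $z_0\to 0$. Since the cycles $\mathbf{a},\mathbf{b}$ are away from $z=0$, the left-hand side converges to $\bigl(\int_{\mathbf{b}}-\tau\int_{\mathbf{a}}\bigr)\frac{dz}{zw(z)}$. On the right, using the local parameter $\sigma=\sqrt{z}$ at the branch point and $w(a_\phi,z)\sim 2e^{3i\phi/2}\sqrt{a_\phi}\,\sqrt{z}$, one finds $w(z_0^+)\sim 2e^{3i\phi/2}\sqrt{a_\phi}\sqrt{z_0}$ and $\int_{z_0^-}^{z_0^+}d\zeta/w(\zeta)\sim 2\sqrt{z_0}/(e^{3i\phi/2}\sqrt{a_\phi})$, so that
\begin{equation*}
\frac{F(z_0^-,z_0^+)}{w(z_0^+)}\longrightarrow \frac{1}{\omega_{\mathbf{a}}\,e^{3i\phi}a_\phi},
\end{equation*}
which yields \eqref{6.9}.

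The only delicate point is the branch-point limit in \eqref{6.9}: both the numerator $F(z_0^-,z_0^+)$ and the denominator $w(z_0^+)$ vanish as $z_0\to 0$, so the limit must be computed via the local uniformiser, and one must check that the limit of the integrals equals the integral of the limiting differential — this is straightforward since the cycles may be fixed once and for all away from $0$, making the integrands depend analytically on the parameter $z_0$ in a neighbourhood of $0$.
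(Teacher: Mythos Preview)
Your proof is correct and follows exactly the approach the paper leaves implicit: the corollary is stated without proof immediately after Proposition~\ref{prop6.3}, and the intended derivation is precisely to integrate that differential identity along $\mathbf{a}$ and $\mathbf{b}$ using the quasi-periodicities of $\vartheta$, then specialise $z_0\to 0$ via the local uniformiser at the branch point for \eqref{6.9}. Nothing to add.
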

\begin{cor}\label{cor6.5}
For $z_+,$ $z_-$ and $W(z)$ given by \eqref{6.2} and \eqref{6.5}, and for
$w(z)=w(a_{\phi},z)$, we have, on $\Pi_{a_{\phi},\phi}$,
\begin{align*}
 \int_{\mathbf{a}}W(z)dz=& \Bigl(z_+ -\frac{w'(z_+^+)}2\Bigr)\omega_{\mathbf{a}}
 +\frac{\vartheta'}{\vartheta}(F(z_+^-,z_+^+)+\nu,\tau)
\\
 &\phantom{--------} - \Bigl(z_- -\frac{w'(z_-^+)}2\Bigr)\omega_{\mathbf{a}} -
\frac{\vartheta'}{\vartheta}(F(z_-^-,z_-^+)+\nu,\tau),
\\
 \Bigl(\int_{\mathbf{b}}-\tau  \int_{\mathbf{a}}\Bigr)&W(z)dz=
    2\pi i(F(z_+^-,z_+^+)-F(z^-_-,z_-^+)).
\end{align*}
\end{cor}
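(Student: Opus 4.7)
The plan is to reduce Corollary \ref{cor6.5} to Corollary \ref{cor6.4} by decomposing the integrand into three elementary pieces. Using definition \eqref{6.5},
\begin{equation*}
W(z)\,dz = (z_+-z_-)\frac{dz}{w(z)} + \frac{w(a_\phi,z_+)\,dz}{(z-z_+)\,w(z)} - \frac{w(a_\phi,z_-)\,dz}{(z-z_-)\,w(z)}.
\end{equation*}
The last two pieces are exactly the differentials treated in Proposition \ref{prop6.3}, each multiplied by the constant $w(a_\phi,z_\pm)$; the convention $b_3/\mu|_{z_\pm}=1$ on $\Pi_+$ used in the derivation of \eqref{6.3} fixes these constants to be the upper-sheet values $w(z_\pm^+)$.

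For $\int_{\mathbf{a}}W(z)\,dz$, the first piece contributes $(z_+-z_-)\omega_{\mathbf{a}}$. For each of the two pole pieces I apply \eqref{6.7}, producing $-w(z_\pm^+)g_0(z_\pm)\omega_{\mathbf{a}}$. The key algebraic simplification is that the prefactor $w(z_\pm^+)$ cancels the $1/w(z_0^+)$ appearing in the explicit formula for $g_0$ from Proposition \ref{prop6.3}, leaving
\begin{equation*}
-w(z_\pm^+)g_0(z_\pm)\omega_{\mathbf{a}} = -\tfrac{1}{2}w'(z_\pm^+)\,\omega_{\mathbf{a}} + \pi i + \frac{\vartheta'}{\vartheta}\bigl(F(z_\pm^-,z_\pm^+)+\nu,\tau\bigr).
\end{equation*}
Summing the three contributions with signs $+$, $+$, $-$, the two $\pi i$ terms cancel and the first identity of Corollary \ref{cor6.5} drops out.

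The second identity is obtained by the same split but with \eqref{6.8} in place of \eqref{6.7}. The first piece now contributes $(z_+-z_-)(\omega_{\mathbf{b}}-\tau\omega_{\mathbf{a}})=0$ by the very definition $\tau=\omega_{\mathbf{b}}/\omega_{\mathbf{a}}$, and for each pole piece \eqref{6.8} combined with the cancellation $w(z_\pm^+)\cdot(1/w(z_\pm^+))=1$ gives $2\pi i\,F(z_\pm^-,z_\pm^+)$; taking the signed difference yields the claimed expression. The computation is essentially mechanical once the decomposition is written down; the only point requiring a moment's care is the sheet convention for the constants $w(a_\phi,z_\pm)$, and I expect this bookkeeping to be the only real obstacle.
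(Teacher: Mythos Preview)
Your proof is correct and is exactly the argument the paper intends: Corollary \ref{cor6.5} is stated without proof as a direct consequence of Corollary \ref{cor6.4}, obtained by decomposing $W(z)$ via \eqref{6.5} and applying \eqref{6.7} and \eqref{6.8} termwise with the constants $w(a_\phi,z_\pm)=w(z_\pm^+)$ fixed by the upper-sheet convention noted after \eqref{6.2}. Your handling of the $\pi i$ cancellation and the sheet bookkeeping is precisely the content the paper leaves implicit.
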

Another expression of $\int_{\mathbf{a}}W(\lambda)d\lambda$ is derived by
using more information on the poles of $\mathrm{P}(u;a_{\phi})$ 
(cf. Proposition \ref{prop7.5}).
\begin{prop}\label{prop6.51}
Under the same condition as above
\begin{equation*}
\int_{\mathbf{a}} W(z)dz
= 2\Bigl( \frac{\vartheta'}{\vartheta}(\tfrac 12F(z_+^-,z_+^+)+\tfrac 12
 +\tfrac{\tau}6,\tau)
 - \frac{\vartheta'}{\vartheta}(\tfrac 12F(z_-^-,z_-^+)+\tfrac 12
 +\tfrac{\tau}6,\tau) \Bigr).
\end{equation*}
\end{prop}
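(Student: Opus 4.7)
The plan is to start from the expression in Corollary \ref{cor6.5} and reduce it by exploiting the specific pole structure of $\mathrm{P}(u;a_{\phi})$ furnished by Proposition \ref{prop7.5}. Consistently with the pole set $\mathcal{P}_{0-}$ that appears in Theorem \ref{thm2.1}, one expects $\mathrm{P}$ to possess exactly two simple poles, located at $u\equiv\pm\omega_{\mathbf{b}}/3$ modulo periods. Because the sum of these pole positions vanishes in the Jacobian, $\mathrm{P}$ is an even function of $u$, so for each $z_0\in\{z_+,z_-\}$ the two preimages on the fundamental parallelogram are $\pm u_0$, and $F(z_0^-,z_0^+)=2u_0/\omega_{\mathbf{a}}$.

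Next, I would write the elliptic function $\mathrm{P}(u)-z_0$ in a manifestly symmetric theta-function form,
$$\mathrm{P}(u)-z_0=C(z_0)\,\frac{\vartheta(\tilde{u}-\tilde{u}_0+\nu,\tau)\,\vartheta(\tilde{u}+\tilde{u}_0+\nu,\tau)}{\vartheta(\tilde{u}-\tfrac{\tau}{3}+\nu,\tau)\,\vartheta(\tilde{u}+\tfrac{\tau}{3}+\nu,\tau)},\qquad\tilde{u}=u/\omega_{\mathbf{a}},\ \ \tilde{u}_0=u_0/\omega_{\mathbf{a}},$$
with $C(z_0)$ fixed by the residue $1/w(z_0^+)$ at $u=u_0$. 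Converting $\int_{\mathbf{a}}W(z)dz$ to a $u$-integral on $[0,\omega_{\mathbf{a}}]$ via $z=\mathrm{P}(u)$, $dz/w(a_{\phi},z)=du$, and integrating the logarithmic derivative of this factorization, the evenness pairs the four resulting $\vartheta'/\vartheta$-values symmetrically about the midpoint $\tilde{u}_0+\tfrac{1}{2}+\tfrac{\tau}{6}$, which is precisely halfway between a full $\mathbf{a}$-period and the pole location $\omega_{\mathbf{b}}/3$.

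The principal obstacle is the theta-function identity that effects the collapse of this four-term combination into the single term $2\vartheta'/\vartheta(\tilde{u}_0+\tfrac{1}{2}+\tfrac{\tau}{6},\tau)$ while simultaneously accounting for the $z_0\omega_{\mathbf{a}}$ and $\tfrac{1}{2}w'(z_0^+)\omega_{\mathbf{a}}$ contributions present in Corollary \ref{cor6.5}. This identity is not among the standard duplication formulas: the very specific shift $\tfrac{1}{2}+\tfrac{\tau}{6}$ is forced by the Boutroux-determined pole locations $\pm\omega_{\mathbf{b}}/3$ and would fail for a generic $\Pi_{a_{\phi},\phi}$. Using $w'(z_0^+)=\mathrm{P}''(u_0)/\mathrm{P}'(u_0)$ together with the Weierstrass duplication $\wp(2v)=-2\wp(v)+\tfrac{1}{4}(\wp''/\wp')^2(v)$ applied at $v=u_0+\omega_{\mathbf{a}}/2+\omega_{\mathbf{b}}/6$, the algebraic pieces $(z_+-z_-)\omega_{\mathbf{a}}-\tfrac{1}{2}(w'(z_+^+)-w'(z_-^+))\omega_{\mathbf{a}}$ should telescope against the quasi-periodicity corrections of $\vartheta$ and match the desired symmetric formula. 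The delicate bookkeeping of these constants and period shifts is the technical crux of the argument, and is precisely where Proposition \ref{prop7.5} supplies the essential input about the pole locations.
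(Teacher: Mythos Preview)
Your proposal identifies the right ingredients (Corollary \ref{cor6.5}, the pole locations from Proposition \ref{prop7.5}) but attempts a direct computational reduction via theta factorization and duplication formulas, and you yourself flag the resulting ``delicate bookkeeping'' as an unresolved obstacle. That obstacle is real: the four-term collapse you describe is not a standard identity, and your sketch does not actually establish it.

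The paper's proof bypasses this computation entirely with a Liouville-type argument. Rather than manipulating the specific values at $z_+$ and $z_-$, it treats
\[
G(z_0):=(z_0-\tfrac12 w'(a_\phi,z_0^+))\omega_{\mathbf{a}}+\frac{\vartheta'}{\vartheta}(F(z_0^-,z_0^+)+\nu,\tau)
\quad\text{and}\quad
H(z_0):=2\frac{\vartheta'}{\vartheta}\bigl(\tfrac12 F(z_0^-,z_0^+)+\tfrac12+\tfrac{\tau}{6},\tau\bigr)
\]
as functions of the moving point $z_0^+$ on $\Pi_{a_\phi,\phi}$. One checks that $z_0-\tfrac12 w'(z_0^+)$ is regular at $\infty^+$ and has a simple pole of residue $2$ at $\infty^-$; the apparent singularities of $G$ at the branch points cancel between the $w'$ term and the $\vartheta'/\vartheta$ term (exactly the cancellation already noted in Section \ref{ssc6.3}). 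For $H$, Proposition \ref{prop7.5} pins down $\omega_{\mathbf{a}}^{-1}\int_0^{\infty^\pm}w^{-1}dz$ modulo periods, and one reads off that $\tfrac12 F(z_0^-,z_0^+)+\tfrac12+\tfrac{\tau}{6}$ hits the theta-zero $\nu$ precisely when $z_0^+=\infty^-$, giving $H$ the same single simple pole with the same residue. Hence $G-H$ is a holomorphic elliptic function, so constant; setting $z_0=z_+$ and $z_0=z_-$ and subtracting kills the constant and yields the proposition via Corollary \ref{cor6.5}.

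The practical lesson: when the target is a difference $f(z_+)-f(z_-)$, it is often far cleaner to prove $f(z_0)=g(z_0)+c_0$ globally by matching singularities than to reduce either side algebraically.
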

\begin{proof}
Let $z_0 \in \Pi_{a_{\phi},\phi}=\Pi_+\cup \Pi_-.$ Note that $z_0-\tfrac 12
w'(a_{\phi}, z_0)$ is holomorphic around $z_0^+=\infty^+ \in \Pi_+$ and
admits a pole at $z_0^+ =\infty^- \in \Pi_-$ with the residue $2$.
Then, we have
$$
(z_0-\tfrac 12 w'(a_{\phi},z^+_0))\omega_{\mathbf{a}} 
+(\vartheta'/\vartheta)(F(z_0^-,z_0^+)+\nu,\tau)=
2(\vartheta'/\vartheta)(\tfrac 12 F(z_0^-,z_0^+)+\tfrac 12 +\tfrac {\tau}6,\tau)
+c_0
$$
for some constant $c_0.$ Indeed,
as shown later by Proposition \ref{prop7.5}, the elliptic function 
$\varphi_0(u)$ defined by 
$u=\int_0^{\varphi_0}w(a_{\phi},z)^{-1}dz$ has poles with the residue $\mp 1$  
at $\pm \tfrac 13\omega_{\mathbf{b}} +\omega_{\mathbf{a}}\mathbb{Z} 
 +\omega_{\mathbf{b}} \mathbb{Z}$, which implies $\omega_{\mathbf{a}}^{-1}
\int^{\infty^+}_0 w(a_{\phi},z)^{-1}dz +\tfrac 12 +\tfrac {\tau}6
= \tfrac 12 -\tfrac {\tau}6$ for $z_0^+=\infty^+ \in \Pi_+,$ and 
$\omega_{\mathbf{a}}^{-1}\int^{\infty^-}_0 w(a_{\phi},z)^{-1}dz +\tfrac 12
+\tfrac {\tau}6= \tfrac 12 +\tfrac {\tau}2=\nu$ for $z_0^-=\infty^- \in \Pi_-$;  
and furthermore, at the branch points $z_0=0, z_1, z_3, z_5$,
the leading terms of $-\tfrac 12 w'(a_{\phi},z^+_{0})\omega_{\mathbf{a}}$ and 
$(\vartheta'/\vartheta)(F(z_{0}^-,z_{0}^+)+\nu,\tau)$ are cancelled out 
(cf.~Subsection \ref{ssc6.3}).
Putting $z_0=z_{\pm}$ and using Corollary \ref{cor6.5}, 
we obtain the proposition.
\end{proof}
\subsection{Expression of $B_{\phi}(t)$}\label{ssc6.3}
Recall that our calculations are carried out under the supposition \eqref{5.1}
in the strip $S_{\phi}(t'_{\infty},\kappa_1,\delta_1).$ Let 
\begin{equation*}
\Omega_{\mathbf{a},\,\,\mathbf{b}}:=\int_{\mathbf{a},\,\,\mathbf{b}}\frac{dz}
{w(A_{\phi},z)}, \qquad
\mathcal{J}_{\mathbf{a},\,\,\mathbf{b}}:=\int_{\mathbf{a},\,\,\mathbf{b}}\frac
{w(A_{\phi},z)}{z} dz
\end{equation*}
for $\mathbf{a},$ $\mathbf{b}$ on 
$\Pi_{A_{\phi},\phi}=\Pi_+ \cup \Pi_- =\lim_{a_{\phi}\to A_{\phi}}
\Pi_{a_{\phi},\phi},$ i.e. $\Pi_{A_{\phi},\phi}=\lim_{t\to\infty}\Pi_{a_{\phi}
(t),\phi}$. We would like to express $B_{\phi}(t)$ defined by \eqref{5.1} 
in terms of these quantities.
\par
By Corollary \ref{cor6.5} the integral $\int_{\mathbf{a}}W(z)dz$ is a linear 
combination of $z_{\pm}$, $w'(z_{\pm}^+)$ and 
$(\vartheta'/\vartheta)(F(z_{\pm}^-,z_{\pm}^+)+\nu,\tau)$. 
In $S_{\phi}(t'_{\infty}, \kappa_1,\delta_1)$, the functions $\psi$, $1/\psi$ 
and $\psi'$ has no poles, and hence $z_{\pm}=z_{\pm}(t)$ are bounded in $S_{\phi}
(t'_{\infty},\kappa_1,\delta_1)$ and so are $w'(z^+_{\pm})$ except 
for neighbourhoods of the zeros of $w(z)$, i.e., $0, z_1, z_3, z_5$. 
Furthermore, around these 
points, the leading terms of $-\tfrac 12 w'(z^+_{\pm})\omega_{\mathbf{a}}$ and 
$(\vartheta'/\vartheta)(F(z_{\pm}^-,z_{\pm}^+)+\nu,\tau)$ are cancelled out. 
Indeed we have, say, around $z^+_{\pm}=0$, $-\tfrac 12 w'(z_{\pm}^+)
\omega_{\mathbf{a}}=-\tfrac 12 e^{3i\phi/2} A_{\phi}^{1/2}\omega_{\mathbf{a}}
(z_{\pm}^+)^{-1/2}+O(1)$ and $F(z_{\pm}^-,z_{\pm}^+)^{-1}=\tfrac 12 
e^{3i\phi/2}A_{\phi}^{1/2} \omega_{\mathbf{a}}(z_{\pm}^+)^{-1/2}+O(1)$. 
Since $z_{\pm}=z_{\pm}(t)$ moves on $\Pi_{a_{\phi},\phi}$ crossing $\mathbf{a}$-
and $\mathbf{b}$-cycles, $\omega_{\mathbf{a}}F(z_{\pm}^-,z_{\pm}^+)
=2p_{\pm}(t)\omega_{\mathbf{a}}+2q_{\pm}(t)\omega_{\mathbf{b}}+O(1)$ 
with $p_{\pm}(t),$ $q_{\pm}(t) \in\mathbb{Z},$ which implies
the boundedness of $\re (\vartheta'/\vartheta)(F(z^{-}_{\pm},z_{\pm}^+)+\nu,
\tau)$. Thus we have verified that $\re \int_{\mathbf{a}} W(z)dz$ is bounded in
$S_{\phi}(t'_{\infty},\kappa_1,\delta_1).$
\par
Suppose that $-\pi/4<\phi<0$. By \eqref{5.1}
\begin{equation*}
\frac 1z(w(a_{\phi},z)-w(A_{\phi},z))=\frac{2e^{3i\phi}t^{-1}B_{\phi}(t)}
{w(A_{\phi},z)}(1+O(t^{-1}B_{\phi}(t))).
\end{equation*}
By using this with $B_{\phi}(t)\ll 1$ the first formula in Proposition 
\ref{prop6.1} is written in the form
$$
\ln(1+s_1s_2)=-\frac t2\int_{\mathbf{a}}\Bigl(\frac{w(A_{\phi},z)}z+\frac
{2e^{3i\phi}t^{-1}B_{\phi}(t)}{w(A_{\phi},z)}\Bigr)dz
+\frac 14\int_{\mathbf{a}} W(z)dz
-C_{\alpha,\beta}(\mathbf{a})+\pi i+O(t^{-\delta}),
$$
that is,
$$
t\mathcal{J}_{\mathbf{a}}+2e^{3i\phi}\Omega_{\mathbf{a}}B_{\phi}(t)
=\frac 12\int_{\mathbf{a}}W(z)dz
 -2C_{\alpha,\beta}(\mathbf{a})+2\pi i -2\ln(1+s_1s_2)+O(t^{-\delta}).
$$
In Proposition \ref{prop6.1}, $(\ln(1+s_1s_2), \ln((1+s_1s_2)(1+s_2s_3)-1))$, 
generally depending on $t$, is a solution of the direct monodromy problem.
Suppose that
\begin{equation}\label{6.10}
\ln(1+s_1s_2)\ll 1, \quad  \ln((1+s_1s_2)(1+s_2s_3)-1) \ll 1 \,\,\, \text{in
$S_{\phi}(t'_{\infty},\kappa_1,\delta_1)$.} 
\end{equation}
By the Boutroux equations \eqref{2.2}, we have 
$\re e^{3i\phi}\Omega_{\mathbf{a}}B_{\phi}(t)\ll 1$ 
as $t\to\infty$ in $S_{\phi}(t'_{\infty},\kappa_1,\delta_1)$. From the
second formula of Proposition \ref{prop6.1} we similarly derive
$$
t\mathcal{J}_{\mathbf{b}}+2e^{3i\phi}\Omega_{\mathbf{b}}B_{\phi}(t)
=\frac 12\int_{\mathbf{b}}W(z)dz -2C_{\alpha,\beta}(\mathbf{b})
- 2\ln((1+s_1s_2)(1+s_2s_3)-1)+O(t^{-\delta}),
$$
in which $\int_{\mathbf{b}}W(z)dz$ is expressed by $\vartheta(z,\hat{\tau})$
with $\hat{\tau}=(-\omega_{\mathbf{a}})/\omega_{\mathbf{b}},$ and this formula 
yields $\re e^{3i\phi}\Omega_{\mathbf{b}} B_{\phi}(t) \ll 1$. 
These two estimates leads to
the inequality $|B_{\phi}(t)|\le C_0$ in $S_{\phi}(t'_{\infty},\kappa_1,
\delta_1)$ for some $C_0>0,$ while the implied constant of \eqref{5.1} may be 
supposed to be $2C_0$ if $t'_{\infty}$ is taken sufficiently large. 
Hence the boundedness of $B_{\phi}(t)$ may be derived under \eqref{6.10} 
independently of \eqref{5.1}. The case $0<\phi<\pi/4$ is discussed in the same
way by using the Boutroux equations \eqref{2.2}. 
\begin{rem}\label{rem6.2}
Under a relaxed condition, say $B_{\phi}(t)\ll \ln |t| $ instead of $\ll 1$
of \eqref{5.1},
each turning point is located within the distance $O(t^{-1}\ln|t|)$ from the
limit one, and Propositions \ref{prop6.1} and \ref{prop6.2} are obtained
by choosing a slightly smaller $\delta$. Then the equivalence between 
\eqref{6.10} and the boundedness of $B_{\phi}(t)$ may be proved as above. 
\end{rem}
\begin{prop}\label{prop6.6}
Suppose that $0<|\phi|<\pi/4$ and let
$\mathfrak{l}(\mathbf{s},\phi)=\ln(1+s_1s_2)$ if $-\pi/4<\phi<0,$ and 
$= -\ln (1+s_2s_3)$ if $0<\phi<\pi/4.$ 
In $S_{\phi}(t'_{\infty},\kappa_1,\delta_1)$, we have
$$
\mathfrak{l}(\mathbf{s},\phi) \ll 1,\quad \ln((1+s_1s_2)(1+s_2s_3)-1) \ll 1 
$$
if and only if $B_{\phi}(t)\ll 1$, and then
$$
t\mathcal{J}_{\mathbf{a}}+2 e^{3i\phi}\Omega_{\mathbf{a}}B_{\phi}(t)
=\frac 12\int_{\mathbf{a}}W(z)dz -2C_{\alpha,\beta}(\mathbf{a})+2\pi i 
-2\mathfrak{l}(\mathbf{s},\phi) +O(t^{-\delta}).
$$
\end{prop}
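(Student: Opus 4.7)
The plan is to extract the stated identity directly from the first $\mathbf{a}$-formula of Proposition~\ref{prop6.1} (for $-\pi/4<\phi<0$) or Proposition~\ref{prop6.2} (for $0<\phi<\pi/4$), by plugging in the expansion of $w(a_\phi,z)/z$ coming from \eqref{5.1}, and to obtain the equivalence by combining the corresponding $\mathbf{b}$-formula with the Boutroux equations \eqref{2.2}. First I would insert
$$
\frac{w(a_\phi,z)}{z}=\frac{w(A_\phi,z)}{z}+\frac{2e^{3i\phi}t^{-1}B_\phi(t)}{w(A_\phi,z)}\bigl(1+O(t^{-1}B_\phi(t))\bigr)
$$
into Propositions~\ref{prop6.1}--\ref{prop6.2}. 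Integrating along $\mathbf{a}$ converts the leading $-\tfrac{t}{2}\int_{\mathbf a}w(a_\phi,z)/z\,dz$ into $-\tfrac{t}{2}\mathcal J_{\mathbf a}-e^{3i\phi}\Omega_{\mathbf a}B_\phi(t)+O(t^{-1}B_\phi^2)$, and rearranging gives the stated formula with $\mathfrak l(\mathbf s,\phi)=\ln(1+s_1s_2)$ in the first case; in the second case Proposition~\ref{prop6.2} has the opposite sign in front of $\tfrac t2\int_{\mathbf a}w/z\,dz$, so one ends up naturally with $\mathfrak l(\mathbf s,\phi)=-\ln(1+s_2s_3)$, the discrepancy of $\pm\pi i$ being absorbed in the branch of the logarithm modulo $2\pi i$.

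Next I would verify that $\int_{\mathbf a}W(z)\,dz$ and $\int_{\mathbf b}W(z)\,dz$ remain bounded throughout $S_\phi(t'_\infty,\kappa_1,\delta_1)$. By Corollary~\ref{cor6.5} each such integral is a linear combination of $z_\pm$, $w'(z_\pm^+)$ and $(\vartheta'/\vartheta)(F(z_\pm^-,z_\pm^+)+\nu,\tau)$. On the strip $|\psi|+|\psi|^{-1}+|\psi_t|<\delta_1^{-1}$, the quantities $z_\pm$ are bounded; the only possible blow-up of $w'(z_\pm^+)$ occurs as $z_\pm^+$ approaches a branch point of $w$, and there the leading singular parts of $-\tfrac12 w'(z_\pm^+)\omega_{\mathbf a}$ and $(\vartheta'/\vartheta)(F(z_\pm^-,z_\pm^+)+\nu,\tau)$ cancel (e.g.\ near $z_\pm^+=0$, both equal $\pm\tfrac12 e^{3i\phi/2}A_\phi^{1/2}\omega_{\mathbf a}(z_\pm^+)^{-1/2}+O(1)$). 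Since $z_\pm(t)$ may move across many cycles, $\omega_{\mathbf a}F(z_\pm^-,z_\pm^+)=2p_\pm\omega_{\mathbf a}+2q_\pm\omega_{\mathbf b}+O(1)$ with $p_\pm,q_\pm\in\mathbb Z$, and the quasi-periodicity of $\vartheta'/\vartheta$ keeps $\re\int_{\mathbf a,\mathbf b}W(z)\,dz$ bounded.

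To close the equivalence, I would repeat the substitution with the $\mathbf b$-formula of Propositions~\ref{prop6.1}--\ref{prop6.2} to obtain
$$
t\mathcal J_{\mathbf b}+2e^{3i\phi}\Omega_{\mathbf b}B_\phi(t)=\tfrac12\int_{\mathbf b}W(z)\,dz-2C_{\alpha,\beta}(\mathbf b)-2\ln\bigl((1+s_1s_2)(1+s_2s_3)-1\bigr)+O(t^{-\delta}),
$$
and then take real parts of both formulas. By \eqref{2.2}, $\re\mathcal J_{\mathbf a}=\re\mathcal J_{\mathbf b}=0$, so the left-hand sides reduce to $2\re(e^{3i\phi}\Omega_{\mathbf{a},\mathbf{b}}B_\phi(t))$. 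Since $\Omega_{\mathbf a}$ and $\Omega_{\mathbf b}$ are $\mathbb R$-linearly independent (because $\im(\Omega_{\mathbf b}/\Omega_{\mathbf a})>0$) one can invert to express $B_\phi(t)$ in terms of the two logarithms and the bounded quantities $\int W(z)\,dz$ and $C_{\alpha,\beta}$. This gives the "if" direction immediately and yields the "only if" direction together with the claimed identity.

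The main obstacle is the bootstrap in the "if" direction: the hypothesis of Propositions~\ref{prop6.1}--\ref{prop6.2} is \eqref{5.1}, which already assumes $B_\phi(t)\ll 1$. To deduce it from the weaker bounds \eqref{6.10} I would follow Remark~\ref{rem6.2}: first establish that if $B_\phi(t)\ll\ln|t|$ then the turning points are within $O(t^{-1}\ln|t|)$ of their limits and Propositions~\ref{prop6.1}--\ref{prop6.2} still hold with a slightly smaller $\delta$; the two real equations above then force $|B_\phi(t)|\le C_0$ for some absolute $C_0$, which upgrades the a priori bound to the one needed for \eqref{5.1} (with implied constant $2C_0$, say) provided $t'_\infty$ is taken large enough. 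This closes the equivalence without circularity.
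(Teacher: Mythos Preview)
Your proposal is correct and follows essentially the same route as the paper: expand $w(a_\phi,z)/z$ via \eqref{5.1}, substitute into the $\mathbf a$- and $\mathbf b$-formulas of Propositions~\ref{prop6.1}--\ref{prop6.2}, verify boundedness of $\int_{\mathbf a,\mathbf b}W(z)\,dz$ through Corollary~\ref{cor6.5} and the branch-point cancellation, then use the Boutroux equations plus the $\mathbb R$-linear independence of $\Omega_{\mathbf a},\Omega_{\mathbf b}$ to bound $B_\phi(t)$, with the bootstrap handled exactly as in Remark~\ref{rem6.2}. The only cosmetic point is that in the $0<\phi<\pi/4$ case the constant comes out as $-2\pi i$ rather than $+2\pi i$, a discrepancy of $4\pi i$ (not $\pm\pi i$ as you wrote), but since $\mathfrak l(\mathbf s,\phi)$ is a logarithm this is absorbed by a $2\pi i$ shift of its branch, just as you indicated.
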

The following fact guarantees the possibility of the limit $a_{\phi}\to
A_{\phi}$ under integration.
\begin{prop}\label{prop6.7}
Suppose that $0<|\phi|<\pi/4.$ Then, in $S_{\phi}(t'_{\infty},\kappa_1,
\delta_1)$,
$$
\biggl(\int^{z^+_+}_{z^-_+} -\int^{z^+_-}_{z^-_-} \biggr)\frac{dz}{w(a_{\phi},z)}
=\biggl(\int^{z^+_+}_{z^-_+}-\int^{z^+_-}_{z^-_-}\biggr)\frac{dz}{w(A_{\phi},z)}
+O(t^{-1}).
$$
\end{prop}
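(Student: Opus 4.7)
The plan is to bound the difference of integrals by estimating the difference of integrands and then handling separately the contributions far from and near the branch points. From the identity $w(a_\phi, z)^2 - w(A_\phi, z)^2 = 4 e^{3i\phi}(a_\phi - A_\phi) z$ together with hypothesis \eqref{5.1} (which gives $a_\phi - A_\phi = B_\phi(t)/t = O(t^{-1})$), I obtain
$$\frac{1}{w(a_\phi, z)} - \frac{1}{w(A_\phi, z)} = \frac{-4 e^{3i\phi}(a_\phi - A_\phi)\, z}{w(a_\phi, z)\, w(A_\phi, z)\,(w(a_\phi, z) + w(A_\phi, z))}.$$
On any portion of the integration path bounded away by a fixed positive distance from the branch points $0, z_1(A_\phi), z_3(A_\phi), z_5(A_\phi)$, this right-hand side is uniformly $O(t^{-1})$. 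By the non-degeneracy of $w(A_\phi, z)$ for $0<|\phi|<\pi/4$, one has $z_j(a_\phi) = z_j(A_\phi) + O(t^{-1})$, so a single $z$-contour may be chosen that lies at fixed positive distance from the branch points of both surfaces, and its contribution to the integral difference is $O(t^{-1})$.

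Near each branch point $z_j$, I would introduce the local uniformizer $\zeta_c$ defined by $z - z_j(c) = \zeta_c^2$ on $\Pi_{c,\phi}$ for $c \in \{a_\phi, A_\phi\}$. The form $dz/w(c, z)$ then pulls back to $2\,d\zeta_c / G_j^c(\zeta_c)$ with $G_j^c$ analytic and non-vanishing at $\zeta_c = 0$ and depending smoothly on $c$. The integral $\int^{z_\pm^+}_{z_\pm^-} dz/w(c, z)$ across a small arc around $z_j$ becomes an integral of a smooth (even in $\zeta_c$) function over the symmetric interval $[-\zeta_0^c, \zeta_0^c]$, with $\zeta_0^c = \sqrt{z_\pm - z_j(c)}$. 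Since $(\zeta_0^a)^2 - (\zeta_0^A)^2 = z_j(A_\phi) - z_j(a_\phi) = O(t^{-1})$, a direct estimate using the $c$-smoothness of $2/G_j^c$ gives an $O(t^{-1})$ contribution for each individual integral, hence for the combination, provided $|z_\pm - z_j|$ is bounded below by a fixed positive constant.

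The main obstacle is the degenerate case where some $z_\pm$ approaches $z_j$: the naive estimate $\zeta_0^a - \zeta_0^A = O(t^{-1})/(\zeta_0^a + \zeta_0^A)$ a priori loses a factor as large as $t^{-1/2}$. The specific combination $\int^{z_+^+}_{z_+^-} - \int^{z_-^+}_{z_-^-}$ in the statement is what absorbs this potential loss: invoking the theta-function representation of Proposition \ref{prop6.3} and Corollary \ref{cor6.4} applied to both $w(a_\phi,z)$ and $w(A_\phi,z)$, the two Abel integrals appear as smooth functions of the data $(c, z_\pm)$ through $\tau$, $\omega_{\mathbf{a}}$, and $F(z^-, z^+)$, each of which depends smoothly on $c$ with variation $O(t^{-1})$; the uniform $O(t^{-1})$ bound on the combination in $S_\phi(t'_\infty, \kappa_1, \delta_1)$ then follows by the mean value theorem applied to the resulting smooth $c$-dependence.
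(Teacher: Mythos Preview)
Your argument has a genuine gap: you treat the integration contour as if it had bounded length, but it does not. The integrals $\int_{z_\pm^-}^{z_\pm^+} dz/w$ are determined by continuity in $t$, and as $z_\pm(t)$ varies in the strip the path winds around the primitive cycles; as the paper records, $\omega_{\mathbf{a}}F(z_\pm^-,z_\pm^+)=2p_\pm(t)\omega_{\mathbf{a}}+2q_\pm(t)\omega_{\mathbf{b}}+O(1)$ with $p_\pm(t),q_\pm(t)\in\mathbb{Z}$ growing linearly in $t$. Consequently your pointwise bound $w(a_\phi,z)^{-1}-w(A_\phi,z)^{-1}=O(t^{-1})$, integrated over a path of length $\asymp t$, yields only $O(1)$, not $O(t^{-1})$. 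Your theta-function paragraph does not repair this: when you vary $c$ from $a_\phi$ to $A_\phi$, the modulus $\tau$ shifts by $O(t^{-1})$, but $F(z_\pm^-,z_\pm^+)=2p_\pm(t)+2q_\pm(t)\tau+O(1)$ with $q_\pm(t)\asymp t$, so $F$ itself shifts by $O(1)$, not $O(t^{-1})$. The mean value theorem therefore gives nothing better than $O(1)$.

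The paper's proof confronts this directly. First it uses Proposition~\ref{prop6.1} (combined via Corollary~\ref{cor6.5} and \eqref{6.9}) together with the Boutroux equations to pin down the winding numbers: $\pi p(t)=-\tfrac{i}{2}\mathcal{J}_{\mathbf{b}}t+O(1)$ and $\pi q(t)=\tfrac{i}{2}\mathcal{J}_{\mathbf{a}}t+O(1)$, where $p=p_+-p_-$, $q=q_+-q_-$. Then it writes the integrand difference as $-2e^{3i\phi}B_\phi(t)t^{-1}\,z\,w(A_\phi,z)^{-3}+O(t^{-2})$ and integrates over $p(t)\mathbf{a}+q(t)\mathbf{b}$ plus a bounded remainder, obtaining $t^{-1}\bigl(p(t)j_{\mathbf{a}}+q(t)j_{\mathbf{b}}\bigr)+O(t^{-1})$ with $j_{\mathbf{a},\,\mathbf{b}}=\int_{\mathbf{a},\,\mathbf{b}}z\,w(A_\phi,z)^{-3}dz$. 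The crucial cancellation is that $p(t)j_{\mathbf{a}}+q(t)j_{\mathbf{b}}$ is proportional to $\mathcal{J}_{\mathbf{b}}j_{\mathbf{a}}-\mathcal{J}_{\mathbf{a}}j_{\mathbf{b}}$, which equals (up to a constant) $\partial_{A_\phi}(\mathcal{J}_{\mathbf{b}}\Omega_{\mathbf{a}}-\mathcal{J}_{\mathbf{a}}\Omega_{\mathbf{b}})$; by the Legendre-type identity of Corollary~\ref{cor8.10} this derivative vanishes identically. That algebraic identity is what buys the extra factor of $t^{-1}$, and it is absent from your argument.
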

\begin{proof}
By \eqref{5.1} it is easy to see $\omega_{\mathbf{a},\,\,\mathbf{b}}
=\Omega_{\mathbf{a},\,\, \mathbf{b}}+O(t^{-1}).$ Suppose that $-\pi/4<\phi<0$. 
By Proposition \ref{prop6.1}, Remark \ref{rem6.1}, Corollary \ref{cor6.5} and
\eqref{6.9},
\begin{align*}
\ln((1+s_1s_2)& (1+s_2s_3)-1)- \tau\ln(1+s_1s_2)
\\
=& -a_{\phi}e^{4i\phi}t \Bigl(\int_{\mathbf{b}}-\tau\int_{\mathbf{a}} \Bigr)
\frac{dz}{zw(a_{\phi},z)} 
+\frac 14 \Bigl(\int_{\mathbf{b}}-\tau\int_{\mathbf{a}} \Bigr)W(z)dz
\\
& -(C_{\alpha,\beta}(\mathbf{b})-\tau C_{\alpha,\beta}(\mathbf{a})) - \pi i\tau
+O(t^{-\delta})
\\
=& -\frac{2\pi i e^{i\phi}t}{\omega_{\mathbf{a}}}+\frac{\pi i}{2}
(F(z^-_+,z^+_+)-F(z^-_-,z^+_-)) -C_{\alpha,\beta}^* -\pi i\tau +O(t^{-\delta})
\end{align*}
with $C_{\alpha,\beta}^*=C_{\alpha,\beta}(\mathbf{b})-\tau C_{\alpha,\beta}
(\mathbf{a}).$ This implies
$$
-2e^{i\phi}t + \frac 12 \biggl(\int_{z^-_+}^{z^+_+}-\int^{z^+_-}_{z^-_-}\biggr)
\frac{dz}{w(a_{\phi},z)} =O(1).
$$
Write $\omega_{\mathbf{a}}F(z_{\pm}^-,z_{\pm}^+)=2p_{\pm}(t)\omega_{\mathbf{a}}
+2q_{\pm}(t)\omega_{\mathbf{b}} +O(1)$ with 
$p_{\pm}(t),$ $q_{\pm}(t)\in \mathbb{Z}$, 
and set $-i\mathcal{J}_{\mathbf{a}}t/2+\pi q(t)=X,$
$-i\mathcal{J}_{\mathbf{b}}t/2-\pi p(t)=Y,$ where $p(t)=p_+(t)-p_-(t),$
$q(t)=q_+(t)-q_-(t).$ By the Boutroux equations \eqref{2.2}, $\im X$ and $\im Y$
are bounded. Note that $\omega_{\mathbf{b}}\mathcal{J}_{\mathbf{a}} 
- \omega_{\mathbf{a}}\mathcal{J}_{\mathbf{b}}=  
\Omega_{\mathbf{b}}\mathcal{J}_{\mathbf{a}} 
- \Omega_{\mathbf{a}}\mathcal{J}_{\mathbf{b}}+O(t^{-1})
= -4\pi i e^{i\phi} +O(t^{-1})$ by Corollary \ref{cor8.10}.  
Then the estimate above is
\begin{align*}
& -2e^{i\phi}t +\omega_{\mathbf{a}}p(t) +\omega_{\mathbf{b}}q(t)+O(1)
\\
=&-2e^{i\phi}t +\pi^{-1}\omega_{\mathbf{a}}(-Y-i\mathcal{J}_{\mathbf{b}}t/2) 
 +\pi^{-1}\omega_{\mathbf{b}}(X+i\mathcal{J}_{\mathbf{a}}t/2) +O(1)
\\
=&-2e^{i\phi}t +\pi^{-1}it(\omega_{\mathbf{b}}\mathcal{J}_{\mathbf{a}} 
- \omega_{\mathbf{a}}\mathcal{J}_{\mathbf{b}})/2 
 +\pi^{-1}(\omega_{\mathbf{b}}X-\omega_{\mathbf{a}}Y) +O(1)
\\
=& \pi^{-1}(\omega_{\mathbf{b}}X-\omega_{\mathbf{a}}Y) +O(1)\ll 1
\end{align*}
with $\im(\omega_{\mathbf{b}}/\omega_{\mathbf{a}})>0$ uniformly.
This implies $X$, $Y \ll 1$, and hence 
$$
\pi p(t)=-i\mathcal{J}_{\mathbf{b}} t/2 +O(1), \quad 
\pi q(t)=i\mathcal{J}_{\mathbf{a}} t/2 +O(1).  
$$
Observing that $w(a_{\phi},z)^{-1}-w(A_{\phi},z)^{-1}=-2ze^{3i\phi} 
B_{\phi}(t)t^{-1}w(A_{\phi},z)^{-3} +O(t^{-2}),$ we have
\begin{align*}
& \biggl| \biggl( \int^{z^+_+}_{z^-_+} -\int^{z^+_-}_{z^-_-} \biggr) \Bigl(
\frac 1{w(a_{\phi},z)} -\frac 1{w(A_{\phi},z)} \Bigr)dz \biggr|
\ll \biggl| \biggl( \int^{z^+_+}_{z^-_+} -\int^{z^+_-}_{z^-_-} \biggr) 
\frac {zB_{\phi}(t)t^{-1}}{w(A_{\phi},z)^3} dz \biggr| +O(t^{-1})
\\
& \ll \biggl|t^{-1} \biggl( \int^{z^+_+}_{z^-_+} -\int^{z^+_-}_{z^-_-} \biggr) 
\frac {z dz }{w(A_{\phi},z)^3}\biggr| +O(t^{-1})
\ll |t^{-1} ( p(t)j_{\mathbf{a}}+q(t)j_{\mathbf{b}}) |+O(t^{-1})
\\
& =|\mathcal{J}_{\mathbf{b}} j_{\mathbf{a}} - \mathcal{J}_{\mathbf{a}}
 j_{\mathbf{b}} |+O(t^{-1})
=\tfrac 12 
|(\partial/\partial A_{\phi}) (\mathcal{J}_{\mathbf{b}} \Omega_{\mathbf{a}}
 - \mathcal{J}_{\mathbf{a}} \Omega_{\mathbf{b}} )|+O(t^{-1}) \ll t^{-1}
\end{align*}
with $j_{\mathbf{a},\,\, \mathbf{b}}=\int_{\mathbf{a},\,\, \mathbf{b}}
z w(A_{\phi},z)^{-3} dz$. This completes the proof.
\end{proof} 
\section{Proofs of the main results}\label{sc7}
\subsection{Derivation of $\psi(t)$}\label{ssc7.1}
Suppose that $-\pi/4<\phi<0.$ Let $\mathbf{s}=(s_1,s_2,s_3,s_4)$ with 
$(1+s_1s_2)(1+ s_2s_3)-1 \not=0,$ $1+s_1s_2\not=0$ be a solution of the
direct monodromy problem discussed above in which $(\psi,\psi_t)$ be such
that \eqref{5.1} is valid in $S_{\phi}(t'_{\infty},\kappa_1,\delta_1)$.
As in the proof of Proposition \ref{prop6.7} we set 
\begin{align*}
\ln((1+s_1s_2)& (1+s_2s_3)-1)- \tau\ln(1+s_1s_2)
\\
=& -\frac{2\pi i e^{i\phi}t}{\omega_{\mathbf{a}}}+\frac{\pi i}{2}
(F(z^-_+,z^+_+)-F(z^-_-,z^+_-)) -C_{\alpha,\beta}^*-\pi i\tau +O(t^{-\delta})
\end{align*}
with $C_{\alpha,\beta}^*=C_{\alpha,\beta}(\mathbf{b})-\tau C_{\alpha,\beta}
(\mathbf{a}).$ Then by Proposition \ref{prop6.7} we have
\begin{equation}\label{7.1}
\biggl(\int^{z^+_+}_0 - \int^{z^+_-}_0 \biggr) \frac{dz}{w(A_{\phi},z)}
=2e^{i\phi}t + 2\tilde{\chi}_0 +\Omega_{\mathbf{b}}+O(t^{-\delta}),
\end{equation}
in which
\begin{align*}
& 2\tilde{\chi}_0=\frac 1{\pi i}(\Omega_{\mathbf{a}}
\ln((1+s_1s_2)(1+s_2s_3)-1)- \Omega_{\mathbf{b}}\ln(1+s_1s_2) )
+\Gamma_{\alpha,\beta},
\\
&\Gamma_{\alpha,\beta}=\frac {(\alpha-\beta)}{2\pi i}\Bigl(\Omega_{\mathbf{a}}
\int_{\mathbf{b}} \frac{z dz}{w(A_{\phi},z)} - \Omega_{\mathbf{b}}
\int_{\mathbf{a}} \frac{z dz}{w(A_{\phi},z)} \Bigr).
\end{align*}
\par
Let us derive the asymptotic form as in Theorem \ref{thm2.1} from \eqref{7.1}. 
Set
$$
u=-\int^{z^+_{+}}_0 \frac{dz}{w(A_{\phi},z)}, \quad
v=-\int^{z^+_{-}}_0 \frac{dz}{w(A_{\phi},z)}.
$$
By the change of variables 
$z= e^{3i\phi}A_{\phi}(\wp -\tfrac 13 e^{2i\phi})^{-1}$ 
(cf. Remark \ref{rem2.1}),
\begin{align*}
 &-\int^{z^+_{\pm}}_0 \frac{dz}{w(A_{\phi},z)}=
 \int^{\wp_{\pm}}_{\infty} \frac{d\wp}{\sqrt{4\wp^3-g_2\wp-g_3}},
\\  
&\wp_{\pm}=\tfrac 13 {e^{2i\phi}} +e^{3i\phi}{A_{\phi}}z_{\pm}^{-1}, \quad
 g_2=e^{4i\phi}(\tfrac 43-4A_{\phi}), \quad
g_3=e^{6i\phi}(-\tfrac 8{27} +\tfrac 43 A_{\phi}-A_{\phi}^2),
\end{align*}
and hence $\wp(u)=\wp_+,$ $\wp(v)=\wp_-$. Observing that,
by \eqref{4.2}, \eqref{5.1} and \eqref{6.2},
\begin{align*}
&z_+ + z_-= -\psi-2e^{i\phi}+O(t^{-1}), 
\quad z_+z_-= -e^{3i\phi}A_{\phi}\psi^{-1} +O(t^{-1}),
\\
& w(A_{\phi},z_{\pm})=\lambda b_3(\lambda)|_{\lambda^2=z_{\pm}}
=z_{\pm}^2+2(e^{i\phi} +\psi)z_{\pm}+O(t^{-1}),
\end{align*}
we have, up to the error term $+O(t^{-1})$,
\begin{align*}
\wp(u)+\wp(v)=&\wp_+ +\wp_- =\tfrac 23 e^{2i\phi}+e^{3i\phi}
A_{\phi}(z_++z_-)(z_+z_-)^{-1}
=\tfrac 23 e^{2i\phi}+\psi^2+2e^{i\phi}\psi,
\\
-(\wp'(u)-\wp'(v))=&e^{3i\phi}A_{\phi}
( z_+^{-2}w(A_{\phi},z_+)-z_-^{-2}w(A_{\phi},z_-))
= 2(e^{i\phi}+\psi)e^{3i\phi}A_{\phi}(z_+^{-1}-z_-^{-1})
\\
=& 2(e^{i\phi}+\psi)(\wp_+ -\wp_-)
= 2(e^{i\phi}+\psi)(\wp(u) -\wp(v)).
\end{align*}
Then by the addition theorem
$$
\wp(u+v)=-\wp(u)-\wp(v)+ \frac 14 \Bigl(\frac{\wp'(u)-\wp'(v)}{\wp(u)-\wp(v)}
\Bigr)^2 =\frac {e^{2i\phi}}3 +O(t^{-1}),
$$
which implies, by Proposition \ref{prop7.5},
\begin{align*}
\biggl(\int^{z_+^+}_{0} + \int^{z_-^+}_0 \biggr) \frac{dz}{w(A_{\phi},z)}
=& -(u+v)=-\int_{\infty}^{e^{2i\phi}/3} \frac{d\wp}{\sqrt{4\wp^3-g_2\wp-g_3}}
\\
=&\int^{\infty}_0 \frac{dz}{w(A_{\phi},z)}
=\Omega_0 \in\{\pm \tfrac 13\Omega_{\mathbf{b}}\}.
\end{align*}
This combined with \eqref{7.1} leads to
\begin{equation}\label{7.2}
\begin{split}
&\int^{z^+_+}_0 \frac{dz}{w(A_{\phi},z)}= e^{i\phi}t+\tilde{\chi}_0+ \frac 12(
\Omega_{\mathbf{b}}+\Omega_0)+O(t^{-\delta}), 
\\
&\int^{z^+_-}_0 \frac{dz}{w(A_{\phi},z)}= -e^{i\phi}t-\tilde{\chi}_0-\frac 12(
\Omega_{\mathbf{b}} -\Omega_0)+O(t^{-\delta}).
\end{split}
\end{equation}
\begin{rem}\label{rem7.1}
A similar argument with use of the addition theorem leads to
\begin{align*}
 &\biggl(\int^{z_+^+}_{0} + \int^{z_-^+}_0 \biggr) \frac{dz}{w(a_{\phi},z)}
 =-\int_{\infty}^{e^{2i\phi}/3} \frac{d\wp}{\sqrt{4\wp^3-\tilde{g}_2\wp-
\tilde{g}_3}} +O(t^{-1})
\\
& =-\int_{\infty}^{e^{2i\phi}/3} \frac{d\wp}{\sqrt{4\wp^3-{g}_2\wp-
{g}_3}} +O(t^{-1})
=\biggl(\int^{z_+^+}_{0} + \int^{z_-^+}_0 \biggr) \frac{dz}{w(A_{\phi},z)}
+O(t^{-1}),
\end{align*}
where $\tilde{g}_2=e^{4i\phi}(\tfrac 43-4a_{\phi}),$
$\tilde{g}_3=e^{6i\phi}(-\tfrac {27}8+\tfrac 4 3a_{\phi}-a_{\phi}^2).$
Combining this with Proposition \ref{prop6.7} we have
$$
 \int^{z^+_{\pm}}_0\frac {dz}{w(a_{\phi},z)}= 
 \int^{z^+_{\pm}}_0\frac {dz}{w(A_{\phi},z)}+O(t^{-1}). 
$$
\end{rem}
Recall that $\mathrm{P}(u;A_{\phi})$ solves $\mathrm{P}_u^2
=w(A_{\phi},\mathrm{P})^2.$ By Proposition \ref{prop7.5}, 
\begin{equation}\label{7.3}
\mathrm{P}(u;A_{\phi})= \frac 1{\Omega_{\mathbf{a}}} \Bigl( 
\frac{\vartheta'}{\vartheta}
\Bigl(\frac u{\Omega_{\mathbf{a}}} +\frac {\tau^*}3 +\nu, \tau^*\Bigr)
-\frac{\vartheta'}{\vartheta}
\Bigl(\frac u{\Omega_{\mathbf{a}}} -\frac {\tau^*}3 +\nu,\tau^*\Bigr)
+ C_{\mathrm{P}} \Bigr)
\end{equation}
with $C_{\mathrm{P}}=-(\vartheta'/\vartheta)( \tfrac 13\tau^* +\nu,\tau^*)
+(\vartheta'/\vartheta)(-\tfrac 13\tau^* +\nu,\tau^*)$ and 
$\tau^*=\Omega_{\mathbf{b}}/\Omega_{\mathbf{a}}$. Then we have
\begin{align*}
\psi(t)=&-z_+ - z_- -2e^{i\phi}+O(t^{-1})
\\
=& -\mathrm{P}(\tilde{t}_{\phi}+\tfrac 12 (\Omega_{\mathbf{b}} 
+\Omega_0);A_{\phi})
 -\mathrm{P}(\tilde{t}_{\phi}+\tfrac 12 (\Omega_{\mathbf{b}} 
-\Omega_0);A_{\phi})
 -2e^{i\phi}+O(t^{-\delta})
\\
=& -\frac 1{\Omega_{\mathbf{a}}}
\Bigl(\frac{\vartheta'}{\vartheta}\Bigl(\frac{\tilde{t}_{\phi}}
{\Omega_{\mathbf{a}}}-\frac{\tau^*}6 \pm \frac{\tau^*}6 +\nu \Bigr)
 -\frac{\vartheta'}{\vartheta}\Bigl(\frac{\tilde{t}_{\phi}}
{\Omega_{\mathbf{a}}}+\frac{\tau^*}6 \pm \frac{\tau^*}6 +\nu \Bigr)
\\
 &+\frac{\vartheta'}{\vartheta}\Bigl(\frac{\tilde{t}_{\phi}}
{\Omega_{\mathbf{a}}}-\frac{\tau^*}6 \mp \frac{\tau^*}6 +\nu \Bigr)
 -\frac{\vartheta'}{\vartheta}\Bigl(\frac{\tilde{t}_{\phi}}
{\Omega_{\mathbf{a}}}+\frac{\tau^*}6 \mp \frac{\tau^*}6 +\nu \Bigr)
\Bigr) +C_{\phi}+O(t^{-\delta})
\\
=&  \frac 1{\Omega_{\mathbf{a}}}
\Bigl( \frac{\vartheta'}{\vartheta}\Bigl(\frac{\tilde{t}_{\phi}}
{\Omega_{\mathbf{a}}}+\frac{\tau^*}3 +\nu \Bigr)
 -\frac{\vartheta'}{\vartheta}\Bigl(\frac{\tilde{t}_{\phi}}
{\Omega_{\mathbf{a}}}-\frac{\tau^*}3 +\nu \Bigr)
\Bigr) +C_{\phi}+O(t^{-\delta})
\\
=& \mathrm{P}(\tilde{t}_{\phi};A_{\phi}) +C_{\phi,0} +O(t^{-\delta}),
\end{align*}
where $C_{\phi}, C_{\phi,0} \in \mathbb{C},$ and 
$\tilde{t}_{\phi}=e^{i\phi}t+\tilde{\chi}_0$. 
If $\psi_t=(d/dt)\psi +O(t^{-1})$, then by \eqref{4.2}
$e^{-2i\phi}((d/dt)\psi)^2=w(A_{\phi},\psi)^2 +O(t^{-1})$, which implies 
$C_{\phi,0}=0.$ In the case $0<\phi<\pi/4$, for $\mathbf{s}$ such that
$(1+s_1s_2)(1+s_2s_3)-1\not=0,$ $1+s_2s_3 \not=0$, the same argument is 
possible. Thus we have the following.
\begin{prop}\label{prop7.1}
If $\psi_t=d\psi/dt+O(t^{-1}),$ then
equation \eqref{7.1} in $S_{\phi}(t'_{\infty},\kappa_1,\delta_1)$ implies
$$
\psi(t)=\mathrm{P}(e^{i\phi}t +\tilde{\chi}_0;A_{\phi})+O(t^{-\delta}),
$$
in which 
\begin{equation*}
 2\tilde{\chi}_0=\frac 1{\pi i}(\Omega_{\mathbf{a}}
\ln((1+s_1s_2)(1+s_2s_3)-1)- \Omega_{\mathbf{b}}\mathfrak{l}(\mathbf{s},\phi))
+\Gamma_{\alpha,\beta},
\end{equation*}
with $\mathfrak{l}(\mathbf{s},\phi)=\ln(1+s_1s_2)$ if $-\pi/4<\phi<0,$ and 
$= -\ln(1+s_2s_3) $ if $0<\phi<\pi/4$. 
\end{prop}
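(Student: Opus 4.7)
The plan is to extract $\psi$ from the key relation \eqref{7.1} by treating the left-hand side as the Abel map evaluated at $z_+^+$ and $z_-^+$, and then using the elliptic addition theorem to separate the two integrals. First I would pass to the Weierstrass coordinate $z = e^{3i\phi}A_\phi(\wp - \tfrac13 e^{2i\phi})^{-1}$ from Remark \ref{rem2.1}, converting the abelian integrals over $\Pi_{A_\phi,\phi}$ to standard Weierstrass integrals with invariants $g_2, g_3$. Setting $u = -\int_0^{z_+^+} dz/w(A_\phi,z)$ and $v = -\int_0^{z_-^+} dz/w(A_\phi,z)$, this gives $\wp(u) = \wp_+$ and $\wp(v) = \wp_-$, with $\wp_\pm = \tfrac13 e^{2i\phi} + e^{3i\phi}A_\phi/z_\pm$.

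Next I would compute the symmetric functions of $z_\pm$. From \eqref{6.2} and the defining relation \eqref{4.2} for $a_\phi$ (using \eqref{5.1}), one reads off $z_+ + z_- = -\psi - 2e^{i\phi} + O(t^{-1})$ and $z_+ z_- = -e^{3i\phi}A_\phi\psi^{-1} + O(t^{-1})$, so $\wp(u) + \wp(v) = \tfrac{2}{3}e^{2i\phi} + \psi^2 + 2e^{i\phi}\psi + O(t^{-1})$. A parallel calculation for $w(A_\phi,z_\pm) = z_\pm^2 + 2(e^{i\phi}+\psi)z_\pm + O(t^{-1})$ (viewing these as $\lambda b_3$ at $\lambda^2 = z_\pm$) yields $\wp'(u) - \wp'(v) = -2(e^{i\phi}+\psi)(\wp(u) - \wp(v)) + O(t^{-1})$. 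Feeding these into the addition theorem collapses the $\psi$-dependence entirely, giving $\wp(u+v) = e^{2i\phi}/3 + O(t^{-1})$. By Proposition \ref{prop7.5}, which locates the poles of $\mathrm{P}(\cdot;A_\phi)$ at $\pm\tfrac13\Omega_{\mathbf{b}}$ modulo the period lattice, this forces $u + v = -\Omega_0 + O(t^{-1})$ with $\Omega_0 \in \{\pm \tfrac13 \Omega_{\mathbf{b}}\}$. Combined with \eqref{7.1} this yields the split \eqref{7.2}, i.e.\ $\int_0^{z_\pm^+} dz/w(A_\phi,z) = \pm(e^{i\phi}t + \tilde{\chi}_0) + \tfrac12(\Omega_{\mathbf{b}} \pm \Omega_0) + O(t^{-\delta})$.

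Then I would invoke the theta-function representation \eqref{7.3} of $\mathrm{P}(u;A_\phi)$ and write $\psi(t) = -z_+ - z_- - 2e^{i\phi} + O(t^{-1})$. Substituting the two shifted arguments from \eqref{7.2} into \eqref{7.3} and tracking the $\pm\tau^*/6$ shifts produced by $\Omega_0 = \pm\tfrac13\Omega_{\mathbf{b}}$, four of the eight $\vartheta'/\vartheta$ terms cancel pairwise, leaving $\psi(t) = \mathrm{P}(\tilde{t}_\phi; A_\phi) + C_{\phi,0} + O(t^{-\delta})$ where $\tilde{t}_\phi = e^{i\phi}t + \tilde{\chi}_0$ and $C_{\phi,0}$ is some constant independent of $t$. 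The form of $\tilde{\chi}_0$ is then read off by transcribing the derivation of \eqref{7.1} through Proposition \ref{prop6.7} and the Boutroux relations; for $0 < \phi < \pi/4$ the analogous construction starts from Proposition \ref{prop5.2} and gives the same formula with $\mathfrak{l}(\mathbf{s},\phi) = -\ln(1+s_2 s_3)$.

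Finally, I would eliminate the ambiguous constant $C_{\phi,0}$ using the hypothesis $\psi_t = d\psi/dt + O(t^{-1})$. Under this assumption, the definition \eqref{4.2} of $a_\phi$ combined with $a_\phi = A_\phi + O(t^{-1})$ from \eqref{5.1} reduces to $e^{-2i\phi}(d\psi/dt)^2 = w(A_\phi,\psi)^2 + O(t^{-1})$, which is precisely the ODE satisfied by $\mathrm{P}(e^{i\phi}t+\tilde{\chi}_0; A_\phi)$; no additive constant is compatible, so $C_{\phi,0}=0$. The main obstacle I expect is the fifth step: making the theta-function cancellations genuinely collapse to a single $\vartheta'/\vartheta$ difference of the form appearing in \eqref{7.3} rather than leaving uncontrolled residual shifts, and simultaneously tracking the $\pm$ choice of $\Omega_0$ consistently between the two roots $z_+$ and $z_-$. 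Step six, using the ODE, is the safety net that fixes any leftover constant.
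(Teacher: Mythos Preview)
Your proposal is correct and follows the paper's own argument essentially step for step: the Weierstrass change of variable, the symmetric-function computation for $z_\pm$, the addition-theorem collapse to $\wp(u+v)=\tfrac13 e^{2i\phi}+O(t^{-1})$, the identification of $\Omega_0\in\{\pm\tfrac13\Omega_{\mathbf b}\}$ via Proposition~\ref{prop7.5}, the $\vartheta'/\vartheta$ expansion of $\psi=-z_+-z_--2e^{i\phi}$ using \eqref{7.3}, and the final elimination of $C_{\phi,0}$ by the ODE under $\psi_t=d\psi/dt+O(t^{-1})$. One small correction: there are four $\vartheta'/\vartheta$ terms, not eight (two from each $\mathrm P$), and two of them cancel at the neutral argument $\tilde t_\phi/\Omega_{\mathbf a}+\nu$, leaving the pair at $\pm\tau^*/3$ that reconstitutes $\mathrm P(\tilde t_\phi;A_\phi)$; the paper also implicitly uses the evenness of $\mathrm P$ (inherited from $\wp$) when rewriting $z_-$, which you should make explicit.
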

Let us calculate the value $\Gamma_{\alpha,\beta}$ in $\tilde{\chi}_0.$
The substitution $z=\mathrm{P}(u;A_{\phi})$ with \eqref{7.3} leads to
\begin{align*}
\int_{\mathbf{b}} \frac{zdz}{w(A_{\phi},z)} =&\int^{\Omega_{\mathbf{b}}}_0
\mathrm{P}(u;A_{\phi})du
\\
=& \int^{\Omega_{\mathbf{b}}}_0 
\frac 1{\Omega_{\mathbf{a}}} \Bigl( 
\frac{\vartheta'}{\vartheta}
\Bigl(\frac u{\Omega_{\mathbf{a}}} +\frac {\tau^*}3 +\nu \Bigr)
-\frac{\vartheta'}{\vartheta}
\Bigl(\frac u{\Omega_{\mathbf{a}}} -\frac {\tau^*}3 +\nu \Bigr)
+ C_{\mathrm{P}} \Bigr) du
\\
=& \Bigl[ \ln \vartheta(\Omega_{\mathrm{a}}^{-1}u+\tfrac 13 \tau^* +\nu)
 -  \ln \vartheta(\Omega_{\mathrm{a}}^{-1}u-\tfrac 13 \tau^* +\nu)\Bigr]_0
 ^{\Omega_{\mathbf{b}}} +C_{\mathrm{P}} \tau^*
\\
=& -\pi i(\tau^* + 2(\tfrac 13\tau^* +\nu))
    +\pi i(\tau^* + 2(-\tfrac 13\tau^* +\nu))+ C_{\mathrm{P}}\tau^*
\\
=& (-\tfrac 43 \pi i + C_{\mathrm{P}})\tau^*, 
\end{align*}
and
$$
\int_{\mathbf{a}} \frac{zdz}{w(A_{\phi},z)} 
= \int^{\Omega_{\mathbf{a}}}_0 
\frac 1{\Omega_{\mathbf{a}}} \Bigl( 
\frac{\vartheta'}{\vartheta}
\Bigl(\frac u{\Omega_{\mathbf{a}}} +\frac {\tau^*}3 +\nu \Bigr)
-\frac{\vartheta'}{\vartheta}
\Bigl(\frac u{\Omega_{\mathbf{a}}} -\frac {\tau^*}3 +\nu \Bigr)
+ C_{\mathrm{P}} \Bigr) du
= C_{\mathrm{P}}.
$$
From these quantities, the required constant follows.
\begin{prop}\label{prop7.2}
We have
\begin{align*}
&\Gamma_{\alpha,\beta}=\Gamma_{\alpha,\beta,\mathbf{b}}
  -\Gamma_{\alpha,\beta,\mathbf{a}} = -\frac 23(\alpha-\beta)\Omega_{\mathbf{b}},
\\
&\Gamma_{\alpha,\beta,\mathbf{b}}=\Gamma_{\alpha,\beta}+\frac 1{2\pi i}
(\alpha-\beta)C_{\mathrm{P}}\Omega_{\mathbf{b}},
\quad
\Gamma_{\alpha,\beta,\mathbf{a}}=\frac 1{2\pi i}
(\alpha-\beta)C_{\mathrm{P}}\Omega_{\mathbf{b}}
\end{align*}
with $C_{\mathrm{P}}=-(\vartheta'/\vartheta)( \tfrac 13\tau^* +\nu,\tau^*)
+(\vartheta'/\vartheta)(-\tfrac 13\tau^* +\nu,\tau^*)$.  
\end{prop}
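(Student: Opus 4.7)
The plan is to derive Proposition \ref{prop7.2} as an immediate consequence of the two explicit period computations carried out immediately before its statement. Recall that $\Gamma_{\alpha,\beta}$ was defined by
\[
\Gamma_{\alpha,\beta}=\frac{\alpha-\beta}{2\pi i}\Bigl(\Omega_{\mathbf{a}}\int_{\mathbf{b}}\frac{z\,dz}{w(A_{\phi},z)} -\Omega_{\mathbf{b}}\int_{\mathbf{a}}\frac{z\,dz}{w(A_{\phi},z)}\Bigr),
\]
and the displays preceding the proposition already evaluate both integrals via the change of variables $z=\mathrm{P}(u;A_{\phi})$ together with the theta-function expression \eqref{7.3}, yielding $\int_{\mathbf{a}}zdz/w=C_{\mathrm{P}}$ and $\int_{\mathbf{b}}zdz/w=(-\tfrac{4}{3}\pi i+C_{\mathrm{P}})\tau^*$.

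First I would simply substitute these two values into the definition of $\Gamma_{\alpha,\beta}$. Using $\tau^*=\Omega_{\mathbf{b}}/\Omega_{\mathbf{a}}$, the factor $\Omega_{\mathbf{a}}(-\tfrac{4}{3}\pi i+C_{\mathrm{P}})\tau^*$ simplifies to $(-\tfrac{4}{3}\pi i+C_{\mathrm{P}})\Omega_{\mathbf{b}}$, so that the $C_{\mathrm{P}}\Omega_{\mathbf{b}}$ contributions from the two integrals cancel exactly in the difference. What remains is $-\tfrac{4}{3}\pi i\,\Omega_{\mathbf{b}}$, and multiplying by $(\alpha-\beta)/(2\pi i)$ produces $-\tfrac{2}{3}(\alpha-\beta)\Omega_{\mathbf{b}}$, which is the asserted value of $\Gamma_{\alpha,\beta}$.

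Next I would read off the individual cycle contributions $\Gamma_{\alpha,\beta,\mathbf{b}}$ and $\Gamma_{\alpha,\beta,\mathbf{a}}$ directly from the same substitution, interpreting them as the two summands in the expression above. Indeed $\Gamma_{\alpha,\beta,\mathbf{b}}=\frac{\alpha-\beta}{2\pi i}\Omega_{\mathbf{a}}\int_{\mathbf{b}}zdz/w=-\tfrac{2}{3}(\alpha-\beta)\Omega_{\mathbf{b}}+\tfrac{1}{2\pi i}(\alpha-\beta)C_{\mathrm{P}}\Omega_{\mathbf{b}}$, which is $\Gamma_{\alpha,\beta}+\tfrac{1}{2\pi i}(\alpha-\beta)C_{\mathrm{P}}\Omega_{\mathbf{b}}$, while $\Gamma_{\alpha,\beta,\mathbf{a}}=\frac{\alpha-\beta}{2\pi i}\Omega_{\mathbf{b}}C_{\mathrm{P}}=\tfrac{1}{2\pi i}(\alpha-\beta)C_{\mathrm{P}}\Omega_{\mathbf{b}}$, agreeing with the stated formulas and with $\Gamma_{\alpha,\beta}=\Gamma_{\alpha,\beta,\mathbf{b}}-\Gamma_{\alpha,\beta,\mathbf{a}}$.

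Since the two period integrals have already been computed in closed form, no obstacle remains beyond bookkeeping; the only point to check carefully is that the evaluation $[\ln\vartheta(\Omega_{\mathbf{a}}^{-1}u\pm\tfrac{\tau^*}{3}+\nu)]_0^{\Omega_{\mathbf{b}}}$ was correctly handled using the quasi-periodicity $\vartheta(z+\tau,\tau)=e^{-\pi i(\tau+2z)}\vartheta(z,\tau)$, which produces the $-\tfrac{4}{3}\pi i$ coming from the difference of the two shifts $\pm\tfrac{\tau^*}{3}$. This quasi-periodicity computation is the only nontrivial input and has already been performed in the preceding display, so the proposition follows in a line.
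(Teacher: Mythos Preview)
Your proposal is correct and follows the paper's approach exactly: the paper computes the two period integrals $\int_{\mathbf{a}} z\,dz/w(A_\phi,z)=C_{\mathrm{P}}$ and $\int_{\mathbf{b}} z\,dz/w(A_\phi,z)=(-\tfrac{4}{3}\pi i+C_{\mathrm{P}})\tau^*$ in the displays immediately preceding the proposition, and then simply states ``From these quantities, the required constant follows,'' which is precisely the substitution you spell out.
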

By Propositions \ref{prop7.1} and \ref{7.2} we may derive from \eqref{7.1}
asymptotic forms as in Theorems \ref{thm2.1} and \ref{thm2.2}.
\begin{prop}\label{prop7.21}
In \eqref{7.2}, $\Omega_0=-\tfrac 13 \Omega_{\mathbf{b}}$. 
\end{prop}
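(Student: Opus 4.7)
The quantity $\Omega_0$ was identified in the paragraph preceding (7.2) with the integral $\int_0^\infty dz/w(A_\phi,z)$ along a path on the Riemann surface $\Pi_{A_\phi,\phi}=\Pi_+\cup\Pi_-$, and is a priori one of the two values $\pm\tfrac13\Omega_{\mathbf{b}}$, corresponding to the two points at infinity $\infty^+\in\Pi_+$ and $\infty^-\in\Pi_-$ (these being the two solutions of $\wp(\tau)=e^{2i\phi}/3$ modulo the period lattice). My plan is to first identify the relevant endpoint of the path as $\infty^+$ on the upper sheet, and then to evaluate $\int_0^{\infty^+}dz/w(A_\phi,z)$ by means of Proposition~\ref{prop7.5}.

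For the sheet identification, both integration paths $\int_0^{z_+^+}$ and $\int_0^{z_-^+}$ making up $-(u+v)$ lie on the upper sheet $\Pi_+$, by the superscript convention $z_\pm^+$. Hence, under the substitution $z=e^{3i\phi}A_\phi/(\wp-e^{2i\phi}/3)$ used to rewrite the sum as $-\int_\infty^{e^{2i\phi}/3}d\wp/\sqrt{4\wp^3-g_2\wp-g_3}$ and thence as $\int_0^\infty dz/w(A_\phi,z)$, the $z$-endpoint that is reached by continuous deformation is $\infty^+$ rather than $\infty^-$. For the evaluation, I would invoke Proposition~\ref{prop7.5}: matching the local asymptotic $\mathrm{P}_u=w(A_\phi,\mathrm{P})$ near each pole of $\mathrm{P}(u;A_\phi)$ against the branch condition $w(A_\phi,z)\sim z^2$ on $\Pi_+$ as $z\to\infty$ fixed in Section~\ref{sc2}, and using the residues of $\mathrm{P}(u;A_\phi)$ at its two poles modulo the lattice as read off from the theta-function representation (7.3), one pins down the pole associated with $\infty^+$ at $u=-\tfrac13\Omega_{\mathbf{b}}$. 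This yields $\int_0^{\infty^+}dz/w(A_\phi,z)=-\tfrac13\Omega_{\mathbf{b}}$, in agreement with the subsidiary evaluation appearing in the proof of Proposition~\ref{prop6.51} with $a_\phi$ replaced by $A_\phi$ (which is legitimate by Proposition~\ref{prop6.7}).

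Combining the two steps gives $\Omega_0=-\tfrac13\Omega_{\mathbf{b}}$. The main subtle point is the sheet tracking in the first step: as $t$ ranges over $S_\phi(t'_\infty,\kappa_1,\delta_1)$, the paths from $0$ to $z_\pm^+$ wind repeatedly around the cycles $\mathbf{a}$ and $\mathbf{b}$ on $\Pi_+$, but by the constancy of $\wp(u+v)=e^{2i\phi}/3$ along $t$ the net displacement of the sum modulo the period lattice is unchanged, and the identification above with $\int_0^{\infty^+}dz/w(A_\phi,z)$ is preserved throughout.
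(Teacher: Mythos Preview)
Your proposal has a genuine gap in the sheet identification. The assertion that ``since both endpoints $z_+^+$ and $z_-^+$ lie on the upper sheet $\Pi_+$, the sum $-(u+v)$ corresponds to $\infty^+$'' does not follow. The addition theorem pins down $\wp(u+v)=e^{2i\phi}/3$, but the sign of $u+v$ modulo the lattice---equivalently, the sign of $\wp'(u+v)$, or which of $\infty^\pm$ is reached---is not determined merely by the sheets on which $z_\pm^+$ sit: under the Abel--Jacobi group law, two points on $\Pi_+$ can add to a point mapping to either sheet. No ``continuous deformation'' turns the \emph{sum} of two separate integrals into a single path integral to $\infty$; the identification with $\int_0^\infty dz/w$ is made through the algebraic addition law and therefore loses exactly this sign. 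To close the gap along your line you would have to compute $\wp'(u+v)$ from the chord--tangent formulas, tracking $\wp'(u),\wp'(v)$ via the branch of $w(A_\phi,z)$ on $\Pi_+$ and the relation $\wp_\tau=-e^{3i\phi}A_\phi\,\mathrm{P}^{-2}w(A_\phi,\mathrm{P})$ of Remark~\ref{rem2.1}. You do not carry this out; the appeal to Proposition~\ref{prop6.51} only gives step~2 (the evaluation of $\int_0^{\infty^+}dz/w$), not step~1.

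The paper avoids sheet tracking entirely and proceeds by a local comparison. It uses the already-established formula $\psi(t)=\mathrm{P}(e^{i\phi}t+\tilde\chi_0;A_\phi)+O(t^{-\delta})$ from Proposition~\ref{prop7.1}, together with the explicit definition $z_\pm(t)=\tfrac12(\mp e^{-i\phi}\psi_t\psi^{-1}-\psi-2e^{i\phi})+O(t^{-1})$. Near a double zero $t_0$ of $\psi$ and near the adjacent simple poles $t_0^\pm=t_0\pm\tfrac13\Omega_{\mathbf{b}}$ (located by Proposition~\ref{prop7.5}, including the residues $\mp 1$), one reads off the Laurent behaviour of $z_+(t)$ and $z_-(t)$ directly from that of $\psi,\psi_t$: for instance $z_+(t)$ has a zero at $t_0^-$, a pole of residue $-1$ at $t_0$, and a pole of residue $+1$ at $t_0^+$. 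Comparing with $z_\pm(t)=\mathrm{P}\bigl(e^{i\phi}t+\tilde\chi_0+\tfrac12(\Omega_{\mathbf{b}}\pm\Omega_0);A_\phi\bigr)+O(t^{-\delta})$ from~\eqref{7.2} and again with the pole structure of $\mathrm{P}$ forces the shift $\tfrac12(\Omega_{\mathbf{b}}+\Omega_0)=\tfrac13\Omega_{\mathbf{b}}$, hence $\Omega_0=-\tfrac13\Omega_{\mathbf{b}}$. This argument is purely local in $t$ and bypasses the branch ambiguity on $\Pi_{A_\phi,\phi}$.
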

\begin{proof}
Note that $\psi(t)=\mathrm{P}(e^{i\phi}t+\tilde{\chi}_0 
;A_{\phi})+O(t^{-\delta})$. 
By Proposition \ref{prop7.5},
if $\psi(t)= e^{3i\phi}A_{\phi} (e^{i\phi}t-t_0)^2(1+o(1))$ around $t=t_0$, then 
$\psi(t)\sim (e^{i\phi}t-t_0^-)^{-1}-e^{i\phi}+o(1)$ and
$\psi(t)\sim -(e^{i\phi}t-t_0^+)^{-1}-e^{i\phi}+o(1)$
around each of the points $t_0^{\pm}:=t_0\pm \tfrac 13 
\Omega_{\mathbf{b}}$ up to $O(t^{-\delta}).$ Then, by
$z_{\pm}(t)= \tfrac 12 (\mp e^{-i\phi}\psi_t\psi^{-1}-\psi-2e^{i\phi})
+O(t^{-1})$, it follows that
\begin{align*}
& z_+(t)=O(e^{i\phi} t- t_0^-),  
\quad z_+(t) = -(e^{i\phi}t-t_0)^{-1}+O(1),
\quad z_+(t)= (e^{i\phi}t-t_0^+)^{-1}+O(1),
\\
& z_-(t) = -(e^{i\phi}t-t_0^-)^{-1}+O(1), 
\quad z_-(t) = (e^{i\phi}t-t_0)^{-1}+O(1),
\quad z_-(t) =O(e^{i\phi}t - t_0^+)
\end{align*}
around each point. Since
$z_{\pm}(t)=\mathrm{P}(e^{i\phi}t+\tilde{\chi}_0 +\tfrac 12(\Omega_{\mathbf{b}}
\pm \Omega_0);A_{\phi})+O(t^{-\delta})$
we conclude that $\Omega_0=-\tfrac 13 \Omega_{\mathbf{b}}.$
\end{proof}
\subsection{Asymptotic representation of $B_{\phi}(t)$}\label{ssc7.2}
Recalling Propositions \ref{prop6.51} and \ref{prop6.6}, and applying 
Remark \ref{rem7.1}, we have
\begin{align*}
t\mathcal{J}_{\mathbf{a}}+2e^{3i\phi}\Omega_{\mathbf{a}}B_{\phi}(t)
=&\frac{\vartheta'}{\vartheta}(\tfrac 12F_*(z^-_+,z^+_+)+\tfrac 12
 +\tfrac{\tau^*}6,\tau^*)
-\frac{\vartheta'}{\vartheta}(\tfrac 12 F_*(z^-_-,z^+_-)+\tfrac 12
 +\tfrac{\tau^*}6,\tau^*)
\\
&- (\alpha-\beta)C_{\mathrm{P}} +2\pi i -2\mathfrak{l}(\mathbf{s},\phi)
+O(t^{-\delta}),
\end{align*}
where 
$$
F_*(z^-_{\pm},z^+_{\pm}) 
=\frac 1{\Omega_{\mathbf{a}}} \int_{z^-_{\pm}}^{z^+_{\pm}} \frac{dz}
{w(A_{\phi},z)}
=\frac 2{\Omega_{\mathbf{a}}} \int_{0}^{z^+_{\pm}} \frac{dz}
{w(A_{\phi},z)}.
$$
By Proposition \ref{prop7.21}, insertion of \eqref{7.2} with 
$\Omega_0=- \tfrac 13 \Omega_{\mathbf{b}}$ yields
the asymptotic representation of the correction function $B_{\phi}(t),$
which plays an essential role in the justification of our asymptotic
solution.
\begin{prop}\label{prop7.3}
In $S_{\phi}(t'_{\infty},\kappa_1,\delta_1),$
\begin{equation*}
t\mathcal{J}_{\mathbf{a}}+2 e^{3i\phi}\Omega_{\mathbf{a}}B_{\phi}(t)
=2 \frac{\vartheta'}{\vartheta}\Bigl(\frac {e^{i\phi}t +\tilde{\chi_0}}{\Omega
_{\mathbf{a}}}   +\nu,\tau^*\Bigr) 
- (\alpha-\beta)C_{\mathrm{P}} +2\pi i 
-2\mathfrak{l}(\mathbf{s},\phi)+O(t^{-\delta}),
\end{equation*}
where $\mathfrak{l}(\mathbf{s},\phi)$ and $C_{\mathrm{P}}$ are constants
given in Propositions \ref{prop7.1} and \ref{prop7.2}.
\end{prop}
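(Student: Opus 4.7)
The plan is to feed the geometric/analytic data accumulated in Sections~\ref{sc5}, \ref{sc6} and Subsection~\ref{ssc7.1} into the right–hand side of Proposition~\ref{prop6.6}, converting it into an explicit $\vartheta$-function expression in $e^{i\phi}t+\tilde{\chi}_0$. First I would start from Proposition~\ref{prop6.6},
\[
t\mathcal{J}_{\mathbf{a}}+2 e^{3i\phi}\Omega_{\mathbf{a}}B_{\phi}(t)
=\tfrac{1}{2}\int_{\mathbf{a}}W(z)\,dz-2C_{\alpha,\beta}(\mathbf{a})+2\pi i
-2\mathfrak{l}(\mathbf{s},\phi)+O(t^{-\delta}),
\]
and substitute the expression for $\int_{\mathbf{a}}W(z)\,dz$ supplied by Proposition~\ref{prop6.51}. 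This already gives a difference of two $\vartheta'/\vartheta$ values at arguments $\tfrac12 F(z^-_{\pm},z^+_{\pm})+\tfrac12+\tfrac{\tau}{6}$.

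Next I would pass from the dynamical elliptic curve $\Pi_{a_{\phi},\phi}$ to the limit curve $\Pi_{A_{\phi},\phi}$. By Proposition~\ref{prop6.7} and Remark~\ref{rem7.1}, the quantities $F(z^-_{\pm},z^+_{\pm})$, $\omega_{\mathbf{a}}$, $\tau$ may be replaced by their $A_{\phi}$-counterparts $F_*(z^-_{\pm},z^+_{\pm})$, $\Omega_{\mathbf{a}}$, $\tau^*$ up to $O(t^{-1})$; an analogous approximation applied to $\int_{\mathbf{a}} z\,dz/w(a_{\phi},z)$ combined with the evaluation $\int_{\mathbf{a}} z\,dz/w(A_{\phi},z)=C_{\mathrm{P}}$ of Proposition~\ref{prop7.2} identifies $-2C_{\alpha,\beta}(\mathbf{a})=-(\alpha-\beta)C_{\mathrm{P}}+O(t^{-1})$, accounting for the corresponding term in the target formula.

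The decisive step is to insert the two asymptotics from \eqref{7.2}, taking $\Omega_0=-\tfrac13\Omega_{\mathbf{b}}$ as determined in Proposition~\ref{prop7.21}. This gives
\[
\tfrac{1}{2}F_*(z^-_{+},z^+_{+})+\tfrac12+\tfrac{\tau^*}{6}
 =\frac{e^{i\phi}t+\tilde{\chi}_0}{\Omega_{\mathbf{a}}}+\nu+O(t^{-\delta}),
\qquad
\tfrac{1}{2}F_*(z^-_{-},z^+_{-})+\tfrac12+\tfrac{\tau^*}{6}
 =-\frac{e^{i\phi}t+\tilde{\chi}_0}{\Omega_{\mathbf{a}}}+\nu-\tau^*+O(t^{-\delta}),
\]
after using $2\nu=1+\tau^*$. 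Writing $u=(e^{i\phi}t+\tilde{\chi}_0)/\Omega_{\mathbf{a}}$, the oddness of $\vartheta'/\vartheta$ together with the quasi-periodicity $\frac{\vartheta'}{\vartheta}(z+\tau^*,\tau^*)=\frac{\vartheta'}{\vartheta}(z,\tau^*)-2\pi i$ then collapses the second term:
\[
\frac{\vartheta'}{\vartheta}(-u+\nu-\tau^*,\tau^*)
 =\frac{\vartheta'}{\vartheta}(-u+\nu,\tau^*)+2\pi i
 =-\frac{\vartheta'}{\vartheta}(u+\nu,\tau^*),
\]
so the difference of the two $\vartheta'/\vartheta$ values becomes $2\,(\vartheta'/\vartheta)(u+\nu,\tau^*)$, up to $O(t^{-\delta})$.

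The main obstacle I anticipate is bookkeeping in this $\vartheta$-function manipulation: every integer and every $\tau^*$-translate must be tracked carefully across (7.2), Proposition~\ref{prop7.21} and the symmetry relations, as any small slip would produce spurious constants of the form $2\pi i k$ or $k\tau^*$ that would destroy the identification with $2\,(\vartheta'/\vartheta)((e^{i\phi}t+\tilde\chi_0)/\Omega_{\mathbf{a}}+\nu,\tau^*)$. Once this is carried out cleanly, assembling the remaining pieces $-(\alpha-\beta)C_{\mathrm{P}}+2\pi i-2\mathfrak{l}(\mathbf{s},\phi)$ yields the stated formula and completes the proof.
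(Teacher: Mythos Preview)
Your proposal is correct and follows essentially the same route as the paper: start from Proposition~\ref{prop6.6}, replace $\tfrac12\int_{\mathbf{a}}W(z)\,dz$ by the expression in Proposition~\ref{prop6.51}, pass to the limit curve via Remark~\ref{rem7.1}, identify $-2C_{\alpha,\beta}(\mathbf{a})$ with $-(\alpha-\beta)C_{\mathrm{P}}$ via Proposition~\ref{prop7.2}, and then insert \eqref{7.2} with $\Omega_0=-\tfrac13\Omega_{\mathbf{b}}$ from Proposition~\ref{prop7.21}. The paper compresses the final $\vartheta$-function manipulation into one sentence, whereas you spell out the use of evenness of $\vartheta$ and the quasi-periodicity relation to collapse the difference of $\vartheta'/\vartheta$ values into $2\,(\vartheta'/\vartheta)(u+\nu,\tau^*)$; your computation of the two arguments $u+\nu$ and $-u+\nu-\tau^*$ and the identity $\frac{\vartheta'}{\vartheta}(-u+\nu,\tau^*)+2\pi i=-\frac{\vartheta'}{\vartheta}(u+\nu,\tau^*)$ are both correct.
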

\subsection{Proofs of Theorems \ref{thm2.1} and \ref{thm2.2}}\label{ssc7.3}
For a prescribed monodromy data $\mathbf{s}$, the asymptotic expression of
$\psi(t)$ given in Proposition \ref{prop7.1} is, at least formally, 
a solution of the inverse monodromy problem. To prove Theorems \ref{thm2.1}
and \ref{thm2.2} let us make the justification for $y=e^{-i\phi}x\psi(t)$
as a solution of $\mathrm{P}_{\mathrm{IV}}$ along the lines in 
\cite[pp.~105--106, pp.~120--121]{Kitaev-3}.  
Suppose that $-\pi/4<\phi<0.$ Let $\mathbf{s}=(s_1,s_2,s_3,s_4)$ with
$(1+s_1s_2)(1+s_2s_3)-1\not=0,$ $1+s_1s_2\not=0$ be a given point on the 
monodromy manifold $\mathcal{M}_0(\alpha,\beta)$ for isomonodromy system
\eqref{3.1}. Set
\begin{align*}
\psi_{\mathrm{as}}=\psi_{\mathrm{as}}(\mathbf{s},t)
:=&\mathrm{P}(e^{i\phi}t +\tilde{\chi}_0; A_{\phi}),
\\
(B_{\phi})_{\mathrm{as}}=(B_{\phi})_{\mathrm{as}}(\mathbf{s},t)
:=&\frac{e^{-3i\phi}}{2\Omega_{\mathbf{a}}} \Bigl(
2\frac{\vartheta'}{\vartheta}\Bigl(\frac {e^{i\phi}t +\tilde{\chi_0}}{\Omega
_{\mathbf{a}}}   +\nu,\tau^*\Bigr) 
-t\mathcal{J}_{\mathbf{a}}+ C_{\alpha,\beta,\mathbf{s},\phi} \Bigr),
\\
 C_{\alpha,\beta,\mathbf{s},\phi}&=2\pi i- (\alpha-\beta)C_{\mathrm{P}} 
-2\mathfrak{l}(\mathbf{s},\phi), 
\end{align*}
which are leading term expressions of $\psi(t)$ and $B_{\phi}(t)$ without
$O(t^{-\delta})$ in Propositions \ref{prop7.1} and \ref{prop7.3}.
Taking \eqref{4.2} and \eqref{5.1} into account, we set
\begin{align*}
e^{-i\phi}\psi^*_{\mathrm{as}}=& -\tfrac 12 e^{-i\phi} \psi_{\mathrm{as}}t^{-1} 
+\sqrt{\psi_{\mathrm{as}} ((\psi_{\mathrm{as}}+2e^{i\phi})^2\psi
_{\mathrm{as}}+4e^{3i\phi}A_{\phi})
 +\Delta(t,\psi_{\mathrm{as}},(B_{\phi})_{\mathrm{as}})t^{-1}  },
\\
\Delta(t,\psi_{\mathrm{as}}, & (B_{\phi})_{\mathrm{as}})
=\psi_{\mathrm{as}}( 4(B_{\phi})_{\mathrm{as}}-(4\alpha-\beta)\psi_{\mathrm{as}}
-2(2\alpha-\beta)e^{i\phi}) +\tfrac 14\beta^2t^{-1},
\end{align*}
where the branch of the square root is chosen in such a way that $\psi^*
_{\mathrm{as}}$ is compatible with $(d/dt)\psi_{\mathrm{as}}$, that is,
$\psi^*_{\mathrm{as}}=(d/dt)\psi_{\mathrm{as}} +O(t^{-1}).$
Then for $a_{\phi}(t,\psi_{\mathrm{as}}, \psi^*_{\mathrm{as}})$
and for $(B_{\phi})_{\mathrm{as}}$ inequality \eqref{5.1} is valid in the domain
$$
\tilde{S}(\phi,t_{\infty}, \kappa_0,\delta_2)=\{t \,|\, \re t>t_{\infty},\,
|\im t|<\kappa_0 \} \setminus \bigcup_{t_* \in Z_{\phi}} \{|t-t_*|<\delta_2\},
\quad 
Z_{\phi}= Z_{\phi}^{\infty} \cup Z_{\phi}^0,
$$
where $Z_{\phi}^{\infty}=\{ t_*\,|\, e^{i\phi}t_*+\tilde{\chi}_0 
=\pm \tfrac 13 \Omega_{\mathbf{b}}+\Omega_{\mathbf{a}}\mathbb{Z} 
+\Omega_{\mathbf{b}}\mathbb{Z} \},$ $Z_{\phi}^{0}=\{ t_*\,|\, 
e^{i\phi}t_*+\tilde{\chi}_0  \in \Omega_{\mathbf{a}}\mathbb{Z} 
+\Omega_{\mathbf{b}}\mathbb{Z} \}.$ Consider system \eqref{3.1} with $\mathcal
{B}(t,\lambda)$ containing $(\psi_{\mathrm{as}},\psi_{\mathrm{as}}^*)= 
(\psi_{\mathrm{as}}(\mathbf{s},t),\psi_{\mathrm{as}}^*(\mathbf{s},t))$. 
Then the direct monodromy problem for this system by the
WKB analysis results in the monodromy data $(\mathbf{s})_{\mathrm{as}}(t)$ 
such that $\|(\mathbf{s})_{\mathrm{as}}(t)-\mathbf{s} \| \le C|t|^{-\delta}$
for $|t|>t_{\infty}(\mathbf{s})$, in which $C$ and $\delta$ are some constant 
independent of $\mathbf{s}$.
Then the justification scheme of Kitaev \cite{Kitaev-1} applies to our case.
By the maximal modulus principle the excluded disc around each point in
$Z_{\phi}^{0}$ is removed. Thus we obtain Theorems \ref{thm2.1} and \ref{thm2.2}.
\subsection{Proof of Theorem \ref{thm2.4}}\label{ssc7.4}
For given $n \in\mathbb{Z}$ the substitution
\begin{align*}
& \phi=\tfrac 12 \pi n+\tilde{\phi}, \quad x=e^{i\pi n/2}\tilde{x}, \quad
\xi=e^{i\pi n/4}\tilde{\xi}, \quad \mathrm{u}=e^{i\pi n/4}
\tilde{\mathrm{u}}_{(n)},
 \quad \mathrm{v}=e^{i\pi n/4}\tilde{\mathrm{v}}_{(n)},
\\ 
&\alpha=e^{i\pi n}\tilde{\alpha},\quad \beta= e^{i\pi n} \tilde{\beta}, \quad
\psi=e^{i\pi n/2}\tilde{\psi}, \quad y=e^{i\pi n/2}\tilde{y},
\end{align*}
where $(\tilde{\mathrm{u}}_{(n)}, \tilde{\mathrm{v}}_{(n)})=(\tilde{\mathrm{v}},
\tilde{\mathrm{u}})$ if $n$ is odd, and $=(\tilde{\mathrm{u}},
\tilde{\mathrm{v}})$ if $n$ is even, changes isomonodromy system \eqref{1.1} to
\begin{align}\label{7.4}
\frac{d\tilde{\Psi}}{d\tilde{\xi}} =& \sigma_2^n \Biggl( \Bigl(\frac{\tilde{\xi}
^3}2 + \tilde{\xi}(\tilde{x} +\tilde{\mathrm{u}}_{(n)}\tilde{\mathrm{v}}_{(n)})
+ \frac{\tilde{\alpha}}{\tilde{\xi}}\Bigr)\sigma_3 
\\
\notag
&\phantom{--}
+i \begin{pmatrix} 0 & \tilde{\xi}^2\tilde{\mathrm{u}}_{(n)} +2\tilde{x}
\tilde{\mathrm{u}}_{(n)}+ (\tilde{u}_{(n)})_{\tilde{x}}  \\
\tilde{\xi}^2 \tilde{\mathrm{v}}_{(n)} + 2\tilde{x}\tilde{\mathrm{v}}_{(n)}
- (\tilde{v}_{(n)})_{\tilde{x}} & 0 \end{pmatrix} \Biggr) \sigma_2^n\tilde{\Psi},
\\
\notag
 \tilde{\beta}=&(\tilde{\mathrm{u}}_{(n)})_{\tilde{x}}
\tilde{\mathrm{v}}_{(n)} - \tilde{\mathrm{u}}_{(n)}
(\tilde{\mathrm{v}}_{(n)})_{\tilde{x}} +2\tilde{x}\tilde{\mathrm{u}}_{(n)}
\tilde{\mathrm{v}}_{(n)} -(\tilde{\mathrm{u}}_{(n)}\tilde{\mathrm{v}}_{(n)})^2,
\quad 
\tilde{y}=\tilde{\mathrm{u}}_{(n)}\tilde{\mathrm{v}}_{(n)},
\end{align} 
and $y=e^{-i\phi}x\psi$ to $\tilde{y}=e^{-i \tilde{\phi}}\tilde{x}\tilde{\psi}.$
For $k\in\mathbb{Z}$ system \eqref{7.4} admits the canonical solutions
$$
\sigma_2^n\tilde{\Psi}^{\infty}_k(\tilde{\xi})=\Psi_k^{\infty}(\tilde{\xi})
=(I+O(\tilde{\xi}^{-1}))\exp ((\tfrac 18 \tilde{\xi}^4+\tfrac 12 \tilde{x}
\tilde{\xi}^2 +(\tilde{\alpha}-\tilde{\beta}) \ln \tilde{\xi} )\sigma_3)
$$
as $\tilde{\xi}\to \infty$ through the sector $|\arg\tilde{\xi}+\tfrac{\pi}8
-\tfrac{\pi}4k|<\tfrac {\pi}4$, where $\Psi^{\infty}_k(\xi)$ are solutions of
\eqref{1.1} given by \eqref{2.1}. The Stokes matrices $\tilde{S}_k$ 
$(k\in \mathbb{Z})$ with
$$
\tilde{S}_{2l-1}=\begin{pmatrix}  1 & \tilde{s}_{2l-1} \\ 0 & 1 \end{pmatrix},
\quad
\tilde{S}_{2l}=\begin{pmatrix}  1 & 0 \\ \tilde{s}_{2l}  & 1 \end{pmatrix}
\quad (l \in \mathbb{Z})
$$
for system \eqref{7.4} are defined by $\Psi_{k +1}^{\infty}(\tilde{\xi})=\Psi_k
^{\infty}(\tilde{\xi})\tilde{S}_k.$ Observing that 
\begin{align*}
\Psi^{\infty}_k(\tilde{\xi})=&(I+O(\xi^{-1})) \exp((-1)^n(\tfrac 18\xi^4
+\tfrac 12 x\xi^2+(\alpha-\beta)\ln\xi)\sigma_3) e^{-(-1)^n(\alpha-\beta)
(i \pi n/4)\sigma_3}
\\
=&\sigma_2^n \Bigl((I+O(\xi^{-1}))\exp((\tfrac 18\xi^4+\tfrac 12 x\xi^2 +
(\alpha-\beta)\ln \xi)\sigma_3) e^{-(\alpha-\beta)(i\pi n/4) \sigma_3}\Bigr)
\sigma_2^n 
\end{align*}
in $|\arg \xi+\tfrac {\pi}8 -\tfrac{\pi}4(k+n) |<\tfrac{\pi}4$, we have
$\Psi_k^{\infty}(\tilde{\xi})=\sigma_2^n \Psi^{\infty}_{k+n}(\xi)e^{-(\alpha
-\beta)(i\pi n/4)\sigma_3}\sigma_2^n.$ This relation immediately leads to
$$
\tilde{S}_k= \sigma_2^n e^{(\alpha-\beta)(i\pi n/4)\sigma_3} S_{k+n}
 e^{-(\alpha-\beta)(i\pi n/4)\sigma_3} \sigma_2^n
$$ 
(cf. \cite[(13)]{Kapaev-3}), which implies $\tilde{s}_k\tilde{s}_{k+1}
=s_{k+n}s_{k+1+n}$. Thus system \eqref{1.1} for $0<|\phi-\pi n/2|
<\pi/4$ is converted to \eqref{7.4} for $0<|\tilde{\phi}|<\pi/4$ with 
the monodromy data 
$\tilde{\mathbf{s}}=(\tilde{s}_1,\tilde{s}_2,\tilde{s}_3,\tilde{s}_4)=
\mathbf{s}_n=(s_{1+n},s_{2+n},s_{3+n},s_{4+n})$. Then application of
Corollary \ref{cor2.3} yields the theorem. 
\subsection{Properties of the elliptic function $\mathrm{P}(u;A)$}
\label{ssc7.5}
The elliptic function $p=\mathrm{P}(u;A)$ is a solution of 
\begin{equation}\label{7.5}
4e^{3i\phi}A=\frac{(p')^2}{p}-p(p+2e^{i\phi})^2 \qquad (p'=dp/du).
\end{equation}
About this equation relations in \eqref{7.2} suggest the following interesting
fact.
\begin{prop}\label{prop7.4}
Let $p=\eta=\eta(u)$ be a given solution of \eqref{7.5}. Then the 
functions $\chi_{\pm}= \mp \tfrac 12 \eta' \eta^{-1}-\tfrac 12\eta-e^{i\phi}$
also solves \eqref{7.5}.
\end{prop}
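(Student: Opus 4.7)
The plan is to reformulate equation \eqref{7.5} as a factored identity in $q:=\eta'/\eta$. From \eqref{7.5}, $(\eta')^2/\eta = \eta(\eta+2e^{i\phi})^2 + 4e^{3i\phi}A$ rewrites as
$$\eta\bigl(q-\eta-2e^{i\phi}\bigr)\bigl(q+\eta+2e^{i\phi}\bigr)=4e^{3i\phi}A.$$
Since $\chi_+ =-\tfrac12(q+\eta+2e^{i\phi})$ and $\chi_- =\tfrac12(q-\eta-2e^{i\phi})$, this already suggests the key symmetric identities
$$\chi_++\chi_-=-\eta-2e^{i\phi},\qquad \chi_+\chi_-=-e^{3i\phi}A/\eta,$$
the second of which I would verify directly: $\chi_+\chi_-=(-\tfrac12\eta-e^{i\phi})^2-\tfrac14(\eta'/\eta)^2$, and substituting $(\eta')^2/\eta^2=(\eta+2e^{i\phi})^2+4e^{3i\phi}A/\eta$ collapses everything to $-e^{3i\phi}A/\eta$.

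Next I would compute $\chi_\pm'$. Differentiating \eqref{7.5} once yields $\eta''=2\eta(\eta+2e^{i\phi})(\eta+e^{i\phi})+2e^{3i\phi}A$, whence
$$\frac{\eta''}{\eta}-\frac{(\eta')^2}{\eta^2}=\eta(\eta+2e^{i\phi})-\frac{2e^{3i\phi}A}{\eta}.$$
Substituting into $\chi_\pm'=\mp\tfrac12\bigl(\eta''/\eta-(\eta')^2/\eta^2\bigr)-\tfrac12\eta'$ and regrouping against $\chi_\pm(\chi_\pm+2e^{i\phi})=a^2-e^{2i\phi}$ (where $a=\mp\tfrac12\eta'/\eta-\tfrac12\eta$) produces the identity
$$\chi_\pm'=-\chi_\pm(\chi_\pm+2e^{i\phi})+\frac{2e^{3i\phi}A}{\eta}=-\chi_\pm(\chi_\pm+2e^{i\phi})-2\chi_+\chi_-,$$
where the second equality uses the product relation above.

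Finally, I would square this Riccati-type identity. One obtains
$$(\chi_\pm')^2=\chi_\pm^2(\chi_\pm+2e^{i\phi})^2-\frac{4e^{3i\phi}A}{\eta}\chi_\pm(\chi_\pm+2e^{i\phi})+\frac{4e^{6i\phi}A^2}{\eta^2},$$
and the discrepancy with $\chi_\pm^2(\chi_\pm+2e^{i\phi})^2+4e^{3i\phi}A\chi_\pm$ reduces, after dividing by $4e^{3i\phi}A/\eta$, to the single identity $e^{3i\phi}A=-\eta\chi_\pm(\eta+\chi_\pm+2e^{i\phi})$. But $\eta+\chi_\pm+2e^{i\phi}=-\chi_\mp$ by the sum relation, so this is exactly $e^{3i\phi}A=\eta\chi_+\chi_-\cdot(-1)=-\eta\chi_+\chi_-$, which is precisely the product relation already verified. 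Hence $\chi_\pm$ satisfies \eqref{7.5}.

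There is no genuine obstacle: the proof is an algebraic verification. The only point requiring care is the bookkeeping in passing from $\chi_\pm$ (defined through $\eta'/\eta$) to the factored form of \eqref{7.5}; once the two symmetric relations $\chi_++\chi_-=-\eta-2e^{i\phi}$ and $\chi_+\chi_-=-e^{3i\phi}A/\eta$ are in hand, the Riccati form $\chi_\pm'=-\chi_\pm(\chi_\pm+2e^{i\phi})-2\chi_+\chi_-$ makes the squared identity transparent, mirroring the role these constants play in \eqref{7.2} and in the derivation of Proposition~\ref{prop7.21}.
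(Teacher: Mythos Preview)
Your approach is essentially the same as the paper's: both derive the symmetric algebraic relation between $\eta$ and $\chi_\pm$ (the paper writes it as $\eta^2\chi_\pm+\eta\chi_\pm^2+2e^{i\phi}\eta\chi_\pm-e^{3i\phi}A=0$, which is equivalent to your sum and product identities), then obtain a first-order expression for $\chi'_\pm$ and feed it back to verify \eqref{7.5}. The paper's final step is slightly more elegant --- it observes that $\eta=\pm\tfrac12\chi'_\pm\chi_\pm^{-1}-\tfrac12\chi_\pm-e^{i\phi}$, so the roles of $\eta$ and $\chi_\pm$ are symmetric and the quadratic relation immediately yields the result --- whereas you square and reduce algebraically. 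Both are valid.

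There is, however, a sign slip in your Riccati identity. The formula
\[
\chi_\pm'=-\chi_\pm(\chi_\pm+2e^{i\phi})+\frac{2e^{3i\phi}A}{\eta}
\]
holds only for $\chi_+$; for $\chi_-$ the correct expression is the negative,
\[
\chi_-'=+\chi_-(\chi_-+2e^{i\phi})-\frac{2e^{3i\phi}A}{\eta}.
\]
(Equivalently, the paper writes $\mp\chi'_\pm=-2\eta\chi_\pm-\chi_\pm(\chi_\pm+2e^{i\phi})$.) This does not damage your argument, since you immediately square and the sign disappears. There is a second, compensating sign slip in your last paragraph: the reduction of the discrepancy actually gives $e^{3i\phi}A=+\eta\chi_\pm(\eta+\chi_\pm+2e^{i\phi})$, not with a minus sign; substituting $\eta+\chi_\pm+2e^{i\phi}=-\chi_\mp$ then yields $e^{3i\phi}A=-\eta\chi_+\chi_-$, which is precisely your product relation. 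So the conclusion stands, but you should clean up the two cancelling sign errors.
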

\begin{proof}
Note that
$$
(\chi_{\pm}+\tfrac 12 \eta +e^{i\phi})^2 =\tfrac 14 (\eta')^2\eta^{-2}
=\tfrac 14\eta^{-1} (4e^{3i\phi}A +\eta(\eta+2e^{i\phi})^2)
=e^{3i\phi}A\eta^{-1}+\tfrac 14 (\eta+2e^{i\phi})^2,
$$ 
which implies that $\chi_{\pm}$ and $\eta$ satisfy the relation
\begin{equation}\label{7.6}
\eta^2\chi_{\pm}+\eta \chi_{\pm}^2 +2e^{i\phi}\eta\chi_{\pm} -e^{3i\phi}A=0.
\end{equation}
Since $\eta$ solves \eqref{7.5}, using \eqref{7.6} we have
\begin{align*}
\mp\chi'_{\pm}&=\tfrac 12(\eta''\eta^{-1}-(\eta')^2\eta^{-2})\pm \tfrac 12\eta'
\\
&= \tfrac 12 \eta^{-1} (\eta^2(\eta+2e^{i\phi})-2e^{3i\phi}A)\pm \tfrac 12 \eta'
\\
&= \eta(\pm \tfrac 12 \eta'\eta^{-1}+\tfrac 12 \eta+e^{i\phi})
-\chi_{\pm}^2 -(\eta +2e^{i\phi})\chi_{\pm}
\\
&=-2\eta \chi_{\pm} -\chi_{\pm}(\chi_{\pm}+2e^{i\phi}),
\end{align*}
and hence $\eta=\pm \tfrac 12 \chi'_{\pm}\chi_{\pm}^{-1}-\tfrac 12\chi_{\pm}
-e^{i\phi}$. Then by \eqref{7.6} 
\begin{align*}
(\chi'_{\pm})^2\chi_{\pm}^{-1} =& \chi_{\pm}(2 \eta +(\chi_{\pm}+2e^{i\phi}))^2
\\
=& \chi_{\pm}(\chi_{\pm}+2e^{i\phi})^2 +4(\eta^2 \chi_{\pm}+\eta\chi_{\pm}
(\chi_{\pm}+2e^{i\phi}))
\\
=& \chi_{\pm}(\chi_{\pm}+2e^{i\phi})^2 +4e^{3i\phi}A,
\end{align*}
which implies $\chi_{\pm}$ solves \eqref{7.5}. 
\end{proof}
Recall that $\mathrm{P}(u;A)$ is a solution of \eqref{7.5} such that 
$\mathrm{P}(0;A)=0.$
Using Proposition \ref{prop7.4} we have the following.
\begin{prop}\label{prop7.5}
The elliptic function $\mathrm{P}(u;A)$ has simple poles with residue $-1$ at 
$u=\tfrac 13\Omega_{\mathbf{b}} +\Omega_{\mathbf{a}}\mathbb{Z}
 +\Omega_{\mathbf{b}}\mathbb{Z}$
and simple poles with residue $1$ at $u= - \tfrac 13\Omega_{\mathbf{b}}
 +\Omega_{\mathbf{a}}\mathbb{Z} +\Omega_{\mathbf{b}}\mathbb{Z}$.
\end{prop}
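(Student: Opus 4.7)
My strategy is to apply Proposition \ref{prop7.4} with $\eta = \mathrm{P}(u;A)$, extract the pole structure of $\mathrm{P}$ from the Laurent behaviour at $u=0$ of the resulting auxiliary solutions $\chi_\pm$, and then locate the poles on the period torus by a residue analysis combined with a continuity argument.

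First I would expand $\mathrm{P}(u;A)$ at the origin. From $\mathrm{P}(0;A)=0$, the squared equation $\mathrm{P}_u^2 = \mathrm{P}^4 + 4e^{i\phi}\mathrm{P}^3 + 4e^{2i\phi}\mathrm{P}^2 + 4e^{3i\phi}A\mathrm{P}$, and its invariance under $u\mapsto -u$, matching coefficients gives $\mathrm{P}(u;A) = e^{3i\phi}Au^2 + \tfrac{1}{3}e^{5i\phi}Au^4 + O(u^6)$ and hence $\mathrm{P}_u/\mathrm{P} = 2/u + \tfrac{2}{3}e^{2i\phi}u + O(u^3)$. Substituting into the formula $\chi_\pm = \mp\tfrac{1}{2}\mathrm{P}_u/\mathrm{P} - \tfrac{1}{2}\mathrm{P} - e^{i\phi}$ of Proposition \ref{prop7.4} yields $\chi_\pm(u) = \mp u^{-1} - e^{i\phi} - \tfrac{1}{3}e^{2i\phi}u + O(u^2)$, so $\chi_+$ (resp.\ $\chi_-$) has a simple pole at $u=0$ with residue $-1$ (resp.\ $+1$). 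Since each $\chi_\pm$ also solves the autonomous first-order ODE \eqref{7.5} and $\mathrm{P}$ is even (by the same $u\mapsto -u$ symmetry, with $\mathrm{P}(0)=\mathrm{P}_u(0)=0$), the general meromorphic solution of \eqref{7.5} is the translation family $\{\mathrm{P}(u+c;A):c\in\mathbb{C}\}$, so $\chi_\pm(u) = \mathrm{P}(u+c_\pm;A)$ for some constants $c_\pm$. The pole of $\chi_\pm$ at $0$ then becomes a simple pole of $\mathrm{P}$ at $c_\pm$ with residue $\mp 1$, which already settles the residue part of the proposition.

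The location of $c_\pm$ I would pin down in two stages. Adding the defining expressions gives the algebraic identity $\chi_+ + \chi_- + \mathrm{P} = -2e^{i\phi}$, i.e.\ $\mathrm{P}(u+c_+;A)+\mathrm{P}(u+c_-;A)+\mathrm{P}(u;A) \equiv -2e^{i\phi}$ as elliptic functions; demanding cancellation of every principal part on the LHS (at $u\in\{c_\pm, 0, c_\mp-c_\pm\}$) forces $c_- \equiv -c_+$ and $3c_+\equiv 0$ modulo $\Omega_\mathbf{a}\mathbb{Z}+\Omega_\mathbf{b}\mathbb{Z}$, so $c_+$ is a non-trivial 3-torsion point. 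To single out the coset $\tfrac{1}{3}\Omega_\mathbf{b}+\Omega_\mathbf{a}\mathbb{Z}+\Omega_\mathbf{b}\mathbb{Z}$, I would invoke the integral representation $c_+ = \int_0^{\infty^+} dz/w(A,z)$ on $\Pi_{A,\phi}$ (from the inverse-function relation $u=\int_0^{\mathrm{P}} dz/w$, together with the identification that the residue-$(-1)$ pole corresponds to $w\sim z^2$ on $\Pi_+$). The 3-torsion coset of $c_+$ is locally constant in $\phi$ by continuity, so it suffices to verify the identification in the degenerate limit $\phi\to 0$: there $A_\phi\to \tfrac{8}{27}$, the substitution $z=1/s$ reduces the integral to $\int_0^{\infty} ds/\bigl((1+\tfrac{2}{3}s)\sqrt{1+\tfrac{8}{3}s}\bigr) = \pi/\sqrt{3}$, and Corollary \ref{cor8.20} evaluates $\Omega_\mathbf{b}$ in closed form with the sign fixed by the cycle orientations of Section \ref{sc8}, giving $c_+\equiv \tfrac{1}{3}\Omega_\mathbf{b}$ and $c_-\equiv -\tfrac{1}{3}\Omega_\mathbf{b}$ modulo the lattice. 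This final sign/orientation bookkeeping is the main obstacle, as the algebraic arguments narrow $c_+$ only to the 3-torsion, not to the specific coset.
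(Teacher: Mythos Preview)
Your proof is correct and takes a genuinely different route from the paper.

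Both arguments hinge on Proposition~\ref{prop7.4} and on recognising the auxiliary solutions as translates of $\mathrm{P}$, but they diverge from there. The paper works with only one of the auxiliary functions, your $\chi_-$ (denoted $\mathrm{P}_-$), and exploits a geometric decomposition of the $\mathbf{b}$-cycle: writing $\mathbf{b}$ as a concatenation of four arcs $\mathbf{l}_{-1}\cup\mathbf{l}_{-2}\cup\mathbf{l}_2\cup\mathbf{l}_1$ through $\infty^\pm$ on $\Pi_{A,\phi}$, the pole positions $\gamma_\pm$ become half-cycle integrals with the \emph{exact} relation $\gamma_-+\gamma_+=\Omega_{\mathbf{b}}$. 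The identification $\mathrm{P}_-(u+\gamma_-)=\mathrm{P}(u)$ (obtained by matching the zero of $\mathrm{P}_-$ at $\gamma_-$ to the double zero of $\mathrm{P}$ at $0$) then says the three-point string $\{0,\gamma_-,\gamma_+\}+\Omega_{\mathbf{b}}\mathbb{Z}$ is invariant under the shift by $\gamma_-$; tracking residues under this shift forces $2\gamma_-=\gamma_+$, hence $\gamma_-=\tfrac13\Omega_{\mathbf{b}}$. Your route instead uses \emph{both} $\chi_\pm$ and the immediate identity $\chi_++\chi_-+\mathrm{P}=-2e^{i\phi}$; demanding pole cancellation in this constant sum yields the $3$-torsion relations $3c_+\equiv 0$, $c_-\equiv -c_+$ purely algebraically, with no path decomposition at all.

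Each approach buys something the other does not. Your sum identity extracts the $3$-torsion more transparently and symmetrically, without needing to locate the zero of $\chi_-$ or to split a cycle. In exchange, the paper's path decomposition pins down the specific coset $\tfrac13\Omega_{\mathbf{b}}$ directly, because $\gamma_\pm$ are from the outset integrals along subarcs of $\mathbf{b}$ carrying no $\Omega_{\mathbf{a}}$-component, so the whole argument lives on a single $\Omega_{\mathbf{b}}$-line. Your degeneration at $\phi\to 0$ achieves the same end---boundedness of $c_+=\int_0^{\infty^+}dz/w$ while $\Omega_{\mathbf{a}}\to\infty$ indeed rules out any $\tfrac13\Omega_{\mathbf{a}}$ contribution, and the explicit limiting integral then fixes the remaining sign---but, as you acknowledge, the orientation and branch bookkeeping through the degenerate limit must be checked carefully against the conventions of Section~\ref{sc8}; this is where the paper's approach is more self-contained.
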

{\small
\begin{figure}[htb]
\begin{center}
\unitlength=0.80mm
\begin{picture}(80,60)(-40,-28)

 \put(31,-16){\makebox{$\mathbf{l}_{-1}$}}
 \put(15,22){\makebox{$\mathbf{l}_{1}$}}
 \put(-10,-23){\makebox{$\mathbf{l}_{-2}$}}
 \put(-23,11){\makebox{$\mathbf{l}_{2}$}}

 \put(39,-3){\makebox{$0$}}
 \put(8, -3){\makebox{$z_1$}}
 \put(-11, -3){\makebox{$z_3$}}
 \put(-43, -3){\makebox{$z_5$}}

 \put(-1,24){\makebox{$\infty^+$}}
 \put(4,-27){\makebox{$\infty^-$}}

 \put(-35,18){\makebox{$\Pi_+$}}
 \put(-35,-19){\makebox{$\Pi_-$}}

 \put(5,20){\circle*{1}}

 \put(5,-20){\circle*{1}}
 \put(-35,0){\circle*{1.5}}
 \put(35,0){\circle*{1.5}}

 \put(-15,0){\circle*{1.5}}
 \put(15,0){\circle*{1.5}}

 \qbezier (-11,18) (-16,15.5) (-18,12)
 \put(-11,18){\vector(4,1){0}}

 \qbezier (-11,-18) (-15,-16.5) (-18,-12)
 \put(-18,-12){\vector(-1,3){0}}

 \qbezier (20,20.5) (24,19.5) (28,16)
 \put(28,16){\vector(2,-3){0}}

 \qbezier (20,-20.5) (24,-19.5) (28,-16)
 \put(20,-20.5){\vector(-1,0){0}}

\thinlines
 \qbezier (-35,0.6) (-20,0.6) (-15,0.6)
 \qbezier (-35,-0.6) (-20,-0.6) (-15,-0.6)

 \qbezier (35,0.6) (20,0.6) (15,0.6)
 \qbezier (35,-0.6) (20,-0.6) (15,-0.6)

\thicklines
 \qbezier (5,20) (30,15) (35,0)
 \qbezier [30] (5,-20) (30,-15) (35,0)
 \qbezier [30] (5.2,-20.2) (30.2,-15.2) (35.2,0.2)

 \qbezier (5,20) (-15,15) (-20,0.3)
 \qbezier [25](5,-20.2) (-15,-15.2) (-20,-0.7)
 \qbezier [25](5.2,-20) (-14.8,-15) (-19.8,-0.5)
\end{picture}
\end{center}
\caption{Cycle $\mathbf{b}=\mathbf{l}_{-1}\cup\mathbf{l}_{-2}
\cup \mathbf{l}_2 \cup\mathbf{l}_1$}
\label{cycle-b}
\end{figure}
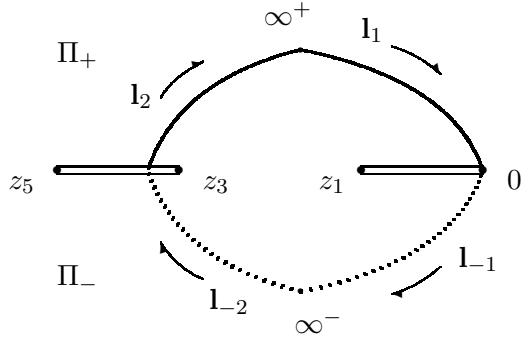
}
\begin{proof}
Consider the loop on $\Pi_{A,\phi}=\Pi_+\cup\Pi_-$ defined by 
$\mathbf{l}_{-1} \cup \mathbf{l}_{-2} \cup \mathbf{l}_2 \cup \mathbf{l}_1$
homotopic to the cycle $\mathbf{b}$, in which $\mathbf{l}_{-1}=[0,\infty^-],$
$\mathbf{l}_{-2}=[\infty^-, z_3] \subset \Pi_-$, and 
$\mathbf{l}_{2}=[z_3,\infty^+],$ $\mathbf{l}_{1}=[\infty^+, 0] \subset \Pi_+$  
as in Figure \ref{cycle-b}. Let
$$
 \gamma_-:=\int_{\mathbf{l}_{-1}}\frac{dz}{w(A,z)}, \quad 
\gamma_+:=\Omega_{\mathbf{b}}-\gamma_-=\int_{\mathbf{l}_{-1} \cup
 \mathbf{l}_{-2}\cup \mathbf{l}_2}\frac{dz}{w(A,z)} = \Bigl(\int_{\mathbf{b}}
 -\int_{\mathbf{l}_1}\Bigr) \frac{dz}{w(A,z)},  
$$
with $\int_{\mathbf{l}_1} w(A,z)^{-1}dz=\int_{\mathbf{l}_{-1}} w(A,z)^{-1}dz.$
Then $\mathrm{P}(u)=\mathrm{P}(u;A)$ has simple poles at 
$u= u_{\pm} \equiv \gamma_{\pm} 
\mod \Omega_{\mathbf{a}}\mathbb{Z}+\Omega_{\mathbf{b}}\mathbb{Z}$, 
and double zeros at $u=u_0\equiv 0,$ and around these poles and zeros, 
$\mathrm{P}(u)=\pm (u-u_{\pm})^{-1}- e^{i\phi}+o(1)$ and 
$\mathrm{P}(u)=e^{3i\phi}A(u-u_0)^2(1+o(1)).$ 
Let
$$
\mathrm{P}_{-}(u)= \tfrac 12 \mathrm{P}'(u)\mathrm{P}(u)^{-1}
-\tfrac 12 \mathrm{P}(u)-e^{i\phi}.
$$
It is easy to see that $\mathrm{P}_-(u)$ has simple poles with residue $1$ 
at $u= u_0\equiv 0 $, with residue $-1$ at $u=u_+\equiv \gamma_+ $, 
and at least vanishes at $u=u_- \equiv \gamma_-$.
By Proposition \ref{prop7.4}, $\mathrm{P}_{-}(u)$ also solves \eqref{7.5}, 
and hence
$\mathrm{P}_-(u+\gamma_-)=\mathrm{P}(u)$. Then the string of poles and 
zeros of $\mathrm{P}(u)$ given by
$ \mathcal{S}:=\Omega_{\mathbf{b}}\mathbb{Z} \cup
 (\gamma_- +\Omega_{\mathbf{b}}\mathbb{Z})
\cup (\gamma_+ +\Omega_{\mathbf{b}}\mathbb{Z})$
coincides with the shifted one $\mathcal{S}+\gamma_-$.    
From this fact we derive $2\gamma_-=\gamma_+=\Omega
_{\mathbf{b}}-\gamma_-,$ which implies $\gamma_-=\tfrac 13 
\Omega_{\mathbf{b}}.$ Thus the proposition is obtained.  
\end{proof}
\section{Boutroux equations}\label{sc8}
We show basic facts on the Boutroux equations used in the 
preceding sections.
\subsection{Basic facts}\label{ssc8.1}
The definitions of the cycles $\mathbf{a},$ $\mathbf{b}$, 
$\mathbf{a}_*,$ $\mathbf{b}_*$ are based on the following stable configuration
of zeros.
\begin{lem}\label{lem8.1}
As long as $A\in \mathcal{D}_0:=\mathbb{C}\setminus 
(\{c\le 0\} \cup\{c\ge \frac 8{27}\})$,
the polynomial $\zeta^3+4\zeta^2+4\zeta+4 A$ has zeros $\zeta_1,$ $\zeta_3$, 
$\zeta_5$ with the properties$:$
\par
$(1)$ $\zeta_j=\zeta_j(A)$ $(j=1,3,5)$ are continuous in $A;$
\par
$(2)$ $\re \zeta_5 < \re \zeta_3 < \re \zeta_1 ;$
\par 
$(3)$ $(\zeta_5, \zeta_3,\zeta_1) \to (-2,-2 , 0) $ as $A\to 0$, and 
$(\zeta_5, \zeta_3,\zeta_1) \to (-\frac 83 ,-\frac 23, -\frac 23) $ 
as $A\to \frac 8{27}.$  
\end{lem}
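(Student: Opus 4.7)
The plan is to reduce the cubic to Cardano's trigonometric form. First I depress via $\zeta=u-\tfrac{4}{3}$, obtaining $u^{3}-\tfrac{4}{3}u+4A-\tfrac{16}{27}=0$, and then substitute $u=\tfrac{4}{3}\cos\vartheta$. The triple-angle identity $4\cos^{3}\vartheta-3\cos\vartheta=\cos 3\vartheta$ collapses the cubic to the scalar equation
\[
\cos 3\vartheta \;=\; 1-\tfrac{27}{4}A.
\]
Setting $w:=1-\tfrac{27}{4}A$, the affine map $A\mapsto w$ is a biholomorphism of $\mathcal{D}_{0}$ onto $\mathcal{E}:=\mathbb{C}\setminus(\{c\le -1\}\cup\{c\ge 1\})$, so the whole lemma reduces to the behaviour of $\arccos$ on $\mathcal{E}$.

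The key observation is that cosine restricts to a biholomorphism from the open vertical strip $\Sigma:=\{\mu\in\mathbb{C}\,|\,0<\re\mu<\pi\}$ onto $\mathcal{E}$. Injectivity follows from $\cos\mu_{1}=\cos\mu_{2}\iff\mu_{2}\equiv\pm\mu_{1}\pmod{2\pi}$, while surjectivity comes from the fact that the two boundary lines $\re\mu=0$ and $\re\mu=\pi$ of $\Sigma$ are sent respectively onto the excluded rays $[1,\infty)$ and $(-\infty,-1]$. Let $\mu=\mu(A)\in\Sigma$ denote the unique preimage of $w(A)$; the three values of $\vartheta$ modulo $2\pi$ producing distinct $\cos\vartheta$ are $\mu/3$, $(\mu+2\pi)/3$ and $(\mu+4\pi)/3$, which I would match to the enumeration in the lemma by setting
\[
\zeta_{1}(A)=\tfrac{4}{3}\cos\tfrac{\mu}{3}-\tfrac{4}{3},\quad
\zeta_{3}(A)=\tfrac{4}{3}\cos\tfrac{\mu+4\pi}{3}-\tfrac{4}{3},\quad
\zeta_{5}(A)=\tfrac{4}{3}\cos\tfrac{\mu+2\pi}{3}-\tfrac{4}{3}.
\]
Analyticity of $\mu(A)$ on $\mathcal{D}_{0}$ yields $(1)$, while substitution of the boundary values $\mu(0)=0$ and $\mu(8/27)=\pi$ reproduces $(\zeta_{1},\zeta_{3},\zeta_{5})\to(0,-2,-2)$ and $\to(-2/3,-2/3,-8/3)$ respectively, giving $(3)$.

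The ordering $(2)$ is then extracted by writing $\mu=x+iy$ with $x\in(0,\pi)$ and noting $\re\cos\tfrac{\mu+2k\pi}{3}=\cos\tfrac{x+2k\pi}{3}\cosh\tfrac{y}{3}$. Since $\cosh(y/3)>0$ is a common positive factor, the ordering of the three real parts is that of $\cos(x/3)\in(\tfrac{1}{2},1)$, $\cos((x+4\pi)/3)\in(-\tfrac{1}{2},\tfrac{1}{2})$ and $\cos((x+2\pi)/3)\in(-1,-\tfrac{1}{2})$, which occupy three pairwise disjoint intervals and hence produce the strict uniform chain $\re\zeta_{5}<\re\zeta_{3}<\re\zeta_{1}$ throughout $\mathcal{D}_{0}$. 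The main obstacle I anticipate is precisely this bookkeeping: one must match the cyclic index $k=0,2,1$ to the subscripts $1,3,5$ in the correct order, and must choose $\Sigma$ as the fundamental domain (not, say, $-\pi<\re\mu<0$) so that the endpoint assignments in $(3)$ are picked up continuously. Apart from this labelling every ingredient is routine calculus on entire functions.
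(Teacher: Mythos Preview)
Your argument is correct and is essentially the same reduction as the paper's: both depress the cubic by the shift $\zeta\mapsto\zeta+\tfrac{4}{3}$ and pass to the parameter $c_0=w=1-\tfrac{27}{4}A$ on the slit domain $\mathbb{C}\setminus((-\infty,-1]\cup[1,\infty))$, then invoke Cardano. The paper writes the roots in the exponential form $2\gamma_k=\omega^{k}u^{1/3}+\omega^{-k}u^{-1/3}$ with $u=c_0+\sqrt{c_0^2-1}$, which is your $\cos\tfrac{\mu+2k\pi}{3}$ under $u=e^{\mp i\mu}$. The one real difference is in the proof of the strict ordering $(2)$: the paper argues by contradiction, showing that $\xi_j-\xi_k\in i\mathbb{R}$ forces $u\in\mathbb{R}$ and hence $c_0\notin\mathcal{D}_1$, and then imports the ordering by continuity from the real interval $-1<c_0<1$; your direct computation $\re\cos\tfrac{\mu+2k\pi}{3}=\cos\tfrac{x+2k\pi}{3}\cosh\tfrac{y}{3}$ together with the observation that the three values $\cos\tfrac{x+2k\pi}{3}$ occupy disjoint subintervals of $(-1,1)$ gives the global ordering in one stroke and is a bit cleaner.
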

\begin{proof}
Suppose that $A \in \mathcal{D}_0$. 
The substitution $\zeta=\frac 43 (\xi-1)$ leads to the polynomial 
$$
\varpi(\xi)=4\xi^3-3\xi -c_0, \quad c_0=1-\tfrac {27}4 A \in 
\mathcal{D}_1
=\mathbb{C}\setminus(\{c\le -1\}\cup \{c\ge 1\}), 
$$
which has multiple zeros if and only if $c_0=\pm 1.$
The zeros of $\varpi(\xi)$ may be numbered in such a way that 
$-1< \xi_5<\xi_3< \xi_1<1$ for $-1<c_0<1$, and clearly
$\xi_j$ $(j=1,3,5)$ move continuously in $c_0 \in \mathcal{D}_1.$ 
Then to verify (2) for $\zeta_j= \frac 43 (\xi_j-1)$ $(j=1,3,5)$, 
it is sufficient to show that $\xi_j-\xi_k \not\in i\mathbb{R}$ 
as long as $c_0\in \mathcal{D}_1$ if $j\not=k$. By Cardano's formula $\xi_j,$
$\xi_k \in \{ \gamma_0,\gamma_{\pm 1}\},$ in which $2\gamma_0=u^{1/3}+u^{-1/3},$ 
$2\gamma_{\pm 1}=\omega^{\pm 1} u^{1/3}+\omega^{\mp 1} u^{-1/3}$ with
$\omega=e^{2\pi i/3}$ and $u=c_0+ \sqrt{c_0^2-1}$. 
Suppose that $2(\gamma_0-\gamma_1)=(1-\omega)u^{1/3}-(1-\omega^{-1})u^{-1/3}=ir$
with $r\in \mathbb{R}.$ Then $v=(1-\omega)u^{1/3}$ fulfils $v^2-irv+3=0$, which
implies $(1-\omega)u^{1/3}\in i \mathbb{R}$ with $1-\omega=i\sqrt{3}\omega^2.$
Hence $u=c_0+\sqrt{c_0^2-1}=r_0 \in\mathbb{R}$, that is, $c_0=(r_0+r_0^{-1})/2
\in \{c\le -1\}\cup\{c\ge 1\}$, contradicting $c_0 \in \mathcal{D}_1.$
Supposition $\gamma_1-\gamma_{-1} \in i\mathbb{R}$ implies $u^{1/3}\in 
\mathbb{R}$ leading to the same contradiction.
Thus the property (2) is verified. It is easy to see that (1) and (3) are
fulfilled.  
\end{proof}
The following corollary is obtained by setting $\zeta=e^{-i\phi}z.$
\begin{cor}\label{cor8.2}
As long as $(\phi,A)\in \mathbb{R} \times \mathcal{D}_0$, 
the polynomial $z^3+4e^{i\phi}z^2+4e^{2i\phi}z
+4 e^{3i\phi}A$ has zeros $z_1,$ $z_3$, $z_5$ with the properties$:$
\par
$(1)$ $z_j=z_j(\phi,A)$ $(j=1,3,5)$ are continuous in $(\phi, A);$
\par
$(2)$ $\re e^{-i\phi}z_5 < \re e^{-i\phi}z_3 < \re e^{-i\phi}z_1 ;$
\par 
$(3)$ $(z_5, z_3,z_1) \to  (-2 e^{i\phi},-2 e^{i\phi}, 0)$ as $A\to 0$, and 
$(z_5, z_3,z_1) \to (-\frac 83 e^{i\phi},-\frac 23 e^{i\phi},
 -\frac 23 e^{i\phi})$ as $A\to \frac 8{27}.$  
\end{cor}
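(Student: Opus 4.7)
The plan is to reduce the corollary directly to Lemma~8.1 by the substitution $\zeta = e^{-i\phi}z$, as the author already indicates in the sentence preceding the statement. First I would compute
\[
z^3 + 4e^{i\phi}z^2 + 4e^{2i\phi}z + 4e^{3i\phi}A \;=\; e^{3i\phi}\bigl(\zeta^3 + 4\zeta^2 + 4\zeta + 4A\bigr),
\]
so that the zeros in $z$ are precisely $z_j(\phi,A) := e^{i\phi}\zeta_j(A)$, where $\zeta_j(A)$ are the three zeros provided by Lemma~8.1 for $A \in \mathcal{D}_0$. This identifies the objects and transports the hypothesis $A \in \mathcal{D}_0$ to the setting of the corollary.

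Next I would verify the three properties in turn. For (1), continuity of $\zeta_j$ in $A$ from Lemma~8.1(1) together with joint continuity of $(\phi,A) \mapsto e^{i\phi}\zeta_j(A)$ gives continuity of $z_j$ in $(\phi,A) \in \mathbb{R}\times\mathcal{D}_0$. For (2), observe that $e^{-i\phi}z_j = \zeta_j$, so the inequalities $\re\zeta_5 < \re\zeta_3 < \re\zeta_1$ from Lemma~8.1(2) translate verbatim into $\re e^{-i\phi}z_5 < \re e^{-i\phi}z_3 < \re e^{-i\phi}z_1$. For (3), multiplying the limits of Lemma~8.1(3) by $e^{i\phi}$ yields $(z_5,z_3,z_1) \to (-2e^{i\phi}, -2e^{i\phi}, 0)$ as $A\to 0$ and $(z_5,z_3,z_1) \to (-\tfrac{8}{3}e^{i\phi}, -\tfrac{2}{3}e^{i\phi}, -\tfrac{2}{3}e^{i\phi})$ as $A\to \tfrac{8}{27}$, exactly as claimed.

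There is essentially no obstacle here: the whole content is algebraic bookkeeping once the factorisation $z^3 + 4e^{i\phi}z^2 + 4e^{2i\phi}z + 4e^{3i\phi}A = e^{3i\phi}(\zeta^3 + 4\zeta^2 + 4\zeta + 4A)$ under $z = e^{i\phi}\zeta$ is observed. The only minor point worth double-checking is that $\phi$ is allowed to range over all of $\mathbb{R}$ (not just a fundamental domain), but this is immediate because the factor $e^{i\phi}$ varies continuously and the selection of $\zeta_j$ from Lemma~8.1 depends only on $A$.
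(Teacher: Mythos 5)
Your proposal is correct and is exactly the route the paper takes: the paper derives Corollary \ref{cor8.2} from Lemma \ref{lem8.1} by the same substitution $\zeta=e^{-i\phi}z$, giving $z_j=e^{i\phi}\zeta_j(A)$, from which (1)--(3) follow immediately. The factorisation $z^3+4e^{i\phi}z^2+4e^{2i\phi}z+4e^{3i\phi}A=e^{3i\phi}(\zeta^3+4\zeta^2+4\zeta+4A)$ and the transfer of each property are all verified correctly.
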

\par
Let us define the elliptic curve $\Pi_{A}^*=\Pi_+^*\cup \Pi_-^*$ and the
cycles $\mathbf{a}_*$ and $\mathbf{b}_*$ on $\Pi^*_{A}$. As long as  
$A\in \mathcal{D}_0$, by Lemma \ref{lem8.1}, the elliptic curve
$$
v(A,\zeta)^2=\zeta^4+4\zeta^3+4\zeta^2+4A\zeta=\zeta(\zeta-\zeta_1)
(\zeta-\zeta_3)(\zeta-\zeta_5),
$$
is the two sheeted Riemann surface $\Pi^*_{A}=\Pi^*_+\cup \Pi^*_-$ glued
along the cuts $[\zeta_5,\zeta_3],$ $[\zeta_1, 0]$, where $\Pi^{*}_{\pm}$
are two copies of $P^1(\mathbb{C})\setminus ([\zeta_5,\zeta_3]\cup [\zeta_1,0])$.
Then on $\Pi^*_{A}$ the cycles $\mathbf{a}_*$ and $\mathbf{b}_*$ are defined
as in Figure \ref{cycles2}.
Note that $\Pi^*_{A}$ and $(\mathbf{a}_*, \mathbf{b}_*)$ are parametrised
by $A\in \mathcal{D}_0$. Let the elliptic curve $\Pi_{A,\phi}=\Pi_+\cup \Pi_-$ 
and the pair of cycles $(\mathbf{a}, \mathbf{b})$ be the images of 
$\Pi^*_{A}$ and $(\mathbf{a}_*, \mathbf{b}_*)$ under the map $z=e^{i\phi}
\zeta$. Then $\Pi_{A,\phi}$ is given by $w(A,z)^2=z^4+4e^{i\phi}z^3+4e^{2i\phi}
z^2 +4e^{3i\phi} Az$ as in Section \ref{sc2}, and the cycles $\mathbf{a}$ 
and $\mathbf{b}$ are as in Figure \ref{cycles1} for $0<|\phi|<\pi/4$. 
{\small
\begin{figure}[htb]
\begin{center}
\unitlength=0.75mm
\begin{picture}(70,45)(-40,-20)
 \put(-57.5,-15.2){\makebox{$\zeta_5$}}
 \put(-15.5,-12.2){\makebox{$\zeta_3$}}
 \put(23.5,-9.2){\makebox{$\zeta_1$}}
 \put(50,3.2){\makebox{$0$}}

 \put(-52,7){\makebox{$\mathbf{a}_*$}}
 \put(-12,13){\makebox{$\mathbf{b}_*$}}

 \put(52,-13){\makebox{$\Pi_+^*$}}

\thinlines

 \qbezier(-45,6.2) (-37,9.8) (-31,10.3)
 \qbezier(-5,11.6) (3,13.8) (9,12.9)

 \put(-31,10.3){\vector(4,-1){0}}
 \put(9,12.9){\vector(3,-2){0}}
 \put(-50,-6.3){\line(5,1){35}}
 \put(-50,-7.7){\line(5,1){35}}
 \put(20,0.7){\line(3,1){30}}
 \put(20,-0.7){\line(3,1){30}}
\thicklines
 \put(-50,-7){\circle*{1.5}}
 \put(-15,0){\circle*{1.5}}
 \put(20,0){\circle*{1.5}}
 \put(50,10){\circle*{1.5}}
  \qbezier(-45,3) (-65,-9.5) (-45,-14.5)
  \qbezier(-20,-10.5) (0,1.5) (-20,7)
  \qbezier(-45,3) (-31, 9.5) (-20,7)
  \qbezier(-45,-14.5) (-34.5, -17) (-20,-10.5)

 \qbezier(-13, 6) (12.5,15.5) (25,1.8)
 \qbezier(-16, 4) (-19.5,2) (-19.2,-0.5)

 \qbezier[10](26.0,-0.3) (27.5,-3) (20,-7.5)
 \qbezier[25](-12.5,-7.3) (4,-14.8) (20,-7.5)
 \qbezier[7](-15.6,-5.4) (-19,-3) (-18.6,-2.4)

\end{picture}
\end{center}
\caption{Cycles $\mathbf{a}_*,$ $\mathbf{b}_*$ on $\Pi^*_{A}
=\Pi^*_+\cup\Pi^*_-$}
\label{cycles2}
\end{figure}
}
\subsection{Uniqueness}\label{ssc8.2}
Let us treat the integrals
\begin{align*}
&J_{\mathbf{a}_*}(A)=\int_{\mathbf{a}_*}\frac{v(A,\zeta)}{\zeta}d\zeta, \qquad
J_{\mathbf{b}_*}(A)=\int_{\mathbf{b}_*}\frac{v(A,\zeta)}{\zeta}d\zeta,
\\
&v(A,\zeta)=\sqrt{\zeta^4 +4\zeta^3+ 4\zeta^2 +4A\zeta} 
=e^{-2i\phi}w(A,z).
\end{align*}
Then the Boutroux equations \eqref{2.2} become \eqref{2.3}, that is,
\begin{equation*}
\tag*{(BE)$_{\phi}$}
\re e^{2i\phi} \int_{\mathbf{a}_*}\frac{v(A_{\phi},\zeta)}{\zeta} d\zeta
=\re e^{2i\phi} \int_{\mathbf{b}_*}\frac{v(A_{\phi},\zeta)}{\zeta} d\zeta=0.
\end{equation*}
\begin{exa}\label{exa8.1}
Let $\phi=0,$ $A=\tfrac 8{27}.$ Then $\zeta_1=\zeta_3 =-\tfrac 23,$
$\zeta_5= -\tfrac 83,$ and hence
$$
J_{\mathbf{a}_*} (\tfrac 8{27}) 
=2 \int^{-2/3}_{-8/3} (\zeta+\tfrac 23) 
\sqrt{\zeta(\zeta+\smash{\tfrac 83)}}\frac{d
\zeta}{\zeta}=2e^{i\pi/2} \int^{2/3}_{8/3} (\tfrac 23 -t)
\sqrt{\frac{\smash{\tfrac 83}-t}t} dt=\frac{4i}{\sqrt{3}}
$$
and $J_{\mathbf{b}_*}(\tfrac 8{27})=0.$ This implies $A_0=
\tfrac 8{27}$ is a solution of the Boutroux equations (BE)$_{\phi=0}$. 
\end{exa}
In accordance with Kitaev \cite[Section 7]{Kitaev-2} we would like to show the
uniqueness of a solution of (BE)$_{\phi=0}$. To do so we begin with the
following.
\begin{prop}\label{prop8.3}
If $\re J_{\mathbf{a}_*}(A)=\re J_{\mathbf{b}_*}(A)=0,$ then $A\in \mathbb{R}.$
\end{prop}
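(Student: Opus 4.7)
The plan is to study the real-valued map $G\colon\mathcal D_0\to\mathbb R^2$, $G(A):=(\re J_{\mathbf a_*}(A),\re J_{\mathbf b_*}(A))$, by combining a holomorphic-derivative / Jacobian computation with the conjugation symmetry $A\mapsto\bar A$. Differentiating under the integral sign, $\partial_A\bigl(v(A,\zeta)/\zeta\bigr) = 2/v(A,\zeta)$, so $dJ_{\mathbf a_*}/dA = 2\Omega_{\mathbf a_*}$ and $dJ_{\mathbf b_*}/dA = 2\Omega_{\mathbf b_*}$, where $\Omega_{\mathbf a_*}(A),\Omega_{\mathbf b_*}(A)$ are the periods of the holomorphic differential on $\Pi_A^*$. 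Writing $A=a_1+ia_2$ and using Cauchy--Riemann,
\[
\det DG \;=\; -4\,\im\!\bigl(\overline{\Omega_{\mathbf a_*}}\,\Omega_{\mathbf b_*}\bigr)
\;=\; -4\,|\Omega_{\mathbf a_*}|^{2}\,\im(\Omega_{\mathbf b_*}/\Omega_{\mathbf a_*}).
\]
By Lemma \ref{lem8.1} the curve $\Pi_A^*$ is a non-degenerate elliptic curve for every $A\in\mathcal D_0$, so its period ratio sits strictly in the upper half-plane; hence $\det DG\ne 0$ throughout $\mathcal D_0$, making $G$ a local diffeomorphism and $G^{-1}(0)$ discrete.

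Separately, the reality of the coefficients of $v(A,\zeta)^{2}$ jointly in $A$ and $\zeta$, together with the continuous dependence of the roots on $A$ in Lemma \ref{lem8.1}, yields $J_{\mathbf a_*}(\bar A)=\overline{J_{\mathbf a_*}(A)}$ and $J_{\mathbf b_*}(\bar A)=\overline{J_{\mathbf b_*}(A)}$ via the substitution $\zeta\mapsto\bar\zeta$. Thus $G(\bar A)=G(A)$ and non-real zeros of $G$ come in complex-conjugate pairs. Moreover, for real $A\in(0,8/27)$ the roots $\zeta_j$ are real and $v(A,\zeta)$ is purely imaginary along the cut $[\zeta_5,\zeta_3]$, so $J_{\mathbf a_*}(A)$ is purely imaginary and the whole real segment lies inside the harmonic zero set $\{\re J_{\mathbf a_*}=0\}$. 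Since $\nabla\re J_{\mathbf a_*}$ is proportional to $\Omega_{\mathbf a_*}$, which is nowhere vanishing on $\mathcal D_0$, this zero set is a smooth one-manifold; the plan is to trace its connected component through $(0,8/27)$ and use the vanishing-cycle asymptotics of $J_{\mathbf a_*}$ as $A$ approaches the degeneration points of $\partial\mathcal D_0$ to identify it as the unique component of $\{\re J_{\mathbf a_*}=0\}$ in $\mathcal D_0$. Together with $\re J_{\mathbf b_*}(A)=0$, this forces $A\in\mathbb R$.

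The main obstacle is precisely this last, global step: a local diffeomorphism on a simply-connected domain need not be globally injective, and the Jacobian non-vanishing together with the symmetry $G(\bar A)=G(A)$ do not on their own rule out additional conjugate pairs of non-real zeros. Closing the gap requires controlling $J_{\mathbf a_*},\ J_{\mathbf b_*}$ near the endpoints of $\partial\mathcal D_0$ -- at $A=0$ the cycle $\mathbf a_*$ vanishes as $\zeta_3,\zeta_5\to -2$, and at $A=8/27$ the cycle $\mathbf b_*$ vanishes as $\zeta_1,\zeta_3\to -2/3$ (cf.\ Example \ref{exa8.1}) -- combined with a continuity or degree-theoretic argument relating the boundary limits of $G$ to the count of interior zeros. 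The vanishing-cycle asymptotics at these two points are the delicate technical ingredient of the proof.
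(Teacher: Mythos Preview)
Your proposal is incomplete, and you say so yourself: the global step—showing that $\{\re J_{\mathbf a_*}=0\}$ has no connected component in $\mathcal D_0$ other than the real segment $(0,8/27)$—is left as a program requiring vanishing-cycle asymptotics and a degree-theoretic argument that you do not carry out. The Jacobian computation and the conjugation symmetry $G(\bar A)=G(A)$ are both correct, and they do yield discreteness of $G^{-1}(0)$ together with conjugate-pairing of non-real zeros; but neither ingredient, separately or together, excludes an isolated conjugate pair of non-real solutions. Without actually performing the boundary analysis you sketch, the argument does not close.

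The paper's proof bypasses global topology entirely by a direct contradiction. Assuming $A\ne\bar A$, one writes
\[
0=J_{\mathbf a_*}(A)+\overline{J_{\mathbf a_*}(A)}=J_{\mathbf a_*}(A)-J_{\mathbf a_*}(\bar A)=4(A-\bar A)\int_{\mathbf a_*}\frac{d\zeta}{v(A,\zeta)+v(\bar A,\zeta)}=:4(A-\bar A)I_0,
\]
so it suffices to show $I_0\ne 0$. The cycle $\mathbf a_*$ (or, when $\re\zeta_3>0$, a companion cycle $\mathbf b_0$ on a re-cut surface) is deformed to a rectangular contour symmetric under $\zeta\mapsto\bar\zeta$; the two vertical legs contribute \emph{real} numbers, while the horizontal leg along the real axis is computed explicitly and shown to be a non-zero purely imaginary number. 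Hence $I_0\notin\mathbb R$ cannot vanish. This is an elementary contour manipulation, with no appeal to degree theory or boundary asymptotics. Your Jacobian identity $\det DG=-4|\Omega^*_{\mathbf a_*}|^2\im(\Omega^*_{\mathbf b_*}/\Omega^*_{\mathbf a_*})$ does reappear in the paper, but later and for a different purpose (smoothness of the trajectory $A_\phi$), not for this proposition.
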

\begin{proof} 
The condition $\re J_{\mathbf{a}_*}(A)=0$ implies
\begin{align} \label{8.1}
0= & J_{\mathbf{a}_*}(A)+\overline{J_{\mathbf{a}_*}(A)}
=  J_{\mathbf{a}_*}(A)+J_{\overline{\mathbf{a}_*}}(\overline{A})
=  J_{\mathbf{a}_*}(A)-J_{\mathbf{a}_*}(\overline{A})
= 4(A-\overline{A}) I_0, 
\\
\notag
I_0:=& \int_{\mathbf{a}_*} 
\frac{d\zeta}{v(A,\zeta)+v(\overline{A},\zeta)}.
\end{align}
Let us derive $A\in \mathbb{R}$ by supposing the contrary 
$A-\overline{A}\not=0.$ Then, by Lemma \ref{lem8.1}, it may be supposed that 
$\re \zeta_5 < \re\zeta_3 < \re \zeta_1 $, which is divided into two 
cases according to the location of $0.$
\par
{\bf (i)} Case $\re \zeta_3 \le 0$. 
It is sufficient to show that $ I_0 \not=0$ in \eqref{8.1}. 
The algebraic function $v(A,\zeta)+v(\overline{A},\zeta)$ is considered 
on the two sheeted Riemann surface $\Pi_A^{*0}$ glued along the cuts 
$[\zeta_1,0]$, $[\overline{\zeta_1},0]$,    
$[\zeta_5,\zeta_3]$, $ [\overline{\zeta_5}, \overline{\zeta_3}],$ which is
constructed by adding to $\Pi^*_A$ the new two cuts $[\overline{\zeta_1},0]$,    
$ [\overline{\zeta_5}, \overline{\zeta_3}]$ and gluing along them. 
Choose the cycle $\mathbf{a}_*$ on $\Pi_A^{*0}$ in such a way that 
$\mathbf{a}_*$ 
surrounds the cuts $[\zeta_5,\zeta_3]$ and $[\overline{\zeta_5},\overline
{\zeta_3}]$ as in Figure \ref{cycle-a}, (a), and 
modify $\mathbf{a}_*$ as in Figure \ref{cycle-a}, (b), where $\zeta_5=a+ib$,
$\zeta_3=c+ib'$, $a<c\le 0$, $b, b' \in \mathbb{R}.$
{\small
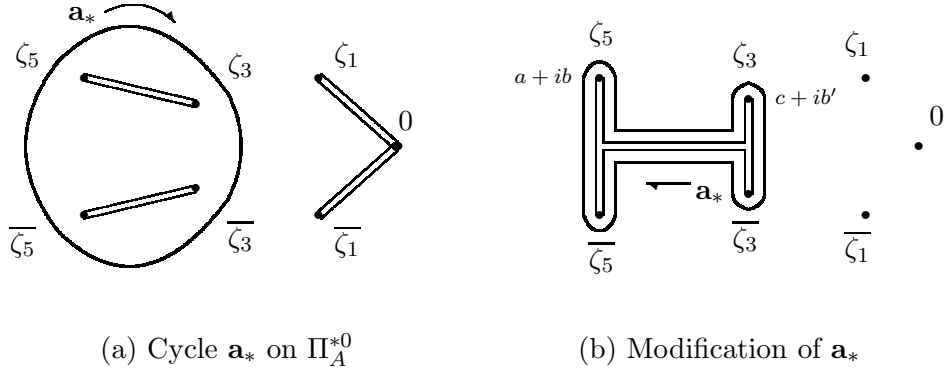
\begin{figure}[htb]
\begin{center}
\unitlength=0.70mm
\begin{picture}(60,65)(-25,-35)

\put(-29,13){\circle*{1.5}}
\put(-29,-13){\circle*{1.5}}

\put(-8,8){\circle*{1.5}}
\put(-8,-8){\circle*{1.5}}

 \qbezier (-29,13.7) (-8,8.7) (-8,8.7)
 \qbezier (-29,12.4) (-8,7.4) (-8,7.4)

 \qbezier (-29,-13.7) (-8,-8.7) (-8,-8.7)
 \qbezier (-29,-12.4) (-8,-7.4) (-8,-7.4)

\put(15,13){\circle*{1.5}}
\put(15,-13){\circle*{1.5}}
\put(30,0){\circle*{1.5}}
 \qbezier (15.5,13.7) (15.5,13.8) (30,0.7)
 \qbezier (15,12.3) (15,12.3) (29.5,-0.7)

 \qbezier (15.5,-13.7) (15.5,-13.7) (30,-0.7)
 \qbezier (15,-12.3) (15,-12.3) (29.5,0.7)
 \put(-12,23.5){\vector (2,-3){0}}
 \qbezier (-25,25.5) (-18,29) (-12,23.5)

 \put(-42,16){\makebox{$\zeta_5$}}
 \put(-43,-20){\makebox{$\overline{\zeta_5}$}}
 \put(-2,14){\makebox{$\zeta_3$}}
 \put(-2,-19){\makebox{$\overline{\zeta_3}$}}
 \put(18,16){\makebox{$\zeta_1$}}
 \put(18,-20){\makebox{$\overline{\zeta_1}$}}
 \put(30,3){\makebox{$0$}}
 \put(-32,24){\makebox{$\mathbf{a}_*$}}

\thicklines
 \qbezier (-34,16) (-18,32) (-2,10)
 \qbezier (-34,-16) (-18,-32) (-2,-10)
 \qbezier (-34,-16) (-46,0) (-34,16)
 \qbezier (-2,-10) (3,0) (-2,10)

\put(-26,-40){\makebox{(a) Cycle $\mathbf{a}_*$ on $\Pi_A^{*0}$}}
\end{picture}
\qquad
\begin{picture}(60,65)(-30,-35)

 \put(-12,20){\makebox{$\zeta_5$}}
 \put(-12,-23){\makebox{$\overline{\zeta_5}$}}
 \put(52,4){\makebox{$0$}}

 \put(15,16){\makebox{$\zeta_3$}}
 \put(15,-19){\makebox{$\overline{\zeta_3}$}}
 \put(36,18){\makebox{${\zeta_1}$}}
 \put(36,-21){\makebox{$\overline{\zeta_1}$}}
 \put(-26,12){\makebox{\tiny$a+ib$}}
 \put(23,8){\makebox{\tiny$c+ib'$}}

 \put(8,-10){\makebox{$\mathbf{a}_*$}}

\qbezier (7,-7) (5,-7) (-1,-7) 
\put(-1,-7){\vector(-3,1){0}}

\put(18,9){\circle*{1.5}}
\put(18,-9){\circle*{1.5}}
\put(-10,13){\circle*{1.5}}
\put(-10,-13){\circle*{1.5}}

\put(40,13){\circle*{1.5}}
\put(40,-13){\circle*{1.5}}
\put(50,0){\circle*{1.5}}

\qbezier (-10.7,13) (-10.7, 0) (-10.7,-13) 
 \qbezier (-9.3,13) (-9.3, 0.7) (-9.3,0.7) 
 \qbezier (-9.3,-13) (-9.3,-0.7) (-9.3,-0.7) 

\qbezier (17.3,0.7) (-9.3,0.7) (-9.3,0.7) 
\qbezier (17.3,-0.7) (-9.3,-0.7) (-9.3,-0.7) 
\qbezier (17.3,0.7) (17.3,0.7) (17.3, 9) 
\qbezier (18.7,9) (18.7,0.7) (18.7,-9) 
\qbezier (17.3,-0.7) (17.3,-0.7) (17.3, -9) 

\thicklines

\qbezier (-13,13) (-12, 16) (-10,16) 
\qbezier (-7,13) (-8, 16) (-10,16) 

\qbezier (-13,-13) (-12,-16) (-10,-16) 
\qbezier (-7,-13) (-8,-16) (-10,-16) 

\qbezier (-13,13) (-13, 0) (-13,-13) 
\qbezier (-7,13) (-7, 3) (-7,3) 
\qbezier (-7,-13) (-7, -3) (-7,-3) 

\qbezier (21,-9) (21, 2) (21,9) 
\qbezier (15,9) (15, 8) (15,3) 
\qbezier (15,-9) (15, -8) (15,-3) 

\qbezier (15,3) (13,3) (-7,3) 
\qbezier (15,-3) (13,-3) (-7,-3) 
\qbezier (15,9) (15,11) (18,12) 
\qbezier (21,9) (21,11) (18,12) 
\qbezier (15,-9) (15,-11) (18,-12) 
\qbezier (21,-9) (21,-11) (18,-12) 

\put(-14,-40){\makebox{(b) Modification of $\mathbf{a}_*$}}
\end{picture}
\end{center}
\caption{Cycle $\mathbf{a}_*$ and modification in the case $\re \zeta_3 \le 0$} 
\label{cycle-a}
\end{figure}
}
To simplify the description we write $v_{A}(\zeta)=v(A,\zeta),$
$v_{\overline{A}}(\zeta)=v(\overline{A},\zeta)$ and
$(v_{A}\pm v_{\overline{A}})(\zeta)=v(A,\zeta)\pm v(\overline{A},\zeta)$.
Let us take the contour of $I_0$ to be
the modified $\mathbf{a}_*$ starting from and terminating in the point
$\zeta=\re \zeta_5 +=a+$ on the upper shore of the cut $[a,c]$. 
Then $I_0$ is decomposed into the left- and right-vertical, and the horizontal 
parts:
\begin{align*}
 I_0=&I_{\mathrm{hor}} + I_{\mathrm{right\text{-}v}}+I_{\mathrm{left\text{-}v}},
\\
I_{\mathrm{hor}}=& 2 \int^c_{a} \frac{ds}{(v_A+v_{\overline{A}})(s)},
\\
 I_{\mathrm{right\text{-}v}}=&\int_0^{b'}
 \frac{idt}{(v_A+v_{\overline{A}})(c+it)}
+\int_{b'}^{-b'} \frac{idt}{(-v_A+v_{\overline{A}})(c+it)}
 +\int^{0}_{-b'} \frac{idt}{(-v_A-v_{\overline{A}})(c+it)},
\\
-I_{\mathrm{left\text{-}v}}=&\int_0^{-b} \frac{idt}{(v_A+v_{\overline{A}})(a+it)}
+\int_{-b}^{b} \frac{idt}{(v_A-v_{\overline{A}})(a+it)}
 +\int_{b}^{0} \frac{idt}{(-v_A-v_{\overline{A}})(a+it)}.
\end{align*}
Here
\begin{align*}
I_{\mathrm{right\text{-}v}}=&  \frac{i}{4(A-\overline{A})} \biggl(
\int_0^{b'}\frac{(v_A-v_{\overline{A}})(c+it)}{c+it}dt 
+\int_0^{b'}\frac{(v_A+v_{\overline{A}})(c+it)}{c+it}dt\biggr) 
\\
& - \frac{i}{4(A-\overline{A})} \biggl(
\int_0^{-b'}\frac{(v_A+v_{\overline{A}})(c+it)}{c+it}dt 
-\int_0^{-b'}\frac{(v_A-v_{\overline{A}})(c+it)}{c+it}dt\biggr) 
\\
=& \frac{i}{2(A-\overline{A})} \biggl(\int_0^{b'}\frac{v_A(c+it)}{c+it}dt
 +\int_0^{b'}\frac{v_{\overline{A}}(c-it)}{c-it}dt \biggr)\in  \mathbb{R}, 
\end{align*}
and similarly $I_{\mathrm{left\text{-}v}} \in \mathbb{R}.$
Let $A=\kappa +i\kappa'$ with $\kappa'\not=0.$ 
If, say $\im \zeta_5>0$, along the upper shore of the cut $[a,c]$,
$v_A(s)= e^{-\pi i/2}\sqrt{-\varphi(s)-4i\kappa' s}$,  
$v_{\overline{A}}(s)= -e^{\pi i/2} \sqrt{-\varphi(s)+4i\kappa' s}$, 
where
$\varphi(s)=s^4+4s^3+4s^2+4\kappa s <0$, i.e. $-\varphi(s)>0$ for 
$a < s < c \le 0.$  
Then the horizontal integral along the upper shore is 
$$
\int^c_{a}\frac{ds}{(v_A+v_{\overline{A}})(s)} = \frac i{\sqrt{2}}
 \int^c_{a}\frac{ds}{ \sqrt{-\varphi(s) +\sqrt{\varphi(s)^2
+16(\kappa')^2s^2}}}=i\gamma_0
$$ 
with some $\gamma_0>0$.  
Thus we have $I_{\mathrm{hor}} \in \mathbb{C}\setminus \mathbb{R},$ and hence
$I_0\not=0,$ which contradicts \eqref{8.1}. This implies $A\in \mathbb{R}$
in this case.
\par
{\bf (ii)} Case $\re \zeta_3 >0$.  
Let $\Pi^{**}_A=\Pi^{**}_+\cup \Pi^{**}_{-}$ 
be a two sheeted Riemann surface glued
along the cuts $[\zeta_3,\zeta_1]$, $[\zeta_5,0]$ with
$\Pi^{**}_{\pm} =P^1(\mathbb{C})\setminus ([\zeta_3,\zeta_1]\cup[\zeta_5,0])$. 
Instead of $\Pi^*_A$ and $\mathbf{a}_*$ of the case (i) we treat 
$\Pi^{**}_A$ and a cycle on it.
Draw a closed curve $\mathbf{b}_0$ on the upper sheet 
$\Pi^{**}_+\subset \Pi^{**}_A$ surrounding the cut $[\zeta_3,\zeta_1]$ 
clockwise as in Figure \ref{xx}.
{\small
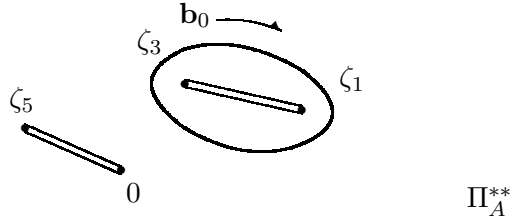
\begin{figure}[htb]
\begin{center}
\unitlength=0.70mm
\begin{picture}(70,40)(-35,-10)

\put(-22,-3){\circle*{1.5}}
 \qbezier (-22,-2.3) (-22,-2.3) (-40,5.7)
 \qbezier (-22,-3.7) (-22,-3.7) (-40,4.3)

\put(12,8.5){\circle*{1.5}}

\put(-10,13.5){\circle*{1.5}}
\put(-40,5){\circle*{1.5}}

 \qbezier (12,9.2) (12,9.2) (-10,14.2)
 \qbezier (12,7.8) (12,7.8) (-10,12.8)

 \put(8,23.5){\vector (1,-1){0}}
 \qbezier (-4,25.5) (2,26) (8,23.5)

 \put(-43,9){\makebox{$\zeta_5$}}
 \put(-19,20){\makebox{$\zeta_3$}}
 \put(19,13){\makebox{$\zeta_1$}}
 \put(-21,-9){\makebox{$0$}}
 \put(-11,25){\makebox{$\mathbf{b}_0$}}
 \put(43,-10){\makebox{$\Pi_A^{**}$}}

\thicklines
 \qbezier (12,16) (0,23) (-10,20)
 \qbezier (12,2) (2,-2) (-10,5)
 \qbezier (12,2) (23.5,7.0) (12,16)
 \qbezier (-10,20) (-22.5,14.4) (-10,5)

\end{picture}
\end{center}
\caption{Cycle $\mathbf{b}_0$ on $\Pi_A^{**}$ in the case $\re \zeta_3>0$} 
\label{xx}
\end{figure}
}
Consider the algebraic function 
$v^*(A,\zeta)=  \sqrt{\zeta^4+4\zeta^3+4\zeta^2+4A\zeta}$
on $\Pi^{**}_A$, where the branch is chosen in such a way that $v^*(A,\zeta)$
coincides with $v(A,\zeta)$ along the upper shore of the cut 
$[\zeta_3,\zeta_1]$. 
The cycle $\mathbf{b}_0$ on $\Pi^{**}_A$ is a substitute for $\mathbf{b}_*$
on $\Pi_A^*.$ Indeed
$$
J_{\mathbf{b}_0}^*(A):=
 \int_{\mathbf{b}_0} \frac{v^*(A,\zeta)}{\zeta}d\zeta  
=2\int^{\zeta_1}_{\zeta_3} \frac{v(A,\zeta)}{\zeta}d\zeta 
= J_{\mathbf{b}_*}(A).
$$
Hence $\re J_{\mathbf{b}_*}(A)=0$ is equivalent to 
$\re J^*_{\mathbf{b}_0}(A)=0.$ Under this supposition,
\begin{align*}
0= & J^*_{\mathbf{b}_0}(A)+\overline{J^*_{\mathbf{b}_0}(A)}
=  J^*_{\mathbf{b}_0}(A)+J^*_{\overline{\mathbf{b}_0}}(\overline{A})
=  J^*_{\mathbf{b}_0}(A)-J^*_{\mathbf{b}_0}(\overline{A})
= 4(A-\overline{A}) I_0^*, 
\\
I_0^*:=& \int_{\mathbf{b}_0} \frac{d\zeta}{v^*(A,\zeta)+v^*(\overline{A},\zeta)}
\end{align*}
instead of \eqref{8.1}. As in the case (i),
to prove $A\in \mathbb{R}$ it is sufficient to show $ I_0^*  \not=0.$ 
The algebraic function $v^*(A,\zeta)+v^*(\overline{A},\zeta)$ is considered 
on the two sheeted Riemann surface $\Pi_A^{**0}$ glued along the cuts 
$[\zeta_3,\zeta_1]$, $[\overline{\zeta_3},\overline{\zeta_1}]$, 
$[\zeta_5,0]$, $[ \overline{\zeta_5},0]$, and the cycle $\mathbf{b}_0$ may be 
taken in such a way that $\mathbf{b}_0$ surrounds the cuts $[\zeta_3,\zeta_1]$ 
and $[\overline{\zeta_3},\overline{\zeta_1}]$ as in Figure \ref{x2}, (a).  
{\small
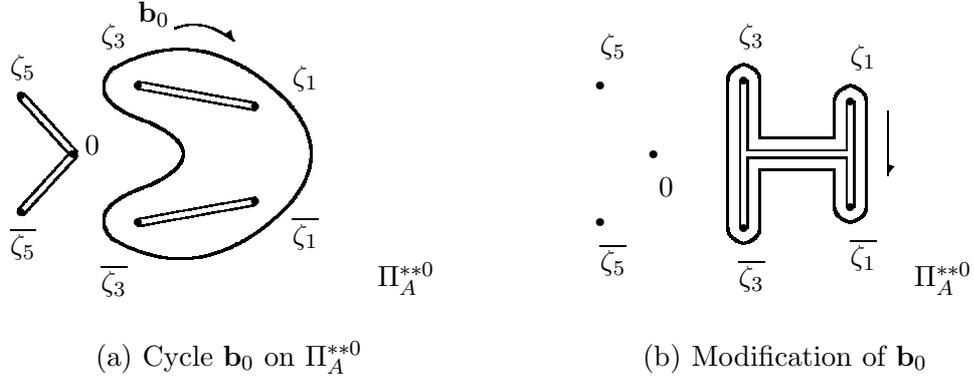
\begin{figure}[htb]
\begin{center}
\unitlength=0.70mm
\begin{picture}(70,65)(-35,-35)

\put(-32,11){\circle*{1.5}}
\put(-32,-11){\circle*{1.5}}
 \qbezier (-22.5,-0.5) (-32.5,10.5) (-32.5,10.5)
 \qbezier (-21.5,0.5) (-31.5,11.5) (-31.5,11.5)
 \qbezier (-22.5,0.5) (-32.5,-10.5) (-32.5,-10.5)
 \qbezier (-21.5,-0.5) (-31.5,-11.5) (-31.5,-11.5)

\put(12,9){\circle*{1.5}}
\put(12,-9){\circle*{1.5}}

\put(-10,13){\circle*{1.5}}
\put(-10,-13){\circle*{1.5}}
\put(-22,0){\circle*{1.7}}

 \qbezier (12,9.7) (12,9.7) (-10,13.7)
 \qbezier (12,8.3) (12,8.3) (-10,12.3)
 \qbezier (12,-9.7) (12,-9.7) (-10,-13.7)
 \qbezier (12,-8.3) (12,-8.3) (-10,-12.3)

 \put(8,21.5){\vector (2,-3){0}}
 \qbezier (-3,23.8) (2,26) (8,21.5)

 \put(-34,15){\makebox{$\zeta_5$}}
 \put(-34,-19){\makebox{$\overline{\zeta_5}$}}
 \put(-17,21){\makebox{$\zeta_3$}}
 \put(-17,-26){\makebox{$\overline{\zeta_3}$}}
 \put(19,13){\makebox{$\zeta_1$}}
 \put(19,-17){\makebox{$\overline{\zeta_1}$}}
 \put(-20,0){\makebox{$0$}}
 \put(-10,25){\makebox{$\mathbf{b}_0$}}
 \put(35,-25){\makebox{$\Pi_A^{**0}$}}

\thicklines
 \qbezier (-15,16) (2,26) (18,10)
 \qbezier (-15,-16) (2,-26) (18,-10)
 \qbezier (18,-10) (27,0) (18,10)
   \qbezier (-8,-6) (5,0) (-8,6)
   \qbezier (-15,16) (-20,10) (-8,6)
   \qbezier (-15,-16) (-20,-10) (-8,-6)

\put(-18,-40){\makebox{(a) Cycle $\mathbf{b}_0$ on $\Pi^{**0}_A$}}
\end{picture}
\qquad\qquad
\begin{picture}(70,65)(-35,-35)

 \put(-6,-8){\makebox{$0$}}

 \put(-17,19){\makebox{$\zeta_5$}}
 \put(-17,-22){\makebox{$\overline{\zeta_5}$}}

 \put(9,21){\makebox{$\zeta_3$}}
 \put(9,-25){\makebox{$\overline{\zeta_3}$}}

 \put(30,17){\makebox{$\zeta_1$}}
 \put(30,-21){\makebox{$\overline{\zeta_1}$}}

 \qbezier (37,8) (37,0) (37,-4) 
 \put(37,-4){\vector(-1,-3){0}}

\put(-7,0){\circle*{1.5}}
\put(-17,13){\circle*{1.5}}
\put(-17,-13){\circle*{1.5}}

\put(10,14){\circle*{1.5}}
\put(10,-14){\circle*{1.5}}

\put(30,10){\circle*{1.5}}
\put(30,-10){\circle*{1.5}}

 \qbezier (9.3,14) (9.3, 0) (9.3,-14) 
 \qbezier (10.7,14) (10.7, 0.7) (10.7,0.7) 
 \qbezier (10.7,-14) (10.7, -0.7) (10.7,-0.7) 

 \qbezier (30.7,10) (30.7, 0) (30.7,-10) 
 \qbezier (29.3,10) (29.3, 0.7) (29.3,0.7) 
 \qbezier (29.3,-10) (29.3,-0.7) (29.3,-0.7) 
 
 \qbezier (10.7,-0.7) (11.7,-0.7) (29.3,-0.7) 
 \qbezier (10.7,0.7) (11.7,0.7) (29.3,0.7) 

 \put(42,-25){\makebox{$\Pi_A^{**0}$}}

\thicklines

 \qbezier (33,10) (33, 12) (30,13) 
 \qbezier (27,10) (27, 12) (30,13) 
 \qbezier (33,-10) (33,-12) (30,-13) 
 \qbezier (27,-10) (27,-12) (30,-13) 

 \qbezier (7,14) (7,16) (10,17) 
 \qbezier (13,14) (13,16) (10,17) 
 \qbezier (7,-14) (7,-16) (10,-17) 
 \qbezier (13,-14) (13,-16) (10,-17) 

 \qbezier (7,14) (7, 0) (7,-14) 
 \qbezier (13,14) (13, 3) (13,3) 
 \qbezier (13,-14) (13, -3) (13,-3) 

 \qbezier (33,10) (33, 0) (33,-10) 
 \qbezier (27,10) (27, 3) (27,3) 
 \qbezier (27,-10) (27, -3) (27,-3) 

 \qbezier (13,3) (13,3) (27,3) 
 \qbezier (13,-3) (13,-3) (27,-3) 

\put(-9,-40){\makebox{(b) Modification of $\mathbf{b}_0$}}
\end{picture}
\end{center}
\caption{Modification of $\mathbf{b}_0$ in the case $\mathrm{Re}\, \zeta_3 >0$} 
\label{x2}
\end{figure}
}
The cycle $\mathbf{b}_0$ is modified as in Figure \ref{x2}, (b), in which 
$\zeta_1=a+ib,$ $\zeta_3 =c+ib'$, $a>c>0,$ $b,b'\in \mathbb{R}.$ The contour of
the integral $I^*_0$ is taken to be the modified $\mathbf{b}_0$ that starts
from and terminates in the point $\zeta=\re\zeta_3 +=c+$ on the upper shore 
of the cut $[c,a].$ Dividing $I^*_0$ into the horizontal, 
and the left- and right-vertical parts, we may show $I^*_0 \not=0$ by the same 
arguments as in the case (i). Thus the proposition is proved.
\end{proof}
As in Example \ref{exa8.1}, $\re J_{\mathbf{a}_*}(\tfrac{8}{27})
=\re J_{\mathbf{b}_*}(\tfrac 8{27})=0$.  
Let us examine $\re J_{\mathbf{a}_*}(A)$ or $\re J_{\mathbf{b}_*}(A)$ for 
$A \in \mathbb{R}\setminus \{\tfrac 8{27}\}.$ 
\par
Consider the polynomial $f(\zeta)=v(A,\zeta)^2=\zeta^4+4\zeta^3+4\zeta^2
+4A\zeta$ for $A\in \mathbb{R}$ on the real line $(-\infty,+\infty)\subset
\Pi_+^*.$ The zeros of $f(\zeta)$ are located as follows, in which (z.3) 
is given by Lemma \ref{lem8.1} and (z.4) is treated in Example \ref{exa8.1}: 
\par
{\bf (z.1)} if $A<0$, then $0 <\zeta_1,$ and 
$\zeta_5,\zeta_3 \not\in \mathbb{R};$
\par
{\bf (z.2)} if $A=0$, then $\zeta_5=\zeta_3 < \zeta_1=0;$  
\par
{\bf (z.3)} if $0<A<\tfrac 8{27}$, then $\zeta_5<\zeta_3 < \zeta_1<0;$  
\par
{\bf (z.4)} if $A=\tfrac 8{27}$, then $\zeta_5<\zeta_3 = \zeta_1 <0;$  
\par
{\bf (z.5)} if $A>\tfrac 8{27}$, then $\zeta_5 <0$, and  
$\zeta_3,\zeta_1 \not\in \mathbb{R}.$
{\small
\begin{figure}[htb]
\begin{center}
\unitlength=0.77mm
\begin{picture}(80,88)(-20,-50)

 \put(50,30){\makebox{{\bf (z.5a)} $\tfrac 8{27}<A<4$:}}
 \put(70,10){\circle{1.5}}
 \put(80,10){\circle*{1.5}}
 \put(90,10){\circle{1.5}}
 \put(92,5){\makebox{$0$}}
 \put(80,12){\makebox{$c$}}
 \put(67,5){\makebox{$\zeta_5$}}
 \put(80,20){\circle{1.5}}
 \put(80,0){\circle{1.5}}
 \put(73,22){\makebox{$\zeta_3$}}
 \put(73,-3){\makebox{$\zeta_1$}}
 \qbezier [16](79.5,0.5)(84.5,5.5) (89.5,10.5) 
 \qbezier [16](80.5,-0.5)(85.5,4.5) (90.5,9.5) 
 \qbezier [16](69.5,10.5)(74.5,15.5) (79.5,20.5) 
 \qbezier [16](70.5,9.5)(75.5,14.5) (80.5,19.5) 
 \qbezier [15](80,9.3)(80,5) (80,0.7) 

 \put(50,-20){\makebox{{\bf (z.5b)} $A \ge 4$:}}
 \put(87,-40){\makebox{$0$}}
 \put(102,-24){\makebox{$\zeta_3$}}
 \put(102,-48){\makebox{$\zeta_1$}}
 \put(70,-35){\circle{1.5}}
 \put(100,-35){\circle*{1.5}}
 \put(90,-35){\circle{1.5}}
 \put(100,-25){\circle{1.5}}
 \put(100,-45){\circle{1.5}}
 \put(100,-33){\makebox{$c$}}
 \put(67,-40){\makebox{$\zeta_5$}}
 \qbezier [15](100,-44.3)(100,-40) (100,-35.7) 
 \qbezier [16](99.5,-45.5)(94.5,-40.5) (89.5,-35.5) 
 \qbezier [16](100.5,-44.5)(95.5,-39.5) (90.5,-34.5) 
 \qbezier [40](99.9,-24.4)(84.9,-29.4) (69.9,-34.4) 
 \qbezier [40](100.1,-25.6)(85.1,-30.6) (70.1,-35.6) 

 \put(-75,30){\makebox{{\bf (z.1)} $A<0$:}}
 \put(10,30){\circle{1.5}}
 \put(20,30){\circle{1.5}}
 \qbezier(10,30.7) (10,30.7) (20,30.7)
 \qbezier(10,29.3) (10,29.3) (20,29.3)
 \put(10,25){\makebox{$0$}}
 \put(20,25){\makebox{$\zeta_1$}}
 \put(-15,34){\circle{1.5}}
 \put(-15,26){\circle{1.5}}
 \qbezier [13](-14.3,26) (-14.3,30) (-14.3,34)
 \qbezier[13](-15.7,26) (-15.7,30) (-15.7,34)
 \put(-21,24){\makebox{$\zeta_3$}}
 \put(-21,34){\makebox{$\zeta_5$}}

 \put(-75,15){\makebox{{\bf (z.2)} $A=0$:}}
 \put(10,15){\circle{1.5}}
 \put(-10,15){\circle{1.5}}
 \put(10,10){\makebox{$\zeta_{1,0}$}}
 \put(-10,10){\makebox{$\zeta_{5,3}$}}

 \put(-75,0){\makebox{{\bf (z.3)} $0<A<\tfrac 8{27}$:}}
 \put(10,0){\circle{1.5}}
 \put(0,0){\circle{1.5}}
 \put(-10,0){\circle{1.5}}
 \put(-20,0){\circle{1.5}}
 \qbezier(-20,0.7) (-20,0.7) (-10,0.7)
 \qbezier(-20,-0.7) (-20,-0.7) (-10,-0.7)
 \qbezier(0,0.7) (0,0.7) (10,0.7)
 \qbezier(0,-0.7) (0,-0.7) (10,-0.7)
 \put(10,-5){\makebox{$0$}}
 \put(0,-5){\makebox{$\zeta_{1}$}}
 \put(-10,-5){\makebox{$\zeta_{3}$}}
 \put(-20,-5){\makebox{$\zeta_{5}$}}

 \put(-75,-15){\makebox{{\bf (z.4)} $A=\tfrac 8{27}$:}}
 \put(10,-15){\circle{1.5}}
 \put(-5,-15){\circle{1.5}}
 \put(-20,-15){\circle{1.5}}
 \qbezier(-20,-15.7) (-20,-15.7) (-5,-15.7)
 \qbezier(-20,-14.3) (-20,-14.3) (-5,-14.3)
 \qbezier(10,-15.7) (10,-15.7) (-5,-15.7)
 \qbezier(10,-14.3) (10,-14.3) (-5,-14.3)
 \put(10,-20){\makebox{$0$}}
 \put(-7,-20){\makebox{$\zeta_{3,1}$}}
 \put(-22,-20){\makebox{$\zeta_{5}$}}

 \put(-75,-30){\makebox{{\bf (z.5)} $A>\tfrac 8{27}$:}}
 \put(-20,-30){\circle{1.5}}
 \put(10,-30){\circle{1.5}}
 \put(10,-35){\makebox{$0$}}
 \put(-20,-35){\makebox{$\zeta_5$}}
 \put(21,-34){\circle{1.5}}
 \put(21,-26){\circle{1.5}}
 \qbezier [13](20.3,-26) (20.3,-26) (20.3,-34)
 \qbezier [13](21.7,-26) (21.7,-26) (21.7,-34)
 \put(23,-26){\makebox{$\zeta_3$}}
 \put(23,-36){\makebox{$\zeta_1$}}

 \put(-50,-54){\makebox{$:$ $f(\zeta)>0$;}}
 \put(-10,-54){\makebox{$:$ $f(\zeta)<0$;}}
 \put(30,-54){\makebox{$:$ cut, $f(\zeta)<0$}}
 \qbezier (20,-53.6) (24,-53.6) (28,-53.6)
 \qbezier (20,-52.4) (24,-52.4) (28,-52.4)

\thicklines
 \qbezier[13] (70.5,10)(80.5,10) (89.5,10) 
 \qbezier[13] (70.4,10)(80.4,10) (89.4,10) 

  \qbezier(55,10) (55,10) (69.3,10)
  \qbezier(110,10) (110,10) (90.7,10)

 \qbezier[13] (70.5,-35)(80.5,-35) (89.5,-35) 
 \qbezier[13] (70.4,-35)(80.4,-35) (89.4,-35) 

  \qbezier(55,-35) (55,-35) (69.3,-35)
  \qbezier(110,-35) (110,-35) (90.7,-35)

  \qbezier(-30,30) (-30,30) (9.3,30)
  \qbezier(20.7,30) (30,30) (30,30)

  \qbezier(-30,15) (-30,15) (-10.7,15)
  \qbezier(-9.3,15) (-9.3,15) (9.3,15)
  \qbezier(10.7,15) (30,15) (30,15)

 \qbezier(10.7,0) (10.7,0) (30,0)
 \qbezier(-0.7,0) (-0.7,0) (-9.3,0)
 \qbezier(-30,0) (-30,0) (-20.7,0)

 \qbezier(10.7,-15) (10.7,-15) (30,-15)
 \qbezier(-30,-15) (-30,-15) (-20.7,-15)

 \qbezier(10.7,-30) (10.7,-30) (30,-30)
 \qbezier(-30,-30) (-30,-30) (-20.7,-30)
 \qbezier[20](-19.3,-30) (-4.3,-30) (9.3,-30)
 \qbezier[20](-19.4,-30) (-4.4,-30) (9.4,-30)

 \qbezier(-60,-53) (-54,-53) (-52,-53)
 \qbezier [5](-20,-53) (-16,-53) (-12,-53)
 \qbezier [5](-19.9,-53) (-15.9,-53) (-11.9,-53)
\end{picture}
\end{center}
\caption{$f(\zeta)$ on $(-\infty,+\infty)$}
\label{zz}
\end{figure}
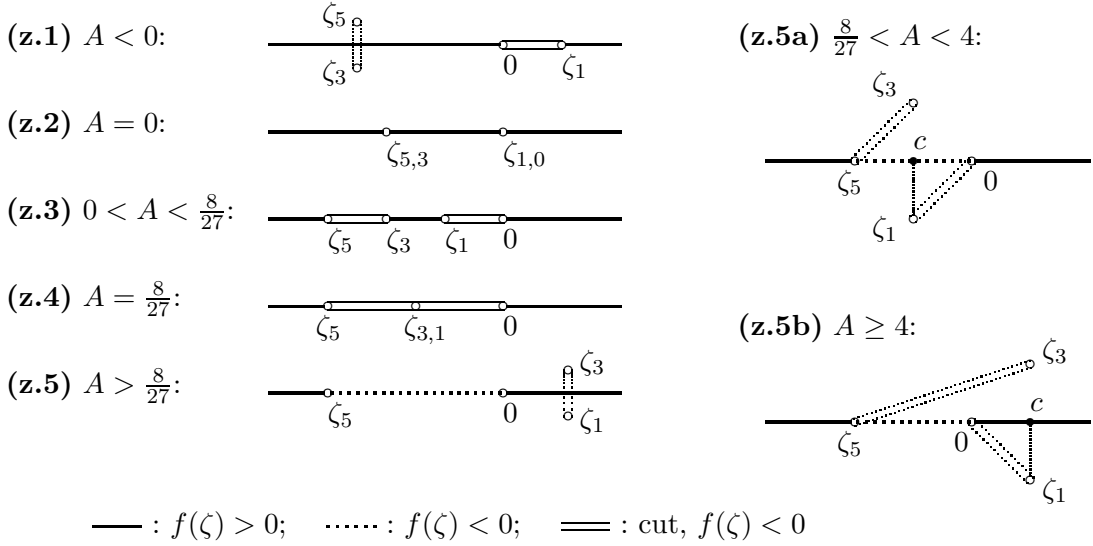
}
\par\noindent
Let us show that $\re J_{\mathbf{a}^*}(A)\not= 0$ or
$\re J_{\mathbf{b}^*}(A)\not= 0$ for $A\in \mathbb{R}\setminus
\{\tfrac 8{27}\}.$ 
In each case, $f(\zeta)$ behaves as in Figure \ref{zz}: say, in the case
(z.3), $f(\zeta)>0$ on $(-\infty,\zeta_5) \cup (\zeta_3,\zeta_1) 
\cup (0,+\infty)$, and
$f(\zeta)<0$ on $(\zeta_5,\zeta_3) \cup (\zeta_1,0)$.
It is easy to see that, in (z.2) or (z.3),
$$
J_{\mathbf{b}_*}(A)=2\int_{\zeta_3}^{\zeta_1} \frac{v(A,\zeta)}{\zeta}d\zeta
=2\int_{\zeta_3}^{\zeta_1} \frac{-\sqrt{f(\zeta)}}{\zeta}d\zeta <0,
$$
and that, in (z.2), $J_{\mathbf{a}_*}(0)=0.$
In (z.1), we write $\zeta_5=c +ib,$ $\zeta_3=c -ib$ with $c <0,$
$b >0$ such that $2c + \zeta_1 =-4.$ Then 
$$
J_{\mathbf{b}_*}(A)=2\int_{\zeta_3}^{\zeta_1} \frac{v(A,\zeta)}{\zeta}d\zeta
=2\int_{c}^{0} \frac{-\sqrt{f(\zeta)}}{\zeta}d\zeta <0.
$$
It remains to discuss the case (z.5). Set $\zeta_1=c-ib$ and $\zeta_3=c+ib$ 
with $b>0.$ The relation $2c+\zeta_5=-4$ yields $A=2(c+2)(c+1)^2$ and $b=
\sqrt{(3c+2)(c+2)}.$ 
Then, according to the location of $c$ we consider two cases:
{\bf (z.5a)} $-\tfrac 23 <c<0$ if $\tfrac 8{27} <A<4$; and 
{\bf (z.5b)} $c\ge 0$ if $A \ge 4$.  
First consider the case (z.5a) as in Figure \ref{zz}. 
Note that
$$
\frac{\partial}{\partial A} \int^{\zeta_1}_0 \frac{v(A,\zeta)}{\zeta}d\zeta
=2 \int^{\zeta_1}_0 \frac{d\zeta}{v(A,\zeta)} +(\zeta_1)_A
 \frac{v(A,\zeta_1)}{\zeta_1}  
=2 \int^{\zeta_1}_0 \frac{d\zeta}{v(A,\zeta)}, 
$$
since $v(A,\zeta_1)=0.$ Then, by $f(\zeta)<0$ on $(c,0)$,
$$
\frac{\partial}{\partial A} \re J_{\mathbf{a}_*}(A)=
2\frac{\partial}{\partial A} \re \int^{\zeta_1}_0 \frac{v(A,\zeta)}{\zeta}d\zeta
=4 \re \int^{\zeta_1}_0 \frac{d\zeta}{v(A,\zeta)}
=4 \re \int^{\zeta_1}_c \frac{d\zeta}{v(A,\zeta)},  
$$
in which the contour is taken along the upper shore of the cut $[0,\zeta_1].$
Using  
\begin{align*}
&f(c+it)=g(t)+ih(t),
\\
& g(t)=t^4-2(3c^2+6c+2)t^2+f(c),
\\
& h(t)=-4(c+1)t^3 +4(c+1)(c+2)(3c+2)t
\end{align*}
fulfilling $g(t)<0$ and $h(t)<0$ for $-\sqrt{(3c+2)(c+2)}<t<0$, and
$v(A,c+it)=e^{\pi i/2} \sqrt{{p}(t)+i{q}(t)}$ with ${p}(t)
=-g(t)$, ${q}(t)=-h(t)$, we have, for $\tfrac 8{27}<A<4,$ 
$$
\frac{\partial}{\partial A} \re J_{\mathbf{a}_*}(A)= 4
\re \int^{-b}_0\frac{dt}
{\sqrt{{p}(t)+i{q}(t)}}
=-2\sqrt{2} \int^{0}_{-b} \frac{\sqrt{\sqrt{{p}(t)^2+{q}(t)^2
}+{p}(t)}}{\sqrt{{p}(t)^2+{q}(t)^2}}dt<0.
$$
From this combined with
$\re J_{\mathbf{a}_*}(\tfrac 8{27})=0$, it follows that $\re J_{\mathbf{a}_*}
(A) <0$ in the case (z.5a).
\par
In the case (z.5b) in Figure \ref{zz}, $f(\zeta)>0$ on $(0,c)$, and hence
\begin{align*}
-\frac{\partial}{\partial A} \re J_{\mathbf{a}_*}(A)=&
2\frac{\partial}{\partial A} \re \int^{\zeta_1}_0 \frac{v(A,\zeta)}{\zeta}d\zeta
=4 \re \int^{\zeta_1}_0 \frac{d\zeta}{v(A,\zeta)}
=4 \re \int^{\zeta_1}_c \frac{d\zeta}{v(A,\zeta)} + I(c),  
\\
I(c) =& 4 \int_0^c \frac{d\zeta}{v(A,\zeta)} \ge 0,
\end{align*}
in which the contour is taken along the upper shore of the cut $[\zeta_1,0]$. 
Note that $g(t)>0$ for $-\sqrt{c(3c+4)} <t<0$ and $h(t)<0$ for $-\sqrt{(3c+2)
(c+2)} <t<0$, and set $q(t)=-h(t).$ Then
\begin{align*}
 \re \int^{\zeta_1}_c \frac{d\zeta}{v(A,\zeta)} &=\re \int^{-b}_0 \frac{idt}
{v(A,c+it)} =\im \int^0_{-b} \frac{dt}{\sqrt{g(t)-iq(t)}}  
\\
&= \frac 1{\sqrt{2}} \int_{-b}^0 \frac{\sqrt{\sqrt{g(t)^2+q(t)^2}-g(t)}}
{\sqrt{g(t)^2+q(t)^2}} dt >0,
\end{align*}
which implies $(\partial/\partial A)\re J_{\mathbf{a}_*}(A)<0$ for $A\ge 4.$ 
This fact combined with $\re J_{\mathbf{a}_*}(4)<0$ leads to
$\re J_{\mathbf{a}_*}(A)<0$ for $A\ge 4.$
\par
Thus we have shown the following uniqueness.  
\begin{prop}\label{prop8.4}
The Boutroux equations $(\mathrm{BE})_{\phi=0}$ admit a unique
solution $A_0=\tfrac 8{27}$.
\end{prop}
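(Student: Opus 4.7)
The plan is to reduce the problem to $A\in\mathbb{R}$ via Proposition \ref{prop8.3}, verify that $A=\tfrac{8}{27}$ is a solution (already done in Example \ref{exa8.1}), and then rule out all other real values of $A$ by a case analysis driven by the location of the zeros of the quartic $f(\zeta):=v(A,\zeta)^2=\zeta^4+4\zeta^3+4\zeta^2+4A\zeta$ on the real line.

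First, any solution of $(\mathrm{BE})_{\phi=0}$ is real by Proposition \ref{prop8.3}, so it suffices to look at $A\in\mathbb{R}$. The five subcases (z.1)--(z.5) exhaust the possibilities, and in each of them I will compute (or estimate the real part of) one of the two period integrals directly using the sign pattern of $f$ shown in Figure \ref{zz}. In cases (z.1), (z.2), (z.3) the zeros $\zeta_3,\zeta_1$ lie on $\mathbb{R}$ with $f<0$ on $(\zeta_3,\zeta_1)$, so
\[
J_{\mathbf{b}_*}(A)=2\int_{\zeta_3}^{\zeta_1}\frac{v(A,\zeta)}{\zeta}\,d\zeta =-2\int_{\zeta_3}^{\zeta_1}\frac{\sqrt{|f(\zeta)|}}{|\zeta|}\,d\zeta\neq 0
\]
(with a suitable interpretation at $\zeta=0$ in case (z.2) via $c<\zeta<0$), which rules out these cases. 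Case (z.4) gives $A=\tfrac{8}{27}$, the asserted solution.

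The harder case is (z.5), where $\zeta_3,\zeta_1=c\mp ib$ are complex conjugates and both period integrals can a priori be purely imaginary. Here I would use the parametrisation $A=2(c+2)(c+1)^2$, $b=\sqrt{(3c+2)(c+2)}$ coming from $2c+\zeta_5=-4$, and differentiate $\re J_{\mathbf{a}_*}(A)$ with respect to $A$ to obtain a monotonicity statement. Since $v(A,\zeta_1)=0$,
\[
\tfrac{1}{2}\tfrac{\partial}{\partial A}J_{\mathbf{a}_*}(A)=2\int_0^{\zeta_1}\frac{d\zeta}{v(A,\zeta)},
\]
and writing this integral over the segment $\{c+it:-b\le t\le 0\}$ one computes $f(c+it)=g(t)+ih(t)$ with $g(t)=t^4-2(3c^2+6c+2)t^2+f(c)$ and $h(t)=-4(c+1)t\bigl(t^2-(c+2)(3c+2)\bigr)$. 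I then split into subcases (z.5a) $-\tfrac{2}{3}<c<0$ (i.e.\ $\tfrac{8}{27}<A<4$) and (z.5b) $c\ge 0$ (i.e.\ $A\ge 4$), according to whether $c<0$ or $c\ge 0$. In each subcase the signs of $g$ and $h$ on the relevant interval are determined, and after rationalising $\bigl(\sqrt{p+iq}\bigr)^{-1}$ one obtains an integrand of definite sign, yielding $\tfrac{\partial}{\partial A}\re J_{\mathbf{a}_*}(A)<0$ throughout $A>\tfrac{8}{27}$ (in (z.5b) one picks up an additional positive contribution $I(c)=4\int_0^c v(A,\zeta)^{-1}d\zeta\ge 0$ from the portion of the contour on the real segment $(0,c)$ where $f>0$, which only reinforces the sign). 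Combined with $\re J_{\mathbf{a}_*}(\tfrac{8}{27})=0$ from Example \ref{exa8.1}, this strict monotonicity forces $\re J_{\mathbf{a}_*}(A)<0$ for every $A>\tfrac{8}{27}$, completing the ruling out of (z.5).

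The main obstacle is the analysis of case (z.5): the two period integrals are not manifestly nonzero there, and one must extract a monotonicity statement from an integral along a complex vertical segment by carefully tracking the sign of the real and imaginary parts of $f(c+it)$ on $-b\le t\le 0$ in both subregimes $c<0$ and $c\ge 0$. Once this is done, collecting the cases (z.1)--(z.5) with (z.4) being the only one producing a simultaneous solution of $\re J_{\mathbf{a}_*}(A)=\re J_{\mathbf{b}_*}(A)=0$ finishes the proof.
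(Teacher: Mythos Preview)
Your overall strategy is exactly the paper's: reduce to $A\in\mathbb{R}$ via Proposition~\ref{prop8.3}, run through the five real cases (z.1)--(z.5), and in the delicate case (z.5) differentiate $\re J_{\mathbf{a}_*}(A)$ in $A$, split according to the sign of $c=\re\zeta_1$, and extract a definite sign from the real/imaginary parts of $f(c+it)$ on the vertical segment. Your (z.2)--(z.5) analysis matches the paper essentially line for line, including the parametrisation $A=2(c+2)(c+1)^2$, the formulas for $g(t),h(t)$, and the extra nonnegative term $I(c)$ in subcase (z.5b).

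There is, however, a genuine slip in your treatment of case (z.1). When $A<0$ the roots $\zeta_5,\zeta_3$ are \emph{not} real (see the list just before Figure~\ref{zz}: ``$\zeta_5,\zeta_3\notin\mathbb{R}$''), so your blanket claim ``in cases (z.1), (z.2), (z.3) the zeros $\zeta_3,\zeta_1$ lie on $\mathbb{R}$ with $f<0$ on $(\zeta_3,\zeta_1)$'' fails for (z.1), and the displayed integral $-2\int_{\zeta_3}^{\zeta_1}\sqrt{|f|}/|\zeta|\,d\zeta$ is not a real integral over a real interval. The paper handles (z.1) separately: writing $\zeta_5=c+ib$, $\zeta_3=c-ib$ with $c<0$, $b>0$ (so $\zeta_1=-4-2c>0$), one deforms the $\mathbf{b}_*$ contour to the real segment $[c,0]$, on which $f>0$, and obtains
\[
J_{\mathbf{b}_*}(A)=2\int_{c}^{0}\frac{-\sqrt{f(\zeta)}}{\zeta}\,d\zeta,
\]
which is real and nonzero. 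Once you correct this one case, your argument coincides with the paper's.
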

\begin{cor}\label{cor8.5}
For every $A\in \mathbb{C}$, $(J_{\mathbf{a}_*}(A),J_{\mathbf{b}_*}(A)) \not
=(0,0).$
\end{cor}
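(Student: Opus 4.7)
The plan is to deduce the corollary immediately from Propositions \ref{prop8.3} and \ref{prop8.4} together with the direct computation in Example \ref{exa8.1}. Specifically, suppose for contradiction that $(J_{\mathbf{a}_*}(A), J_{\mathbf{b}_*}(A)) = (0,0)$ for some $A \in \mathbb{C}$. Then in particular $\re J_{\mathbf{a}_*}(A) = \re J_{\mathbf{b}_*}(A) = 0$, so $A$ is a solution of the Boutroux equations $(\mathrm{BE})_{\phi=0}$.

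First I would handle the generic case $A \in \mathcal{D}_0$: by Proposition \ref{prop8.3}, the vanishing of the real parts forces $A \in \mathbb{R}$, hence $A \in \mathbb{R} \cap \mathcal{D}_0 = (0, \tfrac{8}{27})$. Proposition \ref{prop8.4} then pins down $A = \tfrac{8}{27}$, but this is excluded since $(0,\tfrac{8}{27})$ is open; alternatively, the case (z.3) in the analysis preceding Proposition \ref{prop8.4} already shows $J_{\mathbf{b}_*}(A) < 0$ for $0 < A < \tfrac{8}{27}$, a direct contradiction. For $A = \tfrac{8}{27}$ (the boundary case), Example \ref{exa8.1} gives $J_{\mathbf{a}_*}(\tfrac{8}{27}) = 4i/\sqrt{3} \neq 0$, again contradicting the assumption.

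The remaining step is to dispose of the values $A \in \mathbb{R} \setminus \mathcal{D}_0$ (i.e., $A \le 0$ or $A > \tfrac{8}{27}$) where the elliptic curve $\Pi_A^*$ degenerates or the cycles require reinterpretation. Here I would appeal directly to the case-by-case analysis (z.1), (z.2), (z.5a), (z.5b) carried out in the text: for $A < 0$ and for $A = 0$ one has $J_{\mathbf{b}_*}(A) < 0$ or a nondegenerate nonzero value; for $\tfrac{8}{27} < A < 4$ and $A \ge 4$, the sign computation $(\partial/\partial A) \re J_{\mathbf{a}_*}(A) < 0$ combined with $\re J_{\mathbf{a}_*}(\tfrac{8}{27}) = 0$ shows $\re J_{\mathbf{a}_*}(A) < 0$, hence $J_{\mathbf{a}_*}(A) \neq 0$.

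The conceptual part is transparent; the only point requiring care is bookkeeping across the degenerate strata of $A$-values. Since every case in the classification (z.1)--(z.5) has already been addressed in the derivation of Proposition \ref{prop8.4}, no new computation is needed, and the corollary follows by assembling these observations with Example \ref{exa8.1}.
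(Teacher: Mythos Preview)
Your argument is correct and matches the paper's intent: the corollary is stated without proof immediately after Proposition~\ref{prop8.4}, and the one-line deduction is exactly what you give --- if both integrals vanished then $A$ would solve $(\mathrm{BE})_{\phi=0}$, forcing $A=\tfrac{8}{27}$ by Proposition~\ref{prop8.4}, which contradicts $J_{\mathbf{a}_*}(\tfrac{8}{27})=4i/\sqrt{3}\neq 0$ from Example~\ref{exa8.1}. Your separate bookkeeping over $\mathcal{D}_0$ versus the degenerate real strata is unnecessary, since Proposition~\ref{prop8.4} (via the case analysis (z.1)--(z.5) you cite) already asserts uniqueness over all of $\mathbb{C}$, not just $\mathcal{D}_0$; you can simply invoke it directly.
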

\begin{prop}\label{prop8.6}
Suppose that $0<|\phi|\le \pi/4.$ Let $A_{\phi}$ solve $(\mathrm{BE})_{\phi}$,
and let the elliptic curve  
$v^2= \zeta^4 +4\zeta^3+4\zeta^2 +4A_{\phi} \zeta$ be degenerate. Then
$\phi=\pm \pi/4$ and $A_{\pm \pi/4}=0.$ 
\end{prop}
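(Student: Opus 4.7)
The plan is to show that the elliptic curve can degenerate only at $A\in\{0,8/27\}$, and then to test each of these two values against $(\mathrm{BE})_\phi$.

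\emph{Step 1 (Locating degeneracies.)} The quartic $\zeta^4+4\zeta^3+4\zeta^2+4A\zeta=\zeta(\zeta^3+4\zeta^2+4\zeta+4A)$ has a repeated root iff either $\zeta=0$ is also a root of the cubic (which forces $A=0$) or the cubic itself has a repeated root. The discriminant of the cubic $\zeta^3+4\zeta^2+4\zeta+4A$ is $16A(8-27A)$, vanishing only at $A\in\{0,8/27\}$. Hence $A_\phi\in\{0,8/27\}$.

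\emph{Step 2 (Excluding $A_\phi=8/27$.)} Here $\zeta_1=\zeta_3=-2/3$ and $\zeta_5=-8/3$, so $\mathbf{b}_*$ is homologically trivial, giving $J_{\mathbf{b}_*}(8/27)=0$, while Example \ref{exa8.1} yields $J_{\mathbf{a}_*}(8/27)=4i/\sqrt{3}$. The first Boutroux equation then reads $\re(e^{2i\phi}\cdot 4i/\sqrt{3})=-(4/\sqrt{3})\sin 2\phi=0$, forcing $2\phi\in\pi\mathbb{Z}$, which has no solution in $0<|\phi|\le\pi/4$.

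\emph{Step 3 (Handling $A_\phi=0$.)} I analyse $(\mathrm{BE})_\phi$ by passing to the limit $A\to 0^+$ along $(0,8/27)\subset\mathcal{D}_0$. By Lemma \ref{lem8.1}, $\zeta_1\sim -A$ and $\zeta_3,\zeta_5\to -2$ with $\zeta_3-\zeta_5=2\sqrt{2A}\,(1+o(1))$. The substitution $\zeta=-2+\sqrt{2A}\sin\theta$ on the shrinking cut $[\zeta_5,\zeta_3]$ shows that $J_{\mathbf{a}_*}(A)=O(A)\to 0$, so the $\mathbf{a}_*$-equation becomes vacuous in the limit. For $\mathbf{b}_*$, the contour $[\zeta_3,\zeta_1]$ opens up to $[-2,0]$, and the normalisation $v\sim\zeta^2$ at $\infty$ on $\Pi^*_+$ selects the limiting branch $v(0,\zeta)=\zeta(\zeta+2)$, so $v/\zeta\to\zeta+2$ pointwise on $(-2,0)$. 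Using the factorisation $|v(A,\zeta)/\zeta|^2=(|\zeta-\zeta_1|/|\zeta|)(\zeta-\zeta_3)(\zeta-\zeta_5)$ together with $|\zeta-\zeta_1|/|\zeta|\le 1$ (which holds for $\zeta\le\zeta_1<0$) and a crude bound on $(\zeta-\zeta_3)(\zeta-\zeta_5)$, one obtains a uniform bound on the integrand, and dominated convergence yields
\begin{equation*}
\lim_{A\to 0^+}J_{\mathbf{b}_*}(A)=2\int_{-2}^{0}(\zeta+2)\,d\zeta=4\ne 0.
\end{equation*}
The $\mathbf{b}_*$-Boutroux equation therefore reduces to $4\cos 2\phi=0$, which combined with $0<|\phi|\le\pi/4$ forces $\phi=\pm\pi/4$, as required.

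The main technical point is the uniform control of $v(A,\zeta)/\zeta$ near the shrinking endpoint $\zeta_1\to 0$: a naive local expansion of $v$ near $\zeta_1$ produces an apparent $A^{-1/2}$ blow-up of $v/\zeta$, but the inequality $|\zeta-\zeta_1|\le|\zeta|$ (forced by $\zeta\le\zeta_1<0$) absorbs exactly this apparent singularity and secures the dominated-convergence step.
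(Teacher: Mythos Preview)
Your proof is correct and follows essentially the same three-step strategy as the paper: locate the degenerate values $A\in\{0,8/27\}$, exclude $8/27$ via Example~\ref{exa8.1}, and for $A=0$ use that $J_{\mathbf{a}_*}(0)=0$ while $J_{\mathbf{b}_*}(0)$ is a nonzero real number, forcing $\cos 2\phi=0$.

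The only substantive difference is in Step~3. The paper simply invokes the direct evaluation from case~(z.2) in the analysis preceding Proposition~\ref{prop8.4}: at $A=0$ the cut $[\zeta_5,\zeta_3]$ collapses so $J_{\mathbf{a}_*}(0)=0$, and the degenerate integrand $v(0,\zeta)/\zeta=\zeta+2$ is computed over $[-2,0]$ directly. You instead approach $A=0$ as a limit along $(0,8/27)$, controlling the moving endpoint $\zeta_1\to 0$ via the inequality $|\zeta-\zeta_1|\le|\zeta|$ and dominated convergence. Your route is more elaborate but has the virtue of being explicit about what $(\mathrm{BE})_\phi$ means when the cycles degenerate; the paper's direct evaluation is quicker but tacitly assumes that same limiting interpretation. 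Either way, the conclusion $J_{\mathbf{b}_*}(0)\in\mathbb{R}\setminus\{0\}$ is all that is needed, and your computed value $J_{\mathbf{b}_*}(0)=4$ is consistent with the branch normalisation $v\sim\zeta^2$ (the paper records the sign as negative, but the argument is unaffected).
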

\begin{proof}
The degeneration occurs only when $A_{\phi}=0,$ $\tfrac 8{27}$. By Example
\ref{exa8.1} $A=\tfrac 8{27}$ does not solve $(\mathrm{BE})_{\phi}$ for 
$0<|\phi|\le \pi/4.$ As shown in the case (z.2) above $J_{\mathbf{a}_*}(0)=0$
and $J_{\mathbf{b}_*}(0)<0$, which implies $A=0$ solves $(\mathrm{BE})_{\phi}$
only when $\phi=\pm \pi/4.$ This completes the proof.
\end{proof}
\subsection{Trajectory}\label{ssc8.3}
The ratio $I(A)={J_{\mathbf{b}_*}(A)}/{J_{\mathbf{a}_*}(A)}$
by Novokshenov \cite[Appendix I]{Novokshenov-2} is useful
in examining the Boutroux equations. 
The following is easily verified.
\begin{prop}\label{prop8.7}
$(1)$ If $I(A)\in \mathbb{R}$, then $A$ solves $(\mathrm{BE})_{\phi}$ for some
$\phi \in \mathbb{R}.$
\par
$(2)$ If $A$ solves $(\mathrm{BE})_{\phi}$ for some $\phi \in \mathbb{R}$ and
$J_{\mathbf{a}_*}(A)\not=0,$ then $I(A) \in \mathbb{R}.$
\end{prop}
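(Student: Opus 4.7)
The plan is to reduce both statements to the elementary fact that both $J_{\mathbf{a}_*}(A)$ and $J_{\mathbf{b}_*}(A)$ enter the Boutroux system multiplied by the same rotation factor $e^{2i\phi}$; hence solvability in $\phi$ is equivalent to these two complex numbers being collinear with $i\mathbb{R}$ after a common rotation, i.e.\ to their ratio being real.

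For part (2), I would assume $A$ solves $(\mathrm{BE})_{\phi}$ with $J_{\mathbf{a}_*}(A) \neq 0$. By definition $e^{2i\phi}J_{\mathbf{a}_*}(A)$ lies in $i\mathbb{R}\setminus\{0\}$, and likewise $e^{2i\phi}J_{\mathbf{b}_*}(A) \in i\mathbb{R}$. Taking the quotient, the factors $e^{2i\phi}$ cancel and
\[
I(A)=\frac{J_{\mathbf{b}_*}(A)}{J_{\mathbf{a}_*}(A)}
     =\frac{e^{2i\phi}J_{\mathbf{b}_*}(A)}{e^{2i\phi}J_{\mathbf{a}_*}(A)} \in \mathbb{R},
\]
since the ratio of an element of $i\mathbb{R}$ by a nonzero element of $i\mathbb{R}$ is real.

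For part (1), suppose $I(A)\in \mathbb{R}$. If $J_{\mathbf{a}_*}(A)\neq 0$, choose any $\phi\in\mathbb{R}$ with
\[
2\phi \equiv \tfrac{\pi}{2} - \arg J_{\mathbf{a}_*}(A) \pmod{\pi},
\]
so that $e^{2i\phi}J_{\mathbf{a}_*}(A)\in i\mathbb{R}$; then
\[
e^{2i\phi}J_{\mathbf{b}_*}(A) = I(A)\cdot e^{2i\phi}J_{\mathbf{a}_*}(A) \in \mathbb{R}\cdot i\mathbb{R} \subset i\mathbb{R},
\]
so both Boutroux real parts vanish. If instead $J_{\mathbf{a}_*}(A)=0$, then by Corollary~\ref{cor8.5} we have $J_{\mathbf{b}_*}(A)\neq 0$, and one may simply pick $\phi$ such that $e^{2i\phi}J_{\mathbf{b}_*}(A)\in i\mathbb{R}$; the condition on $J_{\mathbf{a}_*}(A)$ holds trivially.

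There is no real obstacle here: the only mildly delicate point is handling the degenerate case $J_{\mathbf{a}_*}(A)=0$, where the ratio $I(A)$ is not literally defined, and this is dispatched via Corollary~\ref{cor8.5}, which guarantees that the pair $(J_{\mathbf{a}_*}(A),J_{\mathbf{b}_*}(A))$ cannot both vanish, so some valid choice of $\phi$ always exists.
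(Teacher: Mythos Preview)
Your argument is correct and is precisely the natural elementary verification the paper has in mind; the paper itself simply records the proposition as ``easily verified'' and gives no explicit proof. One small remark: in part~(1) your treatment of the case $J_{\mathbf{a}_*}(A)=0$ is in fact vacuous, since the hypothesis $I(A)\in\mathbb{R}$ already requires the ratio to be defined; the paper handles this degenerate situation separately in Remark~\ref{rem8.1} via $I(A)^{-1}$.
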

\begin{rem}\label{rem8.1}
By Corollary \ref{cor8.5} if $J_{\mathbf{a}_*}(A)=0$ then 
$I(A)^{-1}={J_{\mathbf{a}_*}(A)}/{J_{\mathbf{b}_*}(A)}=0$, and around such
a point $I(A)^{-1}$ has the same property as in Proposition \ref{prop8.7}.
\end{rem}
Let $D_0=\{A\in \mathbb{C}\,|\, \text{$A$ solves $(\mathrm{BE})_{\phi}$ for
some $\phi$}\}$.
\begin{prop}\label{prop8.8}
The set $D_0$ is bounded.
\end{prop}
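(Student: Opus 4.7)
The plan is to reduce boundedness to non-vanishing of the imaginary part of a limiting period ratio, via a scaling at infinity.

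First, combining Proposition \ref{prop8.7} with Remark \ref{rem8.1} and Corollary \ref{cor8.5}, I would reformulate
\[
D_0=\{A\in\mathbb{C}\setminus\{0\}\,|\, I(A)\in\mathbb{R}\cup\{\infty\}\},
\qquad I(A)=J_{\mathbf{b}_*}(A)/J_{\mathbf{a}_*}(A),
\]
interpreted as a map into $\mathbb{C}P^1$. Indeed, if $J_{\mathbf{a}_*}(A)\ne 0$ and $(\mathrm{BE})_\phi$ holds, then $e^{2i\phi}J_{\mathbf{a}_*}(A)\in i\mathbb{R}\setminus\{0\}$, and $\re(e^{2i\phi}J_{\mathbf{b}_*}(A))=0$ forces $I(A)\in\mathbb{R}$; the case $J_{\mathbf{a}_*}(A)=0$ corresponds to $I(A)=\infty\in\mathbb{R}P^1$. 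It therefore suffices to show that $I(A)\notin\mathbb{R}P^1$ for all sufficiently large $|A|$.

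Next, I would perform the scaling $\zeta=A^{1/3}\eta$ (choosing any branch on a sector in $\arg A$). Writing $\zeta^3+4\zeta^2+4\zeta+4A=A(\eta^3+4)+O(A^{2/3}\eta^2)$, Lemma \ref{lem8.1} (applied to the rescaled polynomial) gives $\zeta_k=A^{1/3}(\eta_k^\infty+O(A^{-1/3}))$ for $k=1,3,5$, where $\eta_k^\infty$ are the three cube roots of $-4$. A direct expansion of $v(A,\zeta)^2=\zeta(\zeta-\zeta_1)(\zeta-\zeta_3)(\zeta-\zeta_5)$ yields
\[
\frac{v(A,\zeta)}{\zeta}\,d\zeta=A^{2/3}\frac{\sqrt{\eta(\eta^3+4)}}{\eta}\,d\eta\cdot\bigl(1+O(A^{-1/3})\bigr),
\]
uniformly on compact subsets of the rescaled plane away from the branch points. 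Consequently
\[
J_{\mathbf{a}_*}(A)=A^{2/3}\bigl(c_{\mathbf{a}}+O(A^{-1/3})\bigr),\qquad
J_{\mathbf{b}_*}(A)=A^{2/3}\bigl(c_{\mathbf{b}}+O(A^{-1/3})\bigr),
\]
where $c_{\mathbf{a}},c_{\mathbf{b}}$ are the periods of $\omega_\infty:=\eta^{-1}\sqrt{\eta(\eta^3+4)}\,d\eta$ along the images of $\mathbf{a}_*,\mathbf{b}_*$ on the limit elliptic curve $\widetilde\Pi_\infty:w^2=\eta(\eta^3+4)$. Since there are only finitely many possible limit labelings of the roots (as $A^{1/3}$ rotates into its six possible sectors), we obtain a finite set of possible limit ratios $c_{\mathbf{b}}/c_{\mathbf{a}}$.

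The remaining step is to verify that each such limit ratio is non-real. The birational change $S=1/\eta$, $W=w/\eta^2$ maps $\widetilde\Pi_\infty$ to the equianharmonic Weierstrass curve $W^2=4S^3+1$ (so $g_2=0,\,g_3=-1$), with period lattice of modulus $\tau=e^{i\pi/3}\in\mathbb{H}\setminus\mathbb{R}$, and $\omega_\infty$ pulls back to the meromorphic form $-S^{-3}W\,dS$. Using the relation $W^2=4S^3+1$, this form reduces to $-(4+S^{-3})\,dS/W$ as a combination of the holomorphic period $du=dS/W$ and the periods of $\wp(u;0,-1)^{-3}$. Both components inherit the $\mathbb{Z}/6$-symmetry from the equianharmonic lattice, so each non-trivial period ratio lies in $\mathbb{Q}(e^{i\pi/3})\setminus\mathbb{R}$; in particular $c_{\mathbf{b}}/c_{\mathbf{a}}\notin\mathbb{R}$. (Alternatively, one observes that the two cycles $\mathbf{a},\mathbf{b}$ link non-trivially in the limit homology basis, so by the Riemann bilinear relations applied to $\omega_\infty$ paired with the holomorphic differential $d\eta/w$, the ratio of their periods cannot be real unless $\omega_\infty$ is itself exact, which it is not.)

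Having $c_{\mathbf{b}}/c_{\mathbf{a}}\notin\mathbb{R}$ for every limit configuration, there exists $\kappa>0$ such that $|\!\im I(A)|\ge\kappa$ for $|A|$ sufficiently large, so $I(A)\notin\mathbb{R}\cup\{\infty\}$ and $A\notin D_0$. Hence $D_0$ is bounded. The main obstacle is the third step: explicitly checking the non-reality of the period ratio of the meromorphic differential $\omega_\infty$ on the equianharmonic curve, for each of the finitely many relative positions of $(\mathbf{a},\mathbf{b})$ inherited from the rescaling. A careful enumeration of these cases, using the $\mathbb{Z}/6$ automorphism of the equianharmonic lattice to relate them, reduces the work to a single explicit period computation.
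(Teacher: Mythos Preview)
Your strategy is the same as the paper's: scale $\zeta\sim A^{1/3}$ to pass to a limit elliptic curve and show that the limiting period ratio is not real, so that $I(A)\notin\mathbb{R}\cup\{\infty\}$ for large $|A|$. The reformulation of $D_0$ via $I(A)$ is correct and matches the paper's use of Proposition~\ref{prop8.7}.

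The difference lies entirely in your ``third step''. The paper dispatches it in one line. After the substitution $\zeta=e^{\pi i/3}(4A)^{1/3}z$, both $J_{\mathbf{a}_*}$ and $J_{\mathbf{b}_*}$ become, up to the common factor $2e^{2\pi i/3}(4A)^{2/3}$, integrals of $\tfrac{\sqrt{z^4-z}}{z}\,dz$ over the appropriate arcs. Then the identity
\[
\int \frac{\sqrt{z^4-z}}{z}\,dz=\frac12\sqrt{z^4-z}-\frac34\int\frac{dz}{\sqrt{z^4-z}}
\]
shows that over any closed cycle the period of the second-kind differential $\tfrac{\sqrt{z^4-z}}{z}\,dz$ equals $-\tfrac34$ times the period of the holomorphic differential $\tfrac{dz}{\sqrt{z^4-z}}$. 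Hence $\lim_{A\to\infty}I(A)=\Omega_{\mathbf{b}}/\Omega_{\mathbf{a}}$ for the curve $v^2=z^4-z$, which has nonzero imaginary part by the standard positivity of $\im\tau$. No case enumeration and no special features of the equianharmonic lattice are needed.

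By contrast, your two arguments for non-reality are incomplete as stated. The $\mathbb{Z}/6$-symmetry claim (``each non-trivial period ratio lies in $\mathbb{Q}(e^{i\pi/3})\setminus\mathbb{R}$'') does not follow from symmetry alone: the automorphism permutes periods, but nothing prevents the two particular cycles from having real ratio without further input. Your Riemann bilinear alternative is also not right: those relations constrain the pairing of a holomorphic form with itself (or with a second-kind form via the Legendre relation), but they do not directly say that the period ratio of a meromorphic second-kind differential is non-real. What actually makes your situation easy is precisely the integration-by-parts reduction above, which converts the second-kind periods to first-kind periods and lets you invoke $\im\tau>0$ directly. Once you insert that identity, your enumeration of sectors for $A^{1/3}$ becomes unnecessary as well.
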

\begin{proof}
Let us calculate $\lim_{A\to\infty} I(A)$. The zeros of $\zeta^3+4\zeta^2
+4\zeta +4A=0$ are asymptotically expressed as $\zeta_j \sim e^{(1-2j)\pi i/3}
(4A)^{1/3}$ $(j=1,3,5)$ as $A\to \infty$. Then, 
by $\zeta=e^{\pi i/3}(4A)^{1/3}z$,
\begin{align*}
J_{\mathbf{a}_*}(A) =&2\int^{\zeta_3}_{\zeta_5} \frac{v(A,\zeta)}{\zeta}d\zeta
\sim 2e^{2\pi i/3}(4A)^{2/3} \int^1_{e^{2\pi i/3}} \frac{\sqrt{z^4-z}}z dz,
\\ 
J_{\mathbf{b}_*}(A) =&2\int^{\zeta_1}_{\zeta_3} \frac{v(A,\zeta)}{\zeta}d\zeta
\sim 2e^{2\pi i/3}(4A)^{2/3} \int_1^{e^{-2\pi i/3}} \frac{\sqrt{z^4-z}}z dz.
\end{align*}
Since 
$$
\int\frac{\sqrt{z^4-z}}{z}dz=\frac 12 \sqrt{z^4-z} -\frac 34 \int\frac{dz}
{\sqrt{z^4-z}},
$$
$\lim_{A\to \infty}I(A)$ is a ratio of periods of the elliptic curve
$v^2=z^4-z,$ which implies $\im \lim_{A\to \infty}I(A) \not=0.$ 
By Proposition \ref{prop8.7} the set $D_0$ is bounded.
\end{proof}
Recall the periods 
$\Omega^*_{\mathbf{a}_*}=\int_{\mathbf{a}_*} v(A,\zeta)^{-1}d\zeta$ and
$\Omega^*_{\mathbf{b}_*}=\int_{\mathbf{b}_*} v(A,\zeta)^{-1}d\zeta$ 
of $\Pi^*_A$ given in Section \ref{ssc2.2}. 
To examine the conformality of $I(A)$ we need the following
(see also \cite[Lemma 6.1]{Vere}). 
\begin{lem}\label{lem8.9}
$\Omega^*_{\mathbf{b}_*} J_{\mathbf{a}_*}(A)-\Omega^*_{\mathbf{a}_*} 
J_{\mathbf{b}_*}(A)=-4\pi i.$
\end{lem}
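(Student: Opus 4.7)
The plan is to recognize the identity as a Riemann bilinear (Legendre-type) relation for the pair of differentials $\omega_1 = d\zeta/v(A,\zeta)$ (holomorphic on $\Pi^*_A$) and $\omega_2 = (v(A,\zeta)/\zeta)\,d\zeta$ (meromorphic of the second kind), and to evaluate both sides of the relation explicitly.

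First I would verify the pole structure of $\omega_2$. At $\zeta=0$ the apparent pole $v/\zeta \sim 2\sqrt{A/\zeta}$ is removed by passing to the local parameter $u=\sqrt{\zeta}$, giving $\omega_2 = 4\sqrt{A}\,du + O(u^2)\,du$; at each branch point $\zeta_j$ ($j=1,3,5$) the differential is manifestly regular. With the local parameter $s=1/\zeta$ at the two points at infinity $\infty^\pm \in \Pi^*_\pm$, a direct expansion of $v = \pm s^{-2}\sqrt{1+4s+4s^2+4As^3}$ yields
\begin{align*}
 v &= \pm\bigl(s^{-2}+2s^{-1}+0\cdot s^0+2A\,s+O(s^2)\bigr),
\\
 \omega_2 &= \mp\bigl(s^{-3}+2s^{-2}+0\cdot s^{-1}+2A+O(s)\bigr)\,ds,
\\
 \omega_1 &= \mp\bigl(1-2s+4s^2+O(s^3)\bigr)\,ds,
\end{align*}
so $\omega_2$ is of the second kind, with triple poles and zero residues at $\infty^\pm$.

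Next I would cut $\Pi^*_A$ along representatives of $\mathbf{a}_*$ and $\mathbf{b}_*$ to obtain a fundamental polygon $\Delta$ and set $F(P)=\int_{P_0}^P \omega_1$. Integrating $F\,\omega_2$ around $\partial\Delta$ in the usual way (using that $F$ jumps by $-\Omega^*_{\mathbf{b}_*}$ across $\mathbf{a}_*$ and by $\Omega^*_{\mathbf{a}_*}$ across $\mathbf{b}_*$) gives the standard Riemann bilinear identity
\[
 \Omega^*_{\mathbf{a}_*} J_{\mathbf{b}_*}(A) - \Omega^*_{\mathbf{b}_*} J_{\mathbf{a}_*}(A)
 \;=\; 2\pi i \sum_{P\in\{\infty^\pm\}} \mathrm{Res}_{P}(F\omega_2).
\]
The constant $F(\infty^\pm)$ contributes nothing to either residue because $\omega_2$ has no $s^{-1}$ Laurent coefficient; integrating $\omega_1$ we obtain $F(P)-F(\infty^\pm) = \mp(s-s^2+\tfrac43 s^3+O(s^4))$, and multiplying against the Laurent tail of $\omega_2$ picks up coefficient $2$ from the product $(\mp s)\cdot(\mp 2 s^{-2})$ and coefficient $-1$ from $(\pm s^2)\cdot(\mp s^{-3})$, giving $\mathrm{Res}_{\infty^\pm}(F\omega_2) = 2-1 = 1$ on each sheet. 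Hence the sum of residues is $2$, and the right-hand side is $4\pi i$, which rearranges to the claimed $\Omega^*_{\mathbf{b}_*}J_{\mathbf{a}_*}(A)-\Omega^*_{\mathbf{a}_*}J_{\mathbf{b}_*}(A)=-4\pi i$.

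The main obstacle is the bookkeeping of signs and orientations: making sure that (i) the chosen orientations of $\mathbf{a}_*$ and $\mathbf{b}_*$ from Figure \ref{cycles2} give the intersection number $\mathbf{a}_*\cdot\mathbf{b}_*=+1$ that fixes the sign of the boundary integral, (ii) the branch of $v$ on $\Pi^*_+$ used in expanding at $\infty^+$ is the one compatible with the conventions under which $J_{\mathbf{a}_*}, J_{\mathbf{b}_*}, \Omega^*_{\mathbf{a}_*}, \Omega^*_{\mathbf{b}_*}$ are defined, and (iii) the two contributions of $1$ at the two infinities add rather than cancel once the sheet-swap in the sign of $v$ is combined with the sheet-swap in the sign of $F-F(\infty^\pm)$. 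Once the sign conventions are pinned down against Example \ref{exa8.1} (where $J_{\mathbf{a}_*}(8/27)=4i/\sqrt{3}$ and $J_{\mathbf{b}_*}(8/27)=0$ are explicit), the identity is established with the asserted sign.
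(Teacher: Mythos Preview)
Your proof is correct. Both your argument and the paper's are instances of the Riemann bilinear relation proved by the residue theorem on a fundamental domain, but the implementations differ. The paper uniformizes $\Pi^*_A$ by the elliptic function $\zeta(u)$ solving $(\zeta')^2=v(A,\zeta)^2$, forms the quasi-periodic primitive $g(u)=\int^u (\zeta')^2/\zeta\,dt$ (so $g'(u)\,du=\omega_2$), and integrates $g(u)\,du$ around the period parallelogram; the residues are then read off from the Laurent expansion of $\zeta(u)$ at its poles (one finds $c=\pm 1$, $d=-1$, giving residue $-1$ at each $u_\infty^\pm$, sum $-2$). You instead stay on $\Pi^*_A$, take $F=\int\omega_1$ and compute $\mathrm{Res}(F\omega_2)$ at $\infty^\pm$ directly in the local coordinate $s=1/\zeta$, getting $+1$ at each point, sum $+2$. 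The two residue totals differ by a sign exactly because you integrate $(\int\omega_1)\,\omega_2$ while the paper integrates $(\int\omega_2)\,\omega_1$, and swapping the roles of $\omega_1,\omega_2$ flips the sign of the bilinear pairing; both yield the same identity. Your route is a bit more portable (no uniformizing map needed), while the paper's fits naturally with the elliptic-function machinery already in use elsewhere in the section; neither has a real advantage in length or difficulty here.
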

\begin{proof}
Let $\zeta(t)$ be the elliptic function given by $(\zeta')^2=v(A,\zeta)^2
=\zeta^4+4\zeta^3+4\zeta^2 +4A\zeta,$ and let $P,Q,R,S$ be the 
vertices of its period parallelogram such that $Q=P+\Omega^*_{\mathbf{a}_*}$ and
$S=P+\Omega^*_{\mathbf{b}_*}$. 
Then the function
${g}(u)=\int^u_P {\zeta'(t)^2}{\zeta(t)^{-1}} dt$ fulfils
$$
{g}(u +\Omega^*_{\mathbf{b}_*})-g(u)=\int^{u+\Omega^*_{\mathbf{b}_*}}_{u}
\frac{\zeta'(t)^2}{\zeta(t)}dt= \int_{\mathbf{b}_*} \frac{v(A,\zeta)}{\zeta}
d\zeta =J_{\mathbf{b}_*}(A),
$$
and hence
$$
\biggl(\int_P^Q +\int_R^S\biggr) g(u)du=\int_P^Q(g(u)-g(u+\Omega^*_{\mathbf{b}
_*}))du =\int_P^Q(-J_{\mathbf{b}_*}(A))du = -\Omega^*_{\mathbf{a}_*}
J_{\mathbf{b}_*}(A).
$$
Combining this with $(\int_Q^R +\int_S^P)g(u)du=\Omega^*_{\mathbf{b}_*}
J_{\mathbf{a}_*}(A)$ similarly obtained, we have
$$
\Omega^*_{\mathbf{b}_*}J_{\mathbf{a}_*}(A)-
\Omega^*_{\mathbf{a}_*}J_{\mathbf{b}_*}(A) =\int_{(PQRSP)}g(u)du= 
2\pi i (\mathrm{Res}(u_{\infty}^-)+\mathrm{Res}(u_{\infty}^+))
=-4\pi i,
$$
where $u_{\infty}^{\pm}$ are poles of $g(u)$ in the periodic parallelogram.
\end{proof}
As an immediate corollary for $\Omega_{\mathbf{a},\,\mathbf{b}}$ and 
$\mathcal{J}_{\mathbf{a},\,\mathbf{b}}$ (cf. Section \ref{ssc6.3})
we have the following. 
\begin{cor}\label{cor8.10}
$\Omega_{\mathbf{b}} \mathcal{J}_{\mathbf{a}}-\Omega_{\mathbf{a}} 
\mathcal{J}_{\mathbf{b}}=-4\pi i e^{i\phi}.$
\end{cor}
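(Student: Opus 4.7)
The plan is to derive this corollary directly from Lemma 8.9 via the change of variables $z = e^{i\phi}\zeta$ that identifies $\Pi_{A_\phi,\phi}$ with $\Pi^*_{A_\phi}$ and maps $(\mathbf{a},\mathbf{b})$ to $(\mathbf{a}_*,\mathbf{b}_*)$. No new analytic input is needed; the work is just bookkeeping of scaling factors.

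First I would record how the two elliptic curves and their periods/integrals transform. From $v(A_\phi,\zeta) = e^{-2i\phi}w(A_\phi,e^{i\phi}\zeta)$, equivalently $w(A_\phi,z) = e^{2i\phi}\,v(A_\phi,e^{-i\phi}z)$, the substitution $z = e^{i\phi}\zeta$, $dz = e^{i\phi}d\zeta$ gives
\begin{equation*}
\Omega_{\mathbf{a}} = \int_{\mathbf{a}}\frac{dz}{w(A_\phi,z)}
 = \int_{\mathbf{a}_*}\frac{e^{i\phi}d\zeta}{e^{2i\phi}v(A_\phi,\zeta)}
 = e^{-i\phi}\Omega^*_{\mathbf{a}_*},
\end{equation*}
and similarly $\Omega_{\mathbf{b}} = e^{-i\phi}\Omega^*_{\mathbf{b}_*}$ (this is already noted in Section~\ref{ssc2.2}). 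The same substitution for the other integral yields
\begin{equation*}
\mathcal{J}_{\mathbf{a}} = \int_{\mathbf{a}}\frac{w(A_\phi,z)}{z}dz
 = \int_{\mathbf{a}_*}\frac{e^{2i\phi}v(A_\phi,\zeta)}{e^{i\phi}\zeta}\,e^{i\phi}d\zeta
 = e^{2i\phi}J_{\mathbf{a}_*}(A_\phi),
\end{equation*}
and likewise $\mathcal{J}_{\mathbf{b}} = e^{2i\phi}J_{\mathbf{b}_*}(A_\phi)$.

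Combining, the desired quantity factorises as
\begin{equation*}
\Omega_{\mathbf{b}}\mathcal{J}_{\mathbf{a}} - \Omega_{\mathbf{a}}\mathcal{J}_{\mathbf{b}}
 = e^{-i\phi}e^{2i\phi}\bigl(\Omega^*_{\mathbf{b}_*}J_{\mathbf{a}_*}(A_\phi)
    - \Omega^*_{\mathbf{a}_*}J_{\mathbf{b}_*}(A_\phi)\bigr)
 = e^{i\phi}\bigl(\Omega^*_{\mathbf{b}_*}J_{\mathbf{a}_*}(A_\phi)
    - \Omega^*_{\mathbf{a}_*}J_{\mathbf{b}_*}(A_\phi)\bigr).
\end{equation*}
Applying Lemma~\ref{lem8.9} with $A = A_\phi$ to the parenthesised expression gives $-4\pi i$, and multiplying by $e^{i\phi}$ yields the claim.

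There is no real obstacle: the whole content is in Lemma~\ref{lem8.9} (the Legendre-type period relation proved by integrating $g(u) = \int^u_P \zeta'(t)^2/\zeta(t)\,dt$ around the period parallelogram and picking up residues at the two poles $u_\infty^\pm$). The only thing to double-check is that the cycles $\mathbf{a},\mathbf{b}$ on $\Pi_{A_\phi,\phi}$ correspond to $\mathbf{a}_*,\mathbf{b}_*$ on $\Pi^*_{A_\phi}$ under $z = e^{i\phi}\zeta$ with the same orientation — this is guaranteed by the definition in Section~\ref{ssc2.2}, so no sign correction is required.
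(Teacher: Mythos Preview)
Your proof is correct and is precisely the argument the paper has in mind: the text introduces Corollary~\ref{cor8.10} as ``an immediate corollary'' of Lemma~\ref{lem8.9} via the relations between $(\Omega_{\mathbf{a},\mathbf{b}},\mathcal{J}_{\mathbf{a},\mathbf{b}})$ and $(\Omega^*_{\mathbf{a}_*,\mathbf{b}_*},J_{\mathbf{a}_*,\mathbf{b}_*})$, and you have simply written out the scaling factors $\Omega_{\mathbf{a},\mathbf{b}}=e^{-i\phi}\Omega^*_{\mathbf{a}_*,\mathbf{b}_*}$ and $\mathcal{J}_{\mathbf{a},\mathbf{b}}=e^{2i\phi}J_{\mathbf{a}_*,\mathbf{b}_*}(A_\phi)$ that the paper leaves implicit.
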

Observing that $(\partial/\partial A)J_{\mathbf{a}_*,\,\mathbf{b}_*}(A)
=2\Omega^*_{\mathbf{a}_*,\,\mathbf{b}_*},$ we have the following.
\begin{prop}\label{prop8.11}
$I'(A)=-8\pi i J_{\mathbf{a}_*}(A)^{-2}$ and
$(1/I)'(A)=8\pi i J_{\mathbf{b}_*}(A)^{-2}$. 
\end{prop}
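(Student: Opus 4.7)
The plan is to verify this proposition by direct differentiation, combined with the Legendre-type relation established in Lemma \ref{lem8.9}. Since $I(A) = J_{\mathbf{b}_*}(A)/J_{\mathbf{a}_*}(A)$, the quotient rule gives
\[
I'(A) = \frac{J'_{\mathbf{b}_*}(A)\, J_{\mathbf{a}_*}(A) - J_{\mathbf{b}_*}(A)\, J'_{\mathbf{a}_*}(A)}{J_{\mathbf{a}_*}(A)^2},
\]
and analogously for $(1/I)'(A)$, so the task reduces to identifying the two numerators.

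First I would justify the formula $\partial_A J_{\mathbf{c}}(A) = 2\Omega^*_{\mathbf{c}}$ (for $\mathbf{c}=\mathbf{a}_*$ or $\mathbf{b}_*$) stated just above the proposition. From $v(A,\zeta)^2 = \zeta^4+4\zeta^3+4\zeta^2+4A\zeta$ one has $2v\,\partial_A v = 4\zeta$, hence $\partial_A(v(A,\zeta)/\zeta) = 2/v(A,\zeta)$. Differentiating under the integral sign (the cycles $\mathbf{a}_*,\mathbf{b}_*$ may be held fixed in a neighbourhood of any $A$ where $\Pi_A^*$ is non-degenerate) then yields
\[
\frac{\partial}{\partial A} J_{\mathbf{c}}(A) = \int_{\mathbf{c}} \frac{2\,d\zeta}{v(A,\zeta)} = 2\Omega^*_{\mathbf{c}}.
\]

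With this in hand, the numerator of $I'(A)$ becomes $2\bigl(\Omega^*_{\mathbf{b}_*}J_{\mathbf{a}_*}(A) - \Omega^*_{\mathbf{a}_*}J_{\mathbf{b}_*}(A)\bigr)$, and Lemma \ref{lem8.9} identifies this quantity as $2\cdot(-4\pi i) = -8\pi i$. This gives $I'(A) = -8\pi i\, J_{\mathbf{a}_*}(A)^{-2}$ at once. The second formula is obtained by the same calculation applied to $1/I = J_{\mathbf{a}_*}/J_{\mathbf{b}_*}$: the numerator is $2\bigl(\Omega^*_{\mathbf{a}_*}J_{\mathbf{b}_*}(A) - \Omega^*_{\mathbf{b}_*}J_{\mathbf{a}_*}(A)\bigr) = +8\pi i$ by Lemma \ref{lem8.9}, and division by $J_{\mathbf{b}_*}(A)^2$ produces the stated expression.

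There is no real obstacle here; the only point requiring mild care is the legitimacy of differentiating under the integral sign, which is standard once one observes that the integrands $v/\zeta$ and $1/v$ depend holomorphically on $A$ uniformly on a fixed representative of the cycles, valid as long as the elliptic curve does not degenerate (guaranteed by Proposition \ref{prop8.6} in the range of $A$ of interest). The proof is thus a two-line consequence of Lemma \ref{lem8.9}.
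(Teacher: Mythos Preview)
Your proof is correct and is exactly the argument the paper has in mind: the text preceding the proposition notes $(\partial/\partial A)J_{\mathbf{a}_*,\,\mathbf{b}_*}(A)=2\Omega^*_{\mathbf{a}_*,\,\mathbf{b}_*}$ and then states the result, leaving the quotient-rule combination with Lemma~\ref{lem8.9} implicit. You have simply written out what the paper leaves to the reader.
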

\begin{rem}\label{rem8.2}
By Example \ref{exa8.1} and the proposition above, $I(A)$ is conformal
around $A=A_0=\tfrac 8{27}$, and given by
$$
I(A)=\tfrac 32\pi i (A-\tfrac 8{27}) (1+o(1)).
$$
By Proposition \ref{prop8.7}
the inverse image of the interval $(-\varepsilon,\epsilon)\subset \mathbb{R}$
under $I(A)$ is a local trajectory consisting of $A_{\phi}$ for 
$|\phi|<\varepsilon_0$, each $A_{\phi}$ solving (BE)$_{\phi}$, where 
$\varepsilon$ and $\varepsilon_0$ are sufficiently small. This local 
trajectory is expressed as
$$
A_{\phi}=\tfrac 8{27} +i\rho(\phi), \quad 
\tfrac 32\pi\re \rho(\phi)\in (-\varepsilon,\varepsilon), 
\quad \text{$\im \rho(\phi)=o(\re\rho(\phi) )$ as $\phi\to 0$.}
$$
Similarly, there exists a local trajectory for $|\phi \pm \pi/4|<\varepsilon_0$
(cf. Proof of Proposition \ref{prop8.6}).
\end{rem}
Suppose that $0<|\phi|<\pi/4$. Write
$$
J_{\mathbf{a}_*}(A)=u(A)+iv(A),\quad 
J_{\mathbf{b}_*}(A)=U(A)+iV(A),\quad A=x+iy.
$$
Then $A$ solves (BE)$_{\phi}$ if and only if
\begin{equation}\label{8.2}
u(A)-v(A) \tan 2\phi=0, \quad U(A)- V(A) \tan 2\phi=0,
\end{equation} 
which define the trajectory of $A_{\phi}$. The Jacobian for \eqref{8.2} 
around $A=A_{\phi}$ is
\begin{align*}
\det J(x,y)=&\det \begin{pmatrix} u_x-v_x\tan 2\phi & u_y-v_y\tan 2\phi  \\
            U_x-V_x\tan 2\phi  & U_y-V_y\tan 2\phi   \end{pmatrix}
=(1+\tan^2 2\phi) (v_xV_y -v_yV_x)
\\
=& -4(1+\tan^2 2\phi)\im 
 \overline{\Omega^*_{\mathbf{a}_*}}\Omega^*_{\mathbf{b}_*} 
= -4(1+\tan^2 2\phi) |\Omega^*_{\mathbf{a}_*}|^2 \im \frac 
{\Omega^*_{\mathbf{a}_*}}{\Omega^*_{\mathbf{b}_*}} \not=0
\end{align*}
by Proposition \ref{prop8.6}. This fact implies that a given local trajectory
for $|\phi-\phi_0|<\varepsilon_0$ with $|\phi_0|<\pi/4$ is extended smoothly 
for $0<|\phi|<\pi/4$ and continuously for $|\phi|\le \pi/4$, and so are the
trajectories described in Remark \ref{rem8.2}. 
For such an extended trajectory, which is bounded by Proposition \ref{prop8.8}, 
let $A_{\phi_n}$ with $\phi_n \to 0$ be a given sequence. 
By the boundedness there exists a subsequence convergent to some $A_0^* \in 
\mathbb{C}$ solving (BE)$_{\phi=0}$, and then $A_0^*=\tfrac 8{27},$ which
implies the uniqueness of the trajectory for $|\phi|\le \pi/4.$
Thus we have the following.
\begin{prop}\label{prop8.12}
There exists a trajectory $A=A_{\phi}$ for $|\phi|\le \pi/4$ with 
the properties$:$
\par
$(1)$ for each $\phi$, $A_{\phi}$ is a unique solution of 
$(\mathrm{BE})_{\phi};$
\par
$(2)$ $A_{\phi}$ is smooth in $\phi$ for $0<|\phi|<\pi/4,$ and continuous in
$\phi$ for $|\phi|\le \pi/4;$
\par
$(3)$ $A_0=\tfrac{8}{27},$ $A_{\pm \pi/4}=0.$
\end{prop}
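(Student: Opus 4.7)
The plan is to construct the trajectory by combining local existence near the endpoints $\phi=0,\pm\pi/4$ with smooth continuation across the interior via the implicit function theorem, and to derive both uniqueness and endpoint continuity from the boundedness of $D_0$ together with Propositions \ref{prop8.4} and \ref{prop8.6}.

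First I would use Remark \ref{rem8.2} to obtain a smooth local branch $\phi\mapsto A_\phi$ on $|\phi|<\varepsilon_0$ as the $I^{-1}$-image of a small real interval; conformality of $I$ at $A_0=8/27$ (Proposition \ref{prop8.11} with $J_{\mathbf{a}_*}(8/27)\ne 0$) makes this immediate, and the parallel construction with $1/I$ near $A=0$ (cf.\ Remark \ref{rem8.1} and Proposition \ref{prop8.11}) yields local branches near $\phi=\pm\pi/4$. Next, on $0<|\phi|<\pi/4$ the system \eqref{8.2} defines $A_\phi$ as a zero of a smooth map $\mathbb{R}^2\to\mathbb{R}^2$ whose Jacobian, as computed in the text preceding the proposition, equals
$$
\det J(x,y)=-4(1+\tan^2 2\phi)\,|\Omega^*_{\mathbf{a}_*}|^2\,\im\bigl(\Omega^*_{\mathbf{a}_*}/\Omega^*_{\mathbf{b}_*}\bigr),
$$
which is non-zero precisely because, by Proposition \ref{prop8.6}, the elliptic curve does not degenerate on $0<|\phi|<\pi/4$. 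The implicit function theorem then extends the local branch at $\phi=0$ smoothly along the interval; the extension cannot escape to infinity by the boundedness of $D_0$ (Proposition \ref{prop8.8}), nor can it hit a degeneration before $\phi=\pm\pi/4$ again by Proposition \ref{prop8.6}. Hence $A_\phi$ is defined and smooth on $0<|\phi|<\pi/4$.

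For continuity at the endpoints, I would take any sequence $\phi_n\to 0$ along the extended trajectory; by boundedness of $D_0$ a subsequence converges to some $A^*$, which solves $(\mathrm{BE})_0$ by continuity of $J_{\mathbf{a}_*},\,J_{\mathbf{b}_*}$ in both $A$ and (implicitly through the contour) $\phi$. Proposition \ref{prop8.4} forces $A^*=8/27$, so the full sequence converges; moreover, the extended trajectory agrees with the local branch from Remark \ref{rem8.2} on a neighbourhood of $\phi=0$ by uniqueness of the implicit function branch. The same subsequential argument, combined with Proposition \ref{prop8.6}, gives $A_\phi\to 0$ as $\phi\to\pm\pi/4$.

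The main obstacle is global uniqueness for a fixed $\phi\in(-\pi/4,\pi/4)\setminus\{0\}$: a priori another solution $A'$ of $(\mathrm{BE})_\phi$ could exist. To rule this out, I would apply the implicit function theorem at $A'$ using the same nondegenerate Jacobian to obtain a smooth family $\phi'\mapsto A'_{\phi'}$ through $A'$, maximally extended in $(-\pi/4,\pi/4)$. Boundedness of $D_0$ and nondegeneracy of $J$ prevent escape or collapse before the endpoints, so the subsequential argument above forces $A'_{\phi'}\to 8/27$ as $\phi'\to 0$ and $A'_{\phi'}\to 0$ as $\phi'\to\pm\pi/4$. Local uniqueness of the implicit function branch near $\phi=0$ then identifies this new family with the one already constructed, and connectedness of $(-\pi/4,\pi/4)$ gives $A'=A_\phi$. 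At $\phi=0$ and $\phi=\pm\pi/4$ uniqueness is exactly Propositions \ref{prop8.4} and \ref{prop8.6}. This completes the three assertions of the proposition.
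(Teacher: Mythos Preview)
Your argument is correct and follows essentially the same route as the paper: local branches from Remark \ref{rem8.2}, smooth continuation via the nonvanishing Jacobian of \eqref{8.2} (guaranteed by Proposition \ref{prop8.6}), boundedness from Proposition \ref{prop8.8}, and the subsequential endpoint argument using Propositions \ref{prop8.4} and \ref{prop8.6}. Your treatment of global uniqueness---extending a hypothetical second solution to a full branch and forcing it to coincide near $\phi=0$---spells out in detail what the paper compresses into a single sentence, but the strategy is the same.
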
    
For any $\phi \in \mathbb{R}$ we have 
$$
e^{2i\phi}J_{\mathbf{a}_*,\,\mathbf{b}_*}(A_{\phi})=
-e^{2i(\phi\pm \pi/2)}J_{\mathbf{a}_*,\,\mathbf{b}_*}(A_{\phi}), \quad
\overline{e^{2i\phi}J_{\mathbf{a}_*,\,\mathbf{b}_*}(A_{\phi})}=
e^{-2i\phi}J_{\overline{\mathbf{a}_*},\,\overline{\mathbf{b}_*}}
(\overline{A_{\phi}}),
$$ 
which leads to the following.
\begin{prop}\label{prop8.13}
$A_{\phi \pm \pi/2}=A_{\phi}$ and $A_{-\phi}=\overline{A_{\phi}}.$
\end{prop}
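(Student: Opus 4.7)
The plan is to apply the uniqueness of the solution of $(\mathrm{BE})_\phi$ established in Proposition \ref{prop8.12}(1): for each claim I exhibit a candidate that satisfies the Boutroux equations at the relevant angle and then invoke uniqueness.

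For the periodicity $A_{\phi\pm\pi/2}=A_\phi$, I would use the first identity already displayed just above the statement of Proposition \ref{prop8.13}. Since $e^{2i(\phi\pm\pi/2)}=e^{\pm i\pi}e^{2i\phi}=-e^{2i\phi}$, the fact that $A_\phi$ solves $(\mathrm{BE})_\phi$ gives
\[
\re e^{2i(\phi\pm\pi/2)}J_{\mathbf{a}_*,\,\mathbf{b}_*}(A_\phi)=-\re e^{2i\phi}J_{\mathbf{a}_*,\,\mathbf{b}_*}(A_\phi)=0,
\]
so $A_\phi$ itself is a solution of $(\mathrm{BE})_{\phi\pm\pi/2}$, and uniqueness forces $A_{\phi\pm\pi/2}=A_\phi$.

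For the conjugation property $A_{-\phi}=\overline{A_\phi}$, I would exploit the second displayed identity $\overline{e^{2i\phi}J_{\mathbf{a}_*,\,\mathbf{b}_*}(A_\phi)}=e^{-2i\phi}J_{\overline{\mathbf{a}_*},\,\overline{\mathbf{b}_*}}(\overline{A_\phi})$, which itself follows by substituting $\zeta\mapsto\overline{\zeta}$ in the definition of $J_{\mathbf{a}_*,\,\mathbf{b}_*}$ and using $\overline{v(A,\zeta)}=v(\overline{A},\overline{\zeta})$. Taking real parts and using that $A_\phi$ solves $(\mathrm{BE})_\phi$ yields $\re e^{-2i\phi}J_{\overline{\mathbf{a}_*},\,\overline{\mathbf{b}_*}}(\overline{A_\phi})=0$.

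The main obstacle is then to convert this into the statement that $\overline{A_\phi}$ solves $(\mathrm{BE})_{-\phi}$, which requires identifying the reflected cycles $\overline{\mathbf{a}_*},\overline{\mathbf{b}_*}$ with the standard cycles $\mathbf{a}_*,\mathbf{b}_*$ on $\Pi^*_{\overline{A_\phi}}$ up to orientation (a sign being irrelevant for the vanishing of a real part). By Lemma \ref{lem8.1}, the zeros $\zeta_j(\overline{A_\phi})$ of $\zeta^3+4\zeta^2+4\zeta+4\overline{A_\phi}$ coincide with $\overline{\zeta_j(A_\phi)}$ and the ordering $\re\zeta_5<\re\zeta_3<\re\zeta_1$ is preserved under complex conjugation; hence the cuts $[\overline{\zeta_5},\overline{\zeta_3}]$ and $[\overline{\zeta_1},0]$ defining $\Pi^*_{\overline{A_\phi}}$ are exactly the conjugates of those of $\Pi^*_{A_\phi}$, and one reads off from Figure \ref{cycles2} that $\overline{\mathbf{a}_*}$ encircles $[\overline{\zeta_5},\overline{\zeta_3}]$ in the prescribed way (up to orientation), with the analogous check for $\mathbf{b}_*$. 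Consequently $\re e^{-2i\phi}J_{\mathbf{a}_*,\,\mathbf{b}_*}(\overline{A_\phi})=0$, so $\overline{A_\phi}$ solves $(\mathrm{BE})_{-\phi}$, and uniqueness gives $A_{-\phi}=\overline{A_\phi}$.
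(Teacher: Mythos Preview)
Your proof is correct and follows exactly the approach the paper takes: the paper simply displays the two identities
\[
e^{2i\phi}J_{\mathbf{a}_*,\,\mathbf{b}_*}(A_{\phi})=
-e^{2i(\phi\pm \pi/2)}J_{\mathbf{a}_*,\,\mathbf{b}_*}(A_{\phi}), \qquad
\overline{e^{2i\phi}J_{\mathbf{a}_*,\,\mathbf{b}_*}(A_{\phi})}=
e^{-2i\phi}J_{\overline{\mathbf{a}_*},\,\overline{\mathbf{b}_*}}
(\overline{A_{\phi}})
\]
and declares the proposition as an immediate consequence, leaving the invocation of uniqueness implicit. You have spelled out what the paper suppresses, including the identification of the conjugated cycles with $\pm\mathbf{a}_*,\pm\mathbf{b}_*$ on $\Pi^*_{\overline{A_\phi}}$ via Lemma~\ref{lem8.1}; this same identification was already tacitly used in the proof of Proposition~\ref{prop8.3}, where the paper wrote $J_{\overline{\mathbf{a}_*}}(\overline{A})=-J_{\mathbf{a}_*}(\overline{A})$ without further comment.
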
 
To know the shape of the trajectory $A=A_{\phi}$ it is sufficient to examine
it for $|\phi|\le \pi/4$. For $0<|\phi|<\pi/4$ the derivative of \eqref{8.2}
along $A=A_{\phi}$ with respect to $t=\tan 2\phi$ is written in the form 
$J(x,y){}^{\mathrm{T}}\!(x'(t),y'(t)) -{}^{\mathrm{T}}\!(v(A_{\phi}),V(A_{\phi}))
=\mathbf{o}$ with $A_{\phi}=x(t)+iy(t),$ where $J(x,y)$ is the Jacobi
matrix above. Then, for $0<|\phi|<\pi/4$ 
\begin{equation}\label{8.3}
(x'(t),y'(t)) \not=(0,0), \quad (d/d\phi)A_{\phi}=2(x'(t)+iy'(t))\cos^{-2}2\phi
\not=0.
\end{equation} 
By Propositions \ref{prop8.7} and \ref{prop8.11}, 
for every $\phi$ such that $|\phi|\le \pi/4$,
$$
\frac d{dt}I(A_{\phi})=- \frac{8\pi i(x'(t)+iy'(t))}{J_{\mathbf{a}_*}
(A_{\phi})^{2}} \,\,\,\text{or} \,\,\,
\frac d{dt}(1/I)(A_{\phi})= \frac{ 8\pi i(x'(t)+iy'(t))}{J_{\mathbf{b}_*}
(A_{\phi})^{2}} \in \mathbb{R},
$$ 
where $(J_{\mathbf{a}_*}(A_{\phi}), J_{\mathbf{b}_*}(A_{\phi}))\not=(0,0)$ 
by Corollary \ref{cor8.5}.
Setting $J_{\mathbf{a}_*}(A_{\phi})^{-1}$ or $J_{\mathbf{b}_*}(A_{\phi})^{-1}
=P+iQ =i(Q-iP)$ with $P^2+Q^2>0$, 
and observing $\im (d/dt)I(A_{\phi})^{\pm 1}=0$, we have 
$$
x'(t)(P^2-Q^2)-2y'(t)PQ=0.
$$
For $0<|\phi|<\pi/4,$ by \eqref{8.3}, $x'(t)\not=0$, and 
if $y'(t)=0,$ then $P=\pm Q$, implying $2\phi=\mp \pi/4.$ 
For $-\pi/4<\phi<0$, $x'(t)>0$, and for $0<\phi<\pi/4$,
$x'(t)<0$, since $A_{\pm \pi/4}=0$ and $A_{0}=\tfrac 8{27}$. 
If $-\pi/8<\phi<0$ (respectively, $0<\phi<\pi/8$) then $y'(t)<0$, since
$|P|<|Q|$, $PQ>0$ (respectively, $PQ<0$). 
\begin{prop}\label{prop8.14}
Let $A_{\phi}=x(t)+iy(t)$ with $t=\tan 2\phi.$ Then, for $|\phi|\le \pi/4,$
\par
$(1)$ $x'(t)>0$ for $-\pi/4<\phi<0,$ $x'(t)<0$ for $0<\phi<\pi/4;$
\par
$(2)$ $y'(t)<0$ for $0<|\phi|<\pi/4,$ $y'(t)>0$ for $\pi/4 <|\phi|<\pi/2;$
\par
$(3)$ $x'(0)=x'(\pm\tan(\pi/2))=0,$ $y'(\pm \tan (\pi/4))=0.$
\end{prop}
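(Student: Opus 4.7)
The plan is to assemble the sign information from constraints already derived in the paragraphs immediately preceding the statement. The starting point is the Boutroux equation $\re e^{2i\phi}J_{\mathbf{a}_*}(A_\phi)=0$, which I would use to write $J_{\mathbf{a}_*}(A_\phi)=ir(\phi)e^{-2i\phi}$ with a real scalar $r(\phi)$, so that $P+iQ:=J_{\mathbf{a}_*}(A_\phi)^{-1}$ takes the explicit form $P=r(\phi)^{-1}\sin 2\phi$, $Q=-r(\phi)^{-1}\cos 2\phi$. Then $P^2-Q^2=-r(\phi)^{-2}\cos 4\phi$ and $2PQ=-r(\phi)^{-2}\sin 4\phi$, and the identity $(P^2-Q^2)x'(t)-2PQ\,y'(t)=0$ from the preceding discussion collapses to the clean form
\begin{equation*}
x'(t)\cos 4\phi - y'(t)\sin 4\phi = 0,
\end{equation*}
which will drive the subsequent analysis.

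For part (1), I would invoke Proposition \ref{prop8.12}(3) giving $A_0=\tfrac{8}{27}$ and $A_{\pm\pi/4}=0$, together with the nonvanishing $(x'(t),y'(t))\neq(0,0)$ on $0<|\phi|<\pi/4$ from \eqref{8.3} and continuity of $x(\phi)=\re A_\phi$. These force $x'(t)$ to have a single sign on each open interval $(-\pi/4,0)$ and $(0,\pi/4)$, fixed by monotonicity: $x$ rises from $0$ to $\tfrac 8{27}$ and falls back to $0$. The vanishing $x'(0)=0$ in part (3) follows from the symmetry $A_{-\phi}=\overline{A_\phi}$ of Proposition \ref{prop8.13}, making $x$ even in $\phi$; combining this with $A_{\phi+\pi/2}=A_\phi$ yields the composite symmetry $A_{\pi/4+s}=\overline{A_{\pi/4-s}}$, giving evenness of $x$ around $\pm\pi/4$ and hence $x'(\pm\tan(\pi/2))=0$ interpreted as the limit $t\to\pm\infty$.

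For part (2) and the remaining vanishings of (3), the reduced identity $y'(t)\sin 4\phi=x'(t)\cos 4\phi$ forces $y'(t)=0$ whenever $\cos 4\phi=0$ and $x'(t)\neq 0$, i.e., at $\phi=\pm\pi/8$, corresponding to $t=\tan(\pm\pi/4)=\pm 1$ as claimed. Away from these points, $y'(t)=x'(t)\cot 4\phi$ on the open subintervals, whose sign is determined by combining the $x'(t)$ sign from part (1) with the sign of $\cot 4\phi$ on each subinterval of $0<|\phi|<\pi/4$. The claim for $\pi/4<|\phi|<\pi/2$ follows from the periodicity $A_{\phi+\pi/2}=A_\phi$ of Proposition \ref{prop8.13}, taking into account the behavior of $t=\tan 2\phi$ on the shifted fundamental interval.

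The principal obstacle will be verifying that the real scalar $r(\phi)$ does not change sign on either $(-\pi/4,0)$ or $(0,\pi/4)$. A sign change of $r$ would force $J_{\mathbf{a}_*}(A_\phi)=0$, and by the zero analysis of Section \ref{ssc8.2} (case (z.2)) this requires $A_\phi=0$, which by Proposition \ref{prop8.6} occurs only at $\phi=\pm\pi/4$. Combined with the explicit value $r(0)=4/\sqrt 3>0$ extracted from Example \ref{exa8.1}, one thereby obtains $r(\phi)>0$ throughout $|\phi|<\pi/4$, securing the signs in the explicit formulas for $P,Q$. Near $\phi=\pm\pi/4$, where $r(\phi)\to 0$ causes those formulas to degenerate, I would switch to $J_{\mathbf{b}_*}(A_\phi)^{-1}=P+iQ$ in view of Remark \ref{rem8.1} so that the identity remains well-defined and the sign bookkeeping can be pushed all the way to the boundary.
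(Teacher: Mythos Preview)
Your approach is essentially the paper's: the identity $x'(t)(P^2-Q^2)=2y'(t)PQ$ is exactly what the paper derives in the paragraph preceding the proposition, and your rewriting of it as $x'(t)\cos 4\phi=y'(t)\sin 4\phi$ via $J_{\mathbf{a}_*}(A_\phi)=ir(\phi)e^{-2i\phi}$ is a clean and correct repackaging. The paper's handling of part~(1) (noting $x'(t)\neq 0$ on $0<|\phi|<\pi/4$ and then using the endpoint values $A_0=\tfrac{8}{27}$, $A_{\pm\pi/4}=0$) and your version are the same in substance; your symmetry argument for $x'(0)=0$ is a pleasant variant, though the identity itself already gives $x'(0)\cdot 1=y'(0)\cdot 0$.

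However, when you actually carry out the sign bookkeeping in part~(2) you will find it \emph{contradicts the proposition as stated}. From $y'(t)=x'(t)\cot 4\phi$: on $0<\phi<\pi/8$ one has $x'<0$, $\cot 4\phi>0$, so $y'<0$; but on $\pi/8<\phi<\pi/4$ one has $x'<0$, $\cot 4\phi<0$, so $y'>0$. The same flip occurs on the negative side. This is consistent with part~(3) (which places the zero of $y'$ at $t=\pm 1$, i.e.\ $\phi=\pm\pi/8$), with Figure~\ref{trajectory}, and with the paper's own argument, which in fact only establishes $y'(t)<0$ for $|\phi|<\pi/8$. In short, part~(2) as printed has a typo: the thresholds should be $\pi/8$ and $\pi/4$, not $\pi/4$ and $\pi/2$. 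Your periodicity argument for the range $\pi/4<|\phi|<\pi/2$ cannot work as written for the same reason: since both $A_\phi$ and $t=\tan 2\phi$ are $\pi/2$-periodic in $\phi$, that range reproduces exactly the same points $(x(t),y(t))$ as $|\phi|<\pi/4$, so $y'(t)$ cannot have the opposite sign there.

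One small simplification: you need not worry separately about $r(\phi)$ vanishing. Since $J_{\mathbf{b}_*}(A_\phi)=is(\phi)e^{-2i\phi}$ with real $s$ has the same structure, the identity $x'\cos 4\phi=y'\sin 4\phi$ follows equally from $(1/I)'(A)=8\pi i J_{\mathbf{b}_*}^{-2}$, and by Corollary~\ref{cor8.5} at least one of $J_{\mathbf{a}_*},J_{\mathbf{b}_*}$ is nonzero at every $\phi$.
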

By this proposition and Remark \ref{rem8.2} the trajectory $A_{\phi}$ for
$|\phi|\le \pi/4$ is roughly drawn as in Figure \ref{trajectory}.
{\small
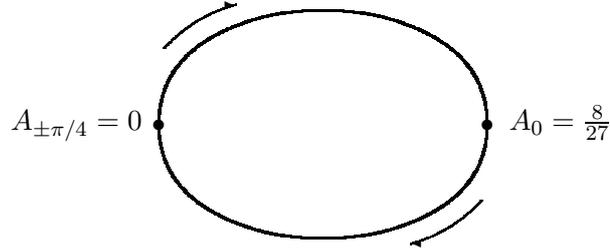
\begin{figure}[htb]
\begin{center}
\unitlength=0.72mm
\begin{picture}(50,40)(-25,-21)
 \put(34,-1){\makebox{$A_0=\tfrac 8{27}$}}
 \put(-57,-1){\makebox{$A_{\pm \pi/4}=0$}}
  \qbezier(-29,14)(-23.5,20)(-16,22)
\put(-16,22){\vector(1,0){0}}
  \qbezier(29,-14)(23.5,-20)(16,-22)
\put(16,-22){\vector(-1,0){0}}
 \put(30,0){\circle*{2}}
 \put(-30,0){\circle*{2}}
\thicklines
\qbezier(10,20)(30,15)(30,0)
\qbezier(-10,20)(0,22)(10,20)
\qbezier(-10,20)(-30,15)(-30,0)
\qbezier(10,-20)(30,-15)(30,0)
\qbezier(-10,-20)(0,-22)(10,-20)
\qbezier(-10,-20)(-30,-15)(-30,0)
\end{picture}
\end{center}
\caption{Trajectory $A_{\phi}$ for $|\phi|\le \pi/4$}
\label{trajectory}
\end{figure}
}
\begin{prop}\label{prop8.15}
There exists a trajectory $A=A_{\phi}$ for $\phi \in \mathbb{R}$ with the
properties$:$
\par
$(1)$ for each $\phi$, $A_{\phi}$ is a unique solution of 
$(\mathrm{BE})_{\phi};$
\par
$(2)$ $A_{\phi+\pi/2}=A_{\phi},$ $A_{-\phi}=\overline{A_{\phi}};$
\par
$(3)$ $A_0=\tfrac 8{27},$ $A_{\pm \pi/4}=0$ and $0 \le \re A_{\phi} \le \tfrac
8{27};$
\par
$(4)$ $A_{\phi}$ is continuous in $\phi \in \mathbb{R}$, and smooth in
$\phi\in \mathbb{R}\setminus \{m\pi/4\,|\, m\in \mathbb{Z}\}.$
\end{prop}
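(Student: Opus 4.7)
The plan is to obtain Proposition \ref{prop8.15} by packaging together the three prior results Proposition \ref{prop8.12}, Proposition \ref{prop8.13}, and Proposition \ref{prop8.14}, and extending the fundamental trajectory from the fundamental strip $|\phi|\le\pi/4$ to all of $\mathbb{R}$ by the $\pi/2$-periodicity. The crucial observation that powers the extension is that the two real Boutroux conditions
\[
\re e^{2i\phi}J_{\mathbf{a}_*}(A)=0,\qquad \re e^{2i\phi}J_{\mathbf{b}_*}(A)=0
\]
are invariant under $\phi\mapsto \phi+\pi/2$, since $e^{2i(\phi+\pi/2)}=-e^{2i\phi}$ only flips an overall sign. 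Hence (BE)$_\phi$ depends on $\phi$ only modulo $\pi/2$.

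First I would set $A_\phi:=A_{\phi_0}$, where $\phi_0\in[-\pi/4,\pi/4]$ is the representative of $\phi$ modulo $\pi/2$ and $A_{\phi_0}$ is the unique trajectory point furnished by Proposition \ref{prop8.12}. Then property (1) is immediate: uniqueness on $|\phi|\le\pi/4$ is Proposition \ref{prop8.12}(1), and the invariance of (BE)$_\phi$ under $\phi\mapsto\phi+\pi/2$ transports uniqueness to every shifted fundamental strip. Property (2) is also immediate: $A_{\phi+\pi/2}=A_\phi$ is the very definition of the extension, while $A_{-\phi}=\overline{A_\phi}$ is given on $|\phi|\le\pi/4$ by Proposition \ref{prop8.13} and propagates everywhere by composing the two symmetries. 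Property (4) is then equally routine: smoothness on each open interval $(m\pi/4,(m+1)\pi/4)$ descends from Proposition \ref{prop8.12}(2) by the $\pi/2$-translation, continuity at each boundary point follows from the fact (Proposition \ref{prop8.12}(3)) that $A_{\pm\pi/4}=0$, so the limits from the two adjacent strips match at every $m\pi/4$.

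For the bound in (3) I would feed Proposition \ref{prop8.14} into the picture. Writing $A_\phi=x(t)+iy(t)$ with $t=\tan 2\phi$, on $0<\phi<\pi/4$ we have $x'(t)<0$ with $x(0)=\tfrac{8}{27}$ and $x(1)=0$, so $0\le \re A_\phi\le \tfrac{8}{27}$ on $[0,\pi/4]$; the reality symmetry $A_{-\phi}=\overline{A_\phi}$ gives the same on $[-\pi/4,0]$. For $\phi\in[\pi/4,\pi/2]$, combining $A_{\phi+\pi/2}=A_\phi$ with $A_{-\phi}=\overline{A_\phi}$ yields $\re A_\phi=\re A_{\pi/2-\phi}$ with $\pi/2-\phi\in[0,\pi/4]$, so the bound persists, and thence to all of $\mathbb{R}$ by periodicity.

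In this light the present proposition is almost purely bookkeeping on top of the three preceding results, and no new analytic obstacle appears: the one point deserving verification is that the extension by periodicity is genuinely consistent at the boundary values $\phi=\pm\pi/4+m\pi/2$, which reduces to the single fact $A_{\pi/4}=A_{-\pi/4}=0$ already proved in Proposition \ref{prop8.12}. If there is a ``hard part'' it is the reality-of-$A$ argument embedded in Proposition \ref{prop8.3} and the trajectory-extension argument of Proposition \ref{prop8.12}, both of which have been discharged before Proposition \ref{prop8.15} is stated; at this stage the proof is little more than a one-paragraph synthesis.
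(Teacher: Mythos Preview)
Your proposal is correct and matches the paper's approach exactly: the paper states Proposition \ref{prop8.15} without a separate proof, treating it as the evident synthesis of Propositions \ref{prop8.12}, \ref{prop8.13}, and \ref{prop8.14} together with the $\pi/2$-periodicity of $(\mathrm{BE})_\phi$. One small slip: at $\phi=\pi/4$ the parameter $t=\tan 2\phi$ tends to $+\infty$, not $1$, so your ``$x(1)=0$'' should read $\lim_{t\to+\infty}x(t)=0$; the monotonicity argument for the bound $0\le\re A_\phi\le\tfrac{8}{27}$ is unaffected.
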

As in the derivation of Corollary \ref{cor8.2} by the change of variables
$\zeta=e^{-\phi}z$ Proposition \ref{prop8.15} is converted to the results
on the trajectory for the Boutroux equations \eqref{2.2} 
with the cycles $\mathbf{a}$ and $\mathbf{b}$ on the elliptic curve 
$\Pi_{A,\phi}:$ $w(A,z)^2=z^4+4e^{i\phi}z^3+4e^{2i\phi}z^2 +4e^{3i\phi}Az.$ 
\begin{cor}\label{cor8.16}
The trajectory $A=A_{\phi}$ in Proposition $\ref{prop8.15}$ fulfils
\par
$(1')$ for each $\phi\in \mathbb{R}$, $A_{\phi}$ is a unique solution 
of \eqref{2.2}.
\end{cor}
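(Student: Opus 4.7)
The statement is essentially a change-of-variables translation of Proposition \ref{prop8.15} to the coordinates used in Section \ref{sc2}, so the plan is to verify that \eqref{2.2} and $(\mathrm{BE})_{\phi}$ are literally the same condition on $A$ and then invoke the uniqueness already established.

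First I would recall from Section \ref{ssc8.1} that $\Pi_{A,\phi}=\Pi_+\cup\Pi_-$ together with the cycles $\mathbf{a}$, $\mathbf{b}$ is defined as the image of $\Pi^*_A=\Pi^*_+\cup\Pi^*_-$ equipped with $\mathbf{a}_*,\mathbf{b}_*$ under the biholomorphism $z=e^{i\phi}\zeta$. From the definition
$$
v(A,\zeta)^2=\zeta^4+4\zeta^3+4\zeta^2+4A\zeta, \qquad
w(A,z)^2=z^4+4e^{i\phi}z^3+4e^{2i\phi}z^2+4e^{3i\phi}Az,
$$
an immediate substitution gives $w(A,e^{i\phi}\zeta)^2=e^{4i\phi}v(A,\zeta)^2$, and the branch conventions on the upper sheets $\Pi_+$ and $\Pi^*_+$ chosen in Section \ref{sc2} and Section \ref{ssc8.1} select consistent square roots, so
$$
w(A,e^{i\phi}\zeta)=e^{2i\phi}v(A,\zeta).
$$

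Next I would push the integrals in \eqref{2.2} through the change of variables. Using $z=e^{i\phi}\zeta$, hence $dz/z=d\zeta/\zeta$, together with the above identity, one obtains for each of the cycles $\mathbf{c}\in\{\mathbf{a},\mathbf{b}\}$ (with image $\mathbf{c}_*\in\{\mathbf{a}_*,\mathbf{b}_*\}$) the identity
$$
\int_{\mathbf{c}}\frac{w(A,z)}{z}\,dz \;=\; e^{2i\phi}\int_{\mathbf{c}_*}\frac{v(A,\zeta)}{\zeta}\,d\zeta.
$$
Taking real parts turns the two equations of \eqref{2.2} into precisely the two equations $(\mathrm{BE})_{\phi}$, so the condition on $A$ defining a solution of \eqref{2.2} coincides with the condition defining a solution of $(\mathrm{BE})_{\phi}$.

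Finally, Proposition \ref{prop8.15}(1) guarantees that $(\mathrm{BE})_{\phi}$ has a unique solution $A=A_{\phi}$ for every $\phi\in\mathbb{R}$, and via the equivalence just established this is also the unique solution of \eqref{2.2}. I do not expect any real obstacle here; the only thing to double-check carefully is the sheet/branch compatibility so that the identity $w(A,e^{i\phi}\zeta)=e^{2i\phi}v(A,\zeta)$ holds with the correct sign along the cycles $\mathbf{a}$ and $\mathbf{b}$, which is a direct consequence of the conventions fixed in Section \ref{sc2} and Section \ref{ssc8.1}.
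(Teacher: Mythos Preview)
Your proposal is correct and follows exactly the paper's own approach: the paper simply notes, in the sentence preceding Corollary~\ref{cor8.16}, that the change of variables $\zeta=e^{-i\phi}z$ converts Proposition~\ref{prop8.15} (i.e.\ the equations $(\mathrm{BE})_{\phi}$) into the Boutroux equations~\eqref{2.2} on $\Pi_{A,\phi}$. You have written out the details of that substitution, which is precisely what the paper leaves implicit.
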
   
\begin{rem}\label{rem8.3}
In the corollary above $(1')$ may be replaced with
\par
$(1'')$ for each $\phi$, there exists $A_{\phi}$ uniquely such that, for
every cycle $\mathbf{c}$ on $\Pi_{A_{\phi},\phi}$,
$$
 \re \int_{\mathbf{c}} \frac{w(A_{\phi},z)}z dz =0.
$$
\end{rem}
\subsection{Coalescing turning points}\label{ssc8.4}
Let us observe coalescing turning points as an application of Proposition 
\ref{prop8.14}. On the trajectory $A_{\phi}$, at $A_{\phi=0}=\tfrac 8{27}$ and 
$A_{\phi=\pm \pi/4}=0$, coalescences of, respectively, $\zeta_3$ and $\zeta_1,$ 
and $\zeta_5$ and $\zeta_3$ occur, and then the elliptic curve 
$\Pi^*_A$ degenerates. 
\par
For $A_{\phi}=\tfrac 8{27}+\varepsilon$ the zeros of $v(A_{\phi},\zeta)^2$ 
are denoted
by $\zeta_5=-\tfrac 83 +\delta_5,$ $\zeta_{3,1}=-\tfrac 23 +\delta_{3,1}$,
where $\delta$ and $\delta_j$ $(j=1,3,5)$ are small.
Then $\delta_{3,1} =\pm i (2\varepsilon)^{1/2} +O(\varepsilon),$ 
$\delta_5=-\varepsilon+O(\varepsilon^2).$ By Proposition \ref{prop8.14}, 
$\varepsilon$ may be written in the
form $2\varepsilon= e^{-i\theta(\phi)}\rho(\phi)(1+o(1))$, where $\theta(\phi)=
\pi/2$ if $\phi>0$ and $=-\pi/2$ if $\phi<0$, and $\rho(0)=0$ and
$\rho(\phi)> 0$ for $|\phi|>0$ around $\phi=0$.  
Hence $\zeta_{3,1}=-\tfrac 23 (1 \mp \tfrac 32 i e^{-i \theta(\phi)/2}
\rho(\phi)^{1/2}(1+o(1)))$, $\zeta_5=-\tfrac 83(1+\tfrac 3{16}e^{-i\theta(\phi)}
\rho(\phi)(1+o(1)))$ as $\phi\to 0,$ where each of $\mp$ is chosen in such a way
that $\re \zeta_3 <\re \zeta_1.$
By $\lambda^2=z=e^{i\phi}\zeta$, the turning points $\lambda_j$ $(j=1, 3, 5)$
in Section \ref{sc4} have the following property.
\begin{prop}\label{prop8.17}
Around $\phi=0$,
\begin{align*}
\lambda_5&= i \sqrt{\tfrac 83}+\hat{\varrho}(\phi)(1+o(1)), \quad
\\
\lambda_3&= i \sqrt{\tfrac 23}+e^{3\pi i/4}\varrho(\phi)(1+o(1)),\quad
\lambda_1= i \sqrt{\tfrac 23}+e^{-\pi i/4}\varrho(\phi)(1+o(1))
\end{align*}
as $\phi \to 0+,$ and
\begin{align*}
\lambda_5&= i \sqrt{\tfrac 83}-\hat{\varrho}(\phi)(1+o(1)), \quad
\\
\lambda_3&= i \sqrt{\tfrac 23}+e^{\pi i/4}\varrho(\phi)(1+o(1)),\quad
\lambda_1= i \sqrt{\tfrac 23}+e^{-3\pi i/4}\varrho(\phi)(1+o(1))
\end{align*}
as $\phi \to 0-,$ 
where $\varrho(0)=0$, $\varrho(\phi)> 0$ for $|\phi|>0$,
and $\hat{\varrho}(\phi)
\asymp \varrho(\phi)^2.$
\end{prop}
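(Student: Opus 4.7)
The plan is to derive the claimed expansions by substituting the formulas for $\zeta_{5}$ and $\zeta_{3,1}$ recorded just before Proposition \ref{prop8.17} into the relation $\lambda_{j}^{2}=z_{j}=e^{i\phi}\zeta_{j}$ fixed in Section \ref{sc4}, and then extracting the correct square root under the branch convention $|\arg\lambda_{j}-\pi/2|<\pi/4$. Since $e^{i\phi}=1+O(\phi)$ and $\rho(\phi)\to0$ as $\phi\to 0$, the leading behaviour reduces to taking a Puiseux expansion of the square root around the limit values $\zeta_{1}=\zeta_{3}=-\tfrac{2}{3}$ and $\zeta_{5}=-\tfrac{8}{3}$. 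The sign $\theta(\phi)$ of the perturbation $\varepsilon$ is supplied by Proposition \ref{prop8.14}: the trajectory satisfies $\im A_{\phi}<0$ for $0<\phi<\pi/4$ and $\im A_{\phi}>0$ for $-\pi/4<\phi<0$, so $\theta(\phi)=\pi/2$ if $\phi>0$ and $\theta(\phi)=-\pi/2$ if $\phi<0$.

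First I would treat $\lambda_{1}$ and $\lambda_{3}$ coming from the coalescence at $-\tfrac{2}{3}$. Writing $\zeta_{3,1}=-\tfrac{2}{3}(1+Y_{3,1})$ with $Y_{3,1}=\mp\tfrac{3}{2}ie^{-i\theta(\phi)/2}\rho(\phi)^{1/2}(1+o(1))$ and evaluating the real part, I would check that for $\phi\to 0+$ the ordering $\re\zeta_{3}<\re\zeta_{1}$ forces the lower sign for $\zeta_{3}$ and the upper sign for $\zeta_{1}$, while for $\phi\to 0-$ the assignment reverses; this uses $ie^{-i\pi/4}=(1+i)/\sqrt{2}$ and $ie^{i\pi/4}=(-1+i)/\sqrt{2}$. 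Then $\lambda_{3,1}=i\sqrt{2/3}\,(1+Y_{3,1}/2+O(Y_{3,1}^{2}))$ after choosing the square root in the upper half plane compatible with $|\arg\lambda_{j}-\pi/2|<\pi/4$, and the coefficient simplifies via $i\sqrt{2/3}\cdot\tfrac{3}{4\sqrt{2}}=\sqrt{3}/4$ together with $\sqrt{3}(1\pm i)/4=(\sqrt{6}/4)\,e^{\pm i\pi/4}$, yielding the four directions $e^{3\pi i/4},\,e^{-i\pi/4},\,e^{i\pi/4},\,e^{-3\pi i/4}$ as claimed. This also identifies $\varrho(\phi)=(\sqrt{6}/4)\rho(\phi)^{1/2}(1+o(1))$, which is positive and $o(1)$ as $\phi\to 0$.

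For $\lambda_{5}$ there is no square-root type coalescence: the expansion $\zeta_{5}=-\tfrac{8}{3}(1+\tfrac{3}{16}e^{-i\theta(\phi)}\rho(\phi)(1+o(1)))$ gives, after taking the principal branch,
\begin{equation*}
\lambda_{5}=i\sqrt{8/3}\bigl(1+\tfrac{3}{32}e^{-i\theta(\phi)}\rho(\phi)(1+o(1))\bigr).
\end{equation*}
Since $ie^{-i\pi/2}=1$ and $ie^{i\pi/2}=-1$, the correction is real, of definite sign, positive for $\phi\to 0+$ and negative for $\phi\to 0-$, so $\hat{\varrho}(\phi)\asymp\rho(\phi)\asymp\varrho(\phi)^{2}$, confirming the last assertion. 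The only nontrivial bookkeeping step is the sign assignment for $\zeta_{3}$ versus $\zeta_{1}$ in each half-sector: the rotating phase $e^{-i\theta(\phi)/2}$ crosses the imaginary axis as $\phi$ passes through $0$, so the labelling of the two merged branches flips, which is precisely why the proposition must state separate expansions for $\phi\to 0+$ and $\phi\to 0-$.
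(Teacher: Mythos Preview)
Your proposal is correct and follows essentially the same approach as the paper: the paragraph immediately preceding Proposition \ref{prop8.17} records the expansions of $\zeta_{3,1}$ and $\zeta_{5}$ in terms of $\rho(\phi)$ and $\theta(\phi)$ (with the sign of $\theta$ determined by Proposition \ref{prop8.14}), and then simply states that the proposition follows from $\lambda^{2}=z=e^{i\phi}\zeta$; you have spelled out this square-root step with the branch choice $|\arg\lambda_{j}-\pi/2|<\pi/4$ and the sign assignment dictated by $\re\zeta_{3}<\re\zeta_{1}$. One minor slip: the intermediate identity you quote should read $i\sqrt{2/3}\cdot\tfrac{3}{4\sqrt{2}}=i\sqrt{3}/4$ rather than $\sqrt{3}/4$, but since you then multiply by $(1\pm i)$ and arrive at the correct directions $e^{\pm i\pi/4}$, $e^{\pm 3i\pi/4}$ and the correct $\varrho(\phi)=(\sqrt{6}/4)\rho(\phi)^{1/2}$, this is only a typographical issue and does not affect the argument.
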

For $A_{\phi}=\varepsilon$ near $\phi=\pm\pi/4$, 
the zeros of $v(A,\zeta)^2$ are $\zeta_{5,3}=-2
+\delta_{5,3},$ $\zeta_1=\delta_1$ with $\delta_{5,3}=\pm (2\varepsilon)^{1/2}
+O(\varepsilon),$ $\delta_1=-\varepsilon +O(\varepsilon^2).$ 
Writing $2\varepsilon=e^{i\theta
(\phi')}\tilde{\rho}(\theta')$, where $\phi=\pm \pi/4 \mp \phi'$, $\phi'\ge 0$,
and $\tilde{\rho}(0)=0,$ $\tilde{\rho}(\phi')\ge 0$, 
we have $\zeta_{5,3}=-2\pm e^{i\theta(\phi')/2}
\tilde{\rho}(\phi')^{1/2}(1+o(1))$, $\zeta_1=-\tfrac 12e^{i\theta(\phi')} \tilde{\rho}(\phi')(1+o(1))$. 
\begin{prop}\label{prop8.18}
Around $\phi =\pm \pi/4$,
\begin{align*}
\lambda_5&= e^{-\pi i/8} (i\sqrt{ 2}+e^{3\pi i/4}\varrho(\phi')(1+o(1))),
\quad
\lambda_3=  e^{-\pi i/8} (i\sqrt{ 2}+e^{-\pi i/4}\varrho(\phi')(1+o(1))),
\\
\lambda_1&= \tfrac 12 e^{-\pi i/8} e^{3\pi i/4}{\varrho}(\phi')(1+o(1))), \quad
\end{align*}
as $\phi'=\phi+\pi/4 \to 0 +,$ and
\begin{align*}
\lambda_5&= e^{\pi i/8} (i\sqrt{ 2}+e^{\pi i/4}\varrho(\phi')(1+o(1))),
\quad
\lambda_3=  e^{\pi i/8} (i\sqrt{ 2}+e^{-3\pi i/4}\varrho(\phi')(1+o(1))),
\\
\lambda_1&= \tfrac 12e^{\pi i/8} e^{\pi i/4}{\varrho}(\phi')(1+o(1))), 
\end{align*}
as $\phi'=\phi-\pi/4 \to 0-,$ 
where ${\varrho}(0)=0$, and ${\varrho}(\phi')> 0$ for $|\phi'|>0$.
\end{prop}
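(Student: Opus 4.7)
I shall treat the case $\phi \to -\pi/4+$ (so $\phi' = \phi + \pi/4 \to 0+$) in detail; the case $\phi \to \pi/4-$ follows by invoking the reflection $A_{-\phi} = \overline{A_\phi}$ of Proposition~\ref{prop8.13}. The scheme parallels the proof of Proposition~\ref{prop8.17}: first expand the zeros $\zeta_j$ of the cubic $\zeta^3 + 4\zeta^2 + 4\zeta + 4A_\phi$ as $A_\phi \to 0$, then transfer the expansion to $\lambda_j$ through $\lambda_j^2 = e^{i\phi}\zeta_j$, using the branch choice $|\arg\lambda_j - \pi/2| < \pi/4$ of Section~\ref{sc4}.

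The first step is to identify the leading behavior of $\varepsilon := A_\phi$ as $\phi' \to 0+$. A direct computation on the degenerate curve $v(0,\zeta)^2 = \zeta^2(\zeta+2)^2$ gives $J_{\mathbf{a}_*}(0) = 0$ and $J_{\mathbf{b}_*}(0)$ a nonzero real number. Proposition~\ref{prop8.11} then shows $(1/I)'(0) = 8\pi i / J_{\mathbf{b}_*}(0)^{2} \in i\mathbb{R} \setminus \{0\}$, so $1/I$ is conformal at $A = 0$ with purely imaginary derivative. By Proposition~\ref{prop8.7} the Boutroux trajectory, defined locally as $(1/I)^{-1}(\mathbb{R})$, is therefore tangent to the imaginary axis at $A = 0$. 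Combined with Proposition~\ref{prop8.14}(2) (which gives $\im A_\phi > 0$ for $-\pi/4 < \phi < 0$) and the continuity $A_{-\pi/4} = 0$, this yields $A_\phi = i\mu(\phi')(1+o(1))$ for some $\mu(\phi') > 0$ vanishing at $\phi' = 0$, so that $\sqrt{2A_\phi} = e^{i\pi/4}\sqrt{2\mu(\phi')}(1+o(1))$ at leading order.

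Next I will expand the three roots of $\zeta^3 + 4\zeta^2 + 4\zeta + 4\varepsilon = 0$. Setting $\zeta = -2 + \delta$ reduces this equation to $-2\delta^2 + \delta^3 + 4\varepsilon = 0$, so $\delta = \pm\sqrt{2\varepsilon}(1+O(\varepsilon^{1/2}))$; the remaining simple root is $\zeta = -\varepsilon + O(\varepsilon^2)$. The ordering $\re\zeta_5 < \re\zeta_3 < \re\zeta_1$ from Corollary~\ref{cor8.2}, together with $\re(e^{i\pi/4}\sqrt{2\mu}) > 0$, then assigns $\zeta_3 = -2 + e^{i\pi/4}\sqrt{2\mu(\phi')}(1+o(1))$, $\zeta_5 = -2 - e^{i\pi/4}\sqrt{2\mu(\phi')}(1+o(1))$, and $\zeta_1 = -i\mu(\phi')(1+o(1))$. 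Substituting into $\lambda_j^2 = e^{i\phi}\zeta_j$ and using the expansion $(e^{i\phi}(-2+\delta))^{1/2} = i\sqrt{2}\,e^{i\phi/2}(1 - \delta/4 + O(\delta^2))$, with $\sqrt{-2} = i\sqrt{2}$ forced by the argument constraint, gives the shifts of $\lambda_3$, $\lambda_5$ around $i\sqrt{2}\,e^{-i\pi/8}$. I define $\varrho(\phi')$ to be the common real positive factor extracted from these shifts, and this produces the stated expressions for $\lambda_3$ and $\lambda_5$. The analogous branch choice for $\sqrt{z_1}$ (chosen so that $\arg\lambda_1 \in (\pi/4,3\pi/4)$) gives $\lambda_1$ as a constant multiple of $e^{-i\pi/8}e^{3\pi i/4}\sqrt{\mu(\phi')}$, hence of $e^{-i\pi/8}e^{3\pi i/4}\varrho(\phi')$.

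For $\phi \to \pi/4-$ (so $\phi' = \phi - \pi/4 \to 0-$), I apply $A_{-\phi} = \overline{A_\phi}$: the trajectory is reflected, so $A_\phi = -i\mu(\phi')(1+o(1))$, and the derivation above repeats with $\sqrt{2\varepsilon}$ now of phase $-\pi/4$, producing the conjugated factors $e^{i\pi/4}$ and $e^{-3\pi i/4}$ of the second half of the proposition. The main anticipated difficulty is the coordinated branch selection for the three nested square roots $\sqrt{2\varepsilon}$, $\sqrt{-2e^{i\phi}}$, and $\sqrt{z_1}$: each must be chosen so that the final $\lambda_j$ satisfy the constraint $|\arg\lambda_j - \pi/2| < \pi/4$, and it is precisely this coordination that pins down the specific unimodular phases $e^{\pm 3\pi i/4}$ and $e^{\mp\pi i/4}$ in the expansions.
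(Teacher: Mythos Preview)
Your proposal is correct and follows essentially the same route as the paper: expand the zeros $\zeta_j$ of the cubic for small $\varepsilon=A_\phi$, determine the phase of $\varepsilon$ from the trajectory information (the paper cites Proposition~\ref{prop8.14} directly, while you supplement this with the conformality of $1/I$ at $A=0$ via Proposition~\ref{prop8.11}), and then pass to $\lambda_j=\sqrt{e^{i\phi}\zeta_j}$ under the branch constraint of Section~\ref{sc4}. Your use of the reflection $A_{-\phi}=\overline{A_\phi}$ to deduce the second case is a legitimate shortcut; the paper instead treats both signs in parallel in the paragraph preceding the proposition.
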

The quantities above are also written as functions of $\phi.$ We have
the following, say, around $\phi=0.$
\begin{prop}\label{prop8.19}
As $\phi\to 0-$ $($respectively, $\phi\to 0+$$)$,
$\zeta_{p}= -\tfrac 23 -i^{p} \sqrt{2\varepsilon_{\phi}}
+O(\varepsilon_{\phi})$ $($respectively,
$= -\tfrac 23 +i^{p} \sqrt{2\varepsilon_{\phi}}
+O(\varepsilon_{\phi})$$)$ for $p=1,3$, and
$\zeta_{5}= -\tfrac 83 -\varepsilon_{\phi}+O(\varepsilon_{\phi}^2),$ where
$A_{\phi}=\tfrac 8{27}+\varepsilon_{\phi}$ with
$$
\varepsilon_{\phi}=\frac{8i}3 \frac{\phi}{\ln\phi}\Bigl(1+O\Bigl(\frac{\ln\ln
\phi}{\ln\phi}\Bigr)\Bigr).
$$
\end{prop}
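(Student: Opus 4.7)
The plan is to combine the Boutroux equation $\re e^{2i\phi}J_{\mathbf{a}_*}(A_\phi)=0$ with a local expansion of $J_{\mathbf{a}_*}(A)$ near the degeneration $A_0=\tfrac 8{27}$, producing a transcendental equation for $\varepsilon_\phi:=A_\phi-\tfrac 8{27}$ which is then inverted iteratively. For the zeros, I will apply Newton--Puiseux about the double root $\zeta=-\tfrac 23$ of $\zeta^3+4\zeta^2+4\zeta+\tfrac{32}{27}=(\zeta+\tfrac 23)^2(\zeta+\tfrac 83)$: substituting $\zeta=-\tfrac 23+w$ into $\zeta^3+4\zeta^2+4\zeta+4A_\phi=0$ gives $2w^2+w^3=-4\varepsilon_\phi$, hence $w=\pm i\sqrt{2\varepsilon_\phi}+O(\varepsilon_\phi)$; the sign attachment and the factor $i^p$ are fixed by the ordering $\re e^{-i\phi}\zeta_3<\re e^{-i\phi}\zeta_1$ of Corollary~\ref{cor8.2}, while a parallel expansion at the simple root $\zeta=-\tfrac 83$ yields $\zeta_5=-\tfrac 83-\varepsilon_\phi+O(\varepsilon_\phi^2)$.

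The crucial step is the asymptotics of $J_{\mathbf{a}_*}(A_\phi)$. Since $v(A,\zeta_3)=0$, differentiation under the integral gives $J_{\mathbf{a}_*}'(A)=2\Omega^*_{\mathbf{a}_*}(A)$ with no boundary contribution, and the period $\Omega^*_{\mathbf{a}_*}$ diverges logarithmically at $A=\tfrac 8{27}$ because the zeros $\zeta_1,\zeta_3$ inside the cycle $\mathbf{a}_*$ coalesce at $\zeta=-\tfrac 23$, while $\zeta(\zeta-\zeta_5)\big|_{\zeta=-2/3}=-\tfrac 43$ stays bounded. Splitting the integral into a local part near $\zeta=-\tfrac 23$, modelled by $\int d\zeta/\sqrt{-\tfrac 43(\zeta-\zeta_1)(\zeta-\zeta_3)}$, and an $O(1)$ non-local remainder, an elementary computation using $2\ln(\sqrt{\zeta-\zeta_3}+\sqrt{\zeta-\zeta_1})$ as antiderivative produces
\begin{equation*}
\Omega^*_{\mathbf{a}_*}(A)=\tfrac 12 i\sqrt 3\,\ln(1/\varepsilon)+O(1),\qquad \varepsilon:=A-\tfrac 8{27},
\end{equation*}
which is consistent with the heuristic formula $\Omega_{\mathbf{a}}=\tfrac 12 i\sqrt 3\,\ln t(1+o(1))$ recorded in Subsection~\ref{ssc2.3}. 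Integrating and using $J_{\mathbf{a}_*}(\tfrac 8{27})=4i/\sqrt 3$ from Example~\ref{exa8.1} then yields
\begin{equation*}
J_{\mathbf{a}_*}(A_\phi)=\frac{4i}{\sqrt 3}-i\sqrt 3\,\varepsilon_\phi\ln\varepsilon_\phi+O(\varepsilon_\phi).
\end{equation*}

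Inserting this into $\re(e^{2i\phi}J_{\mathbf{a}_*}(A_\phi))=0$ and expanding $e^{2i\phi}=1+2i\phi+O(\phi^2)$ gives
\begin{equation*}
-\tfrac{8\phi}{\sqrt 3}-\sqrt 3\,\re(i\varepsilon_\phi\ln\varepsilon_\phi)+O(\phi^2+\varepsilon_\phi)=0.
\end{equation*}
By Remark~\ref{rem8.2} the trajectory is tangent to $i\mathbb{R}$ at $\tfrac 8{27}$, so writing $\varepsilon_\phi=i\rho(1+o(1))$ with $\rho\in\mathbb{R}$ gives $\re(i\varepsilon_\phi\ln\varepsilon_\phi)=-\rho\ln|\rho|+O(\rho)$, and the above condition reduces to the transcendental equation $\rho\ln|\rho|=\tfrac{8\phi}{3}(1+o(1))$. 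A Lambert-type iteration — $|\rho|=\frac{8|\phi|/3}{|\ln|\rho||}$, and $|\ln|\rho||=|\ln\phi|+\ln|\ln\phi|+O(1)$ — then produces
\begin{equation*}
\varepsilon_\phi=\frac{8i}{3}\,\frac{\phi}{\ln\phi}\Bigl(1+O\bigl(\tfrac{\ln\ln\phi}{\ln\phi}\bigr)\Bigr),
\end{equation*}
with the sign dictated by Proposition~\ref{prop8.14}; substituting this back into the Newton--Puiseux expansion from the first paragraph then completes the statement about the $\zeta_j$.

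The main obstacle will be pinning down the constant $\tfrac 12 i\sqrt 3$ in the logarithmic singularity of $\Omega^*_{\mathbf{a}_*}(A)$ and ensuring the branches of $\sqrt{\varepsilon_\phi}$ and $\ln\varepsilon_\phi$ are handled compatibly across the two approach directions $\phi\to 0\pm$: the local integration near $\zeta=-\tfrac 23$ must be matched to the global $\mathbf{a}_*$-cycle on $\Pi^*_{A_\phi}$ with a consistent choice of sheet, and the signs produced must agree with Proposition~\ref{prop8.14}(1)--(2) as well as with the relation $A_{-\phi}=\overline{A_\phi}$ from Proposition~\ref{prop8.15}.
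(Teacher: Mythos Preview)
Your approach is correct and arrives at the same transcendental equation as the paper, but the route differs in two places. First, the paper evaluates $J_{\mathbf{a}_*}(A_\phi)$ and $J_{\mathbf{b}_*}(A_\phi)$ \emph{directly} by expanding the line integrals $\int_{\zeta_5}^{\zeta_3}$ and $\int_{\zeta_3}^{\zeta_1}$ in $\varepsilon$ (via the substitutions $\zeta+\tfrac 83=t$ and $\zeta+\tfrac 23=t$), obtaining $J_{\mathbf{a}_*}=i\sqrt 3\bigl(\tfrac 43-\varepsilon\ln\varepsilon+O(\varepsilon)\bigr)$ and $J_{\mathbf{b}_*}=-2\sqrt 3\,\pi\varepsilon+O(\varepsilon^{3/2})$; you reach the same $J_{\mathbf{a}_*}$ by integrating $J_{\mathbf{a}_*}'=2\Omega^*_{\mathbf{a}_*}$ from the known value $J_{\mathbf{a}_*}(\tfrac 8{27})=\tfrac{4i}{\sqrt 3}$, using the standard nodal-degeneration logarithm for the period. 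Second, the paper uses \emph{both} Boutroux equations---the $\mathbf{b}_*$-equation is what forces $\re\varepsilon=O(\phi\varepsilon+\varepsilon^{3/2})$, i.e.\ $\varepsilon$ nearly imaginary---whereas you substitute Remark~\ref{rem8.2} for that constraint. Your path is more structural and avoids the explicit integral expansions; the paper's is self-contained within this proposition and, as a by-product, supplies the $J_{\mathbf{b}_*}$ asymptotic used in Corollary~\ref{cor8.20}. Two small remarks: (i) the cycle $\mathbf{a}_*$ encloses the cut $[\zeta_5,\zeta_3]$, so the logarithmic pinching arises because the \emph{endpoint} $\zeta_3$ coalesces with the \emph{outside} zero $\zeta_1$---your phrase ``$\zeta_1,\zeta_3$ inside $\mathbf{a}_*$'' is slightly off, though the mechanism and the constant $\tfrac 12 i\sqrt 3$ are correct; (ii) to secure the full error $O\bigl(\tfrac{\ln\ln\phi}{\ln\phi}\bigr)$ you need $\varepsilon_\phi=i\rho(1+O(1/\ln\phi))$ rather than merely $(1+o(1))$, which follows either from the analyticity of $I(A)$ at $\tfrac 8{27}$ (so the $o(1)$ in Remark~\ref{rem8.2} is really $O(\varepsilon)$) or, as in the paper, from the $\mathbf{b}_*$-Boutroux equation.
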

\begin{proof}
In the case $\phi\to 0-,$ 
set $A_{\phi}= \tfrac {8}{27} +\varepsilon$,
and
$\zeta_{1}=-\tfrac 23- i\sqrt{2\varepsilon}+O(\varepsilon),$ 
$\zeta_{3}=-\tfrac 23+ i\sqrt{2\varepsilon}+O(\varepsilon),$ 
$\zeta_5=-\tfrac 83 -\varepsilon+O(\varepsilon^2),$ where $\varepsilon=
i|\varepsilon|(1+o(1))$. Then 
\begin{align*}
\tfrac 12 J_{\mathbf{a}_*}(A_{\phi})&= i^3\int^{\zeta_3}_{\zeta_5}
\sqrt{(\zeta-\zeta_5)(\zeta_3-\zeta)(\zeta_1-\zeta)(-\zeta)}\frac{d\zeta}{\zeta}
\\
&= -i\int^{-2/3+i\sqrt{2\varepsilon}}_{-8/3}
\sqrt{(\zeta+\tfrac 83 )(-\tfrac 23 +i\sqrt{2\varepsilon}-\zeta)
(-\tfrac 23-i\sqrt{2\varepsilon}-\zeta)(-\zeta)}\frac{d\zeta}{\zeta}
+O(\varepsilon)
\\
&= i\int^{2+i\sqrt{2\varepsilon}}_0 \sqrt{\frac t{\frac 83-t}}
\sqrt{(2-t)^2 +2\varepsilon}
\,dt+O(\varepsilon) \qquad (\zeta+\tfrac 83 =t)
\\
&= i\int^{2+i\sqrt{2\varepsilon}}_0 \sqrt{\frac t{\frac 83-t}}\Bigl(2-t+
{\varepsilon}(2-t)^{-1} +O(\varepsilon^2(2-t)^{-3})\Bigr)dt+O(\varepsilon) 
\\
&= i(\tfrac 23\sqrt{3} -\tfrac 12\sqrt{3} \varepsilon \ln\varepsilon +
O(\varepsilon)),
\end{align*} 
and
\begin{align*}
\tfrac 12 J_{\mathbf{b}_*}(A_{\phi})&= i^2\int^{\zeta_1}_{\zeta_3}\sqrt{
(\zeta-\zeta_5)(\zeta-\zeta_3)(\zeta_1-\zeta)(-\zeta)}\frac{d\zeta}{\zeta}
\\
&= -\int^{-2/3-i\sqrt{2\varepsilon}}_{-2/3+i\sqrt{2\varepsilon}}
\sqrt{(\zeta+\tfrac 83 )(\zeta+\tfrac 23 -i\sqrt{2\varepsilon})
(-\tfrac 23-i\sqrt{2\varepsilon}-\zeta)(-\zeta)}\frac{d\zeta}{\zeta}
+O(\varepsilon^{3/2})
\\
&= \sqrt{3} \int^{-i\sqrt{2\varepsilon}}_{i\sqrt{2\varepsilon}}
\sqrt{-2\varepsilon-t^2}\,dt 
+O(\varepsilon^{3/2}) \qquad(\zeta+\tfrac 23=t)
\\
&= -\sqrt{3}\pi +O(\varepsilon^{3/2}).
\end{align*}
Thus we have $J_{\mathbf{a}_*}(A_{\phi})=i\sqrt{3}(\tfrac 43 -\varepsilon
\ln \varepsilon +O(\varepsilon))$ and $J_{\mathbf{b}_*}(A_{\phi})
=-2\sqrt{3} \pi\varepsilon +O(\varepsilon^{3/2})$.
For small $\phi$, the Boutroux equations $\re e^{2i\phi}J_{\mathbf{a}_*}
(A_{\phi})=\re r^{2i\phi}J_{\mathbf{b}_*}(A_{\phi})=0$ yield $\varepsilon
=\varepsilon_{\phi}$ as in the proposition.
\end{proof}
By $2\Omega^*_{\mathbf{a}_*,\,\mathbf{b}_*}=(\partial/\partial\varepsilon_{\phi})
J_{\mathbf{a}_*,\,\mathbf{b}_*}(A_{\phi})$ we have the following corollary.
\begin{cor}\label{cor8.20}
Around $\phi=0$,
\begin{align*}
& J_{\mathbf{a}_*}(A_{\phi})=\tfrac 43 i {\sqrt{3}} 
(1-2i\phi(1+O(\delta_{\phi}))),
\quad 
 J_{\mathbf{b}_*}(A_{\phi})=-\tfrac {16}3\pi i{\sqrt{3}}\, {\phi} \,
({\ln\phi})^{-1}
(1+O(\delta_{\phi})),
\\
&\Omega_{\mathbf{a}_*}^* =-\tfrac 12 i\sqrt{3} \ln \phi \,(1+O(\delta_{\phi})),
\quad
\Omega_{\mathbf{b}_*}^* =-{\sqrt{3} \pi}+O({\phi}^{1/2}),
\end{align*}
where $\delta_{\phi}=\ln\ln\phi \,(\ln\phi)^{-1}$ as $\phi\to 0.$
\end{cor}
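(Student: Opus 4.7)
The plan is to deduce all four formulas from the two expansions
\begin{align*}
J_{\mathbf{a}_*}(A_\phi) &= i\sqrt{3}\bigl(\tfrac{4}{3} - \varepsilon_\phi \ln \varepsilon_\phi + O(\varepsilon_\phi)\bigr), \\
J_{\mathbf{b}_*}(A_\phi) &= -2\sqrt{3}\pi\,\varepsilon_\phi + O(\varepsilon_\phi^{3/2}),
\end{align*}
which are established inside the proof of Proposition~\ref{prop8.19}, combined with the identity $2\Omega^*_{\mathbf{a}_*,\,\mathbf{b}_*} = \partial_A J_{\mathbf{a}_*,\,\mathbf{b}_*}(A)$ recorded just above the corollary. (This identity follows by differentiating $v(A,\zeta)^2 = \zeta^4+4\zeta^3+4\zeta^2+4A\zeta$ under the integral sign, exactly as in the derivation of Proposition~\ref{prop8.11}.)

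First I would substitute $\varepsilon_\phi = \tfrac{8i}{3}\phi/\ln\phi\cdot(1+O(\delta_\phi))$ from Proposition~\ref{prop8.19} into the two $J$-expansions. Since $\ln \varepsilon_\phi = \ln \phi + O(\ln\ln\phi) = \ln \phi\cdot(1+O(\delta_\phi))$, one obtains $\varepsilon_\phi \ln \varepsilon_\phi = \tfrac{8i}{3}\phi\cdot(1+O(\delta_\phi))$, and hence $J_{\mathbf{a}_*}(A_\phi) = \tfrac{4}{3}i\sqrt{3}\bigl(1 - 2i\phi(1+O(\delta_\phi))\bigr)$. Likewise $\varepsilon_\phi^{3/2} = O(\phi^{3/2}|\ln\phi|^{-3/2})$ is negligible relative to $\varepsilon_\phi$, which yields $J_{\mathbf{b}_*}(A_\phi) = -\tfrac{16}{3}i\sqrt{3}\pi\,\phi(\ln\phi)^{-1}(1+O(\delta_\phi))$. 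Next, differentiating the $J$-expansions in $\varepsilon = A - \tfrac{8}{27}$ gives formally $\partial_\varepsilon J_{\mathbf{a}_*} = -i\sqrt{3}\ln\varepsilon + O(1)$ and $\partial_\varepsilon J_{\mathbf{b}_*} = -2\sqrt{3}\pi + O(\varepsilon^{1/2})$. Applying $2\Omega^*_{\mathbf{a}_*,\,\mathbf{b}_*} = \partial_A J_{\mathbf{a}_*,\,\mathbf{b}_*}$ at $A = A_\phi$ and substituting $\ln\varepsilon_\phi = \ln\phi\cdot(1+O(\delta_\phi))$ and $\varepsilon_\phi^{1/2} = O(\phi^{1/2})$ then delivers the claimed asymptotics for $\Omega^*_{\mathbf{a}_*}$ and $\Omega^*_{\mathbf{b}_*}$.

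The main obstacle is to justify that differentiating the $J$-expansions in $\varepsilon$ preserves the asymptotic order, i.e.\ that $\partial_\varepsilon[O(\varepsilon)] = O(1)$ and $\partial_\varepsilon[O(\varepsilon^{3/2})] = O(\varepsilon^{1/2})$. The $O(\varepsilon)$ remainder in $J_{\mathbf{a}_*}(A)$ arises from the near-endpoint portion of the integral in the proof of Proposition~\ref{prop8.19}, where the integrand behaves like $\varepsilon(2-t)^{-1}$ close to the moving endpoint $t = 2 + i\sqrt{2\varepsilon}$ and $(2-t)^{-1}$ is non-integrable on the real axis. To handle this rigorously I would split the contour at $t = 2-\delta$ for a fixed small $\delta > 0$, expand the outer integral in $\varepsilon$ and differentiate term by term, while on the inner portion I would perform the change of variables $2-t = \sqrt{\varepsilon}\,s$ to obtain an integral that is uniform in $\varepsilon$ and whose $\varepsilon$-derivative is indeed $O(1)$; the endpoint contribution from $\partial_\varepsilon \bigl[2 + i\sqrt{2\varepsilon}\bigr] = i/\sqrt{2\varepsilon}$ is multiplied by an integrand that vanishes like $\sqrt{\varepsilon}$ at the endpoint, producing an $O(1)$ boundary term. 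The same strategy controls the derivative of the $O(\varepsilon^{3/2})$ remainder in $J_{\mathbf{b}_*}$.
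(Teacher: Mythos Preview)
Your proposal is correct and follows exactly the paper's approach: the paper's proof consists of the single sentence ``By $2\Omega^*_{\mathbf{a}_*,\,\mathbf{b}_*}=(\partial/\partial\varepsilon_{\phi})J_{\mathbf{a}_*,\,\mathbf{b}_*}(A_{\phi})$ we have the following corollary,'' leaving the substitutions and the justification of term-by-term differentiation implicit. Your treatment is in fact more careful than the paper's, since you explicitly address the rigor issue of differentiating the $O$-remainders that the paper passes over silently.
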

\par
{\bf Acknowledgements.} The author is grateful to Professor Yousuke Ohyama
for a stimulating conversation informing circumstances of Kapaev's
announcement \cite{Kapaev-3} and inspiring the author to tackle this work.

\end{document}